\documentclass[10pt]{article}

\usepackage{amsmath,amsthm,amssymb}
\usepackage{nccmath}
\usepackage{enumitem} 
\usepackage[sorted]{amsrefs}
\usepackage{graphicx} 
\usepackage{float} 
\usepackage{tikz}
\usepackage{xcolor}
\usepackage[colorlinks=true, linkcolor=blue, citecolor=blue]{hyperref}
\usepackage[textwidth=6.0in, textheight=9.0in]{geometry}
\usepackage{microtype}

\makeatletter
\renewcommand\paragraph{\@startsection{paragraph}{4}{\z@}{\z@}{-1em}{\normalfont\normalsize\bfseries}}
\makeatother
\setlist[enumerate]{topsep=0pt,partopsep=1ex,parsep=1ex}
\setlist[itemize]{topsep=0pt,partopsep=1ex,parsep=1ex}
\newtheorem{theorem}{Theorem}[section]
\newtheorem{proposition}[theorem]{Proposition}
\newtheorem{lemma}[theorem]{Lemma}

\newtheorem{fact}[theorem]{Fact}
\newtheorem{corollary}[theorem]{Corollary}
\newtheorem{observation}[theorem]{Observation}
\newtheorem{conjecture}[theorem]{Conjecture}
\newtheorem{definition}[theorem]{Definition}
\newtheorem{example}[theorem]{Example}
\newtheorem{question}[theorem]{Question}

\theoremstyle{remark}
\newtheorem{remark}[theorem]{Remark}

\newcommand{\RR}{\ensuremath{\mathbb R}}

\renewcommand{\epsilon}{\varepsilon}
\newcommand{\link}{\operatorname{lk}}
\newcommand{\cls}[1]{\textsf{#1}}

\newcommand{\bicoloured}{\cls{bicoloured-interval}}
\newcommand{\sandwich}{\cls{interval-sandwich}}
\newcommand{\parallelogram}{\cls{parallelogram}}
\newcommand{\cocomparability}{\cls{co-comparability}}
\newcommand{\comparability}{\cls{comparability}}
\newcommand{\perfect}{\cls{perfect}}
\newcommand{\interval}{\cls{interval}}
\newcommand{\weaklychordal}{\cls{weakly-chordal}}
\newcommand{\chordal}{\cls{chordal}}
\newcommand{\permutation}{\cls{permutation}}
\newcommand{\trapezoid}{\cls{trapezoid}}
\newcommand{\tol}{\cls{tolerance}}
\newcommand{\bounded}{\cls{bounded-tolerance}}
\newcommand{\unit}{\cls{unit-tolerance}}
\newcommand{\proper}{\cls{proper-tolerance}}
\newcommand{\halftolerance}{\cls{50\%-tolerance}}

\newcommand{\NP}{\textsf{NP}}
\newcommand{\PSPACE}{\textsf{PSPACE}}

\newcommand{\ord}{\mathrm{ord}}
\newcommand{\pre}{\mathrm{pre}}
\newcommand{\hi}{\mathrm{hi}}
\newcommand{\lo}{\mathrm{lo}}

\title{Bicoloured-interval and interval-sandwich graphs: two new classes in the tolerance hierarchy}

\author{Abdul Basit\thanks{Centre for Artificial Intelligence and Machine Learning, School of Science, Edith Cowan University, Perth, Australia; \texttt{a.basit@ecu.edu.au}. AB was supported in part by Australian Research Council grant DP250104390.}
\and David Suter\thanks{Centre for Artificial Intelligence and Machine Learning, School of Science, Edith Cowan University, Perth, Australia; \texttt{d.suter@ecu.edu.au}}
\and Erchuan Zhang\thanks{School of Science, Sun Yat-sen University, Shenzhen, P.R.China; \texttt{zhangerch@mail.sysu.edu.cn}}
}

\date{\today}

\begin{document}

\maketitle

\begin{abstract}
We introduce two new graph classes, the bicoloured-interval graphs and the interval-sandwich graphs, arising from the study of the robust line fitting problem in computer vision.
In a bicoloured-interval graph, each vertex is assigned a real interval and one of two colours.
Vertices of different colours are adjacent precisely when their intervals intersect, and vertices of the same colour precisely when the centre of one interval lies in the other.
Discarding the colouring yields the interval-sandwich graphs, sandwiched between the $50\%$-tolerance graph and the intersection graph of a family of intervals.

We prove structural results for both classes, determining which holes, antiholes, trees, and complete bipartite graphs each contains.
Consequently, neither class is characterised by finitely many forbidden induced subgraphs.
We place both classes strictly between the unit tolerance graphs and the co-comparability graphs in the tolerance hierarchy, separating them from the neighbouring classes.
In particular, we construct an infinite family of proper tolerance graphs, each of which is a minimal forbidden induced subgraph for the bicoloured-interval graphs.

Finally, we consider the recognition problems for both classes.
We show that every graph in either class has a polynomial-size integer representation, and hence that both problems lie in~$\NP$.
\end{abstract}

\section{Introduction}
\label{sec:intro}

Robust fitting is a fundamental problem in computer vision and statistics, where the goal is to estimate model parameters from data contaminated by outliers.
The central difficulty is that genuine observations and the model explaining them must be identified together.
Maximum consensus makes this precise by asking for the largest subset of the data that some instance of the model fits within a prescribed tolerance~\cite{chin-suter}.

Computing the maximum consensus exactly is $\NP$-hard when the dimension of the model is part of the input~\cite{chin-cai-neumann}, and practical algorithms trade optimality for efficiency.
One line of work in computer vision uses hypergraphs to model the problem.
Their higher-order structure is then used to separate inliers from outliers~\cites{Wang_2015_ICCV,7582510,8283797,lin2019hypergraph}.

Line fitting under the maximum consensus criterion is a geometric problem.
Given a width and a finite set $P$ of points in the plane with pairwise distinct $x$-coordinates, find the largest subset of $P$ contained in a single slab of that {\em vertical} width.
For a point $p$, the slabs of a given vertical width containing $p$ form a convex set in the parameter plane.
Hence, by Helly's theorem~\cites{helly, tancer2012intersection}, a set of at least three points lies in a common slab if and only if every three of them do.
The feasibility relations among points of $P$ are therefore captured by a $3$-uniform hypergraph $H(P)$ on the points, whose hyperedges are the triples contained in some slab, and the maximum consensus is exactly the maximum clique of $H(P)$.
We refer to these hypergraphs as {\em slab hypergraphs}.
That the width of slabs is measured vertically is essential for this argument.
If the width is measured orthogonally, the corresponding slabs do not form a convex set in the parameter plane, and Helly's theorem cannot be applied.

The {\em link} of a vertex $v$ of a $3$-uniform hypergraph is the graph on the remaining vertices in which $u$ and $w$ are adjacent precisely when $vuw$ is a hyperedge, so the link of a point records which pairs of points can join it in a feasible triple.
Our starting point is an exact description of these links, which are precisely the graphs in a class we denote $\bicoloured$ (Proposition~\ref{prop:B-is-links}).
In a bicoloured representation, each vertex receives an interval and one of two colours, and adjacency is governed by one of two thresholds.
Vertices of different colours are adjacent precisely when their intervals intersect, and vertices of the same colour are adjacent precisely when the centre of one interval lies in the other.
Discarding the colouring and keeping the two thresholds as an upper and a lower constraint yields the larger class of {\em interval-sandwich} graphs, the graphs sandwiched between the $50\%$-tolerance graph and the intersection graph of a family of intervals.

Beyond this characterisation, we undertake a systematic structural study of both classes.
We completely resolve their behaviour on holes, antiholes, trees, and complete bipartite graphs.
In particular, the holes $C_k$ with $k \geq 5$, the antiholes $\overline{C_n}$ with $n \geq 5$, the spider $T_2$, and $K_{3,3}$ are minimal forbidden induced subgraphs for $\bicoloured$, and the same holds for $\sandwich$ except that the even antiholes lie in $\sandwich$.
Consequently, neither class admits a characterisation by finitely many forbidden induced subgraphs.

We then place both classes in the tolerance hierarchy of Golumbic and Trenk~\cite{golumbic2004tolerance}.
Both lie strictly between the unit tolerance graphs and the co-comparability graphs, and Figure~\ref{fig:hierarchy} on page~\pageref{fig:hierarchy} summarises the resulting picture.
Geometrically, $\bicoloured$ is the class of intersection graphs of parallelograms between two horizontal lines, each with a vertical diagonal.
Alongside the inclusions we prove separations, and here the two classes part ways.
The class $\bicoloured$ lies strictly inside the bounded tolerance graphs.
On the other hand, $\sandwich$ is incomparable with both the tolerance graphs and the proper tolerance graphs (\S\ref{sec:sandwich-placement}).

One of the main technical contributions of the paper is a family of graphs $Q^m$, $m \geq 2$, each a proper tolerance graph that is not a bicoloured-interval graph and is minimal with this property, in that every proper induced subgraph of $Q^m$ lies in $\bicoloured$ (Theorem~\ref{thm:proper-incomparable}).
The construction adapts a family of graphs in $\proper \setminus \unit$ due to Bogart, Fishburn, Isaak, and Langley~\cite{bogart-fishburn-isaak-langley}, and since $\unit \subseteq \bicoloured$, the $Q^m$ in turn enlarge the known catalogue of such graphs.
The graphs $Z$ and $J$ of \S\ref{sec:hierarchy} already make $\proper$ and $\bicoloured$ incomparable, and the family shows further that $\proper \cap \bicoloured$ has no characterisation by finitely many forbidden induced subgraphs within $\proper$.

Finally, we consider recognition.
Following the argument of Hayward and Shamir~\cite{hayward-shamir} for tolerance graphs, we show that every graph in either class has a polynomial-size integer representation, so both recognition problems lie in $\NP$.
Once the linear order of the interval endpoints and centres is prescribed, together with the colouring in the case of $\bicoloured$, recognition reduces to linear programming.
Whether either problem is polynomial or $\NP$-complete remains open, as it does for $\unit$ and $\proper$, whereas recognising tolerance graphs is $\NP$-complete~\cite{mertzios-sau-zaks}.
For the slab hypergraphs themselves, by contrast, membership of a triple is a quadratic condition on the coordinates, the natural upper bound is the existential theory of the reals, and we do not know whether recognition lies in $\NP$.

The rest of the paper is organised as follows.
\S\ref{sec:classes} introduces the two classes and proves the correspondence with links of slab hypergraphs.
\S\ref{sec:structural} presents the structural results.
\S\ref{sec:hierarchy} places both classes in the hierarchy, proves the separations, and constructs the family $Q^m$.
\S\ref{sec:recognition} treats recognition, and \S\ref{sec:open} closes with open problems.
Appendices~\ref{ap:proper-sandwich} and~\ref{ap:Qm} are devoted to the analysis of graphs separating $\proper$ from $\sandwich$ and $\bicoloured$, respectively.

\section{Graph classes}
\label{sec:classes}

This section introduces some of the graph classes we refer to throughout the paper.
We begin in \S\ref{sec:standard} by recalling the classical classes and fixing notation.
Central among these are the tolerance graphs, introduced by Golumbic and Monma~\cite{golumbic1982generalization}, which generalise interval graphs by letting intervals overlap without their vertices being adjacent, each vertex specifying how much overlap it tolerates.
We refer the reader to the monograph of Golumbic and Trenk~\cite{golumbic2004tolerance} for background and related results.

\S\ref{sec:bicoloured} and \S\ref{sec:sandwich} introduce the two new classes, and \S\ref{sec:links} connects them back to our original motivation, the slab hypergraphs.

\subsection{Interval and tolerance graphs}
\label{sec:standard}

\begin{definition}[$\interval$]
\label{def:interval}
A graph $G$ is an {\em interval graph} if each vertex $u \in V(G)$ can be assigned a closed bounded interval $I_u$ of positive length so that $uw \in E(G) \iff I_u \cap I_w \neq \emptyset$.
We write $\interval$ for the class of interval graphs.
\end{definition}

\begin{definition}[$\tol$~\cite{golumbic1982generalization}]
\label{def:tolerance}
A graph $G$ is a {\em tolerance graph} if each vertex $u \in V(G)$ can be assigned a closed bounded interval $I_u$ of positive length and a real {\em tolerance} $t_u > 0$ so that $uw \in E(G) \iff |I_u \cap I_w| \geq \min\{t_u, t_w\}$.
The set of pairs $\{(I_u, t_u) : u \in V(G)\}$ is a {\em tolerance representation} of $G$.
We write $\tol$ for the class of tolerance graphs.
\end{definition}

We will need four standard subclasses of $\tol$~\cites{golumbic1982generalization,golumbic-monma-trotter,bogart-fishburn-isaak-langley}.
Two of these bound the tolerances in terms of the intervals, and two constrain the shape of the intervals themselves.

\begin{definition}[$\bounded$, $\unit$, $\proper$, $\halftolerance$]
\label{def:tolerance-variants}
A tolerance representation is
\begin{itemize}
    \item {\em bounded} if $t_u \leq |I_u|$ for every $u$,
    \item {\em unit} if all the $|I_u|$ are equal,
    \item {\em proper} if no $I_u$ properly contains another $I_w$, and
    \item {\em $50\%$} if $t_u = |I_u|/2$ for every $u$.
\end{itemize}
We write $\bounded$, $\unit$, $\proper$, and $\halftolerance$ for the classes of graphs admitting a representation of each kind, respectively.
\end{definition}

We will also need the following geometric class.

\begin{definition}[$\parallelogram$]
\label{def:parallelogram}
A graph $G$ is a {\em parallelogram graph} if there are two distinct parallel lines $\ell_1, \ell_2$ such that each vertex $u \in V(G)$ can be assigned a parallelogram $P_u$, with parallel sides along $\ell_1$ and $\ell_2$, so that $uw \in E(G) \iff P_u \cap P_w \neq \emptyset$. The set $\{P_u : u \in V(G) \}$ is a {\em parallelogram representation} of $G$.
We write $\parallelogram$ for the class of parallelogram graphs.
\end{definition}

We note some relations among the graph classes (see, e.g.,~\cite{golumbic2004tolerance}).

\begin{fact}
\label{fact:classical}
\leavevmode
\begin{enumerate}[label=\normalfont(\roman*)]
  \item\label{it:bd-par} $\bounded = \parallelogram$~\cites{langley-thesis, bogart-fishburn-isaak-langley};
  \item\label{it:unit-half} $\unit = \halftolerance$~\cite{bogart-fishburn-isaak-langley};
  \item\label{it:int-unit} $\interval \subseteq \unit$~\cite{bogart-jacobson-langley-mcmorris} (see also~\cite{golumbic2004tolerance}*{Theorem~2.32}).
\end{enumerate}
\end{fact}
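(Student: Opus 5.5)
The Fact collects three known results from the literature. The plan is to recall the standard argument for each, reducing every item to an explicit transformation of representations.

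For \ref{it:bd-par}, put $\ell_1=\{y=1\}$ and $\ell_2=\{y=0\}$. Given a bounded representation with $I_u=[a_u,b_u]$ and $0<t_u\le |I_u|$, assign to $u$ the parallelogram $P_u$ with top side $[a_u,\,b_u-t_u]$ on $\ell_1$ and bottom side $[a_u+t_u,\,b_u]$ on $\ell_2$. Boundedness ensures these sides are genuine (possibly degenerate) segments. Two such parallelograms are disjoint exactly when one lies strictly to the left of the other on both lines. Suppose $a_u\le a_w$. Then disjointness means $b_u-t_u<a_w$ and $b_u<a_w+t_w$, that is, $b_u-a_w<\min\{t_u,t_w\}$. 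When the intervals overlap, $b_u-a_w$ equals $|I_u\cap I_w|$, apart from the nested case $I_w\subseteq I_u$, which needs a separate check: there $|I_u\cap I_w|=|I_w|\ge t_w$, and the parallelograms do meet, consistently with adjacency. For the converse, start from a parallelogram with top $[a,c]$ and bottom $[d,b]$. Shear so that $d\ge a$ and $b\ge c$ for all vertices simultaneously (a horizontal shear preserves intersections). Then read off $I_u=[a,b]$ and $t_u=b-c$. Handling degenerate or reversed parallelograms and nested pairs carefully is the main fiddly point, so I would simply cite Langley and Bogart--Fishburn--Isaak--Langley for the full details.

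For \ref{it:unit-half}, I would cite Bogart--Fishburn--Isaak--Langley. The inclusion $\halftolerance\subseteq\unit$ is the substantive direction; it goes via proper representations, showing that $50\%$ graphs admit a proper bounded representation, and that proper bounded tolerance graphs coincide with unit ones. Conversely, a unit representation can be perturbed, keeping the endpoint order, into a $50\%$ one. This item is the genuine obstacle and is not reproved here.

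For \ref{it:int-unit}, take an interval representation with all endpoints distinct. Assign tolerances $t_u=\varepsilon$ smaller than every positive intersection length; adjacency is then just intersection. Finally, convert this to unit length using the cited theorem of Bogart, Jacobson, Langley and McMorris (Golumbic--Trenk, Theorem~2.32), which stretches the intervals to a common length while adjusting tolerances so that the adjacency pattern is preserved.
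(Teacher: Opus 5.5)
The paper does not prove this Fact; all three items are quoted from the literature, so your proposal can only be judged on the sketches it adds. Your argument for (i) is the standard one and is sound. Two small points: if Definition~\ref{def:parallelogram} is read as excluding segments, first lower each tolerance with $t_u=|I_u|$ slightly, which is harmless because non-edges are strict inequalities; and in the converse, shear far enough that $d>a$, so that $t_u>0$. For (iii) you, like the paper, ultimately rest on the cited theorem.

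Your sketch for (ii), however, goes through a false statement: proper bounded tolerance graphs do \emph{not} coincide with unit tolerance graphs. Every proper tolerance graph is bounded (Golumbic--Monma--Trotter), and $\unit\subsetneq\proper$ (Bogart--Fishburn--Isaak--Langley). Inside this paper, the representation of $Q^m$ in Appendix~\ref{ap:Qm-proper} is proper and has every tolerance at most the length of its interval. Yet $Q^m\notin\bicoloured\supseteq\unit$, by Theorem~\ref{thm:proper-incomparable} and Proposition~\ref{prop:unit-in-B}. So producing a proper bounded representation of a $50\%$ graph would not place it in $\unit$.

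You also have the difficulty reversed, since $\halftolerance\subseteq\unit$ is a direct substitution. Scale a $50\%$ representation so that every $r_u<2$, keep the centres, and set $J_u=[c_u-1,c_u+1]$ and $t_u=2-r_u>0$. Then $|J_u\cap J_w|=\max\{0,2-|c_u-c_w|\}$ and $\min\{t_u,t_w\}=2-\max\{r_u,r_w\}>0$. Hence $uw$ is an edge exactly when $|c_u-c_w|\le\max\{r_u,r_w\}$, which is the $50\%$ rule by Observation~\ref{obs:thresholds}\ref{it:th-max}.

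Read backwards, a unit representation of common length $2$ has the rule $|c_u-c_w|\le\max\{2-t_u,2-t_w\}$. The only care needed, and it is for $\unit\subseteq\halftolerance$, is with tolerances $t_u\ge 2$, i.e.\ non-positive thresholds. Enlarge all thresholds slightly and perturb the centres so that every inequality is strict and the centres are distinct. Then replace every threshold smaller than the least centre distance $\mu$ by $\mu/2$, which changes no adjacency and makes all radii positive.
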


Throughout, it will be convenient to represent intervals $I_u$ via their centres, denoted by $c_u$, and radii, denoted by $r_u > 0$. So $I_u = [c_u - r_u, c_u + r_u]$ and $|I_u| = 2 r_u$.
In this notation, conditions on the intersections of intervals can be expressed as constraints on the distances between their centres.

\begin{observation}
\label{obs:thresholds}
Let $\{I_u\}$ be a family of intervals with centres $c_u$ and radii $r_u > 0$, and let $u \neq w$. Then the following hold:
\begin{enumerate}[label=\normalfont(\roman*)]
  \item\label{it:th-sum} $|c_u - c_w| \leq r_u + r_w$ $\iff$ $I_u \cap I_w \neq \emptyset$;
  \item\label{it:th-max} $|c_u - c_w| \leq \max\{r_u, r_w\}$ $\iff$ $|I_u \cap I_w| \geq \min\{r_u, r_w\}$ $\iff$ one of $I_u$ and $I_w$ contains the centre of the other.
\end{enumerate}
\end{observation}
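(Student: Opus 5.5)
Both parts of the observation follow by writing the intervals in centre--radius form and comparing the distance $d = |c_u - c_w|$ with the relevant thresholds. By symmetry we will assume throughout that $r_u \le r_w$, so that $\max\{r_u, r_w\} = r_w$ and $\min\{r_u, r_w\} = r_u$.

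\textbf{Part (i).} Two closed intervals $[a,b]$ and $[a',b']$ meet exactly when $a \le b'$ and $a' \le b$. With $a = c_u - r_u$, $b = c_u + r_u$, $a' = c_w - r_w$, $b' = c_w + r_w$, these two inequalities become $c_u - c_w \le r_u + r_w$ and $c_w - c_u \le r_u + r_w$. Together they say $d \le r_u + r_w$.

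\textbf{Part (ii), second equivalence.} We show that $d \le r_w$ holds if and only if one interval contains the centre of the other.
\begin{itemize}
\item $c_u \in I_w$ means exactly $d \le r_w$.
\item $c_w \in I_u$ means $d \le r_u$, which implies $d \le r_w$ since $r_u \le r_w$.
\end{itemize}
So the disjunction ``$c_u \in I_w$ or $c_w \in I_u$'' is equivalent to $d \le r_w$.

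\textbf{Part (ii), first equivalence.} We compute the overlap length $|I_u \cap I_w|$ as a function of $d$.
\begin{itemize}
\item If $d \ge r_u + r_w$, the overlap is $0$.
\item If $d \le r_w - r_u$, then $I_u \subseteq I_w$ and the overlap is $2r_u$.
\item Otherwise, i.e. for $r_w - r_u \le d \le r_u + r_w$, the overlap is $(c_u + r_u) - (c_w - r_w) = r_u + r_w - d$, taking $c_w \le c_u$ without loss of generality.
\end{itemize}
The overlap is therefore the nonincreasing function $\max\{0, \min\{2r_u,\, r_u + r_w - d\}\}$ of $d$.

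Now compare this with $r_u > 0$. Since $2r_u \ge r_u$ and $0 < r_u$, the condition ``overlap $\ge r_u$'' reduces to $r_u + r_w - d \ge r_u$, that is, $d \le r_w$. This is exactly the middle condition.

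The only step needing care is the piecewise overlap formula, in particular the boundary cases $d = r_w - r_u$ and $d = r_u + r_w$, where the adjacent pieces agree. Since $r_u > 0$, the degenerate zero-overlap case can never satisfy overlap $\ge r_u$, so these boundaries cause no problem.
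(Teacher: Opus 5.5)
Your proof is correct and follows essentially the paper's route: part (i) and the centre-containment equivalence are immediate, and the remaining equivalence compares the overlap length $\max\{0,\min\{2r_u,\, r_u+r_w-d\}\}$ with $r_u$. The only difference is that the paper splits this into two directions, using the formula for the forward direction and the containment $I_u\cap I_w\subseteq[c_u-r_u,\,c_w+r_w]$ for the converse, whereas you read both directions off the single closed form. One small slip: with $c_w \le c_u$ the overlap is $(c_w+r_w)-(c_u-r_u)$, not $(c_u+r_u)-(c_w-r_w)$ (which equals $r_u+r_w+d$); the value $r_u+r_w-d$ you actually use is nevertheless right.
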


\begin{proof}
Part~\ref{it:th-sum} is immediate, since two intervals intersect precisely when the distance between their centres is at most the sum of their radii.

Towards Part~\ref{it:th-max}, suppose, without loss of generality, that $r_u \leq r_w$.
Then $\max\{r_u, r_w\} = r_w$ and $\min\{r_u, r_w\} = r_u$, and the assertion is
\[ |c_u - c_w| \leq r_w \iff |I_u \cap I_w| \geq r_u \iff c_u \in I_w \text{ or } c_w \in I_u. \]
That the first and third are equivalent is immediate. We show that the first is equivalent to the second.

Suppose that $|c_u - c_w| \leq r_w$, so that by~\ref{it:th-sum}, $I_u \cap I_w \neq \emptyset$ and $|I_u \cap I_w| = \min\{2r_u,  2r_w,  r_u + r_w - |c_u - c_w|\}$. Noting that $|c_u - c_w| \leq r_w$ implies $r_u + r_w - |c_u - c_w| \geq r_u$, we obtain $|I_u \cap I_w| \geq r_u$.

For the converse, suppose, without loss of generality, that $c_w < c_u$ and that $c_u - c_w > r_w$.
If $I_u \cap I_w = \emptyset$ then $|I_u \cap I_w| = 0 < r_u$.
Otherwise $I_u \cap I_w \subseteq [c_u - r_u,  c_w + r_w]$, so $|I_u \cap I_w| \leq r_u + r_w - c_u + c_w < r_u$, completing the proof.
\end{proof}

We will often use Observation~\ref{obs:thresholds} without comment. If the vertices are labelled $v_0, v_1, \dots, v_k$, we abbreviate $I_{v_i}$, $c_{v_i}$, and $r_{v_i}$ as $I_i$, $c_i$, and $r_i$.
When a representation is fixed and no confusion can arise, we abuse terminology and identify a vertex with its interval or its centre.
For instance, we say that $u$ lies to the left of $w$ when $c_u < c_w$, and that $u$ meets $w$ when $I_u \cap I_w \neq \emptyset$.

\subsection{\texorpdfstring{$\bicoloured$}{Bicoloured-interval graphs}}
\label{sec:bicoloured}

As noted in the introduction, the following class arises geometrically, as the links of the slab hypergraphs (see \S\ref{sec:links}).
The definition below is natural on its own terms, as the rest of this subsection indicates.

\begin{definition}[$\bicoloured$]
\label{def:bicoloured}
A graph $G$ is a {\em bicoloured-interval graph} if each vertex $u \in V(G)$ can be assigned a real interval $I_u = [c_u - r_u, c_u + r_u]$ with $r_u > 0$ and a colour $\sigma(u) \in \{+, -\}$ so that for all $u \neq w$,
\[  uw \in E(G) \iff |c_u - c_w| \leq \tau_{uw}, \]
where
\[
  \tau_{uw} = \begin{cases}
     \max\{r_u, r_w\} & \text{if } \sigma(u) = \sigma(w),\\[2pt]
     r_u + r_w        & \text{if } \sigma(u) \neq \sigma(w).
  \end{cases}
\]
The triple $(\sigma, c, r)$ is a {\em bicoloured-interval representation} (or, more simply, bicoloured representation) of $G$.
We write $\bicoloured$ for the class of bicoloured-interval graphs.
\end{definition}

Restricting a bicoloured representation to a subset $W \subseteq V(G)$ gives a bicoloured representation of the induced subgraph $G[W]$.

\begin{observation}
\label{obs:B-hereditary}
$\bicoloured$ is hereditary.
\end{observation}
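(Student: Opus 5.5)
The plan is to check directly that the class is closed under taking induced subgraphs, which is what hereditary means. I would fix a graph $G \in \bicoloured$ with a bicoloured representation $(\sigma, c, r)$ and a subset $W \subseteq V(G)$. The aim is to show $G[W] \in \bicoloured$, and the natural candidate representation is the restriction $(\sigma|_W, c|_W, r|_W)$, exactly as noted just before the statement.

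The key step is that the adjacency condition in Definition~\ref{def:bicoloured} is pairwise. For $u \neq w$ in $W$, whether $uw$ is an edge depends only on $c_u, c_w$ and on the threshold $\tau_{uw}$. That threshold is determined by $r_u, r_w, \sigma(u), \sigma(w)$ alone, and none of these data change under restriction. The radii stay positive, and every vertex of $W$ still receives an interval and a colour. Since $G[W]$ contains the edge $uw$ if and only if $G$ does, the equivalence
\[ uw \in E(G[W]) \iff |c_u - c_w| \leq \tau_{uw} \]
carries over verbatim for all $u \neq w$ in $W$. So the restriction is a bicoloured representation of $G[W]$.

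There is no real obstacle. The only point that needs care is confirming that $\tau_{uw}$ involves no global quantity, such as a normalisation or a dependence on other vertices, and by inspection of the definition it does not. The degenerate case $W = \emptyset$ is trivially represented by the empty assignment.
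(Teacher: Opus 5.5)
Your proposal is correct and matches the paper's argument: the paper simply notes that restricting a bicoloured representation $(\sigma, c, r)$ to $W \subseteq V(G)$ gives a bicoloured representation of $G[W]$. Your added check, that $\tau_{uw}$ depends only on $\sigma(u), \sigma(w), r_u, r_w$, is exactly what makes this restriction valid.
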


By Observation~\ref{obs:thresholds}, both cases of Definition~\ref{def:bicoloured} correspond to standard conditions on the intervals, connecting $\bicoloured$ to the tolerance classes defined in \S\ref{sec:standard}.
A bichromatic pair $uw$ is adjacent precisely when $I_u \cap I_w \neq \emptyset$, which is the adjacency rule of $\interval$, and a monochromatic pair precisely when $|I_u \cap I_w| \geq \min\{r_u, r_w\} = \min\{|I_u|, |I_w|\}/2$, which is that of $\halftolerance$.

We may also view $\tau_{uw}$ as a uniform threshold by considering the colour as the sign of the radius.
This is the form in which the class appears in \S\ref{sec:links}.

\begin{proposition}
\label{prop:signed-radius}
Let $\sigma(u), \sigma(w) \in \{+,-\}$ and $r_u, r_w > 0$, and set $\rho_u = \sigma(u) r_u$ and $\rho_w = \sigma(w) r_w$, considering the colours as $\pm 1$.
Then
\[  \tau_{uw} = \operatorname{diam}\{0, \rho_u, \rho_w\} ,  \]
where $\operatorname{diam} X = \max X - \min X$ for a finite $X \subseteq \RR$.

Consequently, $G \in \bicoloured$ if and only if each vertex $u \in V(G)$ can be assigned a real $c_u$ and a non-zero real $\rho_u$ so that
\[  uw \in E(G) \iff |c_u - c_w| \leq \operatorname{diam}\{0, \rho_u, \rho_w\} .  \]
\end{proposition}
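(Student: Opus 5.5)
The identity $\tau_{uw} = \operatorname{diam}\{0,\rho_u,\rho_w\}$ follows from a direct case analysis on the colours, using the definition of $\tau_{uw}$ in Definition~\ref{def:bicoloured}. The consequence is then a translation between the data $(\sigma, c, r)$ and the data $(c, \rho)$.

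For the identity, I split into two cases. If $\sigma(u) = \sigma(w) = +$, then $\rho_u = r_u > 0$ and $\rho_w = r_w > 0$. So $\min\{0,\rho_u,\rho_w\} = 0$ and $\max\{0,\rho_u,\rho_w\} = \max\{r_u,r_w\}$, giving diameter $\max\{r_u,r_w\} = \tau_{uw}$. The case $\sigma(u)=\sigma(w)=-$ is symmetric: the set is $\{0,-r_u,-r_w\}$, with maximum $0$ and minimum $-\max\{r_u,r_w\}$. If the colours differ, say $\sigma(u)=+$ and $\sigma(w)=-$ without loss of generality, then $\rho_w = -r_w < 0 < r_u = \rho_u$. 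Hence the maximum is $r_u$, the minimum is $-r_w$, and the diameter is $r_u + r_w = \tau_{uw}$. Since $\operatorname{diam}$ is symmetric in $u$ and $w$, this covers all cases.

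For the equivalence, first suppose $G \in \bicoloured$ with representation $(\sigma,c,r)$. I set $\rho_u = \sigma(u) r_u$, which is non-zero because $r_u > 0$. By the identity, the adjacency condition $|c_u-c_w| \le \tau_{uw}$ is exactly $|c_u - c_w| \le \operatorname{diam}\{0,\rho_u,\rho_w\}$. Conversely, given reals $c_u$ and non-zero reals $\rho_u$, I define $\sigma(u) = \operatorname{sign}(\rho_u)$ and $r_u = |\rho_u| > 0$. Then $\rho_u = \sigma(u) r_u$, and the identity turns the given condition back into the condition of Definition~\ref{def:bicoloured}.

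There is no real obstacle here. The only points needing care are that non-zeroness of $\rho_u$ is exactly what makes the colour well defined and the radius positive, and that the two maps $(\sigma,r) \mapsto \rho$ and $\rho \mapsto (\operatorname{sign}\rho, |\rho|)$ are mutually inverse.
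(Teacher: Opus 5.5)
Your proposal is correct and follows the paper's own proof: a case split on whether the colours agree gives $\max\{r_u,r_w\}$ or $r_u+r_w$, and the equivalence follows because $\rho_u$ determines $\sigma(u)$ and $r_u$ as its sign and absolute value. Your explicit treatment of both monochromatic cases and of the mutually inverse maps just spells out what the paper states briefly.
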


\begin{proof}
If $\sigma(u) = \sigma(w)$ then $\rho_u$ and $\rho_w$ lie on the same side of $0$, so the diameter is $\max\{|\rho_u|, |\rho_w|\} = \max\{r_u, r_w\}$.
If $\sigma(u) \neq \sigma(w)$ then they lie on opposite sides of $0$ and the diameter is $|\rho_u| + |\rho_w| = r_u + r_w$.
In both cases, this is $\tau_{uw}$ as in Definition~\ref{def:bicoloured}.

The last assertion follows since $\rho_u$ determines $\sigma(u)$ and $r_u$ as its sign and absolute value.
\end{proof}

Our next proposition describes $\bicoloured$ using the parallelograms of Definition~\ref{def:parallelogram}, along with the additional constraint that each parallelogram must have a diagonal perpendicular to $\ell_1$ and $\ell_2$.

\begin{proposition}
\label{prop:B-parallelogram}
A graph $G$ is in $\bicoloured$ if and only if there are two distinct parallel lines $\ell_1, \ell_2$ such that each vertex $u \in V(G)$ can be assigned a
parallelogram $P_u$, with parallel sides along $\ell_1$ and $\ell_2$, and a diagonal perpendicular to both, so that
\[  uw \in E(G) \iff P_u \cap P_w \neq \emptyset .  \]
In particular, $\bicoloured \subseteq \parallelogram = \bounded$.
\end{proposition}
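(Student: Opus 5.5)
Fix $\ell_1 = \{y = 0\}$ and $\ell_2 = \{y = 1\}$; an affine map reduces any pair of distinct parallel lines to this one while preserving intersections and sending perpendicular diagonals to perpendicular diagonals. A parallelogram with sides on these lines and a vertical diagonal has its top and bottom sides of equal length $a>0$. Because one diagonal is vertical, one endpoint of the top side lies directly above an endpoint of the bottom side. So the top side is $[x,x+a]$ and the bottom side is either $[x-a,x]$ or $[x+a,x+2a]$; the parallelogram leans to one side or the other. This gives the dictionary: the vertical diagonal sits at $x = c_u$, the length is $a = r_u$, and the lean records $\sigma(u)$. For $\sigma(u)=+$ take top $[c_u, c_u+r_u]$ and bottom $[c_u-r_u, c_u]$. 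For $\sigma(u)=-$ take top $[c_u-r_u,c_u]$ and bottom $[c_u, c_u+r_u]$. This is a bijection between triples $(\sigma(u),c_u,r_u)$ and such parallelograms, so both directions of the equivalence reduce to checking that $P_u\cap P_w\neq\emptyset \iff |c_u-c_w|\le\tau_{uw}$.

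For the check, use the standard fact behind $\parallelogram$. Two such parallelograms are convex, and the horizontal cross-section at height $y$ is an interval whose endpoints vary linearly in $y$. Hence $P_u$ and $P_w$ are disjoint if and only if one lies strictly to the left of the other at every height. By linearity this is equivalent to one lying strictly to the left at both $y=0$ and $y=1$. So $P_u\cap P_w\neq\emptyset$ if and only if the top sides intersect or the bottom sides intersect, or the order of the two parallelograms switches between the top and the bottom.

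Suppose WLOG $c_u\le c_w$, and split into cases.
\begin{itemize}
\item \emph{Same colour $+$.} The parallelograms are disjoint iff $P_u$ is left of $P_w$ at both heights, i.e.\ $c_u+r_u<c_w$ (top) and $c_u<c_w-r_w$ (bottom). This says $c_w-c_u>\max\{r_u,r_w\}$, as required.
\item \emph{Same colour $-$.} This is symmetric to the previous case.
\item \emph{Different colours, say $\sigma(u)=+$ and $\sigma(w)=-$.} Disjointness requires the top condition $c_u+r_u<c_w-r_w$ and the bottom condition $c_u<c_w$. Together these say $c_w-c_u>r_u+r_w$.
\item \emph{Different colours, $\sigma(u)=-$ and $\sigma(w)=+$.} Here the top condition is $c_u<c_w$ and the bottom condition is $c_u+r_u<c_w-r_w$, which again say $c_w-c_u>r_u+r_w$.
\end{itemize}
In each case we must also rule out $P_w$ lying entirely left of $P_u$. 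This cannot happen when $c_u\le c_w$, because the vertical diagonals themselves are ordered the wrong way. Thus in every case the parallelograms intersect exactly when $|c_u-c_w|\le\tau_{uw}$.

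The main obstacle is only bookkeeping: keeping the two cross-section comparisons and the orientation cases straight, and handling the boundary case $c_u=c_w$. The boundary case is trivial, since the two diagonals then meet. Finally, the inclusion $\bicoloured\subseteq\parallelogram$ is immediate because these are special parallelogram representations, and $\parallelogram=\bounded$ by Fact~\ref{fact:classical}\ref{it:bd-par}.
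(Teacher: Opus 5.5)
Your proof is correct and follows essentially the same route as the paper. Both normalise the two lines, parametrise a parallelogram with a vertical diagonal by its centre, side length and lean, reduce disjointness to one parallelogram lying strictly to the left of the other on both lines (you via linearly varying cross-sections, the paper via a separating line), and then check the colour cases. One small wording point: a general affine map need not preserve perpendicularity, so your normalising map should be a similarity, or, as in the paper, a rigid motion followed by rescaling the vertical coordinate, which is all you actually need.
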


\begin{proof}
The equality $\parallelogram = \bounded$ is Fact~\ref{fact:classical}\ref{it:bd-par}, so it suffices to prove the characterisation.

Applying a rigid motion and then rescaling the vertical coordinate, we may assume that $\ell_1$ and $\ell_2$ are the lines $y = 1/2$ and $y = -1/2$.
Let $P$ be a parallelogram with parallel sides along $\ell_1$ and $\ell_2$, say $[p, p+L]$ on $\ell_1$ and $[q, q+L]$ on $\ell_2$, where $L > 0$ is their common length.
A diagonal of $P$ is perpendicular to $\ell_1, \ell_2$ precisely when it is vertical.
Since the diagonals of $P$ join $(p, 1/2)$ to $(q+L, -1/2)$ and $(p+L, 1/2)$ to $(q, -1/2)$, one of them is vertical precisely when $q = p+L$ or $p = q+L$.

If $q = p + L$, the two sides share the $x$-coordinate $p + L$.
Setting $c = p + L$ and $\rho = -L$, the sides are $[c+\rho,  c]$ on $\ell_1$ and $[c,  c-\rho]$ on $\ell_2$.
If instead $p = q + L$, the two sides share the $x$-coordinate $q + L$.
Setting $c = q + L$ and $\rho = L$, the sides are $[c,  c+\rho]$ on $\ell_1$ and $[c-\rho,  c]$ on $\ell_2$.
In either case $P$ is determined by a pair $(c, \rho)$ with $c \in \RR$ and $\rho \in \RR \setminus \{0\}$, and conversely every such pair gives a parallelogram, which we denote by $P(c, \rho)$.

Now suppose each vertex $u \in V(G)$ is assigned such a parallelogram, say $P_u = P(c_u, \rho_u)$, and set $r_u = |\rho_u|$, so that the projection of $P_u$ onto the $x$-axis is $I_u = [c_u - r_u,  c_u + r_u]$.
Fix $u \neq w$. Since $P_u$ and $P_w$ are compact convex sets, they are disjoint if and only if there exists a line strictly separating them.
Noting that such a line cannot be horizontal, since each of $P_u$ and $P_w$ meets both $\ell_1$ and $\ell_2$, we have that $P_u$ and $P_w$ are disjoint if and only if one lies strictly to the left of the other on both lines.

Suppose first that $\rho_u$ and $\rho_w$ have opposite signs, say $\rho_u > 0 > \rho_w$, and that $c_u \leq c_w$ (the case $c_w < c_u$ is symmetric, with $\ell_2$ in place of $\ell_1$).
Then $P_u$ and $P_w$ have opposite shear, so the rightmost point of $P_u$ and the leftmost point of $P_w$ both lie on $\ell_1$.
Hence, if their sides on $\ell_1$ intersect then $P_u$ and $P_w$ intersect, and if not then $c_u + r_u < c_w - r_w$, implying that $I_u$ and $I_w$ are disjoint.
It follows that $P_u$ and $P_w$ intersect if and only if $I_u \cap I_w \neq \emptyset$, i.e., if and only if $|c_u - c_w| \leq r_u + r_w$.

Otherwise, $\rho_u$ and $\rho_w$ have the same sign, and we may take both positive, since reflecting in the $x$-axis swaps $\ell_1$ with $\ell_2$ and sends each $P(c,\rho)$ to $P(c,-\rho)$, leaving every $I_u$ unchanged.
Then $P_u$ has sides $[c_u, c_u + r_u]$ on $\ell_1$ and $[c_u - r_u, c_u]$ on $\ell_2$, and $P_w$ has sides $[c_w, c_w + r_w]$ on $\ell_1$ and $[c_w - r_w, c_w]$ on $\ell_2$.
So $P_u$ lies strictly to the left of $P_w$ if and only if $c_u + r_u < c_w$ and $c_u < c_w - r_w$, i.e., $c_w - c_u > \max\{r_u, r_w\}$.
Hence, $P_u$ and $P_w$ intersect if and only if $|c_u - c_w| \leq \max\{r_u, r_w\}$.

Set $\sigma(u) = \operatorname{sign}(\rho_u)$ for each $u$.
From the above discussion, $P_u \cap P_w \neq \emptyset$ if and only if $|c_u - c_w| \leq \tau_{uw}$, so $\{P_u\}$ represents $G$ if and only if $(\sigma, c, r)$ is a bicoloured representation of $G$.
Conversely, every bicoloured representation $(\sigma, c, r)$ of $G$ arises this way, by taking $\rho_u = \sigma(u) r_u$, completing the proof.
\end{proof}

\begin{example}[$C_4 \in \bicoloured$]
\label{ex:C4}
Consider $C_4$ with vertices $v_0, v_1, v_2, v_3$ in cyclic order.
Colour vertices $v_0, v_2$ with $+$ and $v_1,v_3$ with $-$, and set
\[  r = (6,6,6,6), \quad\text{ and }\quad c = (-5,-5,5,5). \]
Note that all edges $v_iv_j$ are bichromatic and that $|c_i - c_j| \leq 10 < 12 = r_i + r_j$.
The two non-edges $v_iv_j$ are monochromatic and satisfy $|c_i - c_j| = 10 > 6 = \max\{r_i, r_j\}$.
It follows that $C_4 \in \bicoloured$.
See Figure~\ref{fig:C4-bicoloured} for an illustration.
\end{example}

\begin{figure}[tb]
\centering
\includegraphics[width=0.8\textwidth]{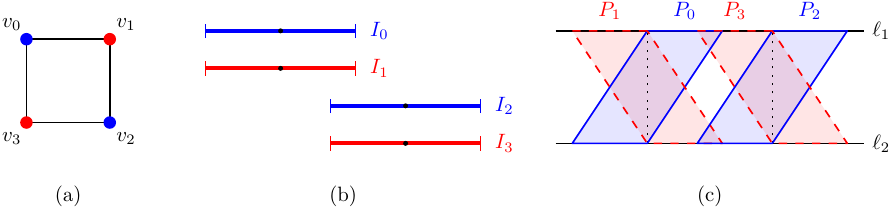}
\caption{The graph $C_4$ of Example~\ref{ex:C4} (a), the intervals of its bicoloured representation (b), and the corresponding parallelograms of Proposition~\ref{prop:B-parallelogram} (c), with $+/-$ denoted by blue/red.
The dotted segments are the vertical diagonals, and each parallelogram projects onto its interval.}
\label{fig:C4-bicoloured}
\end{figure}

\begin{remark}
\label{rem:C4-unit}
The colouring in Example~\ref{ex:C4} is not the only realisable one.
Colouring every vertex $+$ and setting
\[  r = (3, 1, 3, 1), \quad\text{ and }\quad
    c = (-2, -1, 2, 1) \]
gives a monochromatic representation of $C_4$.
However, not every colouring can be realised. Lemma~\ref{lem:C4-colouring} restricts which $2$-colourings of a $C_4$ occur in a bicoloured representation.
\end{remark}

Taking one colour class empty in Definition~\ref{def:bicoloured} leaves only the monochromatic rule, so that $\tau_{uw} = \max\{r_u, r_w\}$ for every pair.
This is exactly the $50\%$-tolerance condition, with $t_u = r_u$, so the graphs admitting a monochromatic bicoloured representation are precisely those in $\halftolerance$ (hence $C_4 \in \halftolerance = \unit$).
Together with Fact~\ref{fact:classical}\ref{it:int-unit} and~\ref{it:unit-half},
this gives the following.

\begin{proposition}
\label{prop:unit-in-B}
$\interval \subseteq \unit = \halftolerance \subseteq \bicoloured$.
\end{proposition}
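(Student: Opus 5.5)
The chain $\interval \subseteq \unit = \halftolerance \subseteq \bicoloured$ has three links, and the plan is to handle them separately, reducing the first two to citations. The first inclusion, $\interval \subseteq \unit$, is Fact~\ref{fact:classical}\ref{it:int-unit}. The equality $\unit = \halftolerance$ is Fact~\ref{fact:classical}\ref{it:unit-half}. So the only link that needs an argument is $\halftolerance \subseteq \bicoloured$.

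For that inclusion, take $G \in \halftolerance$. Fix a $50\%$ tolerance representation $\{(I_u, t_u)\}$ with $t_u = |I_u|/2$. Write $I_u = [c_u - r_u, c_u + r_u]$, so that $t_u = r_u$. By Definition~\ref{def:tolerance}, $uw \in E(G)$ if and only if $|I_u \cap I_w| \geq \min\{t_u, t_w\} = \min\{r_u, r_w\}$.

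By Observation~\ref{obs:thresholds}\ref{it:th-max}, this condition is equivalent to $|c_u - c_w| \leq \max\{r_u, r_w\}$. Now assign every vertex the colour $+$. Then every pair is monochromatic, so $\tau_{uw} = \max\{r_u, r_w\}$ in Definition~\ref{def:bicoloured}. Hence $(\sigma \equiv +, c, r)$ is a bicoloured representation of $G$, and $G \in \bicoloured$.

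No step is a real obstacle, since everything reduces to cited facts and Observation~\ref{obs:thresholds}. The one point to check is that the radii are positive, as Definition~\ref{def:bicoloured} requires. This holds because the intervals in a tolerance representation have positive length.
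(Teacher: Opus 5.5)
Your proposal is correct and follows the paper's own argument: both links $\interval \subseteq \unit$ and $\unit = \halftolerance$ are taken from Fact~\ref{fact:classical}, and $\halftolerance \subseteq \bicoloured$ comes from colouring every vertex the same, so that $\tau_{uw} = \max\{r_u, r_w\}$ is exactly the $50\%$-tolerance rule with $t_u = r_u$ by Observation~\ref{obs:thresholds}\ref{it:th-max}. Your explicit remark that the radii are positive because tolerance intervals have positive length is a small detail the paper leaves implicit.
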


\subsection{\texorpdfstring{$\sandwich$}{Interval-sandwich graphs}}
\label{sec:sandwich}

The next class is a relaxation of $\bicoloured$, obtained by discarding the colouring that decides which threshold applies to each pair of vertices.

\begin{definition}[$\sandwich$]
\label{def:sandwich}
A graph $G$ is an {\em interval-sandwich graph} if each vertex $u \in V(G)$ can be assigned a real interval $I_u = [c_u - r_u, c_u + r_u]$ with $r_u > 0$ so that for all $u \neq w$,
\begin{align*}
    uw \in E(G) & \implies |c_u - c_w| \leq r_u + r_w,\\[2pt]
    uw \notin E(G) & \implies |c_u - c_w| > \max\{r_u, r_w\}.
\end{align*}
The pair $(c, r)$ is an {\em interval-sandwich representation} (or, more simply, sandwich representation) of $G$.
We write $\sandwich$ for the class of interval-sandwich graphs.\footnote{
The name {\em interval sandwich} is also used, in a different sense, for a problem of Golumbic, Kaplan, and Shamir~\cite{golumbic-kaplan-shamir}.
The sense here is that of Proposition~\ref{prop:sandwich-name}.}
\end{definition}

As with $\bicoloured$, restricting a representation to $W \subseteq V(G)$ gives a representation of $G[W]$.

\begin{observation}
\label{obs:C-hereditary}
$\sandwich$ is hereditary.
\end{observation}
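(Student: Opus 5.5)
The statement is Observation~\ref{obs:C-hereditary}, that $\sandwich$ is hereditary. I would prove it by restriction, exactly as for Observation~\ref{obs:B-hereditary}.

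Let $G \in \sandwich$ with a sandwich representation $(c, r)$, and let $W \subseteq V(G)$. Take the restricted pair $(c|_W, r|_W)$, which assigns to each $u \in W$ the same interval $I_u = [c_u - r_u, c_u + r_u]$ with $r_u > 0$.

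Fix distinct $u, w \in W$. Since $G[W]$ is an induced subgraph, $uw \in E(G[W])$ holds if and only if $uw \in E(G)$. Both implications in Definition~\ref{def:sandwich} involve only $c_u, c_w, r_u, r_w$ and whether $uw$ is an edge, so they hold for the pair $u, w$ in $G[W]$ because they hold in $G$. Hence $(c|_W, r|_W)$ is a sandwich representation of $G[W]$, and $G[W] \in \sandwich$.

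Every induced subgraph is isomorphic to some $G[W]$. The class is closed under isomorphism, since a representation can simply be transported along the bijection. This gives the claim.

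There is no real obstacle here. The only point to check is that the conditions are purely pairwise and involve no global constraint on the representation, and Definition~\ref{def:sandwich} makes this immediate.
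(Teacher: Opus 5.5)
Your proof is correct and matches the paper's: just before the observation, the paper notes that restricting a sandwich representation to $W \subseteq V(G)$ gives a sandwich representation of $G[W]$, because the two implications of the definition are conditions on individual pairs of vertices. Your added remark about closure under isomorphism is a harmless extra detail.
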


The two implications in Definition~\ref{def:sandwich} leave a gap. Specifically, a pair $uw$ with
$\max\{r_u, r_w\} < |c_u - c_w| \leq r_u + r_w$ is unconstrained and may be an edge or a non-edge.
In terms of the interval system $\{I_u\}$ itself, the first implication says that every edge of $G$ is an edge of its intersection graph, and the second says that every non-edge of $G$ is a non-edge of its $50\%$-tolerance graph.
Note that the latter is contained in the former, since an overlap of at least half the shorter interval is non-empty.
Hence, we have the following, which explains the name.

\begin{proposition}
\label{prop:sandwich-name}
Let $\{I_u = [c_u - r_u, c_u + r_u]\}_{u \in V}$ be a family of intervals with $r_u > 0$, let $X$ be its intersection
graph, and let $T$ be its $50\%$-tolerance graph.
Then $T \subseteq X$, and a graph $G$ on $V$ has $(c, r)$ as a sandwich representation if and only if
\[  T  \subseteq  G  \subseteq  X . \]
\end{proposition}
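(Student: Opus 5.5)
The argument is a direct unpacking of definitions via Observation~\ref{obs:thresholds}; there is no real obstacle, only bookkeeping about which graphs are viewed as edge sets on the common vertex set $V$.

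First, recall that $X$ and $T$ are graphs on $V$. By Observation~\ref{obs:thresholds}\ref{it:th-sum}, $uw \in E(X)$ if and only if $|c_u - c_w| \leq r_u + r_w$. By Observation~\ref{obs:thresholds}\ref{it:th-max}, the $50\%$-tolerance rule with $t_u = r_u = |I_u|/2$ says that $uw \in E(T)$ if and only if $|I_u \cap I_w| \geq \min\{r_u, r_w\}$, that is, if and only if $|c_u - c_w| \leq \max\{r_u, r_w\}$. Since $r_u, r_w > 0$, we have $\max\{r_u, r_w\} < r_u + r_w$, so every edge of $T$ is an edge of $X$. This gives $T \subseteq X$ (as spanning subgraphs on $V$).

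Next, compare the two implications of Definition~\ref{def:sandwich} with these descriptions. The first implication states that every edge of $G$ satisfies $|c_u - c_w| \le r_u + r_w$; by the description of $X$, this is exactly $E(G) \subseteq E(X)$. The second implication is equivalent to its contrapositive: if $|c_u - c_w| \leq \max\{r_u, r_w\}$ then $uw \in E(G)$. By the description of $T$, this says exactly $E(T) \subseteq E(G)$.

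Since both translations are equivalences, $(c,r)$ is a sandwich representation of $G$ if and only if $T \subseteq G \subseteq X$. The only point requiring care is applying Observation~\ref{obs:thresholds}\ref{it:th-max} with the tolerance $t_u = r_u$, which matches the $50\%$ condition $t_u = |I_u|/2$ of Definition~\ref{def:tolerance-variants}.
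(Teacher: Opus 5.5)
Your proof is correct and is essentially the paper's own argument, which is just the remark that the first implication of Definition~\ref{def:sandwich} says $E(G) \subseteq E(X)$ and the second says $E(T) \subseteq E(G)$, with $T \subseteq X$ because an overlap of at least half the shorter interval is non-empty. Your use of Observation~\ref{obs:thresholds} and the inequality $\max\{r_u, r_w\} < r_u + r_w$ simply writes out these translations in centre--radius form.
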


\begin{proof}
This is a restatement of the two implications above.
\end{proof}

Note that a sandwich representation need not determine the graph it represents. Every $G$ with $T \subseteq G \subseteq X$ has $(c, r)$ as a representation, and $X$ and $T$ differ exactly on the unconstrained pairs above.
This is in sharp contrast to Definitions~\ref{def:interval} and~\ref{def:bicoloured}, where the representation determines the graph outright.

Definitions~\ref{def:bicoloured} and~\ref{def:sandwich} both amount to choosing, for each pair $uw$, which of the two thresholds $\max\{r_u, r_w\}$ and $r_u + r_w$ that pair is tested against, and asking that $uw \in E(G)$ exactly when $|c_u - c_w|$ is at most the chosen threshold.
In $\sandwich$ the choice is unconstrained and may always be made canonically, taking $r_u + r_w$ on the edges and $\max\{r_u, r_w\}$ on the non-edges.
In $\bicoloured$ the pairs assigned $r_u + r_w$ must be exactly the bichromatic pairs of a $2$-colouring of the vertices.
Since $\max\{r_u, r_w\} \leq \tau_{uw} \leq r_u + r_w$ in either case of
Definition~\ref{def:bicoloured}, a bicoloured representation satisfies both implications of Definition~\ref{def:sandwich}.

\begin{corollary}
\label{cor:B-in-C}
If $(\sigma, c, r)$ is a bicoloured representation of $G$, then $(c, r)$
is a sandwich representation of $G$. In particular,
$\bicoloured \subseteq \sandwich$.
\end{corollary}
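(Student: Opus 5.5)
The plan is to check the two implications of Definition~\ref{def:sandwich} directly for the pair $(c,r)$ taken from the bicoloured representation $(\sigma, c, r)$. The whole argument rests on the sandwich inequality
\[ \max\{r_u, r_w\} \leq \tau_{uw} \leq r_u + r_w, \]
valid for every pair $u \neq w$. To establish it, I split according to Definition~\ref{def:bicoloured}. If $\sigma(u) = \sigma(w)$, then $\tau_{uw} = \max\{r_u,r_w\}$, and this is at most $r_u + r_w$ because both radii are positive. If $\sigma(u) \neq \sigma(w)$, then $\tau_{uw} = r_u + r_w$, and this is at least $\max\{r_u,r_w\}$ for the same reason.

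With this inequality in hand, I verify the two implications.

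\textbf{Edges.} If $uw \in E(G)$, then $|c_u - c_w| \leq \tau_{uw}$ by the bicoloured adjacency rule. The upper bound then gives $|c_u - c_w| \leq r_u + r_w$, which is the first implication.

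\textbf{Non-edges.} If $uw \notin E(G)$, then $|c_u - c_w| > \tau_{uw}$. The lower bound then gives $|c_u - c_w| > \max\{r_u,r_w\}$, which is the second implication.

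Hence $(c,r)$ is a sandwich representation of $G$. Since every $G \in \bicoloured$ has some bicoloured representation, it follows that $G \in \sandwich$, proving $\bicoloured \subseteq \sandwich$.

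There is no real obstacle here. The only point needing care is that the strict inequality in the non-edge condition is preserved: a strict inequality $|c_u - c_w| > \tau_{uw}$ combined with the weak inequality $\tau_{uw} \geq \max\{r_u,r_w\}$ still yields a strict inequality.
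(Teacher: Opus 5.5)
Your proposal is correct and follows the paper's argument exactly. The paper likewise derives the corollary from $\max\{r_u, r_w\} \leq \tau_{uw} \leq r_u + r_w$, which holds in both cases of Definition~\ref{def:bicoloured}, and reads off the two implications of Definition~\ref{def:sandwich}; your check that strictness is preserved on non-edges makes explicit a step the paper leaves implicit.
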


\subsection{Slab hypergraphs and links}
\label{sec:links}
In this subsection, we connect the class $\bicoloured$ back to our original motivation of understanding the structure of hypergraphs arising from the robust line fitting problem. The hypergraphs themselves are studied in a companion paper~\cite{companion}.

For $\epsilon \geq 0$, an {\em $\epsilon$-slab}, or a slab of {\em width} $\epsilon$, is the set of points of $\RR^2$
lying within vertical distance $\epsilon/2$ of some non-vertical line, i.e., a set
\[  S(m,b)  =  \left\{ (x,y) \in \RR^2 : |y - mx - b| \leq \epsilon/2 \right\}
  \quad\text{ with }\quad m, b \in \RR. \]
Fix $\epsilon > 0$, and let $P = \{p_0, \dots, p_{n-1}\} \subseteq \RR^2$ be a finite set of points with pairwise distinct $x$-coordinates, and write $p_i = (x_i, y_i)$.
The {\em slab hypergraph} of $P$ is the $3$-uniform hypergraph $H(P)$ on $\{0, \dots, n-1\}$ in which
\[ ijk \in E(H(P)) \iff p_i, p_j, p_k \text{ lie in a common $\epsilon$-slab}. \]
Restricting attention to triples loses nothing.
Indeed, for a point $p$, the set of pairs $(m,b)$ with $p \in S(m,b)$ is a convex subset of $\RR^2$, so by Helly's theorem, a set of at least three points lies in a common slab if and only if every three of them do.
We remark that the hypothesis on the $x$-coordinates, which the proof of Theorem~\ref{thm:link} uses, is not simply a general position requirement.
Indeed, three points on a vertical line need a slab as wide as their $y$-extent, while an arbitrarily small perturbation can put them in a slab of width $0$.

A $3$-uniform hypergraph is a {\em slab hypergraph} if it is isomorphic to $H(P)$ for some finite $P \subseteq \RR^2$ with pairwise distinct $x$-coordinates. We refer to such a $P$ as a {\em realisation} of $H$, identifying $V(H)$ with $P$ and writing $p_u = (x_u, y_u)$ for the point of the vertex $u$.
Since scaling by $\lambda > 0$ scales all slab widths by $\lambda$, the
class does not depend on the choice of $\epsilon$. We set $\epsilon = 1$ in this subsection and suppress it from the notation.

For a triple of points $p_i,p_j,p_k$, the {\em $x$-extent}, denoted by $\xi(p_i,p_j,p_k)$, is the largest of the three pairwise horizontal distances, and $\Delta(p_i,p_j,p_k)$ is twice the signed area of the triangle spanned by the points. More formally,
\[
  \xi(p_i,p_j,p_k)  =  \max   \left\{|x_i - x_j|,  |x_i - x_k|,  |x_j - x_k|  \right\},\quad\text{ and }\quad
  \Delta(p_i,p_j,p_k)
  = \begin{vmatrix}
      1   & 1   & 1\\
      x_i & x_j & x_k\\
      y_i & y_j & y_k
    \end{vmatrix}.
\]
Note that $\xi$ and $|\Delta|$ are symmetric in the three arguments.

\begin{lemma}
\label{lem:residual}
Suppose $p_i,p_j,p_k \in \RR^2$ have distinct $x$-coordinates. The smallest $w \geq 0$ for which there is a $w$-slab containing $p_i,p_j,p_k$ is
\[  w(p_i,p_j,p_k) =  \mfrac{  \left|\Delta(p_i,p_j,p_k)  \right|}{\xi(p_i,p_j,p_k)}. \]
Consequently, $ijk \in E(H(P))$ if and only if $w(p_i,p_j,p_k) \leq 1$.
\end{lemma}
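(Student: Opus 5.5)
Relabel so that $x_i < x_j < x_k$. Then $\xi = x_k - x_i$, and $j$ is the middle point. A slab $S(m,b)$ of width $w$ contains the three points exactly when the residuals $e_\ell = y_\ell - m x_\ell - b$ satisfy $|e_\ell| \le w/2$ for $\ell \in \{i,j,k\}$. So the minimal width is
\[ w^* = \min_{m,b} \Bigl( \max_\ell e_\ell - \min_\ell e_\ell \Bigr), \]
where the inner maximum and minimum are over $\ell \in \{i,j,k\}$. The plan is to show that this minimum equals $|\Delta|/\xi$ by a lower bound that does not depend on $(m,b)$, together with an explicit choice of $(m,b)$ that attains it.

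\textbf{Key identity.} Write $x_j = \lambda x_i + (1-\lambda) x_k$ with
\[ \lambda = \frac{x_k - x_j}{x_k - x_i} \in (0,1). \]
Because $e_\ell$ differs from $y_\ell$ by an affine function of $x_\ell$, the quantity
\[ \delta = e_j - \lambda e_i - (1-\lambda) e_k \]
is independent of $(m,b)$. Taking $m = b = 0$ gives $\delta = y_j - \lambda y_i - (1-\lambda) y_k$, the vertical distance from $p_j$ to the chord $p_ip_k$. Expanding the determinant should give $\Delta = \pm(x_k - x_i)\,\delta$. This is a short cofactor computation: subtract columns so that the determinant becomes $(x_j - x_i)(y_k - y_i) - (x_k - x_i)(y_j - y_i)$. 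Hence $|\delta| = |\Delta|/\xi$.

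\textbf{Lower bound.} The number $\lambda e_i + (1-\lambda) e_k$ is a convex combination of $e_i$ and $e_k$, so it lies in $[\min_\ell e_\ell, \max_\ell e_\ell]$. The same holds for $e_j$. Therefore
\[ |\delta| \le \max_\ell e_\ell - \min_\ell e_\ell \]
for every $(m,b)$, which gives $w^* \ge |\Delta|/\xi$.

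\textbf{Attainment.} Take the line through $p_i$ and $p_k$, shifted vertically by $\delta/2$. Then $e_i = e_k = -\delta/2$ and $e_j = \delta/2$, so all three residuals have absolute value $|\delta|/2$. This slab has width $|\delta|$, so the minimum is attained and $w^* = |\Delta|/\xi$; it is a minimum rather than an infimum.

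\textbf{Consequence.} The final assertion follows from monotonicity: a $w$-slab is contained in the $w'$-slab with the same $(m,b)$ whenever $w \le w'$. So the three points lie in some $1$-slab if and only if $w^* \le 1$.

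The only step needing care is the sign bookkeeping in the determinant identity. It is harmless because only $|\Delta|$ enters, and $|\Delta|$ and $\xi$ are symmetric in the three points, so the relabelling at the start loses nothing.
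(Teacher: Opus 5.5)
Your proposal is correct and follows the paper's proof essentially step for step. The paper also orders $x_i<x_j<x_k$, bounds the width below by $|\Delta|/\xi$ with the same convex-combination argument, and attains the bound using the slope of the line through $p_i$ and $p_k$. The only differences are cosmetic: the paper eliminates the intercept first, working with $f(m)=\max_\ell g_\ell(m)-\min_\ell g_\ell(m)$ for $g_\ell(m)=y_\ell-mx_\ell$, whereas you keep $b$ in the residuals, and you state the final monotonicity step explicitly where the paper leaves it implicit.
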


\begin{proof}
Let $g_\ell(m) = y_\ell - m x_\ell$ for $\ell \in \{i,j,k\}$.
A slab of slope $m$, say $|y - mx - b| \leq w/2$, contains the three points precisely when $[b - w/2, b + w/2]$ contains $g_i(m), g_j(m), g_k(m)$, so the least width of such a slab is $f(m) = \max_\ell g_\ell(m) - \min_\ell g_\ell(m)$, and
$w(p_i,p_j,p_k) = \min_m f(m)$.

Suppose, without loss of generality, that $x_i < x_j < x_k$.
So $\xi(p_i,p_j,p_k) = x_k - x_i$ and $x_j = \lambda x_i + (1-\lambda) x_k$ where $\lambda = (x_k - x_j)/(x_k - x_i) \in (0,1)$.
Now $\lambda g_i(m) + (1-\lambda) g_k(m)$ is a convex combination of $g_i(m)$ and $g_k(m)$, so it lies in the interval $\left[\min_\ell g_\ell(m), \max_\ell g_\ell(m)  \right]$.
So does $g_j(m)$.
Hence, the two differ by at most $f(m)$, implying
\begin{align*}
f(m) & \geq \left| g_j(m) -   \left[\lambda g_i(m) + (1-\lambda) g_k(m)  \right] \right|
 =    \left| y_j - \lambda y_i - (1-\lambda) y_k   \right|\\
&  =  \mfrac{  \left| (x_k - x_i) y_j - (x_k - x_j) y_i - (x_j - x_i) y_k   \right|}{x_k - x_i}
  =  \mfrac{|\Delta(p_i,p_j,p_k)|}{\xi(p_i,p_j,p_k)},
\end{align*}
and hence $w(p_i,p_j,p_k) = \min_m f(m) \geq |\Delta|/\xi$.

The bound is attained at $m^{*} = (y_k - y_i)/(x_k - x_i)$, the slope of the line through $p_i$ and $p_k$, since $g_i(m^{*}) = g_k(m^{*})$, so $f(m^{*}) = \left| g_j(m^{*}) - g_i(m^{*}) \right| = |\Delta|/\xi$. It follows that $w(p_i,p_j,p_k) = |\Delta|/\xi$, completing the proof.
\end{proof}

Lemma~\ref{lem:residual} is the three-point case of the alternation theorem for Chebyshev approximation, as $w(p_i,p_j,p_k)/2$ is the least maximum vertical residual of a line~\cite{cheney}.

Let $H$ be a $3$-uniform hypergraph and let $v \in V(H)$.
The {\em link} of $v$ is the
graph $\link_H(v)$ on vertex set $V(H) \setminus \{v\}$ with
\[
  uw \in E(\link_H(v)) \iff vuw \in E(H).
\]
Links let us view the $3$-uniform structure ``from one vertex'' as a graph.
For a slab hypergraph $H$, the link of any vertex is a bicoloured-interval graph, and a realisation of $H$ gives a bicoloured representation of it.

\begin{theorem}
\label{thm:link}
Let $H$ be a slab hypergraph, $v \in V(H)$, and fix a realisation translated so that $p_v$ is the origin, as translations preserve slab widths.
For $u \neq v$, set
\[
  c_u = \mfrac{y_u}{x_u}, \quad \rho_u = \mfrac{1}{x_u},
  \quad r_u = |\rho_u|, \quad \sigma(u) = \operatorname{sign}(\rho_u) .
\]
Then $(\sigma, c, r)$ is a bicoloured representation of $\link_H(v)$. That is, for all
$u \neq w$ in $V(H) \setminus \{v\}$, \[ uw \in E(\link_H(v)) \iff |c_u - c_w| \leq \tau_{uw} . \]
\end{theorem}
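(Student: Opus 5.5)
The plan is to apply Lemma~\ref{lem:residual} to the triple $p_v, p_u, p_w$ and rewrite the condition $w(p_v,p_u,p_w) \leq 1$ in the variables $c$ and $\rho$. Proposition~\ref{prop:signed-radius} then converts the resulting threshold into $\tau_{uw}$.

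First I check that the quantities are well defined. Since $p_v$ is the origin and the $x$-coordinates are pairwise distinct, every $u \neq v$ has $x_u \neq 0$. So $c_u$ and $\rho_u$ are defined, and $\rho_u \neq 0$, giving $r_u > 0$ and $\sigma(u) \in \{+,-\}$. Also $x_u \neq x_w$ for $u \neq w$, so Lemma~\ref{lem:residual} applies to the triple. Hence $uw \in E(\link_H(v))$ if and only if $|\Delta(p_v,p_u,p_w)| \leq \xi(p_v,p_u,p_w)$.

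Next I compute both sides with $p_v = (0,0)$. Expanding the determinant gives $\Delta = x_u y_w - x_w y_u = x_u x_w (c_w - c_u)$, so $|\Delta| = |x_u x_w|\,|c_u - c_w|$. The $x$-extent is $\xi = \max\{|x_u|, |x_w|, |x_u - x_w|\}$. Dividing by $|x_u x_w| > 0$, the adjacency condition becomes
\[ |c_u - c_w| \leq \max\left\{ \frac{1}{|x_w|}, \frac{1}{|x_u|}, \left|\frac{1}{x_w} - \frac{1}{x_u}\right| \right\} = \max\{ |\rho_w|, |\rho_u|, |\rho_u - \rho_w| \}. \]
The right-hand side is exactly $\operatorname{diam}\{0, \rho_u, \rho_w\}$. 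By Proposition~\ref{prop:signed-radius} this equals $\tau_{uw}$ computed with $\sigma(u) = \operatorname{sign}(\rho_u)$ and $r_u = |\rho_u|$, which proves the theorem.

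The only step needing care is the identity $|x_u - x_w|/|x_u x_w| = |\rho_u - \rho_w|$, together with recognising the three-term maximum as a diameter. Both are routine algebra, so I do not expect a genuine obstacle. The main point is that the distinct-$x$ hypothesis is used twice: for $x_u \neq 0$, and for applying Lemma~\ref{lem:residual}.
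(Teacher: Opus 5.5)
Your proposal is correct and follows the paper's proof: the same computation of $\Delta$ and $\xi$ with $p_v$ at the origin, fed into Lemma~\ref{lem:residual}, then Proposition~\ref{prop:signed-radius}. The only difference is that you identify $\max\{|\rho_u|,|\rho_w|,|\rho_u-\rho_w|\}$ directly as $\operatorname{diam}\{0,\rho_u,\rho_w\}$. The paper instead first splits by the signs of $x_u, x_w$ to obtain $\max\{r_u,r_w\}$ or $r_u+r_w$, so your route is slightly shorter and leaves the case analysis inside Proposition~\ref{prop:signed-radius}.
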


\begin{proof}
The points have distinct $x$-coordinates and $p_v$ is the origin, so
$x_u \neq 0$ for every $u \neq v$ and the quantities $c_u$, $\rho_u$, $r_u$, and $\sigma(u)$ above are well-defined, with $r_u > 0$.

Since $p_v = (0,0)$,
\[ \Delta(p_v, p_u, p_w) = x_u y_w - x_w y_u = x_u x_w (c_w - c_u)
  \quad\text{ and }\quad
  \xi(p_v, p_u, p_w) = \max\{|x_u|,  |x_w|,  |x_u - x_w|\}. \]
By Lemma~\ref{lem:residual}, $vuw \in E(H)$ if and only if $|\Delta| \leq \xi$,
which gives
\begin{equation}
\label{eq:slab-link}
    |c_u - c_w|  \leq  \mfrac{\max\{|x_u|, |x_w|, |x_u - x_w|\}}{|x_u| |x_w|} .
\end{equation}
If $\sigma(u) = \sigma(w)$,
then $p_u$ and $p_w$ lie on the same side of the $y$-axis, and $|x_u - x_w| < \max\{|x_u|, |x_w|\}$. So the right-hand side of~\eqref{eq:slab-link} is
\[
  \mfrac{\max\{|x_u|, |x_w|\}}{|x_u| |x_w|}
  = \mfrac{1}{\min\{|x_u|, |x_w|\}} = \max\{r_u, r_w\} .
\]
If $\sigma(u) \neq \sigma(w)$, then $p_u$ and $p_w$ lie on opposite sides of the $y$-axis, and $|x_u - x_w| = |x_u| + |x_w|$. So the right-hand side of~\eqref{eq:slab-link} is
\[
  \mfrac{|x_u| + |x_w|}{|x_u| |x_w|}
  =  \mfrac{1}{|x_u|} +  \mfrac{1}{|x_w|} = r_u + r_w.
\]

In both cases, the right-hand side is $\operatorname{diam}\{0, \rho_u, \rho_w\}$, which is $\tau_{uw}$ by Proposition~\ref{prop:signed-radius}, completing the proof.
\end{proof}

As an immediate consequence of Theorem~\ref{thm:link}, we obtain the following.

\begin{corollary}
\label{cor:links-in-B}
If $H$ is a slab hypergraph then $\link_H(v) \in \bicoloured$ for every
$v \in V(H)$.
\end{corollary}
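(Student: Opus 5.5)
The statement is Corollary~\ref{cor:links-in-B}. It follows directly from Theorem~\ref{thm:link}, so the plan is a short reduction to that theorem.

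Fix a slab hypergraph $H$ and a vertex $v \in V(H)$. By definition there is a realisation $P$ of $H$ with pairwise distinct $x$-coordinates. We translate $P$ so that $p_v$ is the origin. A translation maps every $\epsilon$-slab to an $\epsilon$-slab, since non-vertical lines go to non-vertical lines and vertical distances are preserved. It also keeps the $x$-coordinates pairwise distinct. Hence the translated set is still a realisation of $H$, with the same hyperedges.

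Theorem~\ref{thm:link} now applies to this realisation. It gives an explicit triple $(\sigma, c, r)$ on $V(H)\setminus\{v\}$, namely $c_u = y_u/x_u$, $r_u = 1/|x_u|$ and $\sigma(u) = \operatorname{sign}(1/x_u)$. This triple is a bicoloured representation of $\link_H(v)$ in the sense of Definition~\ref{def:bicoloured}.

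The only thing to check is that this triple is admissible, that is, $r_u > 0$ for every $u \neq v$. This holds because $x_u \neq x_v = 0$, as noted at the start of the proof of Theorem~\ref{thm:link}. Therefore $\link_H(v) \in \bicoloured$, and since $v$ was arbitrary the corollary follows.

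There is no real obstacle here. The only care needed is to confirm that translating the realisation preserves both the hypergraph and the distinct-$x$ hypothesis, and both are immediate.
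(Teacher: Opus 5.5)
Your proof is correct and matches the paper's: the paper derives the corollary as an immediate consequence of Theorem~\ref{thm:link}, applied to a realisation translated so that $p_v$ is the origin, exactly as you do. Your extra checks (that translation preserves the hyperedges and the distinctness of $x$-coordinates, and that $r_u > 0$) are already built into the hypothesis and proof of that theorem.
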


The converse also holds --- every bicoloured-interval graph arises as a link of some slab hypergraph.

\begin{proposition}
\label{prop:B-is-links}
$\bicoloured$ is exactly the class of links of slab hypergraphs. Specifically,  $G \in \bicoloured$
if and only if $G \cong \link_H(v)$ for some slab hypergraph $H$ and vertex $v \in V(H)$.
\end{proposition}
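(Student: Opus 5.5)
One direction is Corollary~\ref{cor:links-in-B}, so only the converse needs work: given $G \in \bicoloured$, build a slab hypergraph whose link at one vertex is $G$. The plan is to run the substitution of Theorem~\ref{thm:link} backwards. We start from a bicoloured representation $(\sigma, c, r)$ of $G$ and pass to signed radii $\rho_u = \sigma(u) r_u \neq 0$, as in Proposition~\ref{prop:signed-radius}. For each $u \in V(G)$ we place a point $p_u = (x_u, y_u)$ with
\[ x_u = \mfrac{1}{\rho_u}, \qquad y_u = c_u x_u = \mfrac{c_u}{\rho_u}, \]
and we add a new vertex $v$ with $p_v = (0,0)$. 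Then $c_u = y_u/x_u$, $\rho_u = 1/x_u$, $r_u = |\rho_u|$ and $\sigma(u) = \operatorname{sign}(\rho_u)$, which is exactly the recipe of Theorem~\ref{thm:link}. Let $P = \{p_v\} \cup \{p_u : u \in V(G)\}$ and $H = H(P)$ with $\epsilon = 1$.

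The main obstacle is the standing hypothesis that the $x$-coordinates are pairwise distinct, since this is what allows Theorem~\ref{thm:link} to be applied. We have $x_u \neq 0 = x_v$ automatically. However, $x_u = x_w$ holds exactly when $\rho_u = \rho_w$, and a bicoloured representation may well give two vertices equal signed radii. I would therefore first show that the representation can be perturbed to make all $\rho_u$ distinct without changing $G$.

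The perturbation is a small scaling. Since $G$ is finite, every non-edge $uw$ satisfies the strict inequality $|c_u - c_w| > \tau_{uw}$ with some positive slack. If we first scale all $c_u$ by $1/(1+\delta)$, then every edge inequality $|c_u - c_w| \le \tau_{uw}$ becomes strict as well, and for $\delta$ small enough the non-edges remain strict, now with slack lost only proportionally. After this step every pair is strict. We can then replace each $r_u$ by $r_u + \eta_u$ with distinct tiny $\eta_u \geq 0$, chosen so that the values $\rho_u$ become pairwise distinct; here ties occur only within a colour class, and distinct $\eta$ break them. The thresholds $\tau_{uw}$ depend continuously on the radii, so for $\eta$ small enough no strict inequality flips. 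The resulting representation still represents $G$ and has pairwise distinct $\rho_u$.

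With distinct $\rho_u$, the points of $P$ have pairwise distinct $x$-coordinates, so $H(P)$ is a slab hypergraph. Theorem~\ref{thm:link} applied at $v$, with $p_v$ already at the origin, recovers exactly the representation $(\sigma, c, r)$. Hence $uw \in E(\link_H(v))$ if and only if $|c_u - c_w| \le \tau_{uw}$, if and only if $uw \in E(G)$. Identifying $u$ with $p_u$ therefore gives $G \cong \link_H(v)$.
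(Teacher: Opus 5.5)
Your proof is correct and follows the paper's argument: perturb the radii so that the signed radii $\rho_u$ become pairwise distinct, place $p_u = (1/\rho_u,\, c_u/\rho_u)$ with $p_v$ at the origin, and invoke Theorem~\ref{thm:link}. The only difference is how the perturbation is justified. The paper skips your centre-scaling step by noting that both thresholds $\max\{r_u,r_w\}$ and $r_u+r_w$ are non-decreasing in the radii, so increasing the radii (as your $\eta_u \geq 0$ already do) cannot destroy an edge, and only the strict non-edge inequalities need a smallness condition.
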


\begin{proof}
One direction is Corollary~\ref{cor:links-in-B}.
Conversely, let $G \in \bicoloured$ with representation $(\sigma, c, r)$.
We exhibit a slab hypergraph $H$ and vertex $v$ such that $G \cong \link_H(v)$.

By increasing each radius by a small amount, we may assume that the radii are pairwise distinct within each colour class, since only finitely many equalities have to be avoided.
Indeed, both thresholds $\max\{r_u, r_w\}$ and $r_u + r_w$ are monotone in the radii, so no edge condition is weakened, and the finitely many non-edge conditions are strict, so a small enough increase preserves them all.

Now place the point $p_v$ at the origin together with points
\[
  p_u = (x_u, c_u x_u), \quad \text{where}\quad x_u = \sigma(u)/r_u \quad\text{for each }u \in V(G),
\]
considering the colours as $\pm 1$.
These $x$-coordinates are non-zero, and they are pairwise distinct, since two vertices
of the same colour have distinct radii, and two of different colours have
$x$-coordinates of opposite sign.

Let $H$ be the slab
hypergraph of this point set. Vertex $u$ has $y_u/x_u = c_u$ and
$\rho_u = 1/x_u = \sigma(u) r_u$, hence $|\rho_u| = r_u$ and
$\operatorname{sign}(\rho_u) = \sigma(u)$. By Theorem~\ref{thm:link},
colouring by $\operatorname{sign}(\rho_u) = \sigma(u)$, a pair $uw$ is an edge of $\link_H(v)$ if and only if the inequality $|c_u - c_w| \leq \tau_{uw}$ of Definition~\ref{def:bicoloured} holds, i.e., if and only if $uw \in E(G)$.
It follows that $\link_H(v) \cong G$, completing the proof.
\end{proof}

\section{Structural results}
\label{sec:structural}

This section collects structural properties of $\sandwich$ and $\bicoloured$.
We say a graph is a {\em minimal forbidden induced subgraph} for a class if it lies outside the class while every proper induced subgraph lies in it.
Since both classes are hereditary (Observations~\ref{obs:B-hereditary} and~\ref{obs:C-hereditary}), verifying minimality only requires checking the deletions of single vertices.
We resolve the situation completely for holes, antiholes, trees, and complete bipartite graphs, in \S\ref{sec:holes}, \S\ref{sec:antiholes}, \S\ref{sec:trees}, and \S\ref{sec:bipartite} respectively.
The {\em holes} $C_k$ with $k \geq 5$, the {\em antiholes} $\overline{C_n}$ with $n \geq 5$, the spider $T_2$, and $K_{3,3}$ are minimal forbidden induced subgraphs for $\bicoloured$.
The same holds for $\sandwich$, except that the even antiholes are contained in $\sandwich$.
As a consequence, we obtain the following.

\begin{corollary}
\label{cor:no-finite}
Neither $\bicoloured$ nor $\sandwich$ admits a characterisation by finitely many forbidden induced subgraphs.
\end{corollary}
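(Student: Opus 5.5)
The corollary follows from the structural results announced at the start of this section, which give an infinite family of minimal forbidden induced subgraphs for each class. The plan is to take those results as given and derive the corollary from a short general principle.

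The principle is this. Let $\mathcal{C}$ be a hereditary class and suppose $\mathcal{C}$ is characterised by a finite list $\mathcal{F}$ of forbidden induced subgraphs, so that $G \in \mathcal{C}$ if and only if no member of $\mathcal{F}$ is an induced subgraph of $G$. Take any minimal forbidden induced subgraph $M$ of $\mathcal{C}$. Since $M \notin \mathcal{C}$, some $F \in \mathcal{F}$ is an induced subgraph of $M$. Every member of $\mathcal{F}$ lies outside $\mathcal{C}$, because it contains itself. If $F$ were a proper induced subgraph of $M$, minimality of $M$ would put $F$ in $\mathcal{C}$, a contradiction. Hence $F \cong M$, so every minimal forbidden induced subgraph is isomorphic to a member of $\mathcal{F}$. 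In particular, a finite characterisation forces there to be only finitely many minimal forbidden induced subgraphs up to isomorphism.

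It then suffices to exhibit infinitely many pairwise non-isomorphic minimal forbidden induced subgraphs for each class. The holes $C_k$ with $k \geq 5$ do this for both. The paper states, to be proved in \S\ref{sec:holes}, that each such $C_k$ is a minimal forbidden induced subgraph for $\bicoloured$ and for $\sandwich$. These graphs are pairwise non-isomorphic since they have different numbers of vertices. For $\bicoloured$ one could use the odd and even antiholes as well. For $\sandwich$ the odd antiholes would also serve, but the holes alone already suffice for both classes.

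The corollary carries no real difficulty of its own. The substance lies in the results of \S\ref{sec:holes}, which require two things for every $k \geq 5$. The first is that $C_k \notin \sandwich$; since $\bicoloured \subseteq \sandwich$ by Corollary~\ref{cor:B-in-C}, this also gives $C_k \notin \bicoloured$. The second is that every vertex-deleted subgraph of $C_k$, namely the path $P_{k-1}$, lies in $\bicoloured$, and hence also in $\sandwich$. The second is easy because paths are interval graphs, so Proposition~\ref{prop:unit-in-B} applies. The genuine obstacle is the first, a non-representability argument for long cycles. I would expect it to proceed by ordering the centres along a hole and showing that the edge constraints ($|c_u-c_w|\le r_u+r_w$) together with the non-edge constraints ($|c_u-c_w|>\max\{r_u,r_w\}$) force a chord. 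For the corollary itself, however, we only cite that result.
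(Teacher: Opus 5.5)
Your proposal is correct and follows the paper's route: the corollary is an immediate consequence of the infinite family of minimal forbidden induced subgraphs (the holes $C_k$, $k \geq 5$, among others), and your finite-list argument spells out the step the paper leaves implicit. One small note on the cited result: the paper excludes $C_k$ for $k \geq 6$ by exhibiting an asteroidal triple (Proposition~\ref{prop:AT-free}), and $C_5$ via $C_5 \cong \overline{C_5}$ and Theorem~\ref{thm:odd-antiholes}, rather than by the chord-forcing argument you anticipate; this does not affect your proof.
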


Let $G$ be a graph with vertex set $V = V(G)$ and edge set $E = E(G)$.
For $S \subseteq V$, we write $G[S]$ for the subgraph induced by $S$ and $G - S$ for $G[V \setminus S]$. For a vertex $v \in V$, we set $G - v = G[V \setminus \{v\}]$, while $G + v$ denotes the graph obtained from $G$ by adding a new vertex $v$ whose neighbours are either specified alongside or clear from context.
We let $N_G(v)$ denote the neighbourhood of a vertex $v$ and $N_G[v] = N_G(v) \cup \{v\}$ its closed neighbourhood, dropping the subscript when $G$ is clear from the context.
For representations of $G$, we continue with the interval notation of \S\ref{sec:classes}, and additionally write $\tau_{i,j}$ for the threshold $\tau_{v_iv_j}$.
We begin by collecting some tools that will be used throughout the paper. 

Recall that, by Corollary~\ref{cor:B-in-C}, if $(\sigma, c, r)$ is a bicoloured representation then $(c, r)$ is a sandwich representation of the same graph, so every statement about sandwich representations applies to bicoloured representations as well.
We use this repeatedly without further comment.
The converse need not hold, since a statement about a bicoloured representation may constrain the colouring, and a sandwich representation has none.

The following is an immediate consequence of Definition~\ref{def:sandwich} and Observation~\ref{obs:thresholds}.

\begin{observation}
\label{obs:representation}
Let $(c, r)$ be a sandwich representation of a graph $G$, and let $u \neq w$ be vertices of $G$.
Then the following hold:
\begin{enumerate}[label=\normalfont(\roman*)]
  \item\label{it:wa-edge} if $I_u \cap I_w = \emptyset$ then $uw \notin E(G)$;
  \item\label{it:wa-nonedge} if $uw \notin E(G)$ then $c_u \notin I_w$ and $c_w \notin I_u$, and in particular $c_u \neq c_w$.
\end{enumerate}
\end{observation}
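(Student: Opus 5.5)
The statement follows by contraposition from the two implications in Definition~\ref{def:sandwich}, combined with Observation~\ref{obs:thresholds}. No case analysis on the representation is needed; I will read each part as the contrapositive of one defining implication.

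For part~\ref{it:wa-edge}, suppose $I_u \cap I_w = \emptyset$. By Observation~\ref{obs:thresholds}\ref{it:th-sum}, this is equivalent to $|c_u - c_w| > r_u + r_w$. The first implication of Definition~\ref{def:sandwich} says that $uw \in E(G)$ forces $|c_u - c_w| \leq r_u + r_w$. Taking its contrapositive gives $uw \notin E(G)$.

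For part~\ref{it:wa-nonedge}, suppose $uw \notin E(G)$. The second implication of Definition~\ref{def:sandwich} gives $|c_u - c_w| > \max\{r_u, r_w\}$, so in particular $|c_u - c_w| > r_w$ and $|c_u - c_w| > r_u$. Since $I_w = [c_w - r_w, c_w + r_w]$, the point $c_u$ lies in $I_w$ exactly when $|c_u - c_w| \leq r_w$. The first strict inequality therefore gives $c_u \notin I_w$, and symmetrically the second gives $c_w \notin I_u$. Equivalently, we are using the third condition of Observation~\ref{obs:thresholds}\ref{it:th-max}, which says that neither interval contains the centre of the other. Finally, $c_w \in I_w$ because $r_w > 0$, so $c_u \notin I_w$ forces $c_u \neq c_w$. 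Alternatively, $|c_u - c_w| > \max\{r_u, r_w\} > 0$ gives this directly.

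There is no real obstacle here. The only point needing care is to use the strict inequality in the non-edge condition, so that boundary cases such as $c_u$ being an endpoint of $I_w$ are correctly excluded.
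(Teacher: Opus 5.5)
Your proof is correct and follows exactly the route the paper indicates: the paper states this observation as an immediate consequence of Definition~\ref{def:sandwich} and Observation~\ref{obs:thresholds}, and you spell out precisely that, taking the contrapositive of the edge implication for part~\ref{it:wa-edge} and reading the strict non-edge inequality as ``neither interval contains the other's centre'' for part~\ref{it:wa-nonedge}.
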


Centres of non-adjacent vertices are separated by more than either radius.
The next lemma shows that this separation, along with the centre ordering, forces certain substructures in the graph.

\begin{lemma}
\label{lem:additivity}
Let $(c, r)$ be a sandwich representation of a graph $G$, and let $v_0, v_1, v_2$ be vertices of $G$ with $v_0v_1, v_1v_2 \notin E(G)$ and $c_0 < c_1 < c_2$.
Then the following hold:
\begin{enumerate}[label=\normalfont(\roman*)]
  \item\label{it:add-far} $c_2 - c_0 > r_0 + r_2$ and, hence, $v_0v_2 \notin E(G)$;
  \item\label{it:add-cover} if $v_3$ is adjacent to both $v_0$ and $v_2$, then $c_1 \in I_3$ and, hence, $v_3$ is adjacent to $v_1$.
\end{enumerate}
\end{lemma}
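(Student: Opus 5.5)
Both parts follow directly from Observation~\ref{obs:representation}\ref{it:wa-nonedge}, which says that the centres of two non-adjacent vertices are separated by more than the radius of either. The plan is to turn the two non-edges into inequalities between centres and radii, add them for part~\ref{it:add-far}, and argue by position of $c_3$ for part~\ref{it:add-cover}.

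For part~\ref{it:add-far}, the non-edge $v_0v_1$ with $c_0<c_1$ gives $c_1 - c_0 > \max\{r_0,r_1\} \ge r_0$. The non-edge $v_1v_2$ with $c_1<c_2$ gives $c_2 - c_1 > \max\{r_1,r_2\}\ge r_2$. Adding the two yields $c_2 - c_0 > r_0 + r_2$, so $I_0\cap I_2=\emptyset$. Observation~\ref{obs:representation}\ref{it:wa-edge} (the contrapositive of the edge implication in Definition~\ref{def:sandwich}) then gives $v_0v_2\notin E(G)$.

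For part~\ref{it:add-cover}, the idea is that $I_3$ must reach both $I_0$ and $I_2$, whose facing endpoints lie on opposite sides of $c_1$. From the non-edges we get $c_0 + r_0 < c_1$ and $c_1 < c_2 - r_2$. Since $v_3$ is adjacent to $v_0$ and $v_2$, the edge implication gives $I_3\cap I_0\neq\emptyset$ and $I_3\cap I_2\neq\emptyset$. Hence the left endpoint of $I_3$ satisfies $c_3 - r_3 \le c_0 + r_0 < c_1$, and the right endpoint satisfies $c_3 + r_3 \ge c_2 - r_2 > c_1$. So $c_1$ lies in the interior of $I_3$; in particular $c_1\in I_3$. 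If $v_1v_3$ were a non-edge, Observation~\ref{obs:representation}\ref{it:wa-nonedge} would give $c_1\notin I_3$, a contradiction. Therefore $v_3v_1\in E(G)$. Note that $v_3$ is automatically distinct from $v_0,v_1,v_2$: it is adjacent to $v_0$ and $v_2$, which rules out $v_0,v_2$ themselves, and $v_1$ is not adjacent to $v_0$.

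No real obstacle is expected. The one step needing care is reading off the correct one-sided inequality from each non-edge: each must use the radius of the outer vertex ($r_0$ and $r_2$), not $r_1$, which is what lets both parts go through.
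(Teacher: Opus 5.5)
Your proof is correct and matches the paper's argument: for (i) you sum the one-sided separations $c_1-c_0>r_0$ and $c_2-c_1>r_2$, and for (ii) you use the two edges to get $c_3-r_3\le c_0+r_0<c_1<c_2-r_2\le c_3+r_3$, so $c_1\in I_3$. Your remark that $v_3\neq v_1$ (because $v_1\not\sim v_0$) is a small point the paper leaves implicit.
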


\begin{proof}
\ref{it:add-far}:
The two non-edges give
\[ c_1 - c_0 > \max\{r_0, r_1\} \geq r_0 \quad \text{and} \quad c_2 - c_1 > \max\{r_1, r_2\} \geq r_2 . \]
Summing these, we obtain $c_2 - c_0 > r_0 + r_2$, so $I_0 \cap I_2 = \emptyset$ and, hence, $v_0v_2 \notin E(G)$.

\ref{it:add-cover}:
The edges $v_3v_0$ and $v_3v_2$ give $c_3 - r_3 \leq c_0 + r_0$ and $c_3 + r_3 \geq c_2 - r_2$, so $[c_0 + r_0, c_2 - r_2] \subseteq I_3$.
It follows that $c_1 \in I_3$ and, hence, $v_3$ is adjacent to $v_1$.
\end{proof}

The following consequence of Lemma~\ref{lem:additivity}\ref{it:add-far} says that a connected set of non-neighbours of a vertex lies entirely to one side of its centre.

\begin{corollary}
\label{cor:propagate}
Let $(c, r)$ be a sandwich representation of a graph $G$, and let $v_0 \in V(G)$ and $S \subseteq V(G)$ be such that $S$ induces a connected subgraph of $G$ and $S \cap N[v_0] = \emptyset$.
Then either $c_v < c_0$ for every $v \in S$ or $c_v > c_0$ for every $v \in S$.
\end{corollary}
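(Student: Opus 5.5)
Since $S \cap N[v_0] = \emptyset$, we have $v_0 \notin S$, and every $v \in S$ is non-adjacent to $v_0$. By Observation~\ref{obs:representation}\ref{it:wa-nonedge}, $c_v \neq c_0$ for every $v \in S$. So $S$ splits as $S = S_L \cup S_R$ with $S_L = \{v \in S : c_v < c_0\}$ and $S_R = \{v \in S : c_v > c_0\}$. It suffices to show that one of these two sets is empty. The plan is to use connectivity of $G[S]$: if both parts are non-empty, some edge of $G[S]$ joins them, and we derive a contradiction from Lemma~\ref{lem:additivity}\ref{it:add-far}.

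So suppose both $S_L$ and $S_R$ are non-empty. Since $G[S]$ is connected, there is an edge $uw \in E(G)$ with $u \in S_L$ and $w \in S_R$, so $c_u < c_0 < c_w$. Now apply Lemma~\ref{lem:additivity}\ref{it:add-far} to the triple $(u, v_0, w)$, playing the roles of $(v_0, v_1, v_2)$ there. The hypotheses are exactly $uv_0 \notin E(G)$ and $v_0 w \notin E(G)$, which hold because $u, w \notin N[v_0]$, together with the ordering $c_u < c_0 < c_w$. The lemma then gives $uw \notin E(G)$, contradicting the choice of $uw$. Hence one of $S_L, S_R$ is empty, which is the claimed dichotomy.

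There is no real obstacle here. The only points to check are the following.
\begin{itemize}
\item The centres of the vertices of $S$ are distinct from $c_0$, which is Observation~\ref{obs:representation}\ref{it:wa-nonedge}; without it the middle centre would not lie strictly between the other two.
\item The connectivity of $G[S]$ guarantees a crossing edge between any non-trivial bipartition of $S$.
\end{itemize}
The case $S = \emptyset$ holds vacuously.
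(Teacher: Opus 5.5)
Your proposal is correct and is essentially the paper's proof: both note via Observation~\ref{obs:representation}\ref{it:wa-nonedge} that no centre in $S$ equals $c_0$, use connectivity of $G[S]$ to find an edge with endpoints on opposite sides of $c_0$, and then apply Lemma~\ref{lem:additivity}\ref{it:add-far} with $v_0$ in the middle to conclude that this edge is a non-edge, a contradiction. Your extra remarks (that $v_0 \notin S$ and that $S = \emptyset$ is vacuous) are harmless additions.
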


\begin{proof}
By Observation~\ref{obs:representation}\ref{it:wa-nonedge}, $c_v \neq c_0$ for each $v \in S$.
Suppose, for contradiction, that vertices of $S$ have centres on both sides of $c_0$.
Since $G[S]$ is connected, there exist vertices $u, w \in S$ such that $uw \in E(G)$ and $c_w < c_0 < c_u$.
As $v_0$ is adjacent to neither $u$ nor $w$, Lemma~\ref{lem:additivity}\ref{it:add-far} applied to $w, v_0, u$ gives $uw \notin E(G)$, a contradiction.
\end{proof}

The following two consequences of Lemma~\ref{lem:additivity}\ref{it:add-cover} will also be needed, the first purely geometric and the second constraining the colouring.

\begin{corollary}
\label{cor:forcing}
Let $(\sigma, c, r)$ be a bicoloured representation of a graph $G$, and let $u_0, u_1$ be vertices of $G$.
Then the following hold:
\begin{enumerate}[label=\normalfont(\roman*)]
  \item\label{it:forcing-geo} Suppose $I_{u_0} \cap I_{u_1} = \emptyset$ and that $v_0, v_1$ are non-adjacent common neighbours of $u_0, u_1$.
        Then any $v_2 \notin \{v_0, v_1\}$ non-adjacent to both $v_0$ and $v_1$ has $c_{v_2} < \min\{c_{v_0}, c_{v_1}\}$ or $c_{v_2} > \max\{c_{v_0}, c_{v_1}\}$.
  \item\label{it:forcing-col} If $u_0$ and $u_1$ are non-adjacent and have three pairwise non-adjacent common neighbours, then $\sigma(u_0) = \sigma(u_1)$.
\end{enumerate}
\end{corollary}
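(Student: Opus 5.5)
Both parts come from Lemma~\ref{lem:additivity}\ref{it:add-cover}: if a vertex's centre lies strictly between the centres of two of its non-neighbours, then it lies in the interval of every common neighbour of those two. Disjointness of $I_{u_0}$ and $I_{u_1}$ makes this impossible. Part~\ref{it:forcing-col} then reduces to part~\ref{it:forcing-geo}, using the fact that a non-adjacent bichromatic pair has disjoint intervals.

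For~\ref{it:forcing-geo}, first note that $c_{v_2} \neq c_{v_0}$ and $c_{v_2} \neq c_{v_1}$ by Observation~\ref{obs:representation}\ref{it:wa-nonedge}, since $v_2$ is non-adjacent to both. Suppose, for contradiction, that $c_{v_2}$ lies strictly between $c_{v_0}$ and $c_{v_1}$, and relabel $v_0, v_1$ so that $c_{v_0} < c_{v_2} < c_{v_1}$. The triple $v_0, v_2, v_1$ then satisfies the hypotheses of Lemma~\ref{lem:additivity}, because $v_0v_2, v_2v_1 \notin E(G)$. Since $u_0$ is adjacent to both $v_0$ and $v_1$, part~\ref{it:add-cover} gives $c_{v_2} \in I_{u_0}$. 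Applying the same argument with $u_1$ in place of $u_0$ gives $c_{v_2} \in I_{u_1}$. Hence $c_{v_2} \in I_{u_0} \cap I_{u_1} = \emptyset$, a contradiction. The hypothesis $v_0v_1 \notin E(G)$ is not actually used here.

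For~\ref{it:forcing-col}, let $w_1, w_2, w_3$ be the three pairwise non-adjacent common neighbours, and suppose, for contradiction, that $\sigma(u_0) \neq \sigma(u_1)$. Then $\tau_{u_0u_1} = r_{u_0} + r_{u_1}$, so non-adjacency gives $|c_{u_0} - c_{u_1}| > r_{u_0} + r_{u_1}$, that is, $I_{u_0} \cap I_{u_1} = \emptyset$. The centres of the $w_i$ are pairwise distinct by Observation~\ref{obs:representation}\ref{it:wa-nonedge}, so we may order them as $c_{w_1} < c_{w_2} < c_{w_3}$. Applying part~\ref{it:forcing-geo} with $v_0 = w_1$, $v_1 = w_3$, and $v_2 = w_2$ then yields a contradiction, since $c_{w_2}$ lies strictly between the other two centres. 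Hence $\sigma(u_0) = \sigma(u_1)$.

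There is no substantial obstacle. The only points needing care are that all the relevant centres are distinct, so the strict betweenness required by Lemma~\ref{lem:additivity} holds, and that in~\ref{it:forcing-col} a bichromatic non-edge means disjoint intervals, not merely failure of the $\max$ threshold.
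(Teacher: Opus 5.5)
Your proof is correct and follows the paper's argument essentially step for step. In part (i) you apply Lemma~\ref{lem:additivity} once with $u_0$ and once with $u_1$ to place $c_{v_2}$ in $I_{u_0}\cap I_{u_1}=\emptyset$. In part (ii) you use that a bichromatic non-edge has disjoint intervals and apply (i) to the middle common neighbour.

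Your side remark is also right: $v_0v_1\notin E(G)$ is not needed. Since $c_{v_2}$ differs from both $c_{v_0}$ and $c_{v_1}$, the negated conclusion already forces strict betweenness.
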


\begin{proof}
\ref{it:forcing-geo}:
The vertices $v_0, v_1, v_2$ are pairwise non-adjacent, so they have distinct centres by Observation~\ref{obs:representation}\ref{it:wa-nonedge}.
Suppose, without loss of generality, that $c_{v_0} < c_{v_1}$, and, for contradiction, that $c_{v_0} < c_{v_2} < c_{v_1}$.
But then $v_0, v_1$ are adjacent to $u_0, u_1$, so Lemma~\ref{lem:additivity}\ref{it:add-cover} implies that $c_{v_2} \in I_{u_0} \cap I_{u_1}$, contradicting $I_{u_0} \cap I_{u_1} = \emptyset$.

\ref{it:forcing-col}:
Suppose $\sigma(u_0) \neq \sigma(u_1)$.
Since $u_0u_1$ is a bichromatic non-edge, we have $I_{u_0} \cap I_{u_1} = \emptyset$.
The three common neighbours have pairwise distinct centres, so one of them, say $v$, lies strictly between the other two.
But then~\ref{it:forcing-geo}, with $v$ as $v_2$, says that $c_v$ lies strictly to one side of the centres of the other neighbours, a contradiction.
\end{proof}

Corollary~\ref{cor:propagate} yields a structural property of the whole class, which we use both for holes and for trees.
Recall that three pairwise non-adjacent vertices of a graph form an {\em asteroidal triple} if any two of them are joined by a path avoiding the closed neighbourhood of the third.
A graph is {\em AT-free} if it has no asteroidal triple.

\begin{proposition}
\label{prop:AT-free}
Every graph in $\sandwich$ is AT-free.
\end{proposition}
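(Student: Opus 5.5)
Fix a sandwich representation $(c, r)$ of $G$ and suppose, for contradiction, that $x, y, z$ is an asteroidal triple of $G$. The three vertices are pairwise non-adjacent, so by Observation~\ref{obs:representation}\ref{it:wa-nonedge} their centres are pairwise distinct. After relabelling we may assume $c_x < c_y < c_z$. The plan is to use the middle vertex $y$ and apply Corollary~\ref{cor:propagate} with $v_0 = y$.

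By the definition of an asteroidal triple, there is a path $P$ from $x$ to $z$ avoiding $N[y]$. Let $S = V(P)$. Then $S$ induces a connected subgraph of $G$, and $S \cap N[y] = \emptyset$. This uses the non-adjacency of $x$ and $z$ to $y$, but that is already implied by the path avoiding $N[y]$, since the path contains its endpoints. Corollary~\ref{cor:propagate} then says that all centres of vertices in $S$ lie strictly on one side of $c_y$. However, $x, z \in S$ with $c_x < c_y < c_z$, which is a contradiction. Hence $G$ has no asteroidal triple.

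There is no substantial obstacle here. The only points to check are that a strict ordering of the three centres exists, which follows from Observation~\ref{obs:representation}\ref{it:wa-nonedge}, and that the chosen path really includes its endpoints, so that both $x$ and $z$ lie in $S$. All the geometric work is already done in Lemma~\ref{lem:additivity}\ref{it:add-far} via Corollary~\ref{cor:propagate}. Since $\bicoloured \subseteq \sandwich$ by Corollary~\ref{cor:B-in-C}, the same conclusion holds for $\bicoloured$.
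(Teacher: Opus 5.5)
Your proof is correct and follows the paper's argument: the three centres are distinct by Observation~\ref{obs:representation}\ref{it:wa-nonedge}, you order them, and you apply Corollary~\ref{cor:propagate} to the vertex set of the path joining the two outer vertices, which avoids the closed neighbourhood of the middle one.
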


\begin{proof}
Let $(c, r)$ be a sandwich representation of $G$ and suppose, for contradiction, that $u, v, w$ form an asteroidal triple.
The three are pairwise non-adjacent, so their centres are distinct by Observation~\ref{obs:representation}\ref{it:wa-nonedge}, and we may assume $c_u < c_v < c_w$.
By definition, some path from $u$ to $w$ avoids $N[v]$.
The vertices of this path induce a connected subgraph of $G$ disjoint from $N[v]$, so their centres all lie on one side of $c_v$ by Corollary~\ref{cor:propagate}.
But this contradicts the assumption that $c_u < c_v < c_w$.
\end{proof}

We note that Proposition~\ref{prop:AT-free} also follows from Theorem~\ref{thm:c-in-cocomparability} and the classical fact that co-comparability graphs are AT-free~\cite{golumbic-monma-trotter}*{Theorem~4}.
However, the proofs in this section are elementary and self-contained. 

\subsection{Holes}
\label{sec:holes}

Throughout, $C_k$ has vertex set $\{v_0, \dots, v_{k-1}\}$, with indices modulo $k$, so that its edges are the $k$ pairs $v_iv_{i+1}$, $i = 0, \dots, k-1$.

\begin{theorem}
\label{thm:c-cycle-free}
If $G \in \sandwich$ then $G$ has no hole $C_k$ with $k \geq 6$.
\end{theorem}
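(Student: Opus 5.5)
Since $\sandwich$ is hereditary (Observation~\ref{obs:C-hereditary}), it suffices to show that $C_k \notin \sandwich$ for each $k \geq 6$. The plan is to exhibit an asteroidal triple in $C_k$ and then invoke Proposition~\ref{prop:AT-free}, which says that every graph in $\sandwich$ is AT-free.

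For $k \geq 6$, take the three vertices $v_0$, $v_2$ and $v_4$. We first check that they are pairwise non-adjacent. The pairs $v_0v_2$ and $v_2v_4$ are at cyclic distance $2$. The pair $v_0v_4$ is at cyclic distance $\min\{4, k-4\} \geq 2$, since $k \geq 6$.

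Next we produce the three avoiding paths.
\begin{itemize}
  \item The path $v_0 v_1 v_2$ joins $v_0$ to $v_2$. Its interior vertex $v_1$ avoids $N[v_4] = \{v_3, v_4, v_5\}$, because $k \geq 6$ forces $v_5 \neq v_1$.
  \item The path $v_2 v_3 v_4$ joins $v_2$ to $v_4$ and avoids $N[v_0] = \{v_{k-1}, v_0, v_1\}$, since $3 \le k-2$.
  \item The path $v_4 v_5 \cdots v_{k-1} v_0$ joins $v_4$ to $v_0$. Its vertices lie among $v_4, \dots, v_{k-1}, v_0$, so it avoids $N[v_2] = \{v_1, v_2, v_3\}$.
\end{itemize}
Hence $\{v_0, v_2, v_4\}$ is an asteroidal triple, and Proposition~\ref{prop:AT-free} shows $C_k \notin \sandwich$.

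The only real obstacle is the index bookkeeping, namely confirming that the three paths are disjoint from the relevant closed neighbourhoods when $k = 6$. In that case $v_5 = v_{k-1}$ is adjacent to both $v_4$ and $v_0$, and the third path is just $v_4 v_5 v_0$, which still avoids $\{v_1, v_2, v_3\}$. The argument is therefore uniform in $k \geq 6$. The bound is consistent with the statement: $C_5$ has no asteroidal triple, which is why it is excluded here and must be treated separately.
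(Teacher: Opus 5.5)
Your proposal is correct and follows the paper's own proof: it exhibits the asteroidal triple $v_0, v_2, v_4$ in the hole, with the same three avoiding paths, and invokes Proposition~\ref{prop:AT-free}. The only difference is cosmetic: you reduce to $C_k$ itself via heredity, whereas the paper works directly inside $G$, where inducedness of the hole gives the same neighbourhood bookkeeping.
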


\begin{proof}
Suppose that $v_0, \dots, v_{k-1}$ induce a hole in $G$, in this cyclic order, with $k \geq 6$.
Then $v_0, v_2, v_4$ are pairwise non-adjacent, and $v_0v_1v_2$, $v_2v_3v_4$, and $v_0v_{k-1} \cdots v_5v_4$ are paths avoiding $N[v_4]$, $N[v_0]$, and $N[v_2]$ respectively.
So $v_0, v_2, v_4$ form an asteroidal triple of $G$, and $G \notin \sandwich$ by Proposition~\ref{prop:AT-free}.
\end{proof}

The hole $C_5$ is excluded as well, since $C_5 \cong \overline{C_5}$ and Theorem~\ref{thm:odd-antiholes} below excludes the odd antiholes.
Deleting any vertex from $C_k$ leaves $P_{k-1}$, which is an interval graph and so lies in $\bicoloured$ by Proposition~\ref{prop:unit-in-B}.
Hence, for $k \geq 5$, the hole $C_k$ is a minimal forbidden induced subgraph for both $\sandwich$ and $\bicoloured$.

Example~\ref{ex:C4} gives a bicoloured representation of $C_4$, so $C_4 \in \bicoloured \subseteq \sandwich$ and the bound $k \geq 5$ is sharp.
However, our next result restricts which $2$-colourings of $C_4$ can occur in a bicoloured representation.

\begin{lemma}
\label{lem:C4-colouring}
Let $(\sigma, c, r)$ be a bicoloured representation of a graph $G$, and let $v_0, v_1, v_2, v_3$ induce a $4$-cycle in $G$, in this cyclic order.
Suppose some pair of opposite edges is monochromatic, i.e., either $\sigma(v_0) = \sigma(v_1)$ and $\sigma(v_2) = \sigma(v_3)$, or $\sigma(v_1) = \sigma(v_2)$ and $\sigma(v_3) = \sigma(v_0)$.
Then all four vertices have the same colour.
\end{lemma}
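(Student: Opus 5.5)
The plan is to argue by contradiction using only centre distances. By symmetry (relabelling the cycle as $v_1v_2v_3v_0$, and swapping colours), assume $\sigma(v_0)=\sigma(v_1)=+$ and $\sigma(v_2)=\sigma(v_3)=-$. Then the two non-edges $v_0v_2$ and $v_1v_3$ are bichromatic, so
\[ |c_0-c_2| > r_0+r_2, \qquad |c_1-c_3| > r_1+r_3 . \]
Summing, $|c_0-c_2|+|c_1-c_3| > r_0+r_1+r_2+r_3$.

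For the edges, the monochromatic edges $v_0v_1$ and $v_2v_3$ give $|c_0-c_1|\le\max\{r_0,r_1\}$ and $|c_2-c_3|\le\max\{r_2,r_3\}$. The bichromatic edges $v_1v_2$ and $v_3v_0$ give $|c_1-c_2|\le r_1+r_2$ and $|c_3-c_0|\le r_3+r_0$.

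By reflecting $x\mapsto -x$ if necessary, assume $c_0<c_2$; the centres are distinct by Observation~\ref{obs:representation}\ref{it:wa-nonedge}. We then split on the relative order of $c_1$ and $c_3$. The key step is an exact algebraic identity rewriting the sum of the two non-edge distances as a sum of two edge distances.

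If $c_1<c_3$, then
\[ (c_2-c_0)+(c_3-c_1) = (c_2-c_1)+(c_3-c_0) \le r_1+r_2+r_3+r_0 , \]
which contradicts the strict lower bound.

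If $c_3<c_1$, then
\[ (c_2-c_0)+(c_1-c_3) = (c_1-c_0)+(c_2-c_3) \le \max\{r_0,r_1\}+\max\{r_2,r_3\} . \]
This is strictly less than $r_0+r_1+r_2+r_3$ because all radii are positive, again a contradiction.

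Hence no representation can colour a pair of opposite edges monochromatically while the two pairs receive different colours. So if both opposite edges are monochromatic, all four colours agree.

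The only real obstacle is finding the right pairing of distances; a case analysis on interval endpoints is messier. The identities above make both cases immediate. In the first case the pairing uses only the bichromatic edges, and in the second only the monochromatic edges, whose thresholds are too small.
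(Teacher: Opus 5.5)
Your proof is correct and follows essentially the same route as the paper. After normalising $c_0<c_2$, the paper also splits on the order of $c_1$ and $c_3$, regrouping the two non-edge distances as the bichromatic edge distances in one case and as the monochromatic edge distances in the other. The only cosmetic difference is in the second case: the paper isolates $c_1-c_3$ and bounds it by $(\max\{r_0,r_1\}-r_0)+(\max\{r_2,r_3\}-r_2)\le r_1+r_3$, whereas you compare the full sums directly.
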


\begin{proof}
Suppose, for contradiction, that the colours are not all equal.
Rotating the labels along the cycle and swapping the two colours if necessary, we may assume $\sigma(v_0) = \sigma(v_1) = +$ and $\sigma(v_2) = \sigma(v_3) = -$.
Definition~\ref{def:bicoloured} then gives
\begin{itemize}
\item $|c_0 - c_1| \leq \max\{r_0, r_1\}$ and $|c_2 - c_3| \leq \max\{r_2, r_3\}$, from the monochromatic edges $v_0v_1$ and $v_2v_3$;
\item $|c_1 - c_2| \leq r_1 + r_2$ and $|c_3 - c_0| \leq r_3 + r_0$, from the bichromatic edges $v_1v_2$ and $v_3v_0$;
\item $|c_0 - c_2| > r_0 + r_2$ and $|c_1 - c_3| > r_1 + r_3$, from the bichromatic non-edges $v_0v_2$ and $v_1v_3$.
\end{itemize}
Negating every centre if necessary, we may assume
\begin{equation}
\label{eq:C4-norm}
  c_2 - c_0  >  r_0 + r_2 .
\end{equation}

We consider two cases based on the centre order of the non-edge $v_1v_3$.

{\em Case 1: $c_3 - c_1 > r_1 + r_3$.} Adding the hypothesis to~\eqref{eq:C4-norm} and regrouping,
\[
  r_0 + r_1 + r_2 + r_3 < (c_2 - c_0) + (c_3 - c_1)
  = (c_2 - c_1) + (c_3 - c_0) \leq (r_1 + r_2) + (r_3 + r_0),
\]
where the last inequality follows from the two bichromatic edges, a contradiction.

{\em Case 2: $c_1 - c_3 > r_1 + r_3$.} The monochromatic edges give $c_1 - c_0 \leq \max\{r_0, r_1\}$ and $c_2 - c_3 \leq \max\{r_2, r_3\}$, so by~\eqref{eq:C4-norm},
\[ c_1 - c_3 = (c_1 - c_0) - (c_2 - c_0) + (c_2 - c_3) < (\max\{r_0, r_1\} - r_0) + (\max\{r_2, r_3\} - r_2) \leq r_1 + r_3 , \]
a contradiction.
\end{proof}

Lemma~\ref{lem:C4-colouring} is sharp, and every other colouring is realisable.
Up to automorphisms of $C_4$ and interchanging the two colours, it suffices to consider the following four colourings:
\begin{itemize}
  \item $\{v_0,v_1,v_2,v_3\}$: realised by the representation in Remark~\ref{rem:C4-unit}.
  \item $\{v_0,v_1,v_2\}, \{v_3\}$: colour $v_0,v_1,v_2$ with $+$ and $v_3$ with $-$, and set
        \[  r = (3,1,3,1), \quad\text{ and }\quad
             c = (0,-3,-4,0). \]
  \item $\{v_0,v_2\}, \{v_1,v_3\}$: realised by the representation in Example~\ref{ex:C4}.
  \item $\{v_0,v_1\}, \{v_2,v_3\}$: forbidden by Lemma~\ref{lem:C4-colouring}.
\end{itemize}

\subsection{Antiholes}
\label{sec:antiholes}

Throughout, $\overline{C_n}$ has vertex set $\{v_0, \dots, v_{n-1}\}$, with indices modulo $n$, so that its non-edges are the $n$ pairs $v_iv_{i+1}$, $i = 0, \dots, n-1$.

Antiholes behave quite differently from holes, and differently again for the two classes.
We begin with what a representation of an antihole must look like.

\begin{lemma}
\label{lem:antihole-alt}
Let $n \geq 4$ and let $(c, r)$ be a sandwich representation of $\overline{C_n}$.
Then the following hold:
\begin{enumerate}[label=\normalfont(\roman*)]
  \item\label{it:alt-local} for every $i$, the centres $c_{i-1}$ and $c_{i+1}$ lie strictly on the same side of $c_i$;
  \item\label{it:alt-global} $n$ is even and, after negating every centre if necessary,
        \[ c_i < c_{i-1} \quad\text{and}\quad c_i < c_{i+1}
          \quad\text{for every even } i. \]
\end{enumerate}
\end{lemma}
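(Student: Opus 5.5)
For part~\ref{it:alt-local}, I would fix $i$ and use the two non-edges $v_{i-1}v_i$ and $v_iv_{i+1}$ of $\overline{C_n}$. The centres $c_{i-1}, c_i, c_{i+1}$ are distinct from $c_i$ by Observation~\ref{obs:representation}\ref{it:wa-nonedge}.

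Suppose, for contradiction, that $c_i$ lies strictly between $c_{i-1}$ and $c_{i+1}$. Then Lemma~\ref{lem:additivity}\ref{it:add-far}, applied to the triple $v_{i-1}, v_i, v_{i+1}$ in centre order, shows that $v_{i-1}v_{i+1}$ is a non-edge. But in $\overline{C_n}$ with $n \geq 4$ the pair $v_{i-1}v_{i+1}$ is not of the form $v_jv_{j+1}$, so it is an edge. (For $n=4$ the indices $i-1$ and $i+1$ differ by $2$, so it is still an edge.) This contradiction shows that $c_{i-1}$ and $c_{i+1}$ lie strictly on the same side of $c_i$.

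An alternative route is Lemma~\ref{lem:additivity}\ref{it:add-cover}. For $n \geq 5$ there is a vertex $v_j$ adjacent to both $v_{i-1}$ and $v_{i+1}$ but not to $v_i$, which also gives a contradiction. Part~\ref{it:add-far} is simpler and handles $n=4$ directly, so I would use it.

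For part~\ref{it:alt-global}, I would read part~\ref{it:alt-local} as a statement about the cyclic sequence of centres $c_0, c_1, \dots, c_{n-1}, c_0$. It says that no consecutive triple is monotone, so each $c_i$ is either a strict local minimum or a strict local maximum relative to its two cyclic neighbours. Consecutive differences $c_{i+1}-c_i$ are nonzero, since the pairs $v_iv_{i+1}$ are non-edges, and their signs must alternate around the cycle. A cyclic sequence of $n$ signs that alternates forces $n$ to be even. Then the local minima are exactly the vertices of one parity class of indices. If that class is the odd indices, negating every centre swaps minima and maxima; this preserves sandwich representations because only $|c_u-c_w|$ enters the definition. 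After this normalisation, $c_i < c_{i\pm1}$ for every even $i$.

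The main obstacle is minor: it is checking that $v_{i-1}v_{i+1}$ really is an edge for every $n \geq 4$, and noting that the alternation argument needs the nonzero differences supplied by Observation~\ref{obs:representation}\ref{it:wa-nonedge}. The rest is bookkeeping about the signs of the differences.
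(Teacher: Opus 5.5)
Your proof is correct and essentially the paper's: part (i) is exactly the paper's application of Lemma~\ref{lem:additivity}\ref{it:add-far} to the non-edges $v_{i-1}v_i$ and $v_iv_{i+1}$, and your sign-alternation of the differences $c_{i+1}-c_i$ is the paper's orientation of the non-edge cycle, in which every vertex is a source or a sink, in different words. One caution about your unused aside: in $\overline{C_n}$ the only non-neighbours of $v_i$ are $v_{i\pm1}$, so no vertex $v_j$ is adjacent to both $v_{i-1}$ and $v_{i+1}$ but not to $v_i$, and the route via Lemma~\ref{lem:additivity}\ref{it:add-cover} does not work.
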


\begin{proof}
\ref{it:alt-local}:
Let $0 \leq i \leq n-1$ be fixed.
The pairs $v_{i-1}v_i$ and $v_iv_{i+1}$ are non-edges, so by definition $c_{i-1} \neq c_i$ and $c_i \neq c_{i+1}$.
If $c_{i-1}$ and $c_{i+1}$ are on opposite sides of $c_i$, then Lemma~\ref{lem:additivity}\ref{it:add-far} applied to the non-edges $v_{i-1}v_i$ and $v_iv_{i+1}$ implies that $v_{i-1}v_{i+1}$ is a non-edge.
But since $n \geq 4$, $v_{i-1}v_{i+1}$ is an edge, a contradiction.

\ref{it:alt-global}:
By the argument in~\ref{it:alt-local}, consecutive vertices have distinct centres.
Orient each pair $v_iv_{i+1}$ from the smaller centre to the larger.
This gives an orientation of the cycle formed by the non-edges of $\overline{C_n}$.
A directed path $v_{i-1}, v_i, v_{i+1}$ means $c_{i-1} < c_i < c_{i+1}$, and $v_{i+1}, v_i, v_{i-1}$ means $c_{i+1} < c_i < c_{i-1}$.
In either case $c_{i-1}$ and $c_{i+1}$ lie on opposite sides of $c_i$, which is impossible by~\ref{it:alt-local}.
So no vertex has both an in-neighbour and an out-neighbour, i.e., every vertex is a source or a sink.
Note that the sources and the sinks form the classes of a proper $2$-colouring of the cycle and, hence, the cycle is bipartite.
It follows that $n$ is even and that the two classes are the even and the odd indices.
Negating every centre if necessary, we may assume that the vertices with even indices are the sources, which implies the assertion.
\end{proof}

Lemma~\ref{lem:antihole-alt}\ref{it:alt-global} immediately implies the following.

\begin{theorem}
\label{thm:odd-antiholes}
For every $m \geq 2$ we have $\overline{C_{2m+1}} \notin \sandwich$.
\end{theorem}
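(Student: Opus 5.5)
This follows directly from Lemma~\ref{lem:antihole-alt}\ref{it:alt-global}; the only point to check is that its hypothesis $n \geq 4$ is met. Fix $m \geq 2$ and set $n = 2m+1 \geq 5$. Suppose, for contradiction, that $\overline{C_n}$ has a sandwich representation $(c, r)$. Since $n \geq 4$, Lemma~\ref{lem:antihole-alt} applies to this representation. Part~\ref{it:alt-global} of that lemma asserts that $n$ is even, which contradicts $n = 2m+1$. Hence $\overline{C_{2m+1}} \notin \sandwich$.

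Behind the lemma, the substance is this. By part~\ref{it:alt-local}, every vertex of the non-edge cycle $v_0v_1\cdots v_{n-1}$ is a local extremum of the centres. This forces the orientation of the cycle by increasing centre to alternate between sources and sinks, which requires the cycle to be bipartite, that is, of even length. The condition $m \geq 2$ is what makes $v_{i-1}v_{i+1}$ an edge, the ingredient used in part~\ref{it:alt-local}. The case $m = 1$ is genuinely different: $\overline{C_3}$ is edgeless, and an edgeless graph is an interval graph.

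There is no real obstacle, since the work was done in Lemma~\ref{lem:antihole-alt}. As a remark after the proof, I would note two consequences. First, Corollary~\ref{cor:B-in-C} gives $\overline{C_{2m+1}} \notin \bicoloured$ as well. Second, minimality follows from $\overline{C_{2m+1}} - v \cong \overline{P_{2m}}$ together with an explicit representation of $\overline{P_{2m}}$. That representation is presumably constructed later, alongside the even antiholes, and the present statement does not need it.
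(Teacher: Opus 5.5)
Your proposal is correct and is exactly the paper's argument: the theorem is stated as an immediate consequence of Lemma~\ref{lem:antihole-alt}\ref{it:alt-global}, applied with $n = 2m+1 \geq 5$ to conclude that $n$ would have to be even. Your side remarks are also accurate: the failure at $m = 1$, the consequence for $\bicoloured$, and minimality via $\overline{P_{2m}}$ (the paper's Lemma~\ref{lem:copath}).
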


Even antiholes, by contrast, all lie in $\sandwich$.

\begin{proposition}
\label{prop:even-antiholes}
For every $m \geq 2$ we have $\overline{C_{2m}} \in \sandwich$.
\end{proposition}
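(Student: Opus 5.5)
We give an explicit sandwich representation. By Lemma~\ref{lem:antihole-alt}\ref{it:alt-global}, the even-indexed centres should sit to the left of their cyclic neighbours and the odd-indexed ones to the right. The simplest way to do this is to collapse each parity class to a single point. The key observation is that Definition~\ref{def:sandwich} only constrains a pair through the two implications. A pair whose centre distance $d$ satisfies $\max\{r_u, r_w\} < d \leq r_u + r_w$ is therefore unconstrained, and may be either an edge or a non-edge. We will place every bichromatic-by-parity pair in this gap.

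Concretely, for $\overline{C_{2m}}$ with $m \geq 2$, set $c_i = 0$ for even $i$, $c_i = 1$ for odd $i$, and $r_i = 3/4$ for all $i$. We then verify the two implications of Definition~\ref{def:sandwich} pair by pair, splitting into two cases.

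\emph{Same parity.} Two distinct vertices $v_i, v_j$ with $i \equiv j \pmod 2$ satisfy $j \neq i \pm 1 \pmod{2m}$, since $2m$ is even. So they are not a non-edge of $\overline{C_{2m}}$, i.e.\ they are adjacent. We need $|c_i - c_j| = 0 \leq r_i + r_j$, which holds. The second implication is vacuous for them. (Equal centres are allowed, since Observation~\ref{obs:representation}\ref{it:wa-nonedge} only forbids them for non-edges.)

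\emph{Opposite parity.} Here $|c_i - c_j| = 1$, and $\max\{r_i, r_j\} = 3/4 < 1 \leq 3/2 = r_i + r_j$. So both implications hold regardless of whether $v_iv_j$ is an edge. This covers the non-edges $v_iv_{i+1}$ as well as the remaining edges between the two parity classes.

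Hence $(c, r)$ is a sandwich representation, equivalently $T \subseteq \overline{C_{2m}} \subseteq X$ in the sense of Proposition~\ref{prop:sandwich-name}. There is no real obstacle here: the only point needing care is that evenness of $n$ guarantees no two same-parity vertices are consecutive, and this is exactly where the odd case, Theorem~\ref{thm:odd-antiholes}, breaks down.
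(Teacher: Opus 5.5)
Your proposal is correct and is essentially the paper's proof. The paper also collapses each parity class to a single point (centres $0$ and $3/2$ with all radii $1$, where you use $0$ and $1$ with radii $3/4$). In both versions the cross-parity distance lies strictly between $\max\{r_i,r_j\}$ and $r_i+r_j$, and same-parity pairs are edges because $2m$ is even.
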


\begin{proof}
For each $0 \leq i \leq 2m-1$, set
\[
  c_i = \begin{cases} 0 & \text{if $i$ is even},\\[2pt]
                      \mfrac{3}{2} & \text{if $i$ is odd},\end{cases}
  \quad\text{ and }\quad r_i = 1.
\]
If $v_i, v_j$ have the same parity, then they are adjacent in $\overline{C_{2m}}$, since consecutive vertices have opposite parities.
Note that $|c_i - c_j| = 0 \leq 2 = r_i + r_j$, so the implications of Definition~\ref{def:sandwich} hold for this pair.

If $v_i, v_j$ have opposite parities, then $|c_i - c_j| = 3/2$.
If $v_i, v_j$ are adjacent, then $3/2 \leq 2 = r_i + r_j$.
If they are non-adjacent, and so consecutive on the cycle, then $3/2 > 1 = \max\{r_i, r_j\}$.
In each case, the implications of Definition~\ref{def:sandwich} hold for this pair.
\end{proof}

Theorem~\ref{thm:odd-antiholes} and Proposition~\ref{prop:even-antiholes} together settle antiholes for $\sandwich$ completely.
For $n \geq 4$, $\overline{C_n} \in \sandwich$ if and only if $n$ is even.
So $\sandwich$ contains arbitrarily large even antiholes, in sharp contrast to the holes, of which the class contains none of length at least five.

For $\bicoloured$ the picture is different.
No antihole on at least five vertices is a bicoloured-interval graph.
This follows from Proposition~\ref{prop:B-parallelogram}, since tolerance graphs are weakly chordal~\cite{golumbic2004tolerance}*{Theorem~2.17}, that is, contain no hole or antihole on at least five vertices.
We nevertheless give a direct proof, which requires the following two lemmas.
The first lemma applies Lemma~\ref{lem:C4-colouring} to the induced $4$-cycles of $\overline{C_n}$, and restricts which colourings can occur in a bicoloured representation.

\begin{lemma}
\label{lem:antihole-colourings}
Let $n \geq 6$ and let $(\sigma, c, r)$ be a bicoloured representation of $\overline{C_n}$.
Then one of the colour classes of $\sigma$ is empty, a single vertex, or a pair of consecutive vertices $\{v_i, v_{i+1}\}$.
\end{lemma}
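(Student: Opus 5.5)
The plan is to apply Lemma~\ref{lem:C4-colouring} to the induced $4$-cycles of $\overline{C_n}$ and read off the colouring along the cycle $C_n$ formed by its non-edges. Call a pair $v_kv_{k+1}$ (a non-edge of $\overline{C_n}$, i.e.\ an edge of the complementary cycle) \emph{bichromatic} if $\sigma(v_k) \neq \sigma(v_{k+1})$.

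First I identify the induced $4$-cycles. Take two such pairs $\{v_i, v_{i+1}\}$ and $\{v_j, v_{j+1}\}$ that are \emph{far apart}, meaning the indices $i, i+1, j, j+1$ are distinct and no index of one pair is consecutive (mod $n$) to an index of the other. Then the only non-edges of $\overline{C_n}$ among these four vertices are $v_iv_{i+1}$ and $v_jv_{j+1}$. Hence $v_i, v_j, v_{i+1}, v_{j+1}$ induce a $4$-cycle in this cyclic order. Its two pairs of opposite edges are $\{v_iv_j, v_{i+1}v_{j+1}\}$ and $\{v_jv_{i+1}, v_{j+1}v_i\}$.

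The key step is that two far-apart pairs cannot both be bichromatic. Suppose they were, say $\sigma(v_i)=+$ and $\sigma(v_{i+1})=-$.
\begin{itemize}
\item If $(\sigma(v_j),\sigma(v_{j+1})) = (+,-)$, then $\sigma(v_i)=\sigma(v_j)$ and $\sigma(v_{i+1})=\sigma(v_{j+1})$, so the first pair of opposite edges is monochromatic.
\item If $(\sigma(v_j),\sigma(v_{j+1})) = (-,+)$, then $\sigma(v_j)=\sigma(v_{i+1})$ and $\sigma(v_{j+1})=\sigma(v_i)$, so the second pair is monochromatic.
\end{itemize}
In either case Lemma~\ref{lem:C4-colouring} forces all four vertices to have the same colour, contradicting $\sigma(v_i)\neq\sigma(v_{i+1})$.

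Next I combine this with parity. If neither colour class is empty, then going around the cycle $v_0, \dots, v_{n-1}$ the colour changes an even, positive number of times. So there are at least two bichromatic pairs, and by the key step any two of them are not far apart. Two pairs fail to be far apart exactly when they share a vertex or are separated by a single pair. I would check that with $n \geq 6$ no three pairs can be pairwise within this distance. Indeed, three pairs would span at most five consecutive vertices, but then the first and last pairs are separated by at least two other pairs going one way. If they are also within distance one going the other way around, the cycle has at most five vertices. So there are exactly two bichromatic pairs, and the colours are constant on each of the two arcs between them.

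Finally I read off the two configurations. If the pairs are $v_{k-1}v_k$ and $v_kv_{k+1}$, then one colour class is the single vertex $\{v_k\}$. If they are $v_{k-1}v_k$ and $v_{k+1}v_{k+2}$, then one colour class is $\{v_k, v_{k+1}\}$. In both cases the other arc has $n-1 \geq 5$ or $n-2 \geq 4$ vertices, so it forms the other class, which gives the statement.

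I expect the main obstacle to be the bookkeeping in the third step, namely the mod-$n$ distance check showing that three pairwise non-far pairs are impossible when $n \geq 6$. The place where $n\ge 6$ enters (and fails at $n=5$) should be made explicit there.
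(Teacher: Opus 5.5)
Your overall route is the paper's. Two far-apart bichromatic pairs $v_iv_{i+1}$, $v_jv_{j+1}$ give the induced $4$-cycle $v_i, v_j, v_{i+1}, v_{j+1}$ with a monochromatic pair of opposite edges, which Lemma~\ref{lem:C4-colouring} forbids. Your notion of ``far apart'' is exactly the paper's ``cyclic distance at least $3$''. That part is correct.

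\textbf{The gap is in the counting step.} Your claim that, for $n \geq 6$, no three pairs can be pairwise not far apart is false.
\begin{itemize}
\item For every $n$, the pairs $v_0v_1$, $v_1v_2$, $v_2v_3$ are pairwise within your distance. The first two share $v_1$, the last two share $v_2$, and the first and third are separated by the single pair $v_1v_2$.
\item For $n = 6$, the pairs $v_0v_1$, $v_2v_3$, $v_4v_5$ are also pairwise within your distance, and they span all six vertices.
\end{itemize}
So your justification (``span at most five consecutive vertices'', ``separated by at least two other pairs'') does not hold. Pairwise closeness alone does not give ``exactly two bichromatic pairs''.

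\textbf{The repair.} The correct bound is three, and you need parity a second time to finish.
\begin{itemize}
\item Fix a bichromatic index $i$. Every other bichromatic index lies in $\{i-2, i-1, i+1, i+2\}$.
\item Since $n \geq 6$, the indices $i-2$ and $i+1$ are at cyclic distance $3$, and so are $i-1$ and $i+2$. Hence at most one index from each of these two pairs is bichromatic, giving at most three colour changes in total.
\item The number of colour changes is even and positive, so there are exactly two.
\end{itemize}
This is the step where $n \geq 6$ is actually used. With this repair, your final reading-off of the two configurations is correct and matches the paper.
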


\begin{proof}
Let $B = \{ i : \sigma(v_i) \neq \sigma(v_{i+1}) \}$, i.e., $B$ is the set of colour changes along one traversal of the cycle.
In particular, $|B|$ is even.
If $B = \emptyset$ then $\sigma$ is constant and one colour class is empty, so the assertion holds.
From here on, we assume $B \neq \emptyset$, so $|B| \geq 2$.

Suppose $i, j \in B$ with $v_i, v_j$ at cyclic distance at least $3$.
The four vertices $v_i, v_{i+1}, v_j, v_{j+1}$ are thus distinct.
Moreover, each of the pairs $v_iv_j$, $v_jv_{i+1}$, $v_{i+1}v_{j+1}$, $v_{j+1}v_i$ is at cyclic distance at least $2$, hence an edge of $\overline{C_n}$.
So $v_i, v_j, v_{i+1}, v_{j+1}$ induce a $4$-cycle in that cyclic order, with the non-edges $v_iv_{i+1}$ and $v_jv_{j+1}$ as its diagonals.
As $i, j \in B$ we have $\sigma(v_i) \neq \sigma(v_{i+1})$ and $\sigma(v_j) \neq \sigma(v_{j+1})$.
Now either $\sigma(v_i) = \sigma(v_j)$ and, hence, $\sigma(v_{i+1}) = \sigma(v_{j+1})$, or $\sigma(v_j) = \sigma(v_{i+1})$ and, hence, $\sigma(v_{j+1}) = \sigma(v_i)$.
In either case a pair of opposite edges of the $4$-cycle is monochromatic so, by Lemma~\ref{lem:C4-colouring}, all four vertices have the same colour, contradicting $\sigma(v_i) \neq \sigma(v_{i+1})$.
It follows that the elements of $B$ are at pairwise cyclic distance at most $2$.

Fix $i \in B$, so by the argument above $B \subseteq \{i-2, i-1, i, i+1, i+2\}$.
Since $n \geq 6$, the pairs $\{i-2, i+1\}$ and $\{i-1, i+2\}$ are at cyclic distance $3$, so $B$ contains at most one member of each.
Hence, $|B| \leq 3$.
Since $B$ is non-empty and even, this implies $|B| = 2$.

Let $B = \{i, j\}$ where, without loss of generality, $j \in \{i+1, i+2\}$.
Note that $B$ cuts the cycle into two arcs and that $\sigma$ is constant on each arc.
If $j = i+1$ the arcs are $\{v_{i+1}\}$ and $\{v_{i+2}, \dots, v_i\}$, so one class is a single vertex.
If $j = i+2$ the arcs are $\{v_{i+1}, v_{i+2}\}$ and $\{v_{i+3}, \dots, v_i\}$, so one class is a pair of consecutive vertices.
\end{proof}

The second lemma bounds, for a $4$-set, the sum of two non-edge thresholds by the sum of two edge thresholds.

\begin{lemma}
\label{lem:antihole-swap}
Let $n \geq 6$ be even and let $(\sigma, c, r)$ be a bicoloured representation of $\overline{C_n}$ with $c_i < c_{i-1}$ and $c_i < c_{i+1}$ for every even $i$, as in Lemma~\ref{lem:antihole-alt}\ref{it:alt-global}.
Let $i$ be even and $j$ odd, at cyclic distance at least $2$.
Then
\[ \tau_{i,i+1} + \tau_{j,j+1}  <  \tau_{i+1,j+1} + \tau_{i,j}. \]
\end{lemma}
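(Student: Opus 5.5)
The plan is to add the two non-edge inequalities and regroup the centre differences into the two edge pairs, as in Case~1 of Lemma~\ref{lem:C4-colouring}.

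First I identify the four vertices and their adjacencies. Since $n$ is even and $i, j$ have opposite parities, the cyclic distance between $v_i$ and $v_j$ is odd, hence at least $3$. So $v_i, v_{i+1}, v_j, v_{j+1}$ are distinct. The pair $v_{i+1}, v_{j+1}$ is at the same cyclic distance as $v_i, v_j$, so $v_iv_j$ and $v_{i+1}v_{j+1}$ are edges of $\overline{C_n}$. The pairs $v_iv_{i+1}$ and $v_jv_{j+1}$ are non-edges.

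Next I use the normalisation of Lemma~\ref{lem:antihole-alt}\ref{it:alt-global} to orient the two non-edges. Since $i$ is even, $c_i < c_{i+1}$. Since $j+1$ is even, $c_{j+1} < c_j$. By Definition~\ref{def:bicoloured}, each non-edge then gives a strict inequality without absolute values:
\[ c_{i+1} - c_i > \tau_{i,i+1} \quad\text{and}\quad c_j - c_{j+1} > \tau_{j,j+1}. \]
Each edge gives $|c_{i+1} - c_{j+1}| \leq \tau_{i+1,j+1}$ and $|c_j - c_i| \leq \tau_{i,j}$.

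Summing the two non-edge inequalities and regrouping gives
\[ \tau_{i,i+1} + \tau_{j,j+1} < (c_{i+1} - c_i) + (c_j - c_{j+1}) = (c_{i+1} - c_{j+1}) + (c_j - c_i) \leq \tau_{i+1,j+1} + \tau_{i,j}. \]
The last step bounds each bracket by its absolute value and applies the edge inequalities.

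The only point needing care is the orientation and distinctness check. The parity of $n$ is exactly what forces the distance between $i$ and $j$ to be at least $3$, so that the two cross pairs are genuinely edges. No property of the colouring is used beyond the definition of $\tau$.
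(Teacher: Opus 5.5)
Your proof is correct and is essentially the paper's argument: orient the two non-edges using the alternation of Lemma~\ref{lem:antihole-alt}\ref{it:alt-global}, sum them, regroup the centre differences into the two cross pairs, and bound each by its edge threshold. One small overstatement: the cross pairs are already edges because $v_{i+1},v_{j+1}$ are at the same cyclic distance as $v_i,v_j$, which is at least $2$ by hypothesis, so the parity step giving distance at least $3$ is not needed; the evenness of $n$ matters only so that $j+1$ is even modulo $n$, which gives $c_{j+1}<c_j$ even when $j=n-1$.
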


\begin{proof}
The four vertices $v_i, v_{i+1}, v_j, v_{j+1}$ are distinct.
Since $v_iv_{i+1}$ and $v_jv_{j+1}$ are non-edges, and $c_i < c_{i+1}$ and $c_{j+1} < c_{j}$ by hypothesis, we obtain
\[ (c_{i+1} - c_i) + (c_j - c_{j+1})  >  \tau_{i,i+1} + \tau_{j,j+1}. \]
On the other hand, neither $v_iv_j$ nor $v_{i+1}v_{j+1}$ is a consecutive pair, so both are edges, which gives
\[ |c_{i+1} - c_{j+1}| + |c_j - c_i| \leq \tau_{i+1,j+1} + \tau_{i,j}. \]
Since $(c_{i+1} - c_i) + (c_j - c_{j+1}) = (c_{i+1} - c_{j+1}) + (c_j - c_i) \leq |c_{i+1} - c_{j+1}| + |c_j - c_i|$, the assertion follows.
\end{proof}

\begin{theorem}
\label{thm:antihole-bicoloured}
For every $n \geq 5$ we have $\overline{C_n} \notin \bicoloured$.
\end{theorem}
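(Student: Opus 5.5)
Since $\bicoloured \subseteq \sandwich$ (Corollary~\ref{cor:B-in-C}), Theorem~\ref{thm:odd-antiholes} disposes of every odd $n \geq 5$. Also $\overline{C_5} \cong C_5$ is already covered, so it remains to treat even $n = 2m \geq 6$. Fix a bicoloured representation $(\sigma, c, r)$ of $\overline{C_n}$, negated as in Lemma~\ref{lem:antihole-alt}\ref{it:alt-global}, so that even-indexed centres are local minima along the non-edge cycle. By Lemma~\ref{lem:antihole-colourings}, one colour class $R$ is empty, a single vertex, or a consecutive pair $\{v_k, v_{k+1}\}$. All other vertices share the colour of the complementary class.

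The strategy is to sum the inequality of Lemma~\ref{lem:antihole-swap} over a suitable set of pairs $(i,j)$ and reach a contradiction. The sum should pair each non-edge threshold with edge thresholds in a way that makes the two sides comparable. Concretely, for even $i$ take $j = i+3$, which is odd and at cyclic distance $3 \geq 2$ since $n \geq 6$. Lemma~\ref{lem:antihole-swap} gives
\[ \tau_{i,i+1} + \tau_{i+3,i+4} < \tau_{i+1,i+4} + \tau_{i,i+3}. \]
Summing over all even $i$, each non-edge threshold $\tau_{\ell,\ell+1}$ with $\ell$ even appears once on the left, and each with $\ell$ odd appears once as well, via $i = \ell - 3$. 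The right side sums edge thresholds $\tau_{a,a+3}$ over all $a$, each exactly once. This gives
\[ \sum_{\ell} \tau_{\ell,\ell+1} < \sum_{a} \tau_{a,a+3}. \]

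The contradiction should come from the colouring. In the monochromatic case every threshold is $\max\{r_a, r_b\}$. Then the inequality reads $\sum_\ell \max\{r_\ell, r_{\ell+1}\} < \sum_a \max\{r_a, r_{a+3}\}$, which need not fail in general. So this particular summation may be too crude, and I expect choosing the right combination to be the main obstacle.

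My first refinement is to use $j = i+1$'s neighbour pattern more cleverly. Take the pair $(i, j)$ with $j = i-1$, i.e.\ the $4$-cycles $v_i, v_{j}, v_{i+1}, v_{j+1}$, with $i$ and $j$ chosen so that the edge thresholds on the right involve the same radii as the non-edges on the left. Use $\max\{r_a, r_b\} \le$ non-edge thresholds together with $\max \geq$ each radius. Alternatively, I would exploit the slack in Lemma~\ref{lem:antihole-swap}'s proof directly. For the $4$-cycle on $v_i, v_{i+1}, v_j, v_{j+1}$ with $i$ even and $j$ odd, the non-edges give $c_{i+1} - c_i > \max\{r_i, r_{i+1}\}$ and $c_j - c_{j+1} > \max\{r_j, r_{j+1}\}$. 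The edges give $|c_{i+1} - c_{j+1}| \leq \tau$ and $|c_j - c_i| \leq \tau$.

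When all four vertices are monochromatic, the argument of Lemma~\ref{lem:C4-colouring}, Case~2, style (max minus a radius) applies. Choose $i, j$ with $r_i$ maximal among the even-indexed vertices, or a global maximal radius. Then $\max\{r_i, r_j\} = r_i \le \max\{r_i, r_{i+1}\}$, and similarly for the other pair, which contradicts the strict inequality directly. Since $n \geq 6$ and $R$ has at most two consecutive vertices, I can choose the extremal vertex and a partner $j$ at distance $\geq 2$ so that all four vertices avoid $R$; the few small cases where $R$ interferes are handled by picking the maximum over the complement of $R$. If $R$ is nonempty, the bichromatic thresholds $r_a + r_b$ only enter edges adjacent to $R$, and I would choose the quadruple to avoid $R$, needing only that four suitable indices exist off $R$. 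That is where $n \geq 6$ is used.
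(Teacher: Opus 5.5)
Your reduction to even $n \geq 6$ and your tools (Lemmas~\ref{lem:antihole-alt}, \ref{lem:antihole-colourings}, \ref{lem:antihole-swap}) are the right ones. However, the step meant to produce the contradiction fails in both cases.

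\textbf{The monochromatic case.} Take $r_i$ to be the global maximum. Lemma~\ref{lem:antihole-swap} for $(i,j)$ then reads $r_i + \max\{r_j, r_{j+1}\} < \max\{r_{i+1}, r_{j+1}\} + r_i$, which only says $r_{i+1} > \max\{r_j, r_{j+1}\}$. That is perfectly consistent: let $r_{i+1}$ be just below $r_i$ and the other radii small. Your ``similarly for the other pair'' would need the opposite inequality $r_{i+1} \leq \max\{r_j, r_{j+1}\}$, which nothing provides. No single application closes this case. The paper, with $r_0$ maximal, chains three applications:
\begin{itemize}
\item $(0,n-3)$ forces $r_1 > r_{n-2}$;
\item $(2,n-1)$ forces $r_{n-1} > r_2$;
\item for $(n-2,1)$ the right-hand side $\max\{r_{n-1},r_2\} + \max\{r_{n-2},r_1\}$ then collapses to $r_{n-1} + r_1$, which the left-hand side $\max\{r_{n-2},r_{n-1}\} + \max\{r_1,r_2\}$ already attains.
\end{itemize}
Your alternative with $j = i-1$ is not admissible, since Lemma~\ref{lem:antihole-swap} requires cyclic distance at least $2$.

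\textbf{The non-monochromatic case.} Choosing quadruples that avoid $R$ cannot work even in principle. The vertices off $R$ induce $\overline{P_{n-1}}$ or $\overline{P_{n-2}}$. The constant-colour representation of Lemma~\ref{lem:copath}, relabelled and negated if necessary, has exactly the alternating centre pattern you normalised to. The inequality of Lemma~\ref{lem:antihole-swap} depends only on the four vertices involved, so that representation satisfies it for every quadruple off $R$. Hence all such instances are jointly consistent. For $n = 6$ and $R = \{v_0, v_1\}$ there is not even a single admissible quadruple off $R$.

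The missing idea is to use quadruples that meet $R$. There, two bichromatic thresholds share the radius of the $R$-vertex, which cancels and reverses the direction of the resulting inequalities.
\begin{itemize}
\item For $R = \{v_0\}$, the pair $(0,n-3)$ gives $r_1 + \max\{r_{n-3}, r_{n-2}\} < r_{n-3} + \max\{r_1, r_{n-2}\}$, hence $r_1 < \min\{r_{n-3}, r_{n-2}\}$. Similarly, $(2,n-1)$ gives $r_{n-1} < r_2$. The monochromatic quadruple $(n-2,1)$ then has right-hand side $r_2 + r_{n-2}$, which is again at most its left-hand side.
\item For $R = \{v_0, v_1\}$, the pairs $(2,n-1)$ and $(n-2,1)$ give $r_{n-1} < r_2$ and $r_2 < r_{n-1}$ respectively.
\end{itemize}
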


\begin{proof}
Suppose, for contradiction, that $(\sigma, c, r)$ is a bicoloured representation of $\overline{C_n}$.
By Theorem~\ref{thm:odd-antiholes}, $n$ is even, so $n \geq 6$.
By Lemma~\ref{lem:antihole-alt}\ref{it:alt-global} we may assume
\begin{equation}
\label{eq:alt}
 c_i < c_{i-1} \quad\text{and}\quad c_i < c_{i+1}
\quad\text{for every even } i.
\end{equation}
Note that rotating the labels by an even amount preserves~\eqref{eq:alt}.
While rotating the labels by an odd amount reverses the inequalities in~\eqref{eq:alt}, negating every centre restores them.
Hence, we may rotate the labels by any amount, negating the centres if necessary.
By Lemma~\ref{lem:antihole-colourings} one colour class $M$ is empty, a single vertex, or a pair of consecutive vertices.
By rotating the labels and swapping the two colours if necessary, we may assume that $M$ is the class coloured $-$, and that $M$ is $\emptyset$, $\{v_0\}$, or $\{v_0, v_1\}$.

We use the following elementary facts repeatedly.
For positive reals $\alpha, \beta, \lambda$,
\begin{align}
  \label{eq:maxfact}
  \max\{\alpha, \beta\} < \max\{\lambda, \beta\} &\quad\Longrightarrow\quad \lambda > \max\{\alpha, \beta\}, \\
  \label{eq:sumfact}
  \alpha + \max\{\beta, \lambda\} < \beta + \max\{\alpha, \lambda\} &\quad\Longrightarrow\quad \alpha < \min\{\beta, \lambda\}.
\end{align}

{\em Case 1: $M = \emptyset$.}
As $M$ is empty, the colouring is constant, so we may rotate again and assume in addition that $r_0 = \max_k \{ r_k \}$.
Now $\tau_{0,j} = \max\{r_0, r_j\} = r_0$ for each $j = 1, \dots, n-1$.

Since $n \geq 6$ is even, $v_{n-3}$ has odd index, and is at cyclic distance $3$ from $v_0$.
So Lemma~\ref{lem:antihole-swap} with $i = 0$ and $j = n-3$ gives $\tau_{0,1} + \tau_{n-3,n-2} < \tau_{1,n-2} + \tau_{0,n-3}$.
Here $\tau_{0,1}$ and $\tau_{0,n-3}$ both equal $r_0$, so these terms cancel, leaving $\tau_{n-3,n-2} < \tau_{1,n-2}$.
As $M = \emptyset$, every threshold is a maximum, so this reads $\max\{r_{n-3}, r_{n-2}\} < \max\{r_1, r_{n-2}\}$ and, by~\eqref{eq:maxfact}, we obtain $r_1 > \max\{r_{n-3}, r_{n-2}\}$.
In particular, $r_1 > r_{n-2}$.

Similarly, $v_2$ has even index and is at cyclic distance $3$ from $v_{n-1}$.
Lemma~\ref{lem:antihole-swap} with $i = 2$ and $j = n-1$ gives $\tau_{2,3} + \tau_{n-1,0} < \tau_{3,0} + \tau_{2,n-1}$.
Here $\tau_{n-1,0}$ and $\tau_{3,0}$ both equal $r_0$, so these terms cancel, leaving $\tau_{2,3} < \tau_{2,n-1}$, i.e., $\max\{r_2, r_3\} < \max\{r_2, r_{n-1}\}$.
By~\eqref{eq:maxfact}, we obtain $r_{n-1} > \max\{r_2, r_3\}$.
In particular, $r_{n-1} > r_2$.

Finally, $v_{n-2}$ has even index and is at cyclic distance $3$ from $v_1$, so Lemma~\ref{lem:antihole-swap} with $i = n-2$ and $j = 1$ gives
\[
  \max\{r_{n-2}, r_{n-1}\} + \max\{r_1, r_2\}
  <  \max\{r_{n-1}, r_2\} + \max\{r_{n-2}, r_1\}
  =  r_{n-1} + r_1.
\]
But the left-hand side is at least $r_{n-1} + r_1$, a contradiction.

{\em Case 2: $M = \{v_0\}$.}
So $\tau_{0,j} = r_0 + r_j$ for every $j \neq 0$, and every other $\tau$ is a maximum.
The argument here is similar to that of Case~1.
We apply Lemma~\ref{lem:antihole-swap} with $(i,j) = (0, n-3)$, $(2, n-1)$, and $(n-2, 1)$.
For each pair, $i$ is even, $j$ is odd, and the two are at cyclic distance $3$.

With $(i,j) = (0, n-3)$ we obtain $\tau_{0,1} + \tau_{n-3,n-2} < \tau_{1,n-2} + \tau_{0,n-3}$.
Here $\tau_{0,1}$ and $\tau_{0,n-3}$ are sums, so the terms $r_0$ cancel, leaving
\[  r_1 + \max\{r_{n-3}, r_{n-2}\}  <  r_{n-3} + \max\{r_1, r_{n-2}\} . \]
By~\eqref{eq:sumfact}, we obtain $r_1 < \min\{r_{n-3}, r_{n-2}\}$.
In particular, $r_1 < r_{n-2}$.

With $(i,j) = (2, n-1)$ we obtain $\tau_{2,3} + \tau_{n-1,0} < \tau_{3,0} + \tau_{2,n-1}$.
Here $\tau_{n-1,0}$ and $\tau_{3,0}$ are sums, so the terms $r_0$ cancel, leaving
\[  r_{n-1} + \max\{r_3, r_2\}  <  r_3 + \max\{r_{n-1}, r_2\} . \]
By~\eqref{eq:sumfact}, we obtain $r_{n-1} < \min\{r_3, r_2\}$.
In particular, $r_{n-1} < r_2$.

Finally, with $(i,j) = (n-2, 1)$ we obtain $\tau_{n-2,n-1} + \tau_{1,2} < \tau_{n-1,2} + \tau_{n-2,1}$.
None of $v_{n-2}, v_{n-1}, v_1, v_2$ is $v_0$, so every $\tau$ here is a maximum, yielding
\[
  \max\{r_{n-2}, r_{n-1}\} + \max\{r_1, r_2\}
  <  \max\{r_{n-1}, r_2\} + \max\{r_{n-2}, r_1\}
  =  r_2 + r_{n-2}.
\]
But the left-hand side is at least $r_{n-2} + r_2$, a contradiction.

{\em Case 3: $M = \{v_0, v_1\}$.}
Here $\tau_{i,j}$ is a sum precisely when exactly one of $v_i, v_j$ lies in $M$.
We apply Lemma~\ref{lem:antihole-swap} with $(i,j) = (2, n-1)$ and $(n-2, 1)$. 

With $(i,j) = (2, n-1)$ we have $r_{n-1} < r_2$ as in the argument for Case~2.
With $(i,j) = (n-2, 1)$ we obtain $\tau_{n-2,n-1} + \tau_{1,2} < \tau_{n-1,2} + \tau_{n-2,1}$.
The pairs $v_{n-2}v_{n-1}$ and $v_{n-1}v_2$ avoid $M$, while $v_1v_2$ and $v_{n-2}v_1$ each meet it once, so the terms $r_1$ cancel, giving
\[  r_2 + \max\{r_{n-2}, r_{n-1}\}  <  r_{n-2} + \max\{r_2, r_{n-1}\} . \]
By~\eqref{eq:sumfact}, we obtain $r_2 < \min\{r_{n-2}, r_{n-1}\}$.
In particular, $r_2 < r_{n-1}$, contradicting $r_{n-1} < r_2$.
\end{proof}

Proposition~\ref{prop:even-antiholes} and Theorem~\ref{thm:antihole-bicoloured} together separate $\bicoloured$ and $\sandwich$, and do so infinitely often.
Specifically, $\overline{C_{2m}} \in \sandwich \setminus \bicoloured$ for every $m \geq 3$, the smallest example being $\overline{C_6}$.

For the corresponding minimality, we require the following.

\begin{lemma}
\label{lem:copath}
For every $k \geq 1$ we have $\overline{P_k} \in \bicoloured$, and hence $\overline{P_k} \in \sandwich$.
\end{lemma}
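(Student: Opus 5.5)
The plan is to give an explicit monochromatic representation. Taking all vertices coloured $+$ makes $\tau_{uw} = \max\{r_u, r_w\}$ for every pair, so the representation is really a $50\%$-tolerance representation. The membership $\overline{P_k} \in \sandwich$ then follows at once from Corollary~\ref{cor:B-in-C}.

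Label the path $v_1, \dots, v_k$, so that the non-edges of $\overline{P_k}$ are exactly the pairs $v_iv_{i+1}$. Every pair $v_iv_j$ with $j \geq i+2$ is an edge. Guided by Lemma~\ref{lem:antihole-alt} and Proposition~\ref{prop:even-antiholes}, I would let the centres alternate sides of $0$ and let the radii grow geometrically. Concretely, set
\[ r_i = 2^i, \qquad c_i = (-1)^i \tfrac34 r_i . \]
The growth makes $\max\{r_i, r_j\} = r_j$ for $i < j$, so every condition only involves the larger radius.

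Three checks remain, each a one-line inequality; the only real work is choosing the constants, which is done above.
\begin{enumerate}
  \item Non-edges $v_iv_{i+1}$. The two centres lie on opposite sides of $0$, so $|c_i - c_{i+1}| = \tfrac34(r_i + r_{i+1}) = \tfrac34 \cdot \tfrac32 r_{i+1} = \tfrac98 r_{i+1} > r_{i+1}$, as required.
  \item Edges with $j - i \geq 2$ even. The centres lie on the same side of $0$, so $|c_i - c_j| = \tfrac34(r_j - r_i) \leq r_j$.
  \item Edges with $j - i \geq 3$ odd. The centres lie on opposite sides, and $r_i \leq r_j/8$, so $|c_i - c_j| = \tfrac34(r_i + r_j) \leq \tfrac34 \cdot \tfrac98 r_j = \tfrac{27}{32} r_j \leq r_j$.
\end{enumerate}

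The only potential obstacle is balancing the last two constraints. The non-edge bound needs the scale $s = \tfrac34$ to satisfy $s(1 + 1/\lambda) > 1$, while the odd-distance edge bound needs $s(1 + \lambda^{-3}) \leq 1$, where $\lambda = 2$ is the growth ratio. An interval of valid $s$ exists for every $\lambda > 1$, and $\lambda = 2$, $s = \tfrac34$ is one concrete choice. The cases $k \leq 2$ are trivial, and the construction above also covers them.
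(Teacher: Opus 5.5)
Your proposal is correct and takes essentially the paper's approach: an all-$+$ (that is, $50\%$-tolerance) representation with centres alternating in sign around $0$ and radii increasing in the index, checked by splitting on $j-i = 1$, $j-i$ even, and $j-i \geq 3$ odd. The only difference is that the paper grows the radii linearly, with $r_i = 2i+2$ and $c_i = (-1)^{i+1}(i+2)$, whereas you grow them geometrically; this is immaterial for the lemma.
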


\begin{proof}
Let $\overline{P_k}$ have vertex set $\{v_0, \dots, v_{k-1}\}$, so that its non-edges are the $k-1$ pairs $v_iv_{i+1}$, and $v_iv_j \in E(\overline{P_k})$ if and only if $|i - j| \geq 2$.

Colour every vertex $+$ and set
\[  r_i = 2i+2, \quad c_i = (-1)^{i+1}(i+2) \quad\text{ for each}\quad 0 \leq i \leq k-1. \]
Every radius is positive and the colouring is constant, so for $i < j$ the pair $v_iv_j$ is an edge of the represented graph if and only if $|c_i - c_j| \leq \max\{r_i, r_j\} = 2j+2$.

If $j - i$ is even then $c_i$ and $c_j$ have the same sign, so $|c_i - c_j| = j - i < 2j+2$, an edge.
As $j - i$ is even and positive, $j - i \geq 2$ and $v_iv_j \in E(\overline{P_k})$.

If $j - i$ is odd the signs differ and $|c_i - c_j| = i + j + 4$.
For $j = i+1$ this is $2j+3 > 2j+2$, a non-edge, and indeed $v_iv_j \notin E(\overline{P_k})$.
For $j - i \geq 3$ we have $i \leq j-3$, so $i + j + 4 \leq 2j+1 < 2j+2$, an edge, and $j - i \geq 2$ gives $v_iv_j \in E(\overline{P_k})$.
\end{proof}

Deleting any vertex from $\overline{C_n}$ leaves $\overline{P_{n-1}}$, so by Lemma~\ref{lem:copath} every proper induced subgraph of $\overline{C_n}$ lies in $\bicoloured$.
With Theorem~\ref{thm:antihole-bicoloured} it follows that, for $n \geq 5$, the antihole $\overline{C_n}$ is a minimal forbidden induced subgraph for $\bicoloured$, and by Theorem~\ref{thm:odd-antiholes} it is one for $\sandwich$ as well when $n$ is odd.

\subsection{Trees}
\label{sec:trees}

Recall that a {\em caterpillar} is a tree in which some path contains every vertex of degree at least two.
Let $T_2$ be the {\em spider} with three legs of length two, i.e., the tree consisting of three paths of length two joined at a common vertex.
Here we follow the notation and naming of Golumbic and Trenk~\cite{golumbic2004tolerance}.

We start by recalling the following classical fact for trees.

\begin{fact}[\cite{golumbic2004tolerance}*{Theorem~3.2}]
\label{fact:trees}
For a tree $T$ the following are equivalent:
\begin{enumerate}[label=\normalfont(\roman*)]
  \item\label{it:tree-T2} $T$ has no subtree isomorphic to $T_2$;
  \item\label{it:tree-cat} $T$ is a caterpillar;
  \item\label{it:tree-int} $T$ is an interval graph~\cite{lekkerkerker-boland};
  \item\label{it:tree-bd} $T$ is a bounded tolerance graph~\cite{golumbic-monma-trotter}.
\end{enumerate}
\end{fact}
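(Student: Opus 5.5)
The plan is to prove the cycle of implications (i)$\Rightarrow$(ii)$\Rightarrow$(iii)$\Rightarrow$(iv)$\Rightarrow$(i). Throughout I use that a subtree of a tree is an induced subgraph. The reason is that adding an edge of $T$ between two vertices of a subtree would create a cycle.

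\emph{(i)$\Rightarrow$(ii).} Let $T'$ be $T$ with all its leaves deleted. Then $T$ is a caterpillar if and only if $T'$ is a path, or is empty.
\begin{itemize}
\item Suppose $T'$ is not a path. Since $T'$ is a tree, it has a vertex $v$ of degree at least $3$ in $T'$.
\item Let $a_1,a_2,a_3$ be three of its neighbours in $T'$. Each $a_j$ is not a leaf of $T$, so it has a further neighbour $b_j \neq v$ in $T$.
\item The vertices $b_j$ are distinct, and distinct from all other chosen vertices, because $T$ is acyclic.
\item Hence $v,a_1,a_2,a_3,b_1,b_2,b_3$ span a subtree isomorphic to $T_2$.
\end{itemize}
So a tree without a $T_2$ subtree is a caterpillar.

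\emph{(ii)$\Rightarrow$(iii).} Let the spine be $u_1,\dots,u_p$. Give $u_i$ the interval $[i, i+1]$. For a leaf $\ell$ attached to $u_i$, give it a short interval inside $(i+\tfrac13, i+\tfrac23)$, with distinct leaves of $u_i$ receiving disjoint intervals.
\begin{itemize}
\item Consecutive spine intervals meet only at their common endpoint.
\item Non-consecutive spine intervals are disjoint.
\item A leaf interval meets only the interval of its own attachment vertex.
\end{itemize}
If $T$ has no spine vertex, then $T$ is $K_1$ or $K_2$, which is trivial.

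\emph{(iii)$\Rightarrow$(iv).} This follows from Fact~\ref{fact:classical}\ref{it:int-unit}, since a unit tolerance representation with $t_u \le |I_u|$ is bounded. Concretely, Proposition~\ref{prop:unit-in-B} and Proposition~\ref{prop:B-parallelogram} give $\interval \subseteq \bicoloured \subseteq \bounded$.

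\emph{(iv)$\Rightarrow$(i).} This is the only step with real content. Since the class is hereditary and subtrees are induced, it suffices to show $T_2 \notin \bounded = \parallelogram$. The three leaves of $T_2$ form an asteroidal triple: the path between any two of them passes only through their legs and the centre, all of which avoid the closed neighbourhood of the third leaf. So I will prove that parallelogram graphs are AT-free, imitating Proposition~\ref{prop:AT-free}.
\begin{enumerate}
\item Two disjoint parallelograms $P_u,P_w$ are strictly ordered, one to the left of the other on both $\ell_1$ and $\ell_2$. This uses the separating-line argument in the proof of Proposition~\ref{prop:B-parallelogram}.
\item If $S$ induces a connected subgraph avoiding $N[v]$, then $\bigcup_{s\in S}P_s$ is a connected set disjoint from $P_v$. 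It is a connected subset of the strip minus $P_v$, so it lies entirely left or entirely right of $P_v$.
\item Order the three pairwise disjoint parallelograms of an asteroidal triple $u,v,w$ left to right, say with $v$ in the middle. The $u$--$w$ path avoiding $N[v]$ must lie on one side of $P_v$, which is a contradiction.
\end{enumerate}

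The main obstacle is the topological claim in step 2. The set of points of the strip minus $P_v$ has exactly two components, one to the left and one to the right. This is clear from convexity, since $P_v$ spans the strip from $\ell_1$ to $\ell_2$. With that claim in hand, the argument above completes the cycle of implications.
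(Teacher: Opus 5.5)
Your cycle of implications is correct. The paper gives no proof of this statement of its own: it quotes the result from Golumbic and Trenk, attributing (iii) to Lekkerkerker and Boland and (iv) to Golumbic, Monma, and Trotter. So the comparison below is with the standard argument.

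Two small points first.

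\begin{enumerate}
\item In (iii)$\Rightarrow$(iv), your first justification does not work as stated, because a unit tolerance representation need not satisfy $t_u \leq |I_u|$. What rescues it is $\unit = \halftolerance$ (Fact~\ref{fact:classical}\ref{it:unit-half}), since there $t_u = |I_u|/2 \leq |I_u|$. This gives $\interval \subseteq \unit = \halftolerance \subseteq \bounded$ directly. Your ``concretely'' chain through $\bicoloured$ is also valid, and it is not circular: Propositions~\ref{prop:unit-in-B} and~\ref{prop:B-parallelogram} do not use the tree fact.
\item In step~2 of (iv)$\Rightarrow$(i), you can drop the topological two-components claim. Suppose $P_s$ is strictly left of $P_v$ on both lines and $P_v$ is strictly left of $P_{s'}$. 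Then $P_s$ is strictly left of $P_{s'}$ on both lines, so the two are disjoint. Hence adjacent vertices of $S$ lie on the same side of $P_v$. This is exactly the propagation of Lemma~\ref{lem:additivity}\ref{it:add-far} and Corollary~\ref{cor:propagate}.
\end{enumerate}

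As for the route, the standard proof of (iv)$\Rightarrow$(i) combines three facts:
\begin{itemize}
\item bounded tolerance graphs are co-comparability graphs;
\item co-comparability graphs are AT-free;
\item the leaves of $T_2$ form an asteroidal triple.
\end{itemize}
The link to (iii) then comes from the Lekkerkerker--Boland characterisation of interval graphs as the chordal AT-free graphs.

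Your (iv)$\Rightarrow$(i) is instead the parallelogram analogue of the paper's own Proposition~\ref{prop:AT-free} and Corollary~\ref{cor:spider}. The relation ``strictly left on both lines'' is a transitive orientation of the complement. So you are in effect proving $\parallelogram \subseteq \cocomparability$ and AT-freeness in a single geometric step.

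What each approach buys:
\begin{itemize}
\item Your version is elementary and matches the paper's style. It does rest on $\bounded = \parallelogram$, which is itself a nontrivial cited theorem.
\item The classical version needs $\bounded \subseteq \cocomparability$ in its place.
\item Your (i)$\Rightarrow$(ii)$\Rightarrow$(iii) is explicit and constructive, whereas the cited route obtains these equivalences from the general interval-graph characterisation.
\end{itemize}
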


In particular, $T_2 \notin \bounded$, although it is a tolerance graph~\cite{golumbic-monma-trotter}*{Theorems~6~and~7}.
By Fact~\ref{fact:trees}\ref{it:tree-int} and Proposition~\ref{prop:unit-in-B}, every caterpillar lies in $\bicoloured$, and hence in $\sandwich$.
This leaves the trees containing $T_2$ as a subtree, and a subtree of a tree is necessarily an induced subgraph.
We show that these lie outside $\sandwich$.
\begin{corollary}
\label{cor:spider}
The spider $T_2$ is not in $\sandwich$, and hence not in $\bicoloured$.
\end{corollary}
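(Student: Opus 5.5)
The plan is to show that $T_2$ contains an asteroidal triple and then invoke Proposition~\ref{prop:AT-free}. The statement is labelled a corollary, and this route uses nothing beyond that proposition together with Corollary~\ref{cor:B-in-C}.

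Label $T_2$ with centre $z$, middle vertices $a_1, a_2, a_3$ adjacent to $z$, and leaves $b_1, b_2, b_3$ with $b_i$ adjacent to $a_i$. We claim that the three leaves $b_1, b_2, b_3$ form an asteroidal triple.

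\begin{itemize}
\item They are pairwise non-adjacent, since each leaf is adjacent only to its own $a_i$.
\item For distinct $i, j$, let $k$ be the third index. The path $b_i a_i z a_j b_j$ avoids $N[b_k] = \{b_k, a_k\}$.
\end{itemize}

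Hence $\{b_1, b_2, b_3\}$ is an asteroidal triple, and Proposition~\ref{prop:AT-free} gives $T_2 \notin \sandwich$. Corollary~\ref{cor:B-in-C} gives $\bicoloured \subseteq \sandwich$, so $T_2 \notin \bicoloured$ as well.

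The argument is routine; the only point to check is that the connecting paths avoid the closed neighbourhood of the third leaf.

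For the surrounding discussion, since both classes are hereditary, every tree containing $T_2$ as a subtree also lies outside both classes. Minimality of $T_2$ would follow because deleting any vertex yields a forest of caterpillars, which is an interval graph and hence lies in $\bicoloured$ by Proposition~\ref{prop:unit-in-B}. That remark is not needed for the corollary itself.
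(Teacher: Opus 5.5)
Your proof is correct and follows the paper's own argument: the three leaves form an asteroidal triple because each path $b_i a_i z a_j b_j$ avoids $N[b_k] = \{a_k, b_k\}$. Proposition~\ref{prop:AT-free} then excludes $T_2$ from $\sandwich$, and Corollary~\ref{cor:B-in-C} carries the exclusion over to $\bicoloured$.
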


\begin{proof}
Write $z$ for the centre of $T_2$, let $y_0, y_1, y_2$ be its leaves, and let $x_i$ be the middle vertex of the leg containing $y_i$.
The leaves are pairwise non-adjacent, and for $\{i, j, k\} = \{0, 1, 2\}$ the path $y_i x_i z x_j y_j$ avoids $N[y_k] = \{x_k, y_k\}$.
So the leaves form an asteroidal triple, and Proposition~\ref{prop:AT-free} applies.
\end{proof}

Along with Fact~\ref{fact:trees} this settles trees completely.

\begin{corollary}
\label{cor:trees}
For a tree $T$ the following are equivalent:
\begin{enumerate}[label=\normalfont(\roman*)]
  \item\label{it:cat} $T$ is a caterpillar;
  \item\label{it:cat-B} $T \in \bicoloured$;
  \item\label{it:cat-C} $T \in \sandwich$.
\end{enumerate}
\end{corollary}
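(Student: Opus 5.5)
We prove the equivalence as a cycle of three implications, \ref{it:cat}$\Rightarrow$\ref{it:cat-B}$\Rightarrow$\ref{it:cat-C}$\Rightarrow$\ref{it:cat}, using only results already stated.

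For \ref{it:cat}$\Rightarrow$\ref{it:cat-B}, let $T$ be a caterpillar. By Fact~\ref{fact:trees} (the equivalence of \ref{it:tree-cat} and \ref{it:tree-int}), $T$ is an interval graph. Proposition~\ref{prop:unit-in-B} gives $\interval \subseteq \bicoloured$, so $T \in \bicoloured$.

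The implication \ref{it:cat-B}$\Rightarrow$\ref{it:cat-C} is immediate from Corollary~\ref{cor:B-in-C}, which states $\bicoloured \subseteq \sandwich$.

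For \ref{it:cat-C}$\Rightarrow$\ref{it:cat}, we argue by contrapositive. Suppose the tree $T$ is not a caterpillar. By Fact~\ref{fact:trees} (the equivalence of \ref{it:tree-T2} and \ref{it:tree-cat}), $T$ has a subtree isomorphic to $T_2$. This subtree is an induced subgraph of $T$: if two of its vertices were adjacent in $T$ but not in the subtree, adding that edge to the subtree would create a cycle in $T$, which is impossible. By Corollary~\ref{cor:spider}, $T_2 \notin \sandwich$. Since $\sandwich$ is hereditary (Observation~\ref{obs:C-hereditary}), it follows that $T \notin \sandwich$.

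This closes the cycle of implications. The only step needing any care is that a subtree is induced, and that is the one-line acyclicity argument above; every other step is a direct appeal to an earlier result.
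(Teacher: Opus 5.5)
Your proof is correct and follows the paper's own route. Caterpillars are interval graphs, hence lie in $\bicoloured \subseteq \sandwich$ by Proposition~\ref{prop:unit-in-B} and Corollary~\ref{cor:B-in-C}, while a non-caterpillar contains $T_2$ as an induced subtree, which lies outside $\sandwich$ by Corollary~\ref{cor:spider} and heredity; your acyclicity argument just makes explicit the fact, stated without proof in the paper, that a subtree of a tree is induced.
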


Note that every proper induced subgraph of $T_2$ is a disjoint union of caterpillars, hence an interval graph, and so lies in $\bicoloured$ by Proposition~\ref{prop:unit-in-B}.
With Corollary~\ref{cor:spider}, it follows that $T_2$ is a minimal forbidden induced subgraph for both $\sandwich$ and $\bicoloured$.

\subsection{Complete bipartite graphs}
\label{sec:bipartite}

Throughout, $K_{3,3}$ has parts $\{v_0, v_1, v_2\}$ and $\{v_3, v_4, v_5\}$.

\begin{proposition}
\label{prop:K33-not-C}
$K_{3,3} \notin \sandwich$.
\end{proposition}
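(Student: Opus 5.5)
The plan is to play the two sides of $K_{3,3}$ against each other using Lemma~\ref{lem:additivity}: part~\ref{it:add-cover} shows that some point lies in two intervals, and part~\ref{it:add-far} shows that those two intervals are disjoint.

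Suppose, for contradiction, that $(c, r)$ is a sandwich representation of $K_{3,3}$. The part $\{v_0, v_1, v_2\}$ is independent, so by Observation~\ref{obs:representation}\ref{it:wa-nonedge} its three centres are pairwise distinct. After relabelling we may assume $c_0 < c_1 < c_2$. Likewise, after relabelling the other part, we may assume $c_3 < c_4 < c_5$.

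First, I use the part $\{v_0, v_1, v_2\}$. Both $v_0v_1$ and $v_1v_2$ are non-edges, and $c_0 < c_1 < c_2$. Each of $v_3$ and $v_5$ is adjacent to both $v_0$ and $v_2$. Lemma~\ref{lem:additivity}\ref{it:add-cover} then gives $c_1 \in I_3$ and $c_1 \in I_5$. In particular $I_3 \cap I_5 \neq \emptyset$.

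Second, I use the part $\{v_3, v_4, v_5\}$. Both $v_3v_4$ and $v_4v_5$ are non-edges, and $c_3 < c_4 < c_5$. Lemma~\ref{lem:additivity}\ref{it:add-far} gives $c_5 - c_3 > r_3 + r_5$, so $I_3 \cap I_5 = \emptyset$. This contradicts the first step, and hence $K_{3,3} \notin \sandwich$.

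I do not expect a real obstacle. The only points needing care are that the relabelling makes the middle vertex of each part the one with the middle centre, and that the hypotheses of Lemma~\ref{lem:additivity} are exactly the two non-edges through that middle vertex. Both hold automatically, since each part is independent and every cross pair is an edge.
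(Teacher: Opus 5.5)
Your proof is correct, but it takes a slightly different route from the paper's.

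The paper applies Lemma~\ref{lem:additivity}\ref{it:add-far} to both parts, obtaining $c_2 - c_0 > r_0 + r_2$ and $c_5 - c_3 > r_3 + r_5$. It then plays these against the two ``crossing'' edges $v_2v_3$ and $v_0v_5$. Their constraints $c_2 - c_3 \leq r_2 + r_3$ and $c_5 - c_0 \leq r_0 + r_5$ have sum with the same left-hand side $(c_2-c_0)+(c_5-c_3)$ after regrouping the four centres. This is the same summation trick as in Case~1 of Lemma~\ref{lem:C4-colouring}, and it uses only two of the nine cross edges.

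You instead apply part~\ref{it:add-cover} twice on one side, which exhibits an explicit common point $c_1 \in I_3 \cap I_5$. You then apply part~\ref{it:add-far} on the other side to separate $I_3$ from $I_5$. This costs four cross edges but needs no arithmetic beyond what the lemma already packages.

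Your argument is essentially the sandwich form of Corollary~\ref{cor:forcing}\ref{it:forcing-geo}, whose proof uses only the sandwich conditions. There, $v_3, v_5$ play the disjoint pair, $v_0, v_2$ their non-adjacent common neighbours, and $v_1$ the vertex whose centre cannot lie between $c_0$ and $c_2$.

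Either argument is complete. The paper's version is the more economical in edges, and yours gives the more geometric picture.
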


\begin{proof}
Suppose, for contradiction, that $(c, r)$ is a sandwich representation of $K_{3,3}$.
By definition the three centres in each part are distinct, so we may relabel the vertices so that
\[  c_0 < c_1 < c_2 \quad\text{and}\quad c_3 < c_4 < c_5 . \]
The pairs $v_0v_1$ and $v_1v_2$ are non-edges, so Lemma~\ref{lem:additivity}\ref{it:add-far} gives $c_2 - c_0 > r_0 + r_2$.
The same argument in the other part gives $c_5 - c_3 > r_3 + r_5$.
Together, these imply
\[  (c_2 - c_0) + (c_5 - c_3)  >  r_0 + r_2 + r_3 + r_5. \]

On the other hand, $v_2v_3$ and $v_0v_5$ are edges, so $c_2 - c_3 \leq r_2 + r_3$ and $c_5 - c_0 \leq r_0 + r_5$, yielding
\[
  (c_2 - c_0) + (c_5 - c_3)  =  (c_2 - c_3) + (c_5 - c_0)
  \leq  r_0 + r_2 + r_3 + r_5 ,
\]
a contradiction.
\end{proof}

Removing a single edge destroys the obstruction.

\begin{proposition}
\label{prop:K33e-in-B}
$K_{3,3} - e \in \bicoloured$, and hence $K_{3,3} - e \in \sandwich$.
\end{proposition}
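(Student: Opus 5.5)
Since the statement is a membership claim, I plan to prove it by writing down an explicit bicoloured representation and checking all fifteen pairs. By Corollary~\ref{cor:B-in-C}, a bicoloured representation is automatically a sandwich representation, so the second assertion then comes for free.

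\textbf{Where the missing edge must sit.} The proof of Proposition~\ref{prop:K33-not-C} tells us this. Order the centres within each part, $c_0<c_1<c_2$ and $c_3<c_4<c_5$. The contradiction there used only the edges $v_2v_3$ and $v_0v_5$: each joins an extreme vertex of one part to the opposite extreme vertex of the other. So any representation of $K_{3,3}-e$ must have $e$ joining, say, the rightmost vertex of one part to the leftmost of the other. I therefore take $e=v_2v_3$ and aim for a layout in which the two parts ``cross''. The centres of $v_0,v_1,v_2$ should increase, those of $v_3,v_4,v_5$ should increase, and $I_2$ should lie to the right of $I_3$ and miss it.

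\textbf{Choice of colouring.} Because the within-part non-edges force $c_2-c_0>r_0+r_2$ and $c_5-c_3>r_3+r_5$ (Lemma~\ref{lem:additivity}\ref{it:add-far}), the cross edges need the generous threshold $r_u+r_w$. So I first try colouring $\{v_0,v_1,v_2\}$ with $+$ and $\{v_3,v_4,v_5\}$ with $-$. Then the nine cross pairs are governed by interval intersection and the six within-part pairs by the $\max$ rule. The constraints become:
\begin{itemize}
  \item the intervals $I_0,I_1,I_2$ are pairwise ``centre-free'', i.e.\ none contains another's centre, and likewise $I_3,I_4,I_5$;
  \item every $I_i$ with $i\le 2$ meets every $I_j$ with $j\ge 3$, except that $I_2\cap I_3=\emptyset$.
\end{itemize}

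\textbf{Main obstacle: the middle vertices.} A middle vertex such as $v_4$ (and symmetrically $v_1$) must meet all three intervals of the other part, yet cannot contain the centres of its own neighbours in its part. A single long interval is therefore blocked by Observation~\ref{obs:representation}\ref{it:wa-nonedge}. I will try interleaving the two parts as $c_0<c_3<c_1<c_4<c_2<c_5$, taking the extreme vertices $v_0,v_5$ long and the middle vertices moderate. Concretely, I first try $I_0$ and $I_5$ with radius about $3$, $I_1$ and $I_4$ with radius about $2$, and $I_2,I_3$ short, placed near the middle so that they are disjoint but each meets the long intervals from the other side. I then tune the numbers.

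If the bichromatic split cannot be tuned to work, the fallback is a different colouring. For example, make $v_2$ and $v_3$ the same colour as each other, so that their non-edge uses the $\max$ threshold and becomes easier to realise, and keep the remaining cross pairs bichromatic where possible. Any induced $C_4$ (for instance $v_0v_3v_1v_4$) must respect Lemma~\ref{lem:C4-colouring}, so I will rule out colourings making opposite edges of such a $4$-cycle monochromatic with mixed colours. Once numbers are fixed, the verification is a finite table of fifteen inequalities, one per pair, each checked against $\tau_{uw}$.
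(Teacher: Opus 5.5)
\textbf{Genuine gap.} Your strategy matches the paper's: an explicit bicoloured representation with $+$ on $\{v_0,v_1,v_2\}$ and $-$ on $\{v_3,v_4,v_5\}$, followed by Corollary~\ref{cor:B-in-C}. Your observation that $e$ must join the rightmost vertex of one part to the leftmost vertex of the other is also correct. But for a membership statement the entire content of the proof is the representation and the fifteen checks, and you never produce one. Worse, the one concrete layout you commit to cannot be tuned into a representation.

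Your interleaving $c_0<c_3<c_1<c_4<c_2<c_5$ has $c_0<c_1$ and $c_4<c_5$. The non-edges $v_0v_1$ and $v_4v_5$ then give $c_1-c_0>\max\{r_0,r_1\}\geq r_0$ and $c_5-c_4>\max\{r_4,r_5\}\geq r_5$. The edge $v_0v_5$ gives $c_5-c_0\leq r_0+r_5$. Hence
\[ (c_1-c_0)+(c_5-c_4) > r_0+r_5 \geq c_5-c_0 , \]
that is, $c_4<c_1$, contradicting $c_1<c_4$ in your order. This uses only the two implications of Definition~\ref{def:sandwich}, so it holds for every colouring. Neither tuning the radii nor your fallback of changing the colouring can rescue that centre order.

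The repair is to swap the two middle vertices, so that the middle vertex of the part containing the left endpoint of $e$ lies to the left of the middle vertex of the other part. For instance, keep your colouring and take
\[ c=(-6,\tfrac12,4,-4,-\tfrac12,6) \quad\text{and}\quad r=(6,3,2,2,3,6). \]
This gives the order $c_0<c_3<c_4<c_1<c_2<c_5$, with long extremes, moderate middles, and short $v_2,v_3$, as you intended. Every within-part distance exceeds the corresponding maximum of radii: the distances are $6.5,10,3.5$ and $3.5,10,6.5$ against $6,6,3$ and $3,6,6$. Every cross distance is at most the sum of radii except $|c_2-c_3|=8>4$. The paper instead deletes $v_0v_3$, uses radii $5$ on the first part and $3$ on the second, and places the centres at $(12,6,0,0,4,8)$. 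Its middle vertices satisfy the same condition: $c_{v_4}=4<6=c_{v_1}$.
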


\begin{proof}
Suppose, without loss of generality, that the deleted edge is $v_0v_3$.
Colour $v_0, v_1, v_2$ with $+$ and $v_3, v_4, v_5$ with $-$, and set
\[
  r = (5,5,5,3,3,3), \quad\text{ and }\quad c = (12, 6, 0, 0, 4, 8).
\]
Two vertices in the same part are monochromatic, so their threshold is the radius common to that part.
Within the first part the centres are at pairwise distance at least $6 > 5$, and within the second at pairwise distance at least $4 > 3$, so all six such pairs are non-edges.
Two vertices in different parts are bichromatic, so their threshold is $5 + 3 = 8$.
The only such pair at distance more than $8$ is $v_0v_3$, at distance $12$, so the remaining eight pairs are edges.
\end{proof}

Deleting $v_0$ from the representation above leaves a representation of $K_{2,3}$, and every proper induced subgraph of $K_{3,3}$ is an induced subgraph of $K_{2,3}$.
Hence, we have the following.

\begin{corollary}
\label{cor:K33-minimal}
Every proper induced subgraph of $K_{3,3}$ lies in $\bicoloured$.
\end{corollary}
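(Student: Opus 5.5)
The statement to prove is Corollary~\ref{cor:K33-minimal}: every proper induced subgraph of $K_{3,3}$ lies in $\bicoloured$. Since $\bicoloured$ is hereditary (Observation~\ref{obs:B-hereditary}), it suffices to exhibit a bicoloured representation of each maximal proper induced subgraph. These are the vertex-deleted subgraphs $K_{3,3} - v$. By the symmetry of $K_{3,3}$, every such subgraph is isomorphic to $K_{2,3}$. So the plan reduces to a single task: produce one bicoloured representation of $K_{2,3}$, then invoke heredity.

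For that representation I would reuse Proposition~\ref{prop:K33e-in-B} rather than build anything new. Take its representation of $K_{3,3} - v_0v_3$, with
\[ r = (5,5,5,3,3,3), \quad c = (12,6,0,0,4,8), \]
the first part coloured $+$ and the second $-$. Restrict it to $V \setminus \{v_0\}$; by the remark after Definition~\ref{def:bicoloured}, this restriction is a bicoloured representation of $(K_{3,3} - v_0v_3) - v_0$.

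The one check needed is that this graph is exactly $K_{3,3} - v_0 \cong K_{2,3}$. The only edge removed from $K_{3,3}$, namely $v_0v_3$, is incident to the deleted vertex $v_0$, so deleting $v_0$ discards it anyway. The remaining graph therefore has parts $\{v_1,v_2\}$ and $\{v_3,v_4,v_5\}$ with all six cross edges present and no edges inside either part, which is $K_{2,3}$.

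To conclude, let $H$ be any proper induced subgraph of $K_{3,3}$. It misses some vertex, so it is an induced subgraph of some $K_{3,3} - v \cong K_{2,3}$. Hence $H \in \bicoloured$ by heredity. I see no real obstacle here; the only point needing care is confirming that the deleted edge of Proposition~\ref{prop:K33e-in-B} is incident to the deleted vertex, and the choice $v_0$ ensures this.
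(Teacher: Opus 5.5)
Your proof is correct and matches the paper's argument: delete $v_0$ from the representation in Proposition~\ref{prop:K33e-in-B} to get a bicoloured representation of $K_{2,3} \cong K_{3,3} - v$, then use heredity, since every proper induced subgraph is induced in some $K_{3,3} - v$. Your remark that the removed edge $v_0v_3$ is incident to $v_0$ is exactly the point the paper leaves implicit.
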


Along with Proposition~\ref{prop:K33-not-C}, we obtain that $K_{3,3}$ is a minimal forbidden induced subgraph for both $\sandwich$ and $\bicoloured$.

If $s, t \geq 3$ then $K_{s,t}$ contains $K_{3,3}$ as an induced subgraph, so Proposition~\ref{prop:K33-not-C} and Observation~\ref{obs:C-hereditary} imply $K_{s,t} \notin \sandwich$.
Complete bipartite graphs with a part of size at most two, on the other hand, all belong to $\bicoloured$.

\begin{proposition}
\label{prop:K2t}
$K_{2,t} \in \bicoloured$ for every $t \geq 1$.
\end{proposition}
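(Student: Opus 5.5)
We will give an explicit bicoloured representation, modelled on the one in Proposition~\ref{prop:K33e-in-B}. Let the part of size two be $\{u_0, u_1\}$ and the part of size $t$ be $\{w_1, \dots, w_t\}$. Colour $u_0, u_1$ with $+$ and every $w_j$ with $-$, so that every edge $u_iw_j$ is bichromatic and every non-edge is monochromatic. Each non-edge is then tested against $\max\{r,r'\}$ and each edge against $r+r'$.

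We take equal radii $R$ for $u_0, u_1$ and equal radii $s$ for all $w_j$. We spread the $w_j$ along an interval with consecutive gaps exceeding $s$, say $c_{w_j} = 2sj$ for $j = 1, \dots, t$. Monochromatic pairs $w_jw_k$ are then at distance at least $2s > s = \max\{s,s\}$, so they are non-edges as required.

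We place $u_0, u_1$ so that each sees all $w_j$ through the threshold $R + s$, while the two are far enough apart to exceed $R$. Let $L = 2st$ be the right end of the $w$-centres, and put $c_{u_0} = 0$ and $c_{u_1} = L + 2s$. Then $|c_{u_0} - c_{u_1}| = L + 2s$. The largest distance from $u_0$ to any $w_j$ is $L$, and similarly from $u_1$ it is at most $L$. It therefore suffices to choose $R$ with
\[ L - s \le R < L + 2s, \]
for instance $R = L$. This gives all $2t$ edges, since each distance is at most $L \le R + s$, and the non-edge $u_0u_1$, since $L + 2s > R = \max\{R, R\}$.

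The only step needing care is reconciling the two constraints on $R$: it must be large enough that both $u_i$ reach every $w_j$ through $R + s$, yet smaller than the separation of $u_0$ and $u_1$. Placing the two $+$ vertices symmetrically outside the span of the $w_j$ makes this window nonempty, with width $3s$. The remaining verification is the routine pairwise check, which also covers $t = 1, 2$ without change.
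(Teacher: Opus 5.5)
Your proposal is correct and is essentially the paper's construction: $+$ on the part of size two, $-$ on the other part, equal radii within each part, evenly spaced $-$ centres, and the two $+$ centres separated by more than their common radius while every $w_j$ lies within $R+s$ of both. The only difference is placement. The paper puts the $+$ centres at the two extreme $-$ centres with radius $2t-3$, which forces it to treat $t=1$ separately via caterpillars. Your placement just outside the span leaves a window of width $3s$ for $R$, and so covers $t=1$ uniformly.
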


\begin{proof}
For $t = 1$ this is $P_3$, a caterpillar, so Corollary~\ref{cor:trees} applies.
For $t \geq 2$, write the parts as $\{a_0, a_1\}$ and $\{b_0, \dots, b_{t-1}\}$, colour $a_0$ and $a_1$ with $+$ and every $b_i$ with $-$, and set
\[
  r_{a_0} = r_{a_1} = 2t-3, \quad c_{a_0} = 0, \quad c_{a_1} = 2t-2,
  \quad r_{b_i} = 1, \quad c_{b_i} = 2i .
\]
The pair $a_0a_1$ is monochromatic with threshold $2t-3$, and its centres are $2t-2$ apart, so it is a non-edge.
Each pair $b_ib_j$ is monochromatic with threshold $1$, and its centres are at least $2$ apart, so it is a non-edge.
Each pair $a_jb_i$ is bichromatic with threshold $(2t-3) + 1 = 2t-2$, and $|c_{a_0} - c_{b_i}| = 2i$ while $|c_{a_1} - c_{b_i}| = 2(t-1-i)$, both at most $2t-2$, so all $2t$ of these pairs are edges.
\end{proof}

For a part of size one, the graph $K_{1,t}$ is a star, hence a caterpillar, and lies in $\bicoloured$ by Corollary~\ref{cor:trees}.
This settles complete bipartite graphs.

\begin{corollary}
\label{cor:bipartite}
For $s, t \geq 1$ the following are equivalent:
\begin{enumerate}[label=\normalfont(\roman*)]
  \item\label{it:bip-min} $\min\{s, t\} \leq 2$;
  \item\label{it:bip-B} $K_{s,t} \in \bicoloured$;
  \item\label{it:bip-C} $K_{s,t} \in \sandwich$.
\end{enumerate}
\end{corollary}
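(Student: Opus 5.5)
The corollary follows by assembling the results of this subsection around the chain of implications \ref{it:bip-min}$\Rightarrow$\ref{it:bip-B}$\Rightarrow$\ref{it:bip-C}$\Rightarrow$\ref{it:bip-min}. By symmetry of $K_{s,t}$ in its two parts, I will assume throughout that $s \leq t$, so that $\min\{s,t\} = s$.

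For \ref{it:bip-min}$\Rightarrow$\ref{it:bip-B}, I split on $s$. If $s = 1$, then $K_{1,t}$ is a star and hence a caterpillar, so Corollary~\ref{cor:trees} places it in $\bicoloured$. If $s = 2$, Proposition~\ref{prop:K2t} gives $K_{2,t} \in \bicoloured$ directly.

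The implication \ref{it:bip-B}$\Rightarrow$\ref{it:bip-C} is the inclusion $\bicoloured \subseteq \sandwich$ of Corollary~\ref{cor:B-in-C}.

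For \ref{it:bip-C}$\Rightarrow$\ref{it:bip-min}, I argue by contraposition. Suppose $\min\{s,t\} \geq 3$. Choosing three vertices from each part induces a copy of $K_{3,3}$ in $K_{s,t}$. Proposition~\ref{prop:K33-not-C} gives $K_{3,3} \notin \sandwich$, and since $\sandwich$ is hereditary (Observation~\ref{obs:C-hereditary}), it follows that $K_{s,t} \notin \sandwich$.

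No step presents a real obstacle, since each direction is an immediate citation. The only point needing care is to state explicitly that three vertices from each part induce $K_{3,3}$. This holds because $K_{s,t}$ has no edges inside either part, so the induced subgraph on these six vertices is complete bipartite between the two triples.
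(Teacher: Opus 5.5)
Your proposal is correct and follows essentially the same route as the paper. The paper likewise gets $K_{1,t}$ from Corollary~\ref{cor:trees}, $K_{2,t}$ from Proposition~\ref{prop:K2t}, the passage from $\bicoloured$ to $\sandwich$ from Corollary~\ref{cor:B-in-C}, and the exclusion of $K_{s,t}$ for $s,t\geq 3$ from Proposition~\ref{prop:K33-not-C} combined with heredity, with the two directions assembled in the surrounding text.
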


\section{Placement in the hierarchy}
\label{sec:hierarchy}

In addition to the interval and tolerance classes of \S\ref{sec:standard}, we shall need the following.

\begin{definition}[$\perfect$]
\label{def:perfect}
A graph is {\em perfect} if in every induced subgraph the chromatic number equals the clique number.
We write $\perfect$ for the class of perfect graphs.
\end{definition}

\begin{definition}[$\comparability$, $\cocomparability$]
\label{def:comparability}
A graph is a {\em comparability graph} if its edges admit a transitive orientation, and a {\em co-comparability graph} if it is the complement of one.
We write $\comparability$ and $\cocomparability$ for the two classes.
\end{definition}

\begin{definition}[$\chordal$, $\weaklychordal$]
\label{def:chordal}
A graph is {\em chordal} if every cycle of length at least four has a chord, and {\em weakly chordal} if neither it nor its complement has an induced cycle of length at least five.
We write $\chordal$ and $\weaklychordal$ for the two classes.
\end{definition}

\begin{definition}[$\trapezoid$, $\permutation$]
\label{def:trapezoid}
A graph $G$ is a {\em trapezoid graph} if there are two distinct parallel lines $\ell_1, \ell_2$ such that each vertex $u \in V(G)$ can be assigned a trapezoid $T_u$, with one side on $\ell_1$ and the opposite side on $\ell_2$, so that $uw \in E(G) \iff T_u \cap T_w \neq \emptyset$.
It is a {\em permutation graph} if this can be done with every trapezoid degenerate, i.e., a segment joining a point of $\ell_1$ to a point of $\ell_2$.
We write $\trapezoid$ and $\permutation$ for the two classes.
\end{definition}

Comparability graphs and co-comparability graphs are perfect~\cite{golumbic-agtpg}.

We settle the position of $\bicoloured$ and $\sandwich$ relative to the classical classes and to each other.
Both lie strictly between $\unit$ and $\cocomparability$, and Figure~\ref{fig:hierarchy} collects the resulting picture.
We treat the two classes in turn, beginning with $\sandwich$.

\begin{figure}[p]
\centering
\includegraphics[height=0.75\textheight]{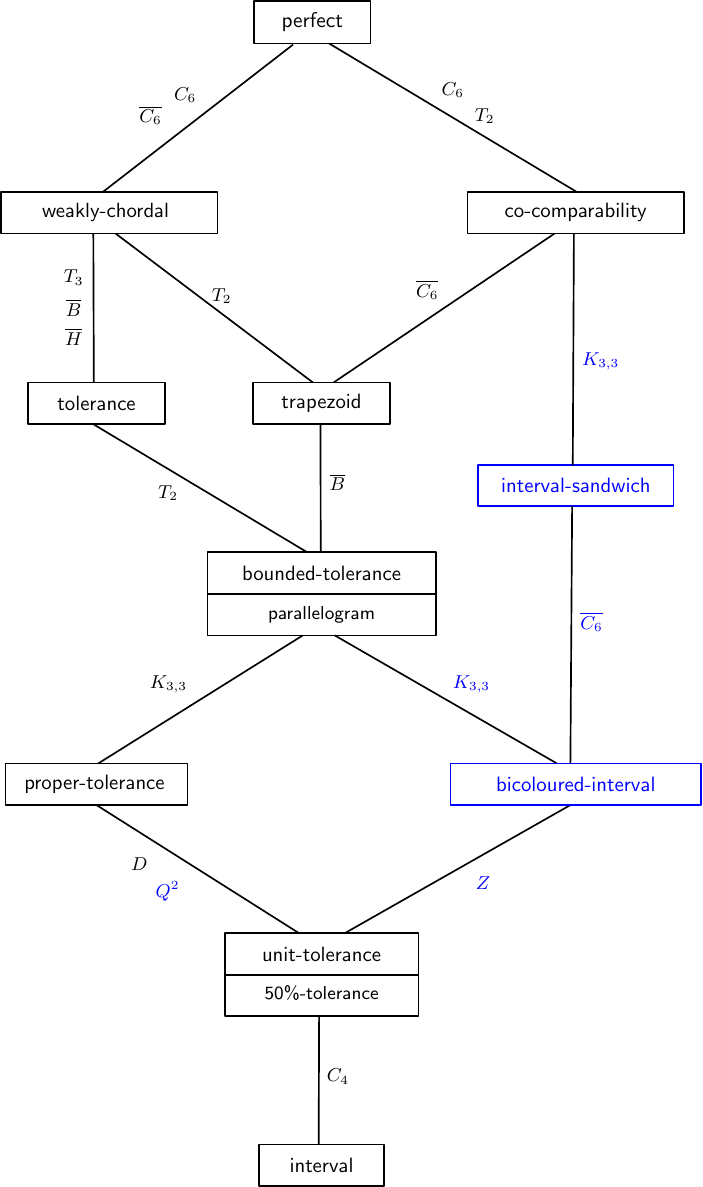}
\caption{The classes $\bicoloured$ and $\sandwich$ within the hierarchy of perfect graph classes.
An edge indicates that the lower class is contained in the upper, and is labelled by graphs separating the two.
Classes shown in blue are introduced in this paper, and the edges at them and the labels shown in blue are established here.
The others, and the layout, follow Figure~2.8 of Golumbic and Trenk~\cite{golumbic2004tolerance}.}
\label{fig:hierarchy}
\end{figure}

\subsection{\texorpdfstring{Placing $\sandwich$}{Placing interval-sandwich graphs}}
\label{sec:sandwich-placement}

The following theorem is the main containment result of this section.
Its proof is immediate from the fact that non-adjacency accumulates along the linear order of the centres.

\begin{theorem}
\label{thm:c-in-cocomparability}
$\sandwich \subsetneq \cocomparability$.
\end{theorem}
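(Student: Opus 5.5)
The plan has two parts: show the inclusion $\sandwich \subseteq \cocomparability$, then exhibit a co-comparability graph outside $\sandwich$.

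\emph{Inclusion.} Take a sandwich representation $(c, r)$ of $G$. Orient each non-edge $uw$ of $G$, i.e. each edge of $\overline{G}$, from the smaller centre to the larger. By Observation~\ref{obs:representation}\ref{it:wa-nonedge} non-adjacent vertices have distinct centres, so this orientation is well defined. It is acyclic, because it follows the strict order of the centres.

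It remains to check transitivity. Suppose $u \to v$ and $v \to w$ in $\overline{G}$. Then $c_u < c_v < c_w$ and $uv, vw \notin E(G)$. Lemma~\ref{lem:additivity}\ref{it:add-far}, applied to $u, v, w$, gives $c_w - c_u > r_u + r_w$. Hence $uw \notin E(G)$, so $u \to w$ is an edge of $\overline{G}$, oriented correctly. Thus $\overline{G}$ is a comparability graph and $G \in \cocomparability$. This step is routine.

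\emph{Strictness.} We need one co-comparability graph not in $\sandwich$. The simplest candidate is $K_{3,3}$, which is not in $\sandwich$ by Proposition~\ref{prop:K33-not-C}. Its complement is $2K_3$, the disjoint union of two triangles. Each triangle can be oriented transitively as a linear order, so $2K_3$ is a comparability graph. Therefore $K_{3,3} \in \cocomparability \setminus \sandwich$.

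Alternatively, any sufficiently large hole would not serve, since holes are not co-comparability. The spider $T_2$ would also serve: it is a tolerance graph and hence co-comparability. But $K_{3,3}$ is the cleanest witness, and I do not expect any real obstacle in either part.
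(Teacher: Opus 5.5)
Your proof is correct and is essentially the paper's: you orient non-edges by increasing centre, obtain transitivity from Lemma~\ref{lem:additivity}\ref{it:add-far}, and use $K_{3,3}$ as the witness via its complement being two disjoint cliques (the paper uses $K_{s,t}$ with $s,t \geq 3$). Drop the aside about $T_2$, though, because it is false: tolerance graphs need not be co-comparability graphs (bounded tolerance graphs are, and $T_2 \in \tol \setminus \bounded$), and the three leaves of $T_2$ form an asteroidal triple, whereas co-comparability graphs are AT-free.
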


\begin{proof}
Let $(c, r)$ be a sandwich representation of a graph $G$.
We orient the non-edges of $G$ as follows.
By Observation~\ref{obs:representation}\ref{it:wa-nonedge} non-adjacent vertices have distinct centres, so we may orient a non-edge $u \to w$ when $c_u < c_w$.
If $u \to v$ and $v \to w$ then $c_u < c_v < c_w$, so Lemma~\ref{lem:additivity}\ref{it:add-far} makes $uw$ a non-edge with $c_u < c_w$, which is oriented $u \to w$.
The orientation is therefore transitive, so $\overline{G}$ is a comparability graph and $G$ is a co-comparability graph.

For strictness, let $s, t \geq 3$.
The complement of $K_{s,t}$ is a disjoint union of two cliques, each the comparability graph of a chain, and hence a comparability graph, so $K_{s,t} \in \cocomparability$.
But $K_{s,t} \notin \sandwich$ by Corollary~\ref{cor:bipartite}, completing the proof.
\end{proof}

As a consequence, we obtain the following.

\begin{corollary}
\label{cor:c-perfect}
$\sandwich \subseteq \perfect$.
\end{corollary}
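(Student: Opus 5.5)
The statement $\sandwich \subseteq \perfect$ follows by composing Theorem~\ref{thm:c-in-cocomparability} with the classical fact, recalled just before Figure~\ref{fig:hierarchy}, that co-comparability graphs are perfect~\cite{golumbic-agtpg}. The plan is therefore one line: given $G \in \sandwich$, Theorem~\ref{thm:c-in-cocomparability} gives $G \in \cocomparability$, and hence $G \in \perfect$.

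If we want the argument self-contained rather than cited, I would reprove perfection of co-comparability graphs in the form needed. Since $\sandwich$ is hereditary (Observation~\ref{obs:C-hereditary}), it suffices to show $\chi(G) = \omega(G)$ for every $G \in \sandwich$.

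The transitive orientation of $\overline{G}$ built in the proof of Theorem~\ref{thm:c-in-cocomparability} makes $\overline{G}$ the comparability graph of a poset $P$. Under this correspondence, cliques of $G$ are antichains of $P$ and independent sets of $G$ are chains of $P$. Concretely, $u <_P w$ exactly when $uw \notin E(G)$ and $c_u < c_w$.

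The claim $\chi(G) = \omega(G)$ then becomes Dilworth's theorem for $P$: the minimum number of chains covering $P$ equals the maximum size of an antichain. I would invoke Dilworth directly rather than reprove it.

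There is no real obstacle here. The only point to check is that the orientation in Theorem~\ref{thm:c-in-cocomparability} is irreflexive and transitive, which was already verified there via Lemma~\ref{lem:additivity}\ref{it:add-far}.
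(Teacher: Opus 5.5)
Your proposal is correct and follows the paper's own argument: the corollary is derived as an immediate consequence of Theorem~\ref{thm:c-in-cocomparability} together with the cited fact that co-comparability graphs are perfect. Your optional self-contained version via Dilworth's theorem is also sound. In the poset given by the centre-ordered non-edges, chains are independent sets of $G$ and antichains are cliques, and the fact that $\sandwich$ is hereditary takes care of induced subgraphs.
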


Theorem~\ref{thm:c-in-cocomparability} gives a second proof of Theorem~\ref{thm:odd-antiholes}, that odd antiholes are excluded from $\sandwich$.
Indeed, $\overline{C_n}$ is a co-comparability graph if and only if $C_n$ is a comparability graph, which for $n \geq 4$ holds if and only if $C_n$ is bipartite (see, e.g.,~\cite{golumbic-agtpg}).

Next, we settle the position of $\sandwich$ relative to the remaining classical classes.

\begin{proposition}
\label{prop:C-position}
$\sandwich$ is incomparable with each of $\tol$, $\bounded$, $\trapezoid$, $\permutation$, $\chordal$, and $\weaklychordal$.
\end{proposition}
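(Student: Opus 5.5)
Incomparability with each class $\mathcal{X}$ needs two witnesses: a graph in $\sandwich \setminus \mathcal{X}$ and a graph in $\mathcal{X} \setminus \sandwich$. Several witnesses can be reused, using the known chain $\permutation \subseteq \trapezoid$, $\bounded \subseteq \tol \subseteq \weaklychordal$, and $\interval \subseteq \chordal$.

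\emph{Graphs in $\sandwich$ but outside the classes.} Take the even antihole $\overline{C_6}$. It lies in $\sandwich$ by Proposition~\ref{prop:even-antiholes}. Being an antihole on six vertices, it is not weakly chordal, and hence not in $\tol$ or $\bounded$, since tolerance graphs are weakly chordal~\cite{golumbic2004tolerance}*{Theorem~2.17}. It is not chordal either, because it contains an induced $C_4$ (for instance $v_0,v_3,v_1,v_4$). For $\trapezoid$ and $\permutation$, note that trapezoid graphs are bounded tolerance graphs (a trapezoid representation degenerates from a parallelogram one; equivalently, $\trapezoid$ sits above $\parallelogram$, so I should check the direction). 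The safer plan is to cite that trapezoid graphs are weakly chordal, or that $\overline{C_6}$ is not a trapezoid graph, from Golumbic--Trenk's figure. If that citation is doubtful, use $C_4 \in \sandwich$ for $\chordal$ only. For $\trapezoid$, look instead for a graph in $\bicoloured \subseteq \sandwich$ that is known not to be a trapezoid graph; some such graph lies in $\unit \setminus \trapezoid$ in Figure~2.8 of~\cite{golumbic2004tolerance}, and it works via Proposition~\ref{prop:unit-in-B}. Since $\permutation \subseteq \trapezoid$, the same witness handles $\permutation$.

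\emph{Graphs in the classes but outside $\sandwich$.} Use $K_{3,3}$, which is not in $\sandwich$ by Proposition~\ref{prop:K33-not-C}. It is a permutation graph, since it is the complement of $2K_3$ and both are comparability graphs; so it lies in $\trapezoid$. It is also weakly chordal, because it has no induced $C_k$ with $k \ge 5$ and its complement $2K_3$ has none either. For $\tol$ and $\bounded$, I would cite that $K_{3,3}$ is bounded tolerance, since permutation graphs are bounded tolerance~\cite{golumbic2004tolerance}. For $\chordal$, $K_{3,3}$ fails (it contains $C_4$), so use instead a chordal graph outside $\sandwich$. The spider $T_2$ is a tree, hence chordal, and $T_2 \notin \sandwich$ by Corollary~\ref{cor:spider}. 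This also gives a second witness for $\tol$ and $\weaklychordal$.

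The main obstacle is the witness in $\sandwich \setminus \trapezoid$. Once the inclusion relations with $\trapezoid$ are pinned down from the literature, I would pick a graph such as the one labelling $\unit$ versus $\trapezoid$ in Golumbic--Trenk's hierarchy figure. Alternatively, I would build a graph containing an asteroidal-type obstruction to trapezoid representations, and check membership in $\sandwich$ by an explicit sandwich representation, in the style of Proposition~\ref{prop:even-antiholes}.
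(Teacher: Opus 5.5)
You have the paper's overall plan. An even antihole lies in $\sandwich$ but is not weakly chordal, which gives one direction. For the other, you use $K_{3,3}$ (for $\tol$, $\bounded$, $\trapezoid$, $\permutation$, $\weaklychordal$) and the spider $T_2$ (for $\chordal$). Your argument that $K_{3,3}$ is a permutation graph, because both it and $2K_3$ are comparability graphs, is a fine substitute for the paper's appeal to bipartite co-comparability graphs.

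The gap is the witness for $\sandwich\not\subseteq\trapezoid$, and hence for $\permutation$. You leave it open, and the routes you propose for it do not work.
\begin{itemize}
\item The inclusion you tentatively state is reversed: $\bounded=\parallelogram\subseteq\trapezoid$, since a parallelogram is a trapezoid.
\item Consequently your fallback is impossible. By Propositions~\ref{prop:unit-in-B} and~\ref{prop:B-parallelogram}, $\unit\subseteq\bicoloured\subseteq\parallelogram\subseteq\trapezoid$. So there is no graph in $\unit\setminus\trapezoid$, or in $\bicoloured\setminus\trapezoid$, to pick; any witness must come from $\sandwich\setminus\bicoloured$.
\item An asteroidal-type obstruction fails too, since a graph with an asteroidal triple is already outside $\sandwich$ by Proposition~\ref{prop:AT-free}.
\end{itemize}

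The missing step is exactly the citation you called the ``safer plan'': trapezoid graphs are weakly chordal~\cite{golumbic2004tolerance}*{Theorem~2.21}. So $\overline{C_6}$, and indeed every $\overline{C_{2m}}$ with $m\ge 3$, lies outside all six classes at once, which is how the paper argues.

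If you want $\overline{C_6}\notin\trapezoid$ self-contained, argue as follows.
\begin{itemize}
\item In a trapezoid representation, ``strictly to the left on both lines'' is the intersection of the two interval orders given by the sides. Its comparability graph is the complement $C_6$.
\item This forces, up to duality, the standard example on minima $a_1,a_2,a_3$ and maxima $b_1,b_2,b_3$, with $a_i<b_j$ iff $i\neq j$.
\item Any interval order containing it must contain $a_i<b_i$ or $a_j<b_j$ for each $i\neq j$. Otherwise the chains $a_i<b_j$ and $a_j<b_i$ form an induced $2+2$, because any comparability across the two chains forces one of those relations by transitivity.
\item So each such interval order omits $a_i<b_i$ for at most one index $i$. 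Two interval orders therefore cannot leave all three pairs $a_i,b_i$ incomparable in their intersection.
\end{itemize}
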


\begin{proof}
Each of $\tol$, $\bounded$, $\trapezoid$, $\permutation$, and $\chordal$ is contained in $\weaklychordal$ --- that $\chordal \subseteq \weaklychordal$ is standard, for $\tol$ and $\trapezoid$ this is~\cite{golumbic2004tolerance}*{Theorems~2.17 and~2.21}, and the rest follow from $\bounded \subseteq \tol$ and $\permutation \subseteq \trapezoid$.
The antihole $\overline{C_{2m}}$ for $m \geq 3$ is not weakly chordal, and hence lies in none of the six classes.
But by Proposition~\ref{prop:even-antiholes} the even antiholes lie in $\sandwich$.
It follows that $\sandwich$ is contained in none of the six classes.

For the converse we consider two cases.
For $\tol$, $\bounded$, $\trapezoid$, $\permutation$, and $\weaklychordal$, let $s, t \geq 3$ and consider $K_{s,t}$, which is bipartite and, as in the proof of Theorem~\ref{thm:c-in-cocomparability}, a co-comparability graph, hence a bounded tolerance graph, a trapezoid graph, and a permutation graph~\cite{golumbic2004tolerance}*{Theorem~3.9}.
Since $\bounded \subseteq \tol$ and trapezoid graphs are weakly chordal, $K_{s,t}$ lies in each of the five.
Yet $K_{s,t} \notin \sandwich$ by Corollary~\ref{cor:bipartite}.

For $\chordal$, trees are chordal, and by Corollary~\ref{cor:trees} a tree lies in $\sandwich$ only if it is a caterpillar.
So every non-caterpillar tree, the spider $T_2$ among them, is chordal but outside $\sandwich$.
Hence, none of the six classes is contained in $\sandwich$.
\end{proof}

We are left with $\proper$, which behaves in the same manner.
As $\proper \subseteq \tol \subseteq \weaklychordal$, the even antiholes $\overline{C_{2m}}$ for $m \geq 3$ again give $\sandwich \not\subseteq \proper$.
For the converse, $K_{3,3}$ is itself not a proper tolerance graph~\cite{golumbic2004tolerance}*{Example~2.30}, so more work is required.
Let $J$ be the graph on $\{v_0, \dots, v_{19}\}$ whose $78$ edges are given by the adjacency lists
\[
\begin{array}{ll}
\hline
  v_{0}\colon 1, 2, 3, 4, 5, 6, 7, 8 & v_{10}\colon 4, 5, 6, 7, 8, 9, 11, 12, 13, 14, 15, 16, 17, 18\\
  v_{1}\colon 0, 5, 9 & v_{11}\colon 4, 5, 7, 8, 9, 10, 13, 16, 19\\
  v_{2}\colon 0, 5, 9 & v_{12}\colon 4, 5, 9, 10, 13, 16, 19\\
  v_{3}\colon 0, 4, 5, 6, 7, 8, 9 & v_{13}\colon 5, 6, 7, 8, 9, 10, 11, 12, 16, 19\\
  v_{4}\colon 0, 3, 5, 6, 7, 8, 9, 10, 11, 12 & v_{14}\colon 5, 10, 16, 19\\
  v_{5}\colon 0, 1, 2, 3, 4, 6, 7, 8, 9, 10, 11, 12, 13, 14, 15 & v_{15}\colon 5, 10, 16, 19\\
  v_{6}\colon 0, 3, 4, 5, 9, 10, 13, 16 & v_{16}\colon 6, 7, 8, 9, 10, 11, 12, 13, 14, 15, 19\\
  v_{7}\colon 0, 3, 4, 5, 9, 10, 11, 13, 16 & v_{17}\colon 10, 19\\
  v_{8}\colon 0, 3, 4, 5, 9, 10, 11, 13, 16 & v_{18}\colon 10, 19\\
  v_{9}\colon 1, 2, 3, 4, 5, 6, 7, 8, 10, 11, 12, 13, 16 & v_{19}\colon 11, 12, 13, 14, 15, 16, 17, 18\\
\hline
\end{array}
\]
where $v_i \colon N$ lists the indices of the neighbours of $v_i$.

\begin{theorem}
\label{thm:proper-sandwich}
$J \in \proper \setminus \sandwich$.
\end{theorem}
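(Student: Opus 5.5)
The proof has two independent halves: an explicit proper tolerance representation of $J$, and a contradiction argument showing $J$ has no sandwich representation.

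\textbf{Membership in $\proper$.} I would exhibit an explicit tolerance representation $\{(I_{v_i}, t_{v_i})\}$ of $J$ with no interval properly containing another. Natural endpoint orderings help here: in a proper representation the left and right endpoints appear in the same order. The vertex $v_{10}$ has large degree and $v_{19}$ is adjacent only to $v_{11},\dots,v_{18}$, which suggests a left-to-right layout roughly following the indices, $v_0,v_1,v_2,v_3,\dots,v_{19}$.

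I would take integer endpoints, with the left endpoints increasing in index order and the right endpoints in the same order. I would then choose tolerances, large where the vertex must tolerate long overlaps with non-neighbours (e.g.\ $v_{1},v_2,v_{14},v_{15},v_{17},v_{18}$ have few neighbours despite lying among dense vertices) and small for hubs such as $v_5,v_9,v_{10}$. Verification is then a finite check of all $190$ pairs against $|I_u\cap I_w|\ge\min\{t_u,t_w\}$. This step is routine once a representation is found, so I would present it as a table together with a brief remark that each pair was checked.

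\textbf{Non-membership in $\sandwich$.} Suppose $(c,r)$ is a sandwich representation. The tools are Lemma~\ref{lem:additivity}, Corollary~\ref{cor:propagate}, and the $K_{3,3}$-style exchange argument of Proposition~\ref{prop:K33-not-C}. That argument generalises as follows: if $a_0,a_1,a_2$ and $b_0,b_1,b_2$ are two triples of pairwise non-adjacent vertices with $c_{a_0}<c_{a_1}<c_{a_2}$ and $c_{b_0}<c_{b_1}<c_{b_2}$, and $a_2b_0$ and $a_0b_2$ are edges, we get a contradiction. More generally, any configuration in which two non-adjacent ``spans'' sum to more than two edge thresholds on the same four endpoints is contradictory.

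$J$ avoids $K_{3,3}$, since $K_{3,3}\notin\proper$, so such configurations must be forced indirectly, through the centre ordering. The plan is:
\begin{enumerate}[label=(\arabic*)]
\item Use Corollary~\ref{cor:propagate} on connected non-neighbour sets of key vertices, such as $v_0$, $v_{19}$, $v_{10}$ and $v_5$, to pin down which side of each hub's centre various vertices lie on. For example, $v_{19}$ is non-adjacent to $v_0,\dots,v_{10}$, which induce a connected set, so all of those lie on one side of $c_{19}$.
\item Use Lemma~\ref{lem:additivity}\ref{it:add-cover} to derive forced orderings among independent triples such as $\{v_1,v_2,v_3\}$-type sets, $\{v_{14},v_{15},v_{17}\}$ and $\{v_6,v_7,v_8\}$. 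Corollary~\ref{cor:forcing}\ref{it:forcing-geo} also applies, since pairs with disjoint intervals (non-edges joined by Lemma~\ref{lem:additivity}\ref{it:add-far}) force outer positions.
\item Once the order of centres within a few independent triples is fixed, sum the non-edge inequalities $c_{a_2}-c_{a_0}>r_{a_0}+r_{a_2}$ from Lemma~\ref{lem:additivity}\ref{it:add-far} against edge inequalities crossing between the triples, as in Proposition~\ref{prop:K33-not-C}, possibly chaining two or three such exchanges through shared vertices such as $v_9$ and $v_{16}$.
\end{enumerate}

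The main obstacle is step (2): determining the case structure. The symmetry $c\mapsto -c$ removes one reflection, but the orders within triples may branch, and each branch must close. I would first fix the global orientation using $v_0$ versus $v_{19}$, which are far apart, since $v_0$'s neighbourhood is disjoint from $v_{19}$'s. I would then try to show that the twin pairs $v_1,v_2$, $v_{14},v_{15}$ and $v_{17},v_{18}$ are each placed on opposite sides of their common neighbours. These twin pairs are presumably the gadgets that make each case reach an exchange contradiction.

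I expect a handful of cases, each ending with an inequality of the form $\sum r>\sum r$, and would organise them in an appendix, as the paper suggests.
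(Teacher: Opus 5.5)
Your outline has the right overall shape: an explicit proper representation, then centre orders from Corollary~\ref{cor:propagate}, then a summed system of edge and non-edge inequalities. But the substantive step of each half is missing.

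\textbf{Membership in $\proper$.} You describe the kind of representation to look for, and your guesses are what the paper uses: left and right endpoints both increasing with the index, large tolerances at $v_1,v_2,v_{14},v_{15},v_{17},v_{18}$, small ones at $v_5,v_{10}$. But you never give one, so membership is asserted rather than proved. The argument needs actual endpoints and tolerances, followed by the check of the $190$ pairs.

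\textbf{Non-membership in $\sandwich$.} The whole content of this half is a specific infeasible combination of inequalities. Your plan stops at ``sum non-edge inequalities against crossing edge inequalities, possibly chaining two or three $K_{3,3}$-type exchanges''. Nothing in the proposal shows that such a short exchange exists in $J$ under the forced centre order. The certificate the paper uses is much larger. It consists of ten chains mixing edges and non-edges, such as $c_{10}<c_{19}-r_{19}\le c_{11}+r_{11}<c_{12}$ and $c_{16}-r_{16}\le c_6+r_6<c_7<c_8-r_8\le c_0+r_0<c_9$. These are taken with multiplicities between $1$ and $4$, giving eighteen strict inequalities in all. Summing, $r_9,r_{12},r_{13},r_{16}$ cancel among the left ends and $r_3,r_{10}$ among the right ends. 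Both sides then collapse to $2c_3+2c_9+4c_{10}+4c_{12}+2c_{13}+4c_{16}$, so the quantity is strictly less than itself.

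You also misjudge where the difficulty lies, because there is no case structure. First negate all centres to get $c_0<c_9$. Then use the transposition automorphisms of the four twin pairs $\{v_1,v_2\}$, $\{v_7,v_8\}$, $\{v_{14},v_{15}\}$, $\{v_{17},v_{18}\}$ to assume $c_1<c_2$, $c_7<c_8$, $c_{14}<c_{15}$, $c_{17}<c_{18}$. After that, every order needed follows from Corollary~\ref{cor:propagate} alone, applied:
\begin{itemize}
\item to $v_0$, whose non-neighbours $v_9,\dots,v_{19}$ induce a connected subgraph;
\item to the component of $J-N[v_u]$ containing $v_0$, for each $u\in\{10,11,12,13,14,16,17,19\}$;
\item to $v_2$ and to $v_6$.
\end{itemize}
So the twins serve only to break symmetry. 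Your intended claim that each twin pair sits on opposite sides of its common neighbours is neither needed nor justified. Lemma~\ref{lem:additivity}\ref{it:add-cover} and Corollary~\ref{cor:forcing} are not needed at all.

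Once the orders are fixed, the sandwich conditions form a single linear system. What is required is an explicit combination of its inequalities exhibiting infeasibility. Until you produce one, the non-membership half is not proved.
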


The proof, an explicit proper tolerance representation of $J$ and a system of inequalities that no sandwich representation can satisfy, is deferred to Appendix~\ref{ap:proper-sandwich}.
With the discussion in the preceding paragraph, Theorem~\ref{thm:proper-sandwich} shows that $\proper$ and $\sandwich$ are incomparable.
Computations indicate that $J$ is minimal, so that every proper induced subgraph of $J$ lies in $\sandwich$, but we do not prove this.

Returning to the even antiholes $\overline{C_{2m}}$ for $m \geq 3$, we note that these are minimal in $\sandwich \setminus \tol$.
Indeed, $\overline{C_{2m}} - v = \overline{P_{2m-1}}$ is a permutation graph (paths are permutation graphs and permutation graphs are closed under complements) and hence lies in $\bounded \subseteq \tol$, as $\permutation \subseteq \bounded$~\cite{golumbic2004tolerance}*{Theorem~2.6}.

Finally, we remark that Theorem~\ref{thm:c-in-cocomparability} and Corollary~\ref{cor:B-in-C} place both $\sandwich$ and $\bicoloured$ inside $\cocomparability$, where each of the following four problems is solvable in polynomial time~\cite{golumbic-agtpg}.

\begin{corollary}
\label{cor:algorithmic}
Maximum clique, maximum independent set, minimum colouring, and minimum clique cover are solvable in polynomial time on $\sandwich$, and hence on $\bicoloured$.
\end{corollary}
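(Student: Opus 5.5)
The plan is to reduce everything to Theorem~\ref{thm:c-in-cocomparability} together with Corollary~\ref{cor:B-in-C}. First, any representation in $\sandwich$ is also a representation in $\cocomparability$. Second, $\bicoloured \subseteq \sandwich$, so every statement proved for $\sandwich$ transfers to $\bicoloured$ at once. That leaves checking that each of the four problems is polynomial on co-comparability graphs. For this I will cite the classical results in~\cite{golumbic-agtpg} rather than reprove them, and spell out only the reductions.

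The key steps, in order, are as follows.

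\begin{enumerate}[label=\normalfont(\arabic*)]
\item Given $G \in \sandwich$, we know $\overline{G}$ is a comparability graph. A transitive orientation of $\overline{G}$ can be found in polynomial time, and this needs no representation of $G$; this point matters, since recognising $\sandwich$ is open.
\item Maximum independent set in $G$ is maximum clique in $\overline{G}$. A transitive orientation of $\overline{G}$ has no directed cycles, and its cliques are exactly the chains, i.e.\ the directed paths. So this is a longest-path computation in a DAG, which is polynomial.
\item Maximum clique in $G$ is maximum independent set in $\overline{G}$, i.e.\ a maximum antichain of the poset. By Dilworth's theorem this reduces to bipartite matching.
\item Minimum colouring of $G$ is minimum clique cover of $\overline{G}$, which is a minimum chain partition. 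This is again bipartite matching, via Fulkerson's reduction.
\item Minimum clique cover of $G$ is minimum colouring of $\overline{G}$. Comparability graphs are perfect, so this colouring is attained by the longest-chain layering, which is computable in polynomial time.
\end{enumerate}

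The only delicate point is step~(1): the algorithm must work from the graph alone. Here I would invoke the polynomial-time transitive orientation algorithms for comparability graphs. These either produce an orientation or certify that none exists, so they are correct on every input from $\sandwich$ without any knowledge of a representation. Everything else is standard.
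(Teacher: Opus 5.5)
Your proposal is correct and follows the paper's argument: the paper also uses Theorem~\ref{thm:c-in-cocomparability} and Corollary~\ref{cor:B-in-C} to place both classes inside $\cocomparability$, and then cites~\cite{golumbic-agtpg} for the four problems there. Your unpacking of that citation is accurate, namely transitive orientation of $\overline{G}$, longest paths in a DAG, Dilworth and Fulkerson via bipartite matching, and the height layering. Your remark that the algorithms need only the graph and not a representation is a worthwhile point that the paper leaves implicit.
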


\subsection{\texorpdfstring{Placing $\bicoloured$}{Placing bicoloured-interval graphs}}
\label{sec:bicoloured-placement}

Propositions~\ref{prop:B-parallelogram} and~\ref{prop:unit-in-B} place $\bicoloured$ between the unit tolerance graphs and the parallelogram graphs, and the rest of the picture is completed by the standard inclusions among the classical classes (see Figure~\ref{fig:hierarchy} and, e.g.,~\cite{golumbic2004tolerance}).

The three inclusions immediately around $\bicoloured$ are all strict.
That $\bicoloured \subsetneq \sandwich$ is already noted in \S\ref{sec:antiholes}, where the even antiholes, which lie in $\sandwich$, are shown to be minimal forbidden induced subgraphs for $\bicoloured$.

Next, for $s, t \geq 3$ the graph $K_{s,t}$ is bipartite and, as in the proof of Theorem~\ref{thm:c-in-cocomparability}, a co-comparability graph, hence a bounded tolerance graph (see~\cite{golumbic2004tolerance}*{Theorem~3.9}), but it is not in $\bicoloured$ by Corollary~\ref{cor:bipartite}.
This gives the following.

\begin{corollary}
\label{cor:B-strict-bd-tol}
$\bicoloured \subsetneq \bounded$.
\end{corollary}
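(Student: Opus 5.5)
The corollary has two halves: the inclusion $\bicoloured \subseteq \bounded$ and a separating graph. The inclusion is already in hand. Proposition~\ref{prop:B-parallelogram} represents every bicoloured-interval graph by parallelograms between two parallel lines, and Fact~\ref{fact:classical}\ref{it:bd-par} identifies $\parallelogram$ with $\bounded$. So I would simply cite these two results and spend the rest of the proof on strictness.

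For strictness I would take $K_{3,3}$, or more generally $K_{s,t}$ with $s,t \geq 3$.

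First, I check that $K_{3,3}$ is a co-comparability graph. Its complement is the disjoint union of two triangles. Each triangle is the comparability graph of a three-element chain, and a disjoint union of comparability graphs is again a comparability graph. This is exactly the observation already made in the proof of Theorem~\ref{thm:c-in-cocomparability}.

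Second, I invoke the classical theorem that a bipartite graph is a bounded tolerance graph if and only if it is a co-comparability graph, namely \cite{golumbic2004tolerance}*{Theorem~3.9}. This gives $K_{3,3} \in \bounded$.

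Third, I note that $K_{3,3} \notin \bicoloured$. By Proposition~\ref{prop:K33-not-C} it is not even in $\sandwich$, and $\bicoloured \subseteq \sandwich$ by Corollary~\ref{cor:B-in-C}. Equivalently, one can read this off Corollary~\ref{cor:bipartite}.

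If one prefers to avoid the external theorem, the membership $K_{3,3} \in \bounded$ can be checked directly. By Fact~\ref{fact:classical}\ref{it:bd-par} it suffices to give a parallelogram representation. Place three thin, pairwise disjoint, left-leaning segments for one part and three right-leaning segments for the other, so that every segment of one part crosses every segment of the other. Degenerate parallelograms can be thickened slightly without changing the intersection pattern.

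The only step with any real content is the membership $K_{3,3} \in \bounded$. Everything else is a citation of results already in the paper, so this is where a little care is needed, and I would cite \cite{golumbic2004tolerance}*{Theorem~3.9} for it as the primary route.
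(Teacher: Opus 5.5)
Your proposal is correct and follows the paper's own argument: the inclusion comes from Proposition~\ref{prop:B-parallelogram} together with Fact~\ref{fact:classical}\ref{it:bd-par}, and strictness comes from $K_{s,t}$ with $s,t \geq 3$. This graph is a bipartite co-comparability graph, hence lies in $\bounded$ by \cite{golumbic2004tolerance}*{Theorem~3.9}, and it lies outside $\bicoloured$ by Corollary~\ref{cor:bipartite}. Your optional direct check is also sound, since the crossing segments form a permutation representation of $K_{3,3}$, and slight thickening turns them into a parallelogram representation.
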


Finally, we show $\unit \subsetneq \bicoloured$.
A unit tolerance representation has no interval properly containing another, so $\unit \subseteq \proper$, and it is enough to exhibit a graph in $\bicoloured \setminus \proper$.
We use the following lemma of Bogart, Fishburn, Isaak, and Langley.

\begin{lemma}[\cite{bogart-fishburn-isaak-langley}*{Lemma~4}]
\label{lem:proper-P4}
If $v_0v_1v_2v_3$ is an induced path, in this order, in a proper tolerance graph, then in every proper tolerance representation the centres $c_0$ and $c_1$ are both less than, or both greater than, each of the centres $c_2$ and $c_3$.
\end{lemma}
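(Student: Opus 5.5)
The plan is to exploit the fact that in a proper representation all three orderings coincide: the order of left endpoints, the order of right endpoints and the order of centres. From this we get a monotonicity of overlaps, and then a short case analysis on how the centres of the induced path can interleave finishes the proof.

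\emph{Step 1 (monotonicity).} First I would record the basic consequence of properness. Since no interval properly contains another, $c_u \le c_w$ forces both endpoints of $I_u$ to be at most the corresponding endpoints of $I_w$. If two intervals have the same centre, they must be equal. Hence, whenever $c_u \le c_v \le c_w$, we have $I_u \cap I_w \subseteq I_u \cap I_v$ and $I_u \cap I_w \subseteq I_v \cap I_w$. Consequently
\[ |I_u \cap I_w| \le \min\{|I_u\cap I_v|, |I_v \cap I_w|\}. \]

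\emph{Step 2 (straddling lemma).} Next I would show the following. Suppose $c_u \le c_v \le c_w$ and $uw$ is an edge. If $uv$ is a non-edge, then $t_w < t_v$ (and $t_w<t_u$). The reason is that $\min\{t_u,t_w\} \le |I_u\cap I_w| \le |I_u \cap I_v| < \min\{t_u,t_v\}$, which forces $\min\{t_u,t_w\} = t_w < \min\{t_u,t_v\}$. Symmetrically, if $vw$ is a non-edge, then $t_u < \min\{t_v,t_w\}$. In particular, an edge $uw$ can never weakly straddle a vertex $v$ that is non-adjacent to both $u$ and $w$, since this would give $t_w<t_u$ and $t_u<t_w$.

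\emph{Step 3 (case analysis).} Suppose the conclusion fails, so the pairs $\{c_0,c_1\}$ and $\{c_2,c_3\}$ interleave weakly. Up to reversing the line, I would list the possible patterns. By Step~2, the edge $v_2v_3$ cannot straddle $v_0$, because $v_0$ is non-adjacent to $v_2$ and $v_3$. Likewise, the edge $v_0v_1$ cannot straddle $v_3$. Checking the interleaving orders, every one except $c_0 \le c_2 \le c_1 \le c_3$ (and its mirror image) puts $c_0$ between $c_2,c_3$ or $c_3$ between $c_0,c_1$. For the remaining order I would apply the one-sided version of Step~2 twice:
\begin{itemize}
\item the edge $v_0v_1$ straddles $v_2$, and $v_0v_2$ is a non-edge, so $t_1 < \min\{t_0,t_2\}$;
\item the edge $v_2v_3$ straddles $v_1$, and $v_1v_3$ is a non-edge, so $t_2 < \min\{t_1,t_3\}$.
\end{itemize}
Together these give $t_1<t_2<t_1$, a contradiction.

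The main obstacle is the bookkeeping in Step~3, in particular handling ties in centres. Because every inequality in Steps~1 and~2 is weak in the centres, equal centres fall under the same arguments, so the ties should not need separate treatment. I would nonetheless double-check that the enumeration of interleavings is exhaustive under weak inequalities.
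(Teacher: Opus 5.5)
Your proof is correct, ties included. Properness does force the centre order to agree with both endpoint orders, and it forces equal centres to give equal intervals. So the overlap comparison in Step~1 and the strict tolerance inequalities in Step~2 are valid, and the crossing configuration yields $t_1<t_2<t_1$.

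The paper does not prove this lemma itself; it quotes it from Bogart, Fishburn, Isaak, and Langley. Your argument is, however, the tolerance-graph version of the paper's proof of its bicoloured analogue, Lemma~\ref{lem:P4gen}:
\begin{itemize}
\item There, Corollary~\ref{cor:propagate} (additivity of non-adjacency, Lemma~\ref{lem:additivity}\ref{it:add-far}) places $c_2,c_3$ on one side of $c_0$ and $c_0,c_1$ on one side of $c_3$. The ``in particular'' of your Step~2 does exactly this, being the proper-tolerance counterpart of that additivity.
\item In the remaining order $c_0<c_2\le c_1<c_3$, the paper uses the same two pairs you do: the edge $v_0v_1$ with the non-edge $v_0v_2$, and the edge $v_2v_3$ with the non-edge $v_1v_3$. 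It obtains $r_1>r_2$ and $r_2>r_1$ from monotonicity of the thresholds in the radii, where you obtain the corresponding contradiction on tolerances.
\end{itemize}

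Your worry about exhaustiveness is settled the same way. After reflecting so that $c_0<\min\{c_2,c_3\}$, the exclusion for $v_3$ forces $c_1<c_3$. So the only failing order is $c_0<c_2\le c_1<c_3$, and no separate enumeration of weak interleavings is needed.
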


Let $Z$ be the {\em domino}, the graph on $\{v_0, \dots, v_5\}$ consisting of the six-cycle $v_0v_1v_2v_3v_4v_5$ together with the chord~$v_0v_3$.

\begin{proposition}
\label{prop:Z}
$Z \in \bicoloured \setminus \proper$.
\end{proposition}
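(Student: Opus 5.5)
We prove the two halves separately: membership in $\bicoloured$ by an explicit bicoloured representation, and non-membership in $\proper$ by applying Lemma~\ref{lem:proper-P4} to several induced $P_4$'s of $Z$ until the centre orderings conflict.

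\emph{$Z \in \bicoloured$.} The domino is the union of the two induced $4$-cycles $v_0v_1v_2v_3$ and $v_0v_3v_4v_5$, which share the edge $v_0v_3$. By Lemma~\ref{lem:C4-colouring}, each of these $4$-cycles should receive either a constant colouring or one not of the forbidden type. The alternating colouring of Example~\ref{ex:C4} suits both cycles at once: colour $v_0, v_2, v_4$ with $+$ and $v_1, v_3, v_5$ with $-$. Take
\[ c = (0,-4,-4,0,4,4), \qquad r = (2,3,3,2,3,3). \]
The check is finite and routine.
\begin{itemize}
\item The bichromatic pairs at distance $0$ are $v_0v_3$, $v_1v_2$ and $v_4v_5$, so these are edges.
\item The bichromatic pairs $v_0v_1$, $v_2v_3$, $v_3v_4$ and $v_0v_5$ are at distance $4 \leq 5$, so these are edges.
\item The monochromatic pairs $v_0v_2$, $v_1v_3$, $v_0v_4$ and $v_3v_5$ are at distance $4 > 3$, so these are non-edges.
\item The remaining pairs $v_1v_4$, $v_1v_5$, $v_2v_4$ and $v_2v_5$ are at distance $8$, which exceeds both thresholds $3$ and $6$, so these are non-edges.
\end{itemize}
This gives exactly the seven edges of $Z$.

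\emph{$Z \notin \proper$.} Suppose a proper tolerance representation exists. Its intervals are closed and bounded, so their centres are defined. We use four induced paths:
\begin{itemize}
\item $v_1v_2v_3v_4$;
\item $v_1v_0v_3v_4$;
\item $v_2v_3v_0v_5$;
\item $v_5v_4v_3v_2$.
\end{itemize}
For each path, inducedness follows because the relevant chords $v_1v_3$, $v_1v_4$, $v_2v_4$, $v_0v_2$, $v_0v_4$, $v_2v_5$, $v_3v_5$, $v_1v_5$ are all non-edges.

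We apply Lemma~\ref{lem:proper-P4} to each path in turn.
\begin{enumerate}
\item From the first path, and reflecting if necessary, we may assume $c_1, c_2 < c_3, c_4$.
\item The second path puts $c_0$ on the same side as $c_1$, so $c_0 < c_3, c_4$.
\item The third path puts $\{c_2, c_3\}$ entirely on one side of $\{c_0, c_5\}$. Since $c_3 > c_0$, we get $c_2, c_3 > c_0, c_5$. In particular $c_5 < c_2$.
\item The fourth path requires $c_4$ and $c_5$ on the same side of both $c_2$ and $c_3$. But $c_4 > c_2$ and $c_5 < c_2$, which is a contradiction.
\end{enumerate}

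The only delicate part is choosing which induced $P_4$'s to use and in what order, so that the side constraints chain into a contradiction. The representation for the first half is found by gluing two copies of the alternating $C_4$ representation along the shared edge.
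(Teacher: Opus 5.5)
Your proof is correct and follows essentially the same route as the paper, which uses the same alternating colouring with the pairs $\{v_1,v_2\}$, $\{v_0,v_3\}$, $\{v_4,v_5\}$ at three equally spaced centres (there with all radii $1$ and $c=(2,0,0,2,4,4)$) and four applications of Lemma~\ref{lem:proper-P4}. Your four induced paths are exactly the image of the paper's $\pi_1,\dots,\pi_4$ under the automorphism $v_0\leftrightarrow v_3$, $v_1\leftrightarrow v_2$, $v_4\leftrightarrow v_5$ of $Z$; the non-edge $v_1v_5$ in your chord list is not a chord of any of these paths, which is harmless.
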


\begin{proof}
To see $Z \in \bicoloured$, consider the following representation.
Colour $v_0, v_2, v_4$ with $-$ and $v_1, v_3, v_5$ with $+$, and set
\[  r = (1,1,1,1,1,1) \quad\text{ and }\quad c = (2, 0, 0, 2, 4, 4) . \]
Two vertices of the same parity are monochromatic, so their threshold is $1$.
Their centres are at distance at least $2$, so all six such pairs are non-edges, as required.
Two vertices of opposite parity are bichromatic, so their threshold is $2$.
Of the nine such pairs, only $v_1v_4$ and $v_2v_5$ have centres at distance more than $2$, namely $4$, and these are exactly the two opposite-parity non-edges.

We now show $Z \notin \proper$.
Suppose, for contradiction, that $Z$ has a proper tolerance representation.
We apply Lemma~\ref{lem:proper-P4} repeatedly to the following induced paths in $Z$:
\[  \pi_1 = v_2v_1v_0v_5, \quad \pi_2 = v_2v_3v_0v_5, \quad \pi_3 = v_1v_0v_3v_4, \quad\text{ and }\quad \pi_4 = v_1v_0v_5v_4 . \]
Considering $\pi_1$ and $\pi_2$, we see that $c_1, c_2$ and $c_2, c_3$ each lie on one side of $c_0, c_5$, and the same side in both, as both pairs contain $c_2$.
Reflecting the representation if necessary, we may assume they lie to the left, i.e.,
\[  c_1, c_2, c_3 < \min\{c_0, c_5\} , \]
and in particular $c_3 < c_0$, $c_1 < c_0$, and $c_1 < c_5$.

Considering $\pi_3$, $c_0, c_1$ are on the same side of $c_3, c_4$.
This cannot be the left side, since $c_3 < c_0$.
So $c_3, c_4$ lie to the left of $c_0, c_1$, and in particular $c_4 < c_1$.

Considering $\pi_4$, $c_0, c_1$ lie on the same side of $c_4, c_5$.
This cannot be the right side, since $c_1 < c_5$.
So $c_0, c_1$ lie to the left of $c_4, c_5$, and in particular $c_1 < c_4$.
But this contradicts $c_4 < c_1$ from $\pi_3$.
\end{proof}

In particular, $Z$ is a bipartite graph in $\bounded \setminus \proper$.

We now consider the remaining classes $\chordal$, $\permutation$, and $\proper$, showing that each is incomparable with $\bicoloured$.
For $\chordal$ this is immediate, since $C_4$ is not chordal but $C_4 \in \bicoloured$ by Example~\ref{ex:C4}, and the spider $T_2$ is chordal but $T_2 \notin \bicoloured$ by Corollary~\ref{cor:spider}.

Let $W = P_7^2$ be the square of the seven-vertex path, i.e., the graph on $\{v_0, \dots, v_6\}$ with $v_iv_j \in E(W)$ exactly when $|i - j| \leq 2$.
We remark that $W$ separates $\parallelogram$ from $\permutation$~\cite{golumbic2004tolerance}*{Figure~2.8}.
The following proposition shows that $W$ separates $\bicoloured$ from $\permutation$ as well.

\begin{proposition}
\label{prop:B-permutation}
$\bicoloured$ and $\permutation$ are incomparable.
\end{proposition}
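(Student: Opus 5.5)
We need two witnesses: a graph in $\bicoloured \setminus \permutation$ and a graph in $\permutation \setminus \bicoloured$.

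\emph{A graph in $\permutation \setminus \bicoloured$.} Take $K_{3,3}$. As noted in the proof of Proposition~\ref{prop:C-position}, it is a bipartite co-comparability graph and hence a permutation graph~\cite{golumbic2004tolerance}*{Theorem~3.9}. By Corollary~\ref{cor:bipartite} it is not in $\bicoloured$. Alternatively the spider $T_2$ would serve, but $K_{3,3}$ needs no further argument.

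\emph{A graph in $\bicoloured \setminus \permutation$.} Take $W = P_7^2$. The remark above, citing~\cite{golumbic2004tolerance}*{Figure~2.8}, records that $W \notin \permutation$, so it only remains to show $W \in \bicoloured$. Since $W$ is a proper interval graph, it is an interval graph, and Proposition~\ref{prop:unit-in-B} gives $W \in \interval \subseteq \bicoloured$.

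To be self-contained, one can also write the interval representation explicitly: $I_i = [i, i+2]$, or, if strict separation of non-neighbours is wanted, $I_i = [i, i+2.5]$. Then $I_i$ meets $I_j$ exactly when $|i-j| \le 2$.

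The only point needing care is the claim $W \notin \permutation$. If one does not want to rely on the figure, argue directly. Permutation graphs are exactly the graphs that are both comparability and co-comparability graphs. $W$ is co-comparability, being an interval graph, so the task is to show that $W$ has no transitive orientation. The triangles $v_iv_{i+1}v_{i+2}$ chain together: edges $v_iv_{i+2}$ and $v_{i+1}v_{i+3}$ form an induced $P_4$ pattern with the middle edge, forcing their orientations through the $\Gamma$-relation (forcing classes). Following the forced orientations along the path leads to a conflict, which is the standard verification that $P_7^2$ is not a comparability graph. This forcing-class check is the main obstacle; I would simply cite it.
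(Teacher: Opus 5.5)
Your argument is the paper's: $K_{3,3}$ (which the paper reaches through Proposition~\ref{prop:C-position}) witnesses $\permutation \not\subseteq \bicoloured$, and $W = P_7^2$, an interval graph that is not a comparability graph, witnesses $\bicoloured \not\subseteq \permutation$. Two of your asides are wrong and should be dropped: $T_2$ would not serve, since its leaves form an asteroidal triple, so it is not co-comparability and hence not a permutation graph; and in your forcing sketch $v_i, v_{i+2}, v_{i+1}, v_{i+3}$ induce a diamond rather than a $P_4$ (the real obstruction is that each of $v_2, v_3, v_4$ is forced to be a source or a sink, which is impossible on the triangle $v_2v_3v_4$).
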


\begin{proof}
That $\permutation \not\subseteq \bicoloured$ follows from Proposition~\ref{prop:C-position}, as $\bicoloured \subseteq \sandwich$.

For $\bicoloured \not\subseteq \permutation$, assigning $v_i$ the interval $[i-1, i+1]$ realises $W$ as a unit interval graph, so $W \in \interval \subseteq \unit \subseteq \bicoloured$ by Proposition~\ref{prop:unit-in-B}.
But $W$ is not a comparability graph (see~\cite{golumbic2004tolerance}*{Exercise~2.5}), while $\permutation \subseteq \comparability$, so $W \notin \permutation$.
\end{proof}

For $\proper$, incomparability is already settled, by Proposition~\ref{prop:Z} in one direction and by the graph $J$ of Theorem~\ref{thm:proper-sandwich} in the other, as $\bicoloured \subseteq \sandwich$.
We strengthen the second direction to an infinite family of minimal graphs, deferring the construction to \S\ref{sec:proper} and the proofs to Appendix~\ref{ap:Qm}.

\begin{theorem}
\label{thm:proper-incomparable}
For every $m \geq 2$ there is a graph $Q^m$ on $6m+1$ vertices that is minimal in $\proper \setminus \bicoloured$.
\end{theorem}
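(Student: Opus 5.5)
We will construct $Q^m$ by adapting the Bogart--Fishburn--Isaak--Langley family in $\proper \setminus \unit$. Their graphs are built from a long chain of overlapping gadgets, in which Lemma~\ref{lem:proper-P4}-type ordering constraints propagate along the chain. We modify the gadgets so that the propagated constraint becomes a colouring constraint in a bicoloured representation, rather than a length constraint. Concretely, $Q^m$ consists of $m$ blocks of six vertices plus one apex, for $6m+1$ vertices in total. Each block is built around an induced $C_4$ or $K_{2,3}$-like configuration. Corollary~\ref{cor:forcing}\ref{it:forcing-col} then forces equal colours on certain non-adjacent pairs, and Lemma~\ref{lem:C4-colouring} forbids the ``opposite edges monochromatic, not all equal'' pattern. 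Consecutive blocks share vertices, so these colour constraints chain across the whole graph. The apex closes the chain into a contradiction, in the spirit of an odd cycle of parity constraints.

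The proof then has three parts, carried out in this order. First, exhibit an explicit proper tolerance representation of $Q^m$, with intervals of equal combinatorial type per block, translated by a fixed offset per block. Verifying it amounts to a finite check per block type plus the interblock and apex pairs, which is routine. Second, show $Q^m \notin \bicoloured$. Take a bicoloured representation. Use Corollary~\ref{cor:propagate} and Lemma~\ref{lem:additivity} to fix the left-to-right centre order within each block, up to reflection. Then use Corollary~\ref{cor:forcing} and Lemma~\ref{lem:C4-colouring} to derive, block by block, that the colouring is forced to alternate in a prescribed way. Finally, at the apex, sum the threshold inequalities around the chain, in the style of the proof of Lemma~\ref{lem:antihole-swap} and Proposition~\ref{prop:K33-not-C}, to obtain a telescoping contradiction of the form $\sum(\text{non-edge gaps}) > \sum(\text{edge thresholds})$. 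Third, prove minimality. Since $\bicoloured$ is hereditary, it suffices to exhibit for each vertex $v$ a bicoloured representation of $Q^m - v$. Up to the symmetry of the chain (reversal, and shifting of blocks), there are only $O(1)$ vertex types. Deleting any vertex breaks the chain, after which one can recolour one side and realise the two halves with independent block representations glued at the cut, as in the constructions of Proposition~\ref{prop:K33e-in-B} and Proposition~\ref{prop:Z}.

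The main obstacle is the second part, the non-membership argument. It must be uniform in $m$, and it has to handle every possible colouring, not just the forced one. The danger is that some block admits several locally consistent colourings, which would cause the case analysis to branch exponentially. I would first try to prove an inductive invariant. The claim would be that, after normalising by reflection and colour swap, every block $i$ carries the same colouring and the same centre order, together with a strict inequality between one radius in block $i$ and one in block $i+1$, analogous to $r_{n-1} < r_2$ in Theorem~\ref{thm:antihole-bicoloured}. The apex would then force the reverse inequality between the first and last blocks, contradicting transitivity along the chain. Minimality is the second most delicate step. Here I would choose the representations of $Q^m - v$ parametrically in $m$, so that the deletion point absorbs the slack that the inductive inequality chain lacks.
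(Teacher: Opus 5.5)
\textbf{There is a genuine gap.} The theorem asserts the existence of a specific graph, and your proposal never specifies one. ``$m$ blocks of six vertices plus an apex'' has no adjacencies given, and ``consecutive blocks share vertices'' contradicts the vertex count $6m+1$. So all three parts are promises that cannot be checked.

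\textbf{The non-membership mechanism you describe cannot work.} Every constraint from Corollary~\ref{cor:forcing}\ref{it:forcing-col} or Lemma~\ref{lem:C4-colouring} (and likewise Lemmas~\ref{lem:fan} and~\ref{lem:twins}) says either ``these vertices share a colour'' or ``this non-constant pattern is forbidden''. The constant colouring satisfies all of them, so no ``odd cycle of parity constraints'' can arise from them. A constant colouring is exactly a $50\%$-tolerance, i.e.\ unit tolerance, representation. Since $\unit \subseteq \bicoloured$, any graph in $\proper \setminus \bicoloured$ lies in $\proper \setminus \unit$, and the contradiction must ultimately be metric.

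\textbf{What the paper does instead.} It does the reverse of what you propose. Rather than turning the propagated length constraint of Bogart, Fishburn, Isaak, and Langley into a colour constraint, it adds gadgets that force the relevant vertices into one colour class, and then keeps the length propagation.
\begin{itemize}
\item $Q^m$ is $B^m$ (a caterpillar chain, with $x$ adjacent to every chain vertex but $1$ and $z$ adjacent to the top half), plus $q_1,\dots,q_{2m-1}$. Each $q_i$ has a prefix neighbourhood (its cut), and the $q_i$ induce $K_{2m-1}$ minus the edge $q_1q_{2m-1}$.
\item Corollary~\ref{cor:forcing}\ref{it:forcing-col} gives $\sigma(x)=\sigma(z)$, and Lemma~\ref{lem:fan} gives $\sigma(q_1)=\sigma(z)$.
\item Each outer $q_i$ fixes one joint via Lemma~\ref{lem:twins} (Proposition~\ref{prop:joints}). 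This is the only reason the outer vertices exist.
\item Only then does Proposition~\ref{prop:cl8}, a bicoloured analogue of their Lemma~8, show by reverse induction, seeded at the top by $q_1$, that the gaps between consecutive joints strictly increase. With $z$ pinning $c_x<c_{2m+1}$, the interval $I_x$, which must reach beyond $I_{4m-1}$, is forced to contain $c_1$, a contradiction.
\end{itemize}
The unforced colours (of $1$, $4m-2$, and $4m$) are absorbed by short local case splits, so your worry about exponential branching does not arise. The final contradiction must use the monochromatic thresholds $\max\{r_u,r_w\}$ on the forced pairs. A colour-blind telescoping sum like Proposition~\ref{prop:K33-not-C} would show non-membership in $\sandwich$, and computations indicate the paper's $Q^m\in\sandwich$ for $m\leq 6$. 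The second half of your plan, a strict inequality propagated along the chain and reversed at the end, has the right shape, but without a concrete graph it is only asserted.

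\textbf{Minimality.} ``Deleting any vertex breaks the chain'' is false for vertices off the chain: your apex, and in the paper $x$, $z$ and every $q_i$. A finite chain with distinguished ends also has no shift symmetry, so ``$O(1)$ vertex types'' is unavailable, and the representation of $Q^m-u$ genuinely depends on $u$.
\begin{itemize}
\item Deleting $x$ or $z$ gives a $50\%$-tolerance representation with long gaps $\Lambda(i)$.
\item Deleting a chain vertex also gives a $50\%$-tolerance representation, but one whose gaps drop from long to short at the deleted vertex, so that the budget $2H-c_1-\pi$ needed to place $x$ becomes positive. The cases $u\in\{1,4m-1,4m\}$ are handled separately.
\item Deleting $q_i$ with $i\geq 2$ leaves $B^m+q_1$, which has no monochromatic representation by Proposition~\ref{prop:impossible}. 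So a second colour is genuinely needed, and it is placed on the single joint just above the cut $s_i$. Your ``recolour one side and glue two halves'' does not describe this, and gluing is problematic anyway when a vertex like $x$ meets both halves.
\end{itemize}

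\textbf{The proper representation.} A block-periodic proper representation sits badly with an obstruction built on forced monotone growth. The paper's representation has quadratically growing offsets and decreasing tolerances $t_{2i-1}=2-i\delta$. It places $q_1$ inside the base, between the third and fourth left endpoints, precisely to create the non-edge $q_1q_{2m-1}$ that the colour forcing exploits.
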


Theorem~\ref{thm:proper-incomparable} shows that $\proper \cap \bicoloured$ has no characterisation by finitely many forbidden induced subgraphs within $\proper$.

\subsection{\texorpdfstring{An infinite family in $\proper \setminus \bicoloured$}{An infinite family in proper-tolerance minus bicoloured-interval}}
\label{sec:proper}

We now construct the family of Theorem~\ref{thm:proper-incomparable} by adapting a family of graphs in $\proper \setminus \unit$ due to Bogart, Fishburn, Isaak, and Langley~\cite{bogart-fishburn-isaak-langley}.
For the rest of this subsection, let $m \geq 2$ be fixed.

\begin{definition}[The base graph $B^m$]
\label{def:GmBm}
Let $B^m$ be the graph with vertex set $\{1, \dots, 4m\} \cup \{x, z\}$ and edges
\[
  \{2i-1,  2i+1\} \ \text{ and }\ \{2i,  2i+1\}
  \quad \text{where } 1 \leq i \leq 2m-1,
\]
together with
\[
  \{x, j\} \quad \text{where } 2 \leq j \leq 4m
  \quad\text{and}\quad
  \{z, j\} \quad \text{where } 2m+1 \leq j \leq 4m.
\]
\end{definition}

So $B^m - \{x, z\}$, which we call the {\em chain}, is the caterpillar formed by the path $1, 3, \dots, 4m-1$ with vertex $2i$ adjacent to vertex $2i+1$ for $1 \leq i \leq 2m-1$, together with the isolated vertex $4m$.
The vertex~$x$ is adjacent to every chain vertex except $1$, and $z$ is adjacent to the chain vertices $2m+1, \dots, 4m$.
We refer to the vertices $3, 5, \dots, 4m-1$ as the {\em joints}.
We note that $B^m$ is the graph $H^m$ of~\cite{bogart-fishburn-isaak-langley} with one vertex deleted, so $B^m \in \unit$ by~\cite{bogart-fishburn-isaak-langley}*{Theorem~13}.

\begin{definition}[Ordering and cuts]
\label{def:cuts}
Let $\ord(m)$ be the ordering of $V(B^m)$ given by
\[ \ord(m) = (1, 2, \dots, 2m,\ x,\ 2m+1, \dots, 4m,\ z), \]
and let $\omega_p$ denote the $p^{\text{th}}$ element of $\ord(m)$.

For $1 \leq s \leq 4m+1$, a vertex $v \notin V(B^m)$ has {\em cut} $s$ if $N(v) \cap V(B^m) = \{\omega_1, \dots, \omega_s\}$, i.e.,
\[ N(v) \cap V(B^m) =
\begin{cases}
    \{1,\dots,s\} & \text{if }1 \leq s \leq 2m,\\
    \{1,\dots,2m\} \cup \{x\} \cup \{2m+1,\dots,s-1\} & \text{if }2m+1 \leq s \leq 4m+1.
\end{cases} \]
\end{definition}

Since $z = \omega_{4m+2}$ and $s \leq 4m+1$, no vertex with a cut is adjacent to $z$.

\begin{definition}[The graph $Q^m$]
\label{def:Qm}
Let $Q^m$ be obtained from $B^m$ by adding vertices $q_1, \dots, q_{2m-1}$ inducing $K_{2m-1}$ minus the edge $q_1q_{2m-1}$, where $q_i$ has cut
\[
s_i = \begin{cases}
    4m+1-2i & \text{if }1 \leq i \leq m-1,\\
    4m-2i   & \text{if } m \leq i \leq 2m-1.
\end{cases}
\]
We call $q_1$ the {\em inner} vertex and $q_2, \dots, q_{2m-1}$ the {\em outer} vertices.
\end{definition}

So $|V(Q^m)| = 6m+1$, and the cuts $s_1 > s_2 > \cdots > s_{2m-1}$ are the odd values $2m+3 \leq s \leq 4m-1$ and the even values $2 \leq s \leq 2m$.
Note that each cut ends at an even chain vertex.
In particular, the inner vertex $q_1$ has cut $4m-1$, so $N(q_1) \cap V(B^m) = \{1,\dots,4m-2\} \cup \{x\}$, and the outer vertex $q_{2m-1}$, the only one not adjacent to $q_1$, has cut $2$, so $N(q_{2m-1}) \cap V(B^m) = \{1, 2\}$.

We prove the following three propositions.

\begin{proposition}
\label{prop:Qm-proper}
$Q^m \in \proper$.
\end{proposition}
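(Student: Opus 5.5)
The plan is to exhibit an explicit proper tolerance representation of $Q^m$. We start from a proper representation of the base graph $B^m$ and then append the vertices $q_1,\dots,q_{2m-1}$ as intervals protruding far to the left. For the base graph we use the unit (hence proper) tolerance representation of $H^m$ from~\cite{bogart-fishburn-isaak-langley}*{Theorem~13}, restricted to $B^m$. We may instead write down our own explicit $50\%$-tolerance representation of $B^m$, with integer data depending on $m$, if that is easier to control.

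\textbf{Step 1: normalise the base representation.} For each $v \in V(B^m)$ with interval $I_v=[\ell_v,\rho_v]$ and tolerance $t_v$, set $a_v=\ell_v+t_v$. The idea is that if a new interval $[L,R]$ has $L\le \ell_v$, $R\le \rho_v$, and tolerance at least $t_v$, then it is adjacent to $v$ exactly when $R\ge a_v$. We therefore want a representation of $B^m$ satisfying two conditions:
\begin{itemize}
\item $a_{\omega_1}<a_{\omega_2}<\cdots<a_{\omega_{4m+1}}<a_{\omega_{4m+2}}=a_z$, that is, the points $a_v$ occur in the order $\ord(m)$;
\item there is a point $L_0$ to the left of every $\ell_v$, and the relevant right endpoints $\rho_v$ are large enough that no new interval with right endpoint below $a_z$ is contained in, or contains, a base interval.
\end{itemize}
The ordering $\ord(m)$ is the one in which the Bogart--Fishburn--Isaak--Langley construction places the vertices left to right: the chain runs $1,2,\dots,2m$, then $x$ sits in the middle, then $2m+1,\dots,4m$, then $z$. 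I expect their representation, possibly after a small perturbation preserving all strict inequalities, to satisfy the ordering condition. Otherwise we redesign the representation by hand along the chain, with $x$ a long interval and $z$ on the right.

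\textbf{Step 2: add the $q_i$.} Put $q_i=[L_i,R_i]$ with $L_1>L_2>\cdots>L_{2m-1}$, all to the left of every $\ell_v$. Put $R_1>R_2>\cdots>R_{2m-1}$, with $R_i$ strictly between $a_{\omega_{s_i}}$ and $a_{\omega_{s_i+1}}$. This is possible since $s_1>\cdots>s_{2m-1}$. Because the left and right endpoints are both monotone in $i$, no $q$-interval contains another. We then give every $q_i$ a tolerance exceeding all base tolerances, so that $q_i$ meets each base vertex exactly in its cut, and in particular misses $z$. Among the $q_i$, all pairs overlap heavily. We choose the tolerances of $q_1$ and $q_{2m-1}$ larger than $|I_{q_1}\cap I_{q_{2m-1}}|=R_{2m-1}-L_1$, while every other pair has overlap at least the smaller of its two tolerances. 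This is achievable by making the $L_i$ spread far apart, so that the overlap of $q_1$ with $q_{2m-1}$ is the smallest of all pairwise overlaps. We then set $t_{q_1},t_{q_{2m-1}}$ just above that overlap and the remaining $t_{q_i}$ just above the largest base tolerance.

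\textbf{Main obstacle.} The substantive step is Step~1, securing the monotone order of the $a_v$ along $\ord(m)$ together with properness against the added intervals. The delicate case is $x$: it is adjacent to almost the whole chain yet must have its $a_x$ wedged between those of $2m$ and $2m+1$. If the cited representation does not give this directly, we construct $B^m$'s representation explicitly with linear-in-index coordinates and verify the finitely many inequality types (chain edges, joint adjacencies, $x$, and $z$) case by case.
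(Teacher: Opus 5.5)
\textbf{The gap.} Step~2 cannot work, and the failure is structural: it does not depend on how well you choose the base representation. Each $q_i$ starts to the left of every base interval, so properness forces $R_i<\rho_v$ for every base vertex $v$. With $t_{q_i}\ge t_v$, the adjacency $q_i\sim v$ then means $R_i-\ell_v\ge t_v$.

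Take any two base neighbours $u,v$ of $q_i$ with $\ell_u\le\ell_v$. Then $|I_u\cap I_v|=\min\{\rho_u,\rho_v\}-\ell_v>R_i-\ell_v\ge t_v\ge\min\{t_u,t_v\}$, so $u\sim v$. Hence every cut $\{\omega_1,\dots,\omega_{s_i}\}$ would have to be a clique of $B^m$. But every cut contains the non-adjacent pair $1,2$.

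Equivalently, the two requirements of your Step~1 cannot hold together. The conditions $a_1,a_2<R_{2m-1}<\min\{\rho_1,\rho_2\}$ already force $1\sim 2$, so neither a perturbation of the cited representation nor a hand-built one can rescue the plan. Your device for the non-edge $q_1q_{2m-1}$, namely large tolerances on $q_1$ and $q_{2m-1}$ with their overlap the smallest, lives in this same large-tolerance regime and falls with it.

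\textbf{What the paper does instead.} It reverses the inequality. Every new tolerance is at most $t_z$, the smallest base tolerance. So for a base vertex $\omega_p$ lying to the right of $q_i$, one has $q_i\sim\omega_p$ if and only if $L_{\omega_p}\le R_{q_i}-t_{q_i}$.

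Adjacency is therefore read off from the \emph{left-endpoint} order of the base. For the representation of \cite{bogart-fishburn-isaak-langley}*{Lemma~9}, with $z$ appended on the right, that order is $\ord(m)$. Setting $R_{q_i}=L_{\omega_{s_i}}+t_{q_i}$ realises the cuts, and the outer vertices placed on the left automatically form a clique.

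In this small-tolerance regime, however, $q_1$ cannot also sit on the left. The edge $q_{2m-1}\sim 2$ gives $R_{q_{2m-1}}-L_{\omega_2}\ge t_{q_{2m-1}}$. So an interval for $q_1$ that starts before $L_{\omega_2}$ and ends after $R_{q_{2m-1}}$ meets $J_{q_{2m-1}}$ in at least $t_{q_{2m-1}}$, which forces the unwanted edge.

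The missing idea is to start $q_1$ \emph{inside} the base, between $L_{\omega_3}$ and $L_{\omega_4}$, with $R_{q_1}=2+L_{q_1}$ and $t_{q_1}=R_{q_1}-L_{\omega_{4m-1}}$. Then $q_1\sim q_i$ if and only if $L_{\omega_{s_i}}\ge L_{q_1}$, that is, if and only if $s_i\ge 4$. This excludes exactly $q_{2m-1}$.
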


\begin{proposition}
\label{prop:Qm-not-bicoloured}
$Q^m \notin \bicoloured$.
\end{proposition}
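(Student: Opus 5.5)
We argue by contradiction, in the spirit of the proof of Theorem~\ref{thm:antihole-bicoloured}. Suppose $(\sigma, c, r)$ is a bicoloured representation of $Q^m$. The argument has three stages. First we fix the left-to-right order of the centres. Next we pin down the colouring. Finally we add up threshold inequalities around a closed circuit of pairs so that the two sides cannot agree. Throughout, $(c, r)$ is also a sandwich representation, so we use Observation~\ref{obs:representation}, Lemma~\ref{lem:additivity} and Corollary~\ref{cor:propagate} freely.

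\emph{Stage 1: the centre order.} For each $q_i$, the non-neighbours in $B^m$ are the final segment $\omega_{s_i+1}, \dots, \omega_{4m+1}, z$ of $\ord(m)$. This segment induces a connected subgraph. When $s_i \leq 2m$ it contains $x$, which is adjacent to every chain vertex except $1$. When $s_i > 2m$ it contains $z$ together with chain vertices in $\{2m+1, \dots, 4m\}$, and these are adjacent to $z$. By Corollary~\ref{cor:propagate}, each such tail lies entirely on one side of $c_{q_i}$. The tails are nested and all contain $z$, so they all lie on the same side; reflecting if necessary, we take it to be the right. Applying Corollary~\ref{cor:propagate} to the non-neighbours of the joints, of $1$ and of $4m$, we aim to show that the joints $3, 5, \dots, 4m-1$ are increasing and that each leaf $2i$ sits on the correct side of the joints it misses. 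This is the same ordering lemma that Bogart, Fishburn, Isaak and Langley use for $H^m$. We expect it to go through for sandwich representations, because it uses only Lemma~\ref{lem:additivity} and never the particular adjacency rule of tolerance graphs.

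\emph{Stage 2: the colouring.} Next we constrain $\sigma$. First apply Corollary~\ref{cor:forcing}\ref{it:forcing-col} to non-adjacent pairs that have three pairwise non-adjacent common neighbours. Typical examples are $x$ and $1$'s neighbours, and pairs $q_i$, $\omega_{s_i+1}$ whose common neighbourhood contains several leaves $2j$. This gives equalities between colours. Then apply Lemma~\ref{lem:C4-colouring} to the induced $4$-cycles of the form $q_i\, a\, q_j\, b$ or $q_1\, u\, q_{2m-1}\, w$ with $u, w$ in the common neighbourhood, which propagates the equalities. The goal is to show that all of $q_1, \dots, q_{2m-1}$, together with the chain vertices that carry the constraints, lie in one colour class. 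Failing that, we want to reduce to a bounded number of patterns, say one exceptional vertex, as in Lemma~\ref{lem:antihole-colourings}.

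\emph{Stage 3: the telescoping contradiction.} Once the colouring is fixed, each relevant threshold is either a maximum or a sum. For each $i$, the non-edge $q_i\,\omega_{s_i+1}$ gives $c_{\omega_{s_i+1}} - c_{q_i} > \tau$, and the edge $q_i\,\omega_{s_i}$ gives an upper bound. The two ends of the family supply the crucial non-edge $q_1 q_{2m-1}$ alongside the edges $q_iq_{i+1}$. We sum these along the alternating circuit
\[
q_1 \to \text{chain} \to q_2 \to \cdots \to q_{2m-1} \to q_1 .
\]
The centre differences then cancel, as in the proofs of Lemma~\ref{lem:antihole-swap} and Proposition~\ref{prop:K33-not-C}. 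What remains is a strict inequality among radius terms, which we treat with the elementary facts~\eqref{eq:maxfact} and~\eqref{eq:sumfact}. The cuts alternate between odd values above $2m$ and even values at most $2m$. This alternation should force a cyclic chain of strict radius comparisons $r_{a_1} < r_{a_2} < \cdots < r_{a_1}$, which is impossible.

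We expect Stage~2 to be the main obstacle. In the unit case the threshold is uniform, and the telescoping in Stage~3 works directly. Here a single bichromatic pair replaces $\max$ by a sum, which may absorb the slack. The first thing we would try is to show that every pair used in the Stage~3 circuit is forced to be monochromatic. Where that fails, we would split into cases according to which vertices are exceptional, and rerun the summation with the sum thresholds, as in Cases~2 and~3 of the proof of Theorem~\ref{thm:antihole-bicoloured}.
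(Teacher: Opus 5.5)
\emph{Stage 1 and Stage 2.} Your Stage~1 cannot come first, and the justification you give for it is wrong. The ordering lemma of Bogart, Fishburn, Isaak and Langley rests on Lemma~\ref{lem:proper-P4}, which uses properness and not merely Lemma~\ref{lem:additivity}. Its bicoloured substitute, Lemma~\ref{lem:P4gen}, needs the middle edge of the induced $P_4$ to be monochromatic, and without that hypothesis the conclusion fails. For example, colours $(-,+,-,+)$, radii $(10,100,100,10)$ and centres $(0,108,105,210)$ give a bicoloured representation of the path $v_0v_1v_2v_3$ with $c_0<c_2<c_1<c_3$. So two facts are available only after you know the joints share a colour: monotonicity of the joints (Corollary~\ref{cor:noextremum} on the path $1,3,\dots,4m-1,4m-2$), and the placement of each leaf $2i$ to the right of $2i-1$. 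Establishing that common colour is your Stage~2, and there your tools do not reach the joints. Corollary~\ref{cor:forcing}\ref{it:forcing-col} gives some relations, such as $\sigma(x)=\sigma(z)$ via $2m+1,2m+2,4m$. But some of your examples fail its hypothesis: the common neighbourhood of $x$ and $1$ is $\{3,q_1,\dots,q_{m-1}\}$, a clique. More to the point, no non-neighbour of a joint shares three pairwise non-adjacent neighbours with it. Lemma~\ref{lem:C4-colouring} also says nothing about, for example, the induced $4$-cycle $x,\,2m+1,\,z,\,2m+2$ when $2m+1$ takes the other colour. The missing ingredients are Lemmas~\ref{lem:fan} and~\ref{lem:twins}. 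They extract colour information from $K_{2,3}$-type configurations, the latter with an extra vertex attached, by combining Lemma~\ref{lem:C4-colouring} and Corollary~\ref{cor:forcing} with the local order information of Lemmas~\ref{lem:P4gen} and~\ref{lem:monoC4}. With them, each outer vertex pins the joint just above its cut to the colour of $q_1$, $x$ or $z$. The non-edge $q_1q_{2m-1}$ is used exactly here, for the joint $3$. This gives Proposition~\ref{prop:joints}. The right target is $\{q_1,x,z\}$ together with all joints; the outer vertices' own colours are never needed.

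\emph{Stage 3.} Stage~3 has no concrete mechanism, and the one you sketch is not where the contradiction lies. The cuts also do not alternate: they are $4m-1>\dots>2m+3$ followed by $2m>\dots>2$. Once $q_1,x,z$ and the joints are monochromatic, the outer vertices and $q_1q_{2m-1}$ are irrelevant. Proposition~\ref{prop:impossible} shows that $B^m+q_1$ alone has no such representation, and in particular no $50\%$-tolerance one. So even in the uniform case, a circuit through $q_2,\dots,q_{2m-1}$ is not what fails. What fails is a propagation down the chain, not a single closed sum:
\begin{itemize}
\item $q_1$ forces $c_{4m-1}-c_{4m-3}>r_{4m-3}$, and $z$ forces $c_x<c_{2m+1}$.
\item A reverse induction then uses the leaves, with case analysis on where $2i$ sits relative to $2i+1$ and $2i+3$. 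It gives $g_i>r_{2i-1}$ for the gaps $g_i=c_{2i+1}-c_{2i-1}$, hence $g_2<\dots<g_{2m-1}$ (Proposition~\ref{prop:cl8}).
\item Consequently $c_{4m-1}-c_{2m+1}>c_{2m+1}-c_3$. Since $I_x$ must reach past $I_{4m-1}$ (because of $4m$) from a centre left of $c_{2m+1}$, it must also reach $c_1$, contradicting $x\not\sim 1$.
\end{itemize}
Your concern that sum thresholds absorb slack is handled there by short case splits on the free colours of $1$, $4m$ and $4m-2$. It is not handled by forcing every pair to be monochromatic, which is not available.
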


\begin{proposition}
\label{prop:Qm-minimal}
$Q^m - u \in \bicoloured$ for every $u \in V(Q^m)$.
\end{proposition}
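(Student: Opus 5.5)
The statement to prove is Proposition~\ref{prop:Qm-minimal}: $Q^m - u \in \bicoloured$ for every $u \in V(Q^m)$. The plan is to give explicit bicoloured representations for a small number of deletion types. Since $\bicoloured$ is hereditary (Observation~\ref{obs:B-hereditary}), it suffices to handle one deletion per symmetry-free class of vertices. It is also enough to represent a supergraph-by-deletion: any graph $Q^m - u$ that is an induced subgraph of some other representable graph is covered for free.

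First, I would build a single ``template'' representation of most of $Q^m$. The base graph $B^m$ lies in $\unit$, and so it has a monochromatic (colour $+$) $50\%$-tolerance representation. I would start from the unit representation of $H^m$ in~\cite{bogart-fishburn-isaak-langley}, recentred as in Proposition~\ref{prop:signed-radius}. The chain then runs left to right with its centres roughly in the order $\ord(m)$, the vertex $x$ is a long interval covering the chain from $2$ onward, and $z$ sits at the right end.

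The cut vertices $q_i$ must be adjacent exactly to an initial segment $\omega_1,\dots,\omega_{s_i}$ of $\ord(m)$. I would try to realise each $q_i$ with the colour $-$ and a centre far to the left. The bichromatic rule is then plain interval intersection, so the neighbourhood of $q_i$ in $B^m$ is an initial segment of the left endpoints of the $B^m$ intervals. This works provided the left endpoints of $B^m$ are ordered as in $\ord(m)$. Among themselves, the $q_i$ are monochromatic, and they must form $K_{2m-1}$ minus $q_1q_{2m-1}$. With the cuts decreasing, I would give $q_1$ and $q_{2m-1}$ extreme positions and choose radii so that only that one pair exceeds $\max\{r_{q_1},r_{q_{2m-1}}\}$.

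Proposition~\ref{prop:Qm-not-bicoloured} says that all these requirements are jointly inconsistent. So for each deletion type I would identify which inequality in the non-membership proof uses the deleted vertex, and relax exactly that constraint. The deletion types are:
\begin{enumerate}[label=(\roman*)]
\item $u = q_i$, an inner or outer vertex. Dropping it frees the ordering of the remaining cuts.
\item $u \in \{x, z\}$.
\item $u$ a joint or an even chain vertex.
\end{enumerate}
Deleting $z$ or $x$ should allow a nearly monochromatic representation extending the unit one. Deleting a chain vertex splits the chain, and the two pieces can be placed with opposite colours, or mirrored, so that the bichromatic rule absorbs the conflicting cuts. I expect each case to reduce to choosing centres on an integer grid with radii from a few arithmetic progressions, in the style of Lemma~\ref{lem:copath} and Proposition~\ref{prop:K2t}. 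Each case is then verified by listing pair types (chain--chain, chain--$q$, $q$--$q$, and pairs involving $x,z$) and checking the threshold in each.

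The main obstacle is the uniformity in $m$. The representations must be written as explicit formulas in $m$ and the index, and the verification must handle every deleted joint position $2i+1$, not just small cases. I would first compute representations for $m=2,3$ by solving the linear programs that arise once the colouring and the order of endpoints and centres are fixed, as in the recognition approach of \S\ref{sec:recognition}. From these I would read off patterns and then generalise. I expect deleting a middle joint to need a representation whose left part and right part use different colours, and the bookkeeping of bichromatic pairs across the cut there is where most care is needed.
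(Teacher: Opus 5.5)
\textbf{There is a genuine gap.} Your three deletion types match the paper's Cases~(C3), (C1) and~(C2), but the proposal is a plan rather than a proof. No representation is written down for any deletion, and the uniformity in $m$, which you rightly flag as the main difficulty, is left to patterns guessed from LP solutions for $m=2,3$.

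More seriously, your one concrete ingredient is ruled out by the forcing results of Appendix~\ref{ap:Qm-not-bicoloured}. That ingredient is a monochromatic $+$ base with every $q_i$ coloured $-$ and placed far left, so that the cuts come from plain interval intersection.
\begin{itemize}
\item The proof of Proposition~\ref{prop:Wm-mono} uses only $x,z,q_1,1,2m+1,2m+2,4m$. So whenever these survive (in particular for every $u=q_i$ with $i\geq 2$) it forces $\sigma(q_1)=\sigma(x)=\sigma(z)$.
\item For $m\geq 3$, the outer vertex with cut $6$ is non-adjacent to $x$, and $2,4,6$ are pairwise non-adjacent common neighbours of the two. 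So Corollary~\ref{cor:forcing}\ref{it:forcing-col} forces that outer vertex into the colour of $x$.
\item The same corollary applied to $q_1$ and $z$, with common neighbours $2m+1,2m+2,2m+4$, forces $\sigma(q_1)=\sigma(z)$.
\end{itemize}
Together these kill your colouring for every $u$ once $m\geq 3$.

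The prefix adjacencies therefore have to be realised under the monochromatic rule. The paper does this with Lemma~\ref{lem:prefixclique}: the outer centres go to the left of every left endpoint, and each interval ends exactly at the largest centre of its prefix. Because $q_1\not\sim q_{2m-1}$, the centre of $q_1$ instead goes inside the base, between $\hi(2)$ and the next centre, with right endpoint $M=\hi(4m-1)$ (Lemma~\ref{lem:lift}). This needs the extra inequality $2\hi(2)-\lo<M$.

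\textbf{Your case-by-case expectations also point the wrong way.}
\begin{itemize}
\item \emph{Deleting an outer $q_i$.} Carried out carefully, your own heuristic does not free the order of the remaining cuts. It frees exactly the colour of the joint $y=\omega_{s_i}+1$ that $q_i$ pinned via Lemma~\ref{lem:twins} in Proposition~\ref{prop:joints}. Since $Q^m-q_i$ still contains $B^m+q_1$, Propositions~\ref{prop:Wm-mono} and~\ref{prop:impossible} force some joint out of the colour class of $q_1,x,z$. The paper recolours precisely $y$: it takes the $50\%$ representation of $Q^m-y$ from Case~(C2), deletes $q_i$, and inserts $I_y=[c_{y-2}+r_{y-2},\,c_{y+2}-r_{y+2}]$ (ending at $c_{y-1}$ when $y=4m-1$).
\item \emph{Deleting a chain vertex.} Giving the two pieces opposite colours is impossible for large $m$. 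Lemma~\ref{lem:twins}, applied through the surviving outer vertices, still puts every joint far from $u$, on either side, into the common class of $x$ and $z$. The obstruction here is metric, not chromatic: Proposition~\ref{prop:cl8} forces the joint gaps to increase, which is what makes the budget~\eqref{eq:budget} fail. The fix is a fully monochromatic representation whose gaps drop from $\Lambda(i)$ to $\Sigma(i)$ above $u$ (Definition~\ref{def:chain-u}, Lemma~\ref{lem:budget}).
\item \emph{Deleting $x$ or $z$.} This needs only the increasing gaps of Lemma~\ref{lem:omitxz}.
\end{itemize}
Finally, your hereditary reduction saves nothing. All the graphs $Q^m-u$ have $6m$ vertices, so one is an induced subgraph of another only if they are isomorphic, and every $u$ must be handled.
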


By Observation~\ref{obs:B-hereditary} and Proposition~\ref{prop:Qm-minimal}, every proper induced subgraph of $Q^m$ is in $\bicoloured$.
Hence, the three propositions together imply Theorem~\ref{thm:proper-incomparable}.
The proofs of the three propositions are in Appendix~\ref{ap:Qm}.
Since the appendix is long and technical, we give a brief sketch here.

Proposition~\ref{prop:Qm-proper} is proved by an explicit tolerance representation in which the left endpoints and the right endpoints induce the same ordering of the vertices, so the representation is proper.
We start with the proper representation of $B^m - z$ in~\cite{bogart-fishburn-isaak-langley}*{Lemma~9}.
The vertex $z$ is placed on the right, and the outer vertices $q_2, \dots, q_{2m-1}$ to the left of everything, as in~\cite{bogart-fishburn-isaak-langley}*{Theorem~7}.
For each $i$, the interval of $q_i$ overlaps that of $\omega_{s_i}$ by exactly the tolerance of $q_i$, so that $q_i$ is adjacent to precisely the first $s_i$ base vertices, and the outer intervals overlap one another by more than their tolerances, so the outer vertices form a clique.
The inner vertex $q_1$ instead begins inside the base, between its third and fourth left endpoints, so it is adjacent to every outer vertex whose cut is at least $4$, and not to the vertex with cut $2$, i.e., $q_{2m-1}$.
This is where our construction diverges from that of~\cite{bogart-fishburn-isaak-langley}, whose added vertices on the left all lie outside the base, serve only to pin its order, and form a clique.
Placing $q_1$ inside is what makes the non-edge $q_1q_{2m-1}$ possible, and that non-edge is precisely what Proposition~\ref{prop:Qm-not-bicoloured} exploits.

The proof of Proposition~\ref{prop:Qm-not-bicoloured} proceeds by contradiction.
Suppose $Q^m$ has a bicoloured representation.
The first step forces the colouring.
The vertices $x$ and $z$ are non-adjacent with three pairwise non-adjacent common neighbours so, by Corollary~\ref{cor:forcing}\ref{it:forcing-col}, they have the same colour.
We then show that $q_1$ has the same colour as well, using Lemma~\ref{lem:fan} twice.
Each outer vertex $q_i$ then fixes the colour of one joint via its cut, by Lemma~\ref{lem:twins}, and $q_1$ fixes the remaining top joint $4m-1$ by Lemma~\ref{lem:fan}.
This is the only reason we need the outer vertices, and it gives Proposition~\ref{prop:joints}, that $q_1$, $x$, $z$, and all $2m-1$ joints share a colour.

The second step orders the joints.
The path $1, 3, 5, \dots, 4m-1, 4m-2$ has a monochromatic interior.
By Corollary~\ref{cor:noextremum} the centres of the joints are strictly monotone, say $c_3 < \cdots < c_{4m-1}$. Moreover, by Lemma~\ref{lem:P4gen} each even vertex $2i$ with $2 \leq i \leq 2m-1$ lies to the right of the joint $2i-1$.

The third step is the contradiction, a bicoloured counterpart of~\cite{bogart-fishburn-isaak-langley}*{Lemma~8} that propagates a separation between consecutive joints, given at the top of the chain by $q_1$, down to the bottom (Proposition~\ref{prop:cl8}).
As a result, the gaps between consecutive joints increase along the chain.
The interval of $x$, which must reach beyond $I_{4m-1}$ from a centre that $z$ pins below $c_{2m+1}$, is then wide enough to reach $1$ as well, contradicting that $x$ and $1$ are non-adjacent.
All of this takes place inside $B^m + q_1$, the base graph with the inner vertex $q_1$ with cut $4m-1$ adjoined, so that subgraph already admits no representation with the forced colouring (Proposition~\ref{prop:impossible}).

For Proposition~\ref{prop:Qm-minimal}, the deleted vertex $u$ falls into three cases.
In the first two cases, deleting one of $x, z$, or a chain vertex leaves a graph with a monochromatic, i.e., a $50\%$-tolerance, representation.
The chain here is represented by centres at prescribed gaps, which we take to increase along the chain, mirroring Proposition~\ref{prop:cl8}.
Then, if needed, $x$ and $z$ are adjoined.
The vertex~$z$ goes to the right of everything, so is easily placed.
The interval of $x$ contains every chain centre but $c_1$, so $c_x$ lies above the midpoint of $c_1$ and the largest chain centre.
Moreover, if $z$ is present, $x$ is placed in the gap just above $2m$ to accommodate the adjacencies of $z$.
This is possible only when that midpoint lies below the smallest centre above $2m$, which is precisely the condition that the {\em budget} in Appendix~\ref{ap:Qm} is positive (see~\eqref{eq:budget}).
Finally, $q_1$ is placed, which is possible when the centre order respects the cuts and a second, budget-like inequality holds.
The outer vertices are then attached as a clique on the left.

The case when a chain vertex is deleted requires particular care.
Both $x$ and $z$ survive, so (unless $u = 1$) the budget must be positive, which fails for the gaps used when $x$ or $z$ is deleted.
To make it positive, three boundary cases aside, we shorten the gaps above the deleted vertex, lowering the largest chain centre.
This is possible only because the deleted vertex interrupts the propagation of Proposition~\ref{prop:cl8}, which otherwise forces the gaps to increase.

When a vertex $q_i$ is deleted, we use a different approach.
For $i \geq 2$ the subgraph $B^m + q_1$ survives, so by Proposition~\ref{prop:impossible} no monochromatic representation exists and we must use the second colour.
The case $i = 1$ can be handled in the same manner.
It turns out that one vertex of the second colour suffices, namely the joint $y$ just above the cut $s_i$.
We obtain a monochromatic representation of $Q^m - \{q_i, y\}$ as above, and add an interval for $y$ that meets exactly the intervals of its neighbours, which gives a bicoloured representation with $y$ in the second colour.

\section{Recognition}
\label{sec:recognition}

The recognition problem for a class of graphs asks whether a given graph $G$ belongs to the class.
Hayward and Shamir~\cite{hayward-shamir} showed that the recognition problems for $\tol$ and $\bounded$ are in $\NP$, and Mertzios, Sau, and Zaks~\cite{mertzios-sau-zaks} proved that they are in fact $\NP$-complete, even when the input is promised to be a trapezoid graph.
The recognition problems for $\unit$ and $\proper$ remain open (see~\cite{golumbic2004tolerance}*{Chapter~14} and~\cite{mertzios-sau-zaks}*{Section~5}).
By contrast, $\trapezoid$, which contains $\bounded$, is recognisable in polynomial time~\cite{ma-spinrad}, as is $\cocomparability$~\cite{golumbic-agtpg}, which contains $\sandwich$ by Theorem~\ref{thm:c-in-cocomparability}.

In this section we show that the recognition problems for $\bicoloured$ and $\sandwich$ are in $\NP$.
Our argument follows the outline of Hayward and Shamir~\cite{hayward-shamir}:
\begin{itemize}
  \item whether $(c, r)$ is a sandwich representation of $G$ depends only on a single linear order, with ties, on the interval endpoints and centres, and in the bicoloured case only on that order and the colouring,
  \item this order can be imposed by a system of linear constraints, and every solution of the system is again a representation of $G$, and
  \item the system is feasible and has a solution giving a polynomial-size integer representation of $G$.
  \end{itemize}

Fix a graph $G$ with vertex set $V = V(G)$, where $|V| = n$.
For $c \in \RR^V$ and $r \in \RR^V$ with $r_u > 0$ for every $u \in V$, write $\mathcal{P}(c,r)$ for the family of $3n$ reals
\[  c_u - r_u, \quad c_u, \quad c_u + r_u \quad\text{ with }\quad u \in V, \]
each labelled by its vertex and its role as the left endpoint, centre, or right endpoint of $I_u$.
A {\em linear order} of the labels is an arrangement of them in a sequence, together with a relation, $=$ or $<$, between each label and the next.
A linear order is {\em induced} by $(c,r)$ if these relations are those of the members of $\mathcal{P}(c,r)$.
Two pairs $(c, r)$ and $(c', r')$ are {\em order-equivalent} if they induce a common linear order.

\begin{lemma}
\label{lem:order-equivalence}
Let $(c, r)$ and $(c', r')$ be order-equivalent.
Then $(c, r)$ is a sandwich representation of $G$ if and only if $(c', r')$ is.
Moreover, for every colouring $\sigma$, the triple $(\sigma, c, r)$ is a bicoloured representation of $G$ if and only if $(\sigma, c', r')$ is.
\end{lemma}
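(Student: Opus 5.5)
The plan is to show that every adjacency condition in Definitions~\ref{def:sandwich} and~\ref{def:bicoloured} is equivalent to a comparison between two members of $\mathcal{P}(c,r)$. Such comparisons are recorded by the induced linear order, so they agree for order-equivalent pairs. By symmetry it suffices to prove one direction of each equivalence, and by Observation~\ref{obs:thresholds} each threshold condition can be rewritten as a statement about endpoints and centres.

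First, the sum threshold. By Observation~\ref{obs:thresholds}\ref{it:th-sum}, $|c_u - c_w| \leq r_u + r_w$ holds if and only if $I_u \cap I_w \neq \emptyset$. This is equivalent to the two inequalities
\[ c_u - r_u \leq c_w + r_w \quad\text{and}\quad c_w - r_w \leq c_u + r_u. \]
Each is a comparison between two labelled members of $\mathcal{P}(c,r)$, namely a left endpoint against a right endpoint. Whether each holds is therefore determined by the induced linear order.

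Next, the max threshold. By Observation~\ref{obs:thresholds}\ref{it:th-max}, $|c_u - c_w| \leq \max\{r_u, r_w\}$ holds if and only if $c_u \in I_w$ or $c_w \in I_u$. Here $c_u \in I_w$ means $c_w - r_w \leq c_u \leq c_w + r_w$, which is again a pair of comparisons between members of $\mathcal{P}(c,r)$, and the same holds for $c_w \in I_u$. So this condition is also determined by the order.

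Now suppose $(c,r)$ and $(c',r')$ induce a common linear order. The truth value of every comparison $a \leq b$ or $a < b$ between labelled members is then the same in both families. For sandwich representations, each implication in Definition~\ref{def:sandwich} has a hypothesis determined by $G$ alone and a conclusion determined by the order: the first conclusion is the sum condition, and the second is the negation of the max condition. Hence $(c,r)$ is a sandwich representation if and only if $(c',r')$ is. For the bicoloured case with a fixed $\sigma$, the colour of each pair selects which of the two thresholds applies, independently of $(c,r)$. The condition $uw \in E(G) \iff |c_u-c_w| \leq \tau_{uw}$ is therefore equally order-determined.

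The only delicate point is ensuring that ties are handled correctly, since the conditions mix the non-strict $\leq$ with its strict negation. This is exactly why the definition of a linear order records $=$ versus $<$ between consecutive labels. Transitivity along the sequence then fixes the relation, $<$, $=$, or $>$, between any two labels, not just adjacent ones. I would state this explicitly as the key step. Beyond that, the argument is routine unwinding of the definitions.
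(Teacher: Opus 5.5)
Your proposal is correct and takes essentially the same route as the paper. Both rewrite the sum and max threshold conditions as Boolean combinations of comparisons between labelled members of $\mathcal{P}(c,r)$ (the paper by unfolding the absolute value, you via Observation~\ref{obs:thresholds}), and then read off Definitions~\ref{def:sandwich} and~\ref{def:bicoloured}. Your explicit remark about ties is a detail the paper leaves implicit: by transitivity, the recorded relations between consecutive labels fix the relation between any two labels.
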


\begin{proof}
Fix $u \neq w$.
Unfolding the absolute value, $|c_u - c_w| \leq r_u + r_w$ if and only if
\[  c_u - r_u \leq c_w + r_w \quad\text{ and }\quad c_w - r_w \leq c_u + r_u . \]
Similarly, $|c_u - c_w| \leq \max\{r_u, r_w\}$ if and only if $|c_u - c_w| \leq r_u$ or $|c_u - c_w| \leq r_w$, which is equivalent to
\[  c_u - r_u \leq c_w \leq c_u + r_u \quad\text{ or }\quad c_w - r_w \leq c_u \leq c_w + r_w . \]
Thus each of the two threshold conditions is a Boolean combination of comparisons between members of $\mathcal{P}(c,r)$.
Each condition therefore holds for $(c,r)$ if and only if it holds for $(c', r')$.

For $\sandwich$, Definition~\ref{def:sandwich} requires that the first condition hold when $uw \in E(G)$ and that the second fail when $uw \notin E(G)$.
For $\bicoloured$, Definition~\ref{def:bicoloured} requires that $uw \in E(G)$ if and only if one of the two conditions holds, depending on $\sigma(u)$ and $\sigma(w)$.
Since both requirements involve only the two conditions and $u, w$ are arbitrary, the lemma follows.
\end{proof}

By Lemma~\ref{lem:order-equivalence}, a representation may be replaced by any order-equivalent one.
We use this to prove the existence of a polynomial-size integer representation.

\begin{theorem}
\label{thm:integer-representation}
Let $G$ be a graph on $n$ vertices, and suppose that $G \in \sandwich$ or $G \in \bicoloured$.
Then $G$ has a sandwich or a bicoloured representation, respectively, in which every $c_u$ and every $r_u$ is a non-negative integer less than $n \, 2^{4n}$.
\end{theorem}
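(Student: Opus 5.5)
Starting from an arbitrary representation $(c,r)$ (with colouring $\sigma$ in the bicoloured case), we fix the linear order $L$ it induces on the $3n$ labelled reals $\mathcal{P}(c,r)$. By Lemma~\ref{lem:order-equivalence}, any $(c',r')$ inducing the same order $L$ is again a representation (with the same $\sigma$). So it suffices to show that the set of pairs inducing $L$ contains an integer point of the claimed size.

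We encode $L$ as a linear system in the $2n$ unknowns $c_u, r_u$. Each consecutive pair of labels in $L$ gives one constraint. If the pair is related by $=$, we impose an equality between the corresponding expressions $c_u + \epsilon r_u$ with $\epsilon \in \{-1,0,1\}$. If it is related by $<$, we impose that the difference is at least $1$. We also add $r_u \geq 1$, which is implied anyway since $c_u - r_u < c_u$ appears in $L$. Finally we normalise by requiring the smallest label to equal $0$, which makes everything non-negative.

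The system is feasible. Take the original $(c,r)$, translate it so the minimum label is $0$, and scale it by a large factor so that every strict gap is at least $1$. Scaling and translation preserve the induced order. Every feasible point induces exactly $L$: equalities hold, and strict gaps are positive. So any feasible point is a representation.

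It remains to find a small integer feasible point, and this is the main obstacle. We minimise the largest label, i.e.\ the last element of $L$, over the system. The feasible region is a pointed polyhedron, since all variables are bounded below. Hence the optimum is attained at a vertex, and that vertex is the unique solution of a nonsingular $2n\times 2n$ subsystem of tight constraints. The coefficients there lie in $\{0,\pm1,\pm2\}$: a constraint has the form $(c_u+\epsilon r_u)-(c_w+\epsilon' r_w)$, and when $u=w$ the $r_u$ coefficient can be $2$. The right-hand sides lie in $\{0,1\}$.

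By Cramer's rule, each coordinate has the form $\det(A_i)/\det(A)$. Bounding $|\det A_i|$ by Hadamard's inequality gives roughly $(\sqrt{\text{row norm}^2})^{2n} \le (\sqrt{10})^{2n}$, and we multiply through by $|\det A|$ to clear denominators. A rational scaling of a feasible point by a positive integer remains feasible, because the gaps only grow and the equalities and order are preserved. The resulting integers are bounded by $|\det A_i|$, at most about $10^{n}\cdot$(small factor).

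I would refine this to get exactly $n\,2^{4n}$. Each row has at most four nonzero entries of absolute value at most $2$, but more carefully a row involves at most two vertices with entries in $\{\pm1\}$ except in the $u=w$ case. This gives a Hadamard row bound of at most $4$, so determinants are at most $4^{n}$. The extra factor $n$ comes from summing up to $3n$ gaps along the chain. If that bookkeeping fails, I would instead bound the vertex directly as a sum of gaps, each a ratio of determinants. The estimate $n\,2^{4n}$ leaves ample room for a crude bound of the form $(\text{entry bound})^{2n}$ times the chain length.
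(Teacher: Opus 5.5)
Your proposal is correct and follows essentially the same route as the paper. Both arguments fix the induced order, encode it as a linear system with unit gaps (feasible by scaling, and every solution a representation by Lemma~\ref{lem:order-equivalence}), take an extreme solution, and clear denominators with a determinant bounded via Hadamard. The only difference is that you take a vertex of the polyhedron in the $2n$ original unknowns, whereas the paper passes to equality form with slacks and a basic feasible solution with at most $3n$ rows, getting the bound $m\,5^{(m-1)/2}$.

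Your final bookkeeping can be made exact without the ``sum of gaps'' fallback. The two endpoint labels of a vertex are never consecutive, since its centre lies strictly between them, so every row has at most four nonzero entries, all $\pm1$. Laplace expansion of $\det A_i$ along the right-hand-side column then gives $|\det A_i|\le 2n\cdot 2^{2n-1}=n\,4^n<n\,2^{4n}$.
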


\begin{proof}
Let $(c,r)$ be a sandwich representation of $G$, or the interval part of a bicoloured representation $(\sigma, c, r)$ of $G$.
By translating if necessary, we may assume that the smallest member of $\mathcal{P}(c,r)$ is $0$.
Fix a linear order of the labels induced by $(c, r)$.
Note that, for each vertex, the centre's label lies strictly between the two endpoint labels.
In particular, the two endpoint labels of a vertex are not consecutive.

Consider $2n$ real unknowns $x_u, y_u$ for $u \in V$, and the $3n$ linear expressions
\[  x_u - y_u, \quad x_u, \quad x_u + y_u \quad (u \in V), \]
whose values at $(x,y) = (c,r)$ are the members of $\mathcal{P}(c,r)$.
Each expression inherits the label of its value.
Let $S$ be the system of constraints defined as follows.
The first expression in the ordering equals~$0$.
For each pair of consecutive labels, the two expressions are equal when the relation between the labels is $=$, and the later exceeds the earlier by at least $1$ when the relation is $<$.

The system $S$ is feasible.
Indeed, $(\lambda c, \lambda r)$ satisfies it for every large enough $\lambda > 0$, where scaling fixes the first expression at $0$, preserves the ties, and multiplies every strict gap by $\lambda$.
Now let $(x,y)$ be any solution of $S$.
Along the ordering the expressions start at $0$ and never decrease, so all of them are non-negative.
Each $y_u$ is at least $1$, since a strict relation separates the labels of $x_u - y_u$ and $x_u$.
Moreover, $(x,y)$ induces the fixed linear order, as $S$ enforces the relation between each pair of consecutive expressions.
By Lemma~\ref{lem:order-equivalence}, the pair $(x,y)$ is therefore a sandwich representation of $G$, or the triple $(\sigma, x, y)$ a bicoloured one, in the respective cases.

Adding a slack unknown for each inequality and discarding redundant equations, we rewrite $S$ as the matrix equation $Az = b$ with $z \geq 0$.
Here $A$ has $m \leq 3n$ linearly independent rows and $b \in \{0, 1\}^m$.
Each row involves the expressions of at most two vertices and at most one slack, so it has at most five non-zero entries.
The entries lie in $\{0, \pm 1\}$, since a coefficient of $\pm 2$ could only come from comparing the two endpoint expressions of one vertex, but those labels are never consecutive.

Since every solution of $S$ is non-negative, this system is feasible, and so has a basic feasible solution $z^*$.
The non-zero entries of $z^*$ are among the entries of $B^{-1} b$ for some non-singular $m \times m$ column submatrix $B$ of $A$~\cite{schrijver}.
Set $z' = |\det B| \, z^*$.
By Cramer's rule, the entries of $\det(B) \, B^{-1}$ are, up to sign, minors of $B$ of order $m - 1$.
So $z'$ has integer entries, and they are non-negative as $z^* \geq 0$.
Moreover, each minor is at most $5^{(m-1)/2}$ in absolute value by Hadamard's inequality, since the rows of $B$ have at most five non-zero entries, each of which is $\pm 1$.
Now $b \in \{0, 1\}^m$, so every entry of $z'$ is, up to sign, a sum of at most $m$ entries of $\det(B) \, B^{-1}$.
It follows that every entry of $z'$ is at most $m \, 5^{(m-1)/2} < n \, 2^{4n}$, where the second inequality holds since $m \leq 3n$.

Finally, $z'$ is a solution of $Az = |\det B| \, b$.
Scaling $b$ by $|\det B| \geq 1$ keeps the first expression at $0$ and the ties, and every strict gap at least $1$.
So the coordinates $x_u, y_u$ of $z'$ satisfy $S$, as required.
\end{proof}

\begin{corollary}
\label{cor:NP}
The recognition problems for $\bicoloured$ and for $\sandwich$ are in $\NP$.
\end{corollary}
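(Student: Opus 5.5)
The plan is to exhibit a polynomial-size certificate and a polynomial-time verifier, using Theorem~\ref{thm:integer-representation} directly.

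\emph{Certificate.} For a graph $G$ on $n$ vertices, the certificate for $\sandwich$ is a pair $(c,r)$ of integer vectors indexed by $V(G)$ with every entry in $[0, n\,2^{4n})$. For $\bicoloured$ it additionally contains a colouring $\sigma \in \{+,-\}^{V(G)}$.

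\emph{Size.} Each entry has at most $\lceil \log_2(n\,2^{4n}) \rceil = O(n)$ bits. There are $2n$ entries, plus $n$ colour bits in the bicoloured case. So the certificate has $O(n^2)$ bits, which is polynomial in the input size.

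\emph{Verification.} The verifier first checks $r_u \ge 1$ for every $u$. It then loops over all $\binom{n}{2}$ pairs $u \neq w$.
\begin{itemize}
\item In the sandwich case, it checks that every edge satisfies $|c_u-c_w| \le r_u+r_w$ and every non-edge satisfies $|c_u-c_w| > \max\{r_u,r_w\}$, as in Definition~\ref{def:sandwich}.
\item In the bicoloured case, it computes $\tau_{uw}$ from $\sigma$ and checks that $uw \in E(G)$ if and only if $|c_u-c_w| \le \tau_{uw}$, as in Definition~\ref{def:bicoloured}.
\end{itemize}
Each test is an integer comparison on $O(n)$-bit numbers. The total running time is therefore polynomial.

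\emph{Completeness.} If $G \in \sandwich$ (resp.\ $\bicoloured$), Theorem~\ref{thm:integer-representation} gives a representation with non-negative integer entries below $n\,2^{4n}$. Positivity of the radii also holds: the proof of that theorem shows each $y_u \ge 1$, and any valid representation has $r_u > 0$, which for integers means $r_u \ge 1$. In the bicoloured case, the theorem keeps the colouring $\sigma$ of the original representation, so $(\sigma, c, r)$ is an accepted certificate.

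\emph{Soundness.} An accepted certificate is by definition a sandwich (resp.\ bicoloured) representation of $G$, so $G$ lies in the class.

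Essentially all the difficulty sits in Theorem~\ref{thm:integer-representation}, so no step here is an obstacle. The one point needing care is that the bit bound is polynomial, which the estimate $O(n)$ bits per entry settles.
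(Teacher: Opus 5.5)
Your proposal is correct and follows the paper's proof. Both take the polynomial-size integer representation of Theorem~\ref{thm:integer-representation} as the certificate and check the defining conditions pair by pair in polynomial time; the only cosmetic difference is that for $\bicoloured$ the paper encodes the colouring as the sign of $\rho_u = \sigma(u) r_u$ via Proposition~\ref{prop:signed-radius}, whereas you pass $\sigma$ explicitly, which is equivalent.
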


\begin{proof}
For membership in $\NP$ it suffices to exhibit, for every graph in either class, a certificate with polynomially many bits that can be checked in polynomial time.

Suppose first that $G \in \sandwich$, and take the representation $(c,r)$ of Theorem~\ref{thm:integer-representation}.
The certificate $(c,r)$ consists of $2n$ non-negative integers with $O(n)$ bits each.
To verify it, we check that $r_u > 0$ for every $u$, and that the two implications of Definition~\ref{def:sandwich} hold for each of the $\binom{n}{2}$ pairs.
Each pair costs a constant number of additions and comparisons of such integers.

Suppose next that $G \in \bicoloured$, and take the representation $(\sigma, c, r)$ of Theorem~\ref{thm:integer-representation}.
Set $\rho_u = \sigma(u) r_u$.
By Proposition~\ref{prop:signed-radius}, the pair $(c, \rho)$ is again a certificate, of $2n$ integers with $O(n)$ bits each.
To verify it, we check that $\rho_u \neq 0$ for every $u$, and that $uw \in E(G)$ if and only if $|c_u - c_w| \leq \operatorname{diam}\{0, \rho_u, \rho_w\}$ for each pair $u \neq w$.
Again each pair costs a constant number of operations, as required.
\end{proof}

The argument above relies on the linearity of the constraints once the order is fixed.
When the constraints are polynomials of higher degree, the natural upper bound is the existential theory of the reals, a class between $\NP$ and $\PSPACE$~\cite{canny}.
For intersection graphs of segments and of disks, recognition is in fact complete for this class~\cites{kratochvil-matousek, schaefer, mcdiarmid-muller}.
There, integer representations may require exponentially many bits~\cite{mcdiarmid-muller}, so the analogue of Theorem~\ref{thm:integer-representation} fails.
Slab hypergraphs are also defined by higher-degree constraints, since membership of a triple in $H(P)$ is a quadratic condition on the coordinates by Lemma~\ref{lem:residual}.
So we do not know whether recognising slab hypergraphs lies in $\NP$.

\section{Conclusion and open problems}
\label{sec:open}

Our starting point was the links of slab hypergraphs, which Proposition~\ref{prop:B-is-links} identifies with the class $\bicoloured$.
We have placed $\bicoloured$ and its relaxation $\sandwich$ in the tolerance hierarchy and shown that their recognition problems lie in $\NP$.
We close with some open questions.

\paragraph{Forbidden induced subgraphs}
Corollary~\ref{cor:no-finite} can be sharpened for $\bicoloured$.
By \S\ref{sec:antiholes} and Proposition~\ref{prop:even-antiholes}, the even antiholes $\overline{C_{2m}}$ with $m \geq 3$ are minimal forbidden induced subgraphs for $\bicoloured$ lying in $\sandwich$, and any set characterising $\bicoloured$ within $\sandwich$ by forbidden induced subgraphs must contain each of them.

\begin{corollary}
\label{cor:no-finite-cocomp}
$\bicoloured$ is not characterised by finitely many forbidden induced subgraphs within $\sandwich$, nor within $\cocomparability$.
\end{corollary}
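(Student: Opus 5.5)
The plan is to argue by contradiction, using the even antiholes as the infinite family of obstructions. Suppose $\mathcal{F}$ is a finite set of graphs such that, for every $G \in \sandwich$, we have $G \in \bicoloured$ if and only if $G$ has no induced subgraph in $\mathcal{F}$.

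First, we may assume every member of $\mathcal{F}$ lies in $\sandwich$. Indeed, a member outside $\sandwich$ never occurs as an induced subgraph of a graph in $\sandwich$, by Observation~\ref{obs:C-hereditary}, so it can be discarded without changing the characterisation.

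Next, choose $m \geq 3$ with $2m$ larger than the number of vertices of every member of $\mathcal{F}$. Then $\overline{C_{2m}} \in \sandwich$ by Proposition~\ref{prop:even-antiholes}. It is not in $\bicoloured$ by Theorem~\ref{thm:antihole-bicoloured}, so it must contain an induced copy of some $F \in \mathcal{F}$. Since $F$ has fewer than $2m$ vertices, $F$ is a proper induced subgraph of $\overline{C_{2m}}$. It is therefore an induced subgraph of $\overline{C_{2m}} - v \cong \overline{P_{2m-1}}$, which lies in $\bicoloured$ by Lemma~\ref{lem:copath}. Then $\overline{P_{2m-1}}$ is a graph in $\sandwich$, since $\bicoloured \subseteq \sandwich$, that lies in $\bicoloured$ yet contains $F$. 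This contradicts the characterisation. Equivalently, any characterising set must contain each minimal obstruction $\overline{C_{2m}}$ for $m \ge 3$, and these are pairwise non-isomorphic.

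For $\cocomparability$ the same argument applies verbatim. The graphs $\overline{C_{2m}}$ and $\overline{P_{2m-1}}$ are co-comparability graphs, since $\sandwich \subseteq \cocomparability$ by Theorem~\ref{thm:c-in-cocomparability}, and $\cocomparability$ is hereditary, so members of $\mathcal{F}$ outside it may again be discarded.

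There is no real obstacle here. The only point needing care is the reduction to members of $\mathcal{F}$ inside the ambient class, together with the fact that a vertex-deleted antihole is a co-path.
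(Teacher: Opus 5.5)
Your proof is correct and takes the paper's approach: the even antiholes $\overline{C_{2m}}$ with $m \geq 3$ are minimal forbidden induced subgraphs for $\bicoloured$ that lie in $\sandwich$ (by Proposition~\ref{prop:even-antiholes}, Theorem~\ref{thm:antihole-bicoloured} and Lemma~\ref{lem:copath}), so any characterising family must contain all of them. The only cosmetic difference is in the $\cocomparability$ case, where you rerun the argument. The paper instead deduces it from the $\sandwich$ case: a finite family characterising $\bicoloured$ within $\cocomparability \supseteq \sandwich$ would also characterise it within $\sandwich$.
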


The second statement follows from the first, since $\sandwich \subseteq \cocomparability$ by Theorem~\ref{thm:c-in-cocomparability}.

For $\sandwich$ no such sharpening is available.
Of its minimal forbidden induced subgraphs from \S\ref{sec:structural}, only $K_{3,3}$ is a co-comparability graph (see~\cite{golumbic2004tolerance}*{Example~2.7} and~\cite{golumbic-agtpg}).
The graph $J$ of Theorem~\ref{thm:proper-sandwich} contains a second one.
Indeed, $J$ lies in $\proper \subseteq \cocomparability$~\cite{golumbic-monma-trotter}*{Theorems~1 and~10} but not in $\sandwich$, so it contains a minimal forbidden induced subgraph $F$ for $\sandwich$, which is again a co-comparability graph.
Now $F$ is not $K_{3,3}$, since $K_{3,3}$ is not a proper tolerance graph~\cite{golumbic2004tolerance}*{Example~2.30} while $J$ is.
As noted after Theorem~\ref{thm:proper-sandwich}, computations indicate that $J$ is itself minimal, and we are not aware of any other obstructions inside $\cocomparability$.

\begin{question}
\label{q:sandwich-finite}
Is $\sandwich$ characterised by finitely many forbidden induced subgraphs within $\cocomparability$, or within $\proper$?
\end{question}

\paragraph{The hierarchy}
The results of \S\ref{sec:classes} and \S\ref{sec:hierarchy} give the chain
\begin{equation}
\label{eq:chain}
  \bicoloured \subseteq \sandwich \cap \bounded \subseteq \sandwich \cap \tol ,
\end{equation}
the first inclusion by Corollaries~\ref{cor:B-in-C} and~\ref{cor:B-strict-bd-tol}, and the second since $\bounded \subseteq \tol$.

The first inclusion in~\eqref{eq:chain} is strict.
An exhaustive computational search found that the smallest graphs in $(\sandwich \cap \bounded) \setminus \bicoloured$ have eight vertices, one being the graph on $\{v_0, \dots, v_7\}$ with edges $v_0v_4$, $v_0v_5$, $v_0v_6$, $v_0v_7$, $v_1v_4$, $v_1v_6$, $v_2v_5$, $v_2v_6$, $v_2v_7$, $v_3v_5$, $v_3v_7$, $v_4v_7$, and~$v_6v_7$, so $\bicoloured$ is not determined by its two upper bounds.
We believe the family $Q^m$ of Theorem~\ref{thm:proper-incomparable} gives infinitely many such examples.

\begin{question}
\label{q:Qm-sandwich}
Is $Q^m \in \sandwich$ for every $m \geq 2$?
\end{question}

Computations indicate that $Q^m \in \sandwich$ for $m \leq 6$, but we have not attempted a proof.
By Theorem~\ref{thm:proper-incomparable} and $\proper \subseteq \bounded$~\cite{golumbic-monma-trotter}*{Theorem~10}, an affirmative answer would make each $Q^m$ minimal in $(\sandwich \cap \bounded) \setminus \bicoloured$.
This would imply that $\bicoloured$ has no characterisation by finitely many forbidden induced subgraphs within $\sandwich \cap \bounded$.

Whether the second inclusion in~\eqref{eq:chain} is strict is open.
Equality is exactly Conjecture~\ref{conj:GM-restricted} below, a special case of the following conjecture of Golumbic, Monma, and Trotter.

\begin{conjecture}[\cite{golumbic-monma-trotter}; see also~\cite{golumbic2004tolerance}*{Question~3.1}]
\label{conj:GM}
$\bounded = \tol \cap \cocomparability$.
\end{conjecture}

Conjecture~\ref{conj:GM} is known to hold for complements of trees~\cite{golumbic2004tolerance}*{\S3.1} and, more generally, for complements of triangle-free graphs~\cite{mertzios-zaks}.
Since $\sandwich \subseteq \cocomparability$ by Theorem~\ref{thm:c-in-cocomparability}, Conjecture~\ref{conj:GM} would imply the following.

\begin{conjecture}
\label{conj:GM-restricted}
$\sandwich \cap \tol \subseteq \bounded$.
\end{conjecture}

The standard example of a tolerance graph that is not bounded, the spider $T_2$, is not a counterexample, as it lies outside $\sandwich$ by Corollary~\ref{cor:spider}.

\paragraph{Recognition}
Corollary~\ref{cor:NP} places the two recognition problems in $\NP$, and their exact complexity is open.

\begin{question}
\label{q:recognition}
Are the recognition problems for $\bicoloured$ and for $\sandwich$ $\NP$-complete, or solvable in polynomial time?
\end{question}

By the proof of Theorem~\ref{thm:integer-representation}, recognition is polynomial once the order of the labels is prescribed, together with a colouring in the case of $\bicoloured$, so the difficulty lies in the choice of the order.
Prescribing only the colouring does not obviously help, since under a constant colouring the problem becomes the recognition of $\halftolerance = \unit$, which is open.
The order itself is constrained, since by the proof of Theorem~\ref{thm:c-in-cocomparability} the centres of a sandwich representation transitively orient the complement of the graph, but we do not know whether this can be exploited.
An easier question is whether the $O(n)$ bits per entry of Theorem~\ref{thm:integer-representation} can be reduced to $O(\log n)$, as for interval graphs.

\paragraph{Slab hypergraphs}
We end where we began.
Proposition~\ref{prop:B-is-links} describes the links of slab hypergraphs exactly, but the hypergraphs themselves are more complicated objects.
As noted in \S\ref{sec:recognition}, the natural upper bound on their recognition is the existential theory of the reals.

\begin{question}
\label{q:slab-NP}
Is the recognition problem for slab hypergraphs in $\NP$?
\end{question}

By Lemma~\ref{lem:residual}, three rational points can be tested for containment in a common slab in polynomial time.
So it would suffice that every slab hypergraph has a realisation by points with polynomially many bits.
The hypergraphs are the subject of a companion paper~\cite{companion}.

\section*{Statement on the use of AI}
The authors used Claude (Anthropic) to discuss proof strategies, check proofs, and edit the exposition.
The graph $J$ of Theorem~\ref{thm:proper-sandwich} and the proof in Appendix~\ref{ap:proper-sandwich} were obtained with AI assistance, as were parts of the analysis in Appendix~\ref{ap:Qm}.
All other results and proofs are the authors' own.
The authors have checked every proof and take full responsibility for all content.

\bibliography{graph-classes}

\appendix

\section{\texorpdfstring{A graph in $\proper \setminus \sandwich$}{A graph in proper-tolerance minus interval-sandwich}}
\label{ap:proper-sandwich}

We prove Theorem~\ref{thm:proper-sandwich}, beginning with the membership $J \in \proper$.
For convenience, we reproduce the adjacency lists of $J$.
\[
\begin{array}{ll}
\hline
  v_{0}\colon 1, 2, 3, 4, 5, 6, 7, 8 & v_{10}\colon 4, 5, 6, 7, 8, 9, 11, 12, 13, 14, 15, 16, 17, 18\\
  v_{1}\colon 0, 5, 9 & v_{11}\colon 4, 5, 7, 8, 9, 10, 13, 16, 19\\
  v_{2}\colon 0, 5, 9 & v_{12}\colon 4, 5, 9, 10, 13, 16, 19\\
  v_{3}\colon 0, 4, 5, 6, 7, 8, 9 & v_{13}\colon 5, 6, 7, 8, 9, 10, 11, 12, 16, 19\\
  v_{4}\colon 0, 3, 5, 6, 7, 8, 9, 10, 11, 12 & v_{14}\colon 5, 10, 16, 19\\
  v_{5}\colon 0, 1, 2, 3, 4, 6, 7, 8, 9, 10, 11, 12, 13, 14, 15 & v_{15}\colon 5, 10, 16, 19\\
  v_{6}\colon 0, 3, 4, 5, 9, 10, 13, 16 & v_{16}\colon 6, 7, 8, 9, 10, 11, 12, 13, 14, 15, 19\\
  v_{7}\colon 0, 3, 4, 5, 9, 10, 11, 13, 16 & v_{17}\colon 10, 19\\
  v_{8}\colon 0, 3, 4, 5, 9, 10, 11, 13, 16 & v_{18}\colon 10, 19\\
  v_{9}\colon 1, 2, 3, 4, 5, 6, 7, 8, 10, 11, 12, 13, 16 & v_{19}\colon 11, 12, 13, 14, 15, 16, 17, 18\\
\hline
\end{array}
\]

Assign to each vertex $v_i$ of $J$ the interval $I_i = [L_i, R_i]$ and tolerance $t_i$ given by
\[
\begin{array}{l*{10}{r}}
\hline
i & 0 & 1 & 2 & 3 & 4 & 5 & 6 & 7 & 8 & 9\\ \hline
L_i & 0 & 1 & 2 & 3 & 4 & 5 & 6 & 7 & 8 & 9\\
R_i & 10 & 33 & 35 & 42 & 84 & 85 & 97 & 98 & 99 & 100\\
t_i & 2 & 32 & 33 & 34 & 35 & 1 & 91 & 93 & 92 & 24\\
\hline
\end{array}
\]
\[
\begin{array}{l*{10}{r}}
\hline
i & 10 & 11 & 12 & 13 & 14 & 15 & 16 & 17 & 18 & 19\\ \hline
L_i & 40 & 41 & 49 & 62 & 83 & 84 & 85 & 97 & 98 & 99\\
R_i & 101 & 103 & 104 & 105 & 106 & 107 & 108 & 109 & 110 & 111\\
t_i & 3 & 57 & 55 & 23 & 94 & 54 & 12 & 93 & 56 & 4\\
\hline
\end{array}
\]
Both rows $L$ and $R$ are strictly increasing, so no interval contains another and the representation is proper.
For $i < j$, by monotonicity, $|I_i \cap I_j| = \max\{0, R_i - L_j\}$, so, as all tolerances are positive, Definition~\ref{def:tolerance} gives
\[   v_iv_j \in E(J) \iff R_i - L_j \geq \min\{t_i, t_j\}, \]
a single integer comparison.
An exhaustive computational check over the $\binom{20}{2}$ pairs confirms that this is a tolerance representation of $J$, so $J \in \proper$.

We turn to the non-membership.
Suppose $(c, r)$ is a sandwich representation of $J$.
Note that the pairs $\{v_1, v_2\}$, $\{v_7, v_8\}$, $\{v_{14}, v_{15}\}$, and $\{v_{17}, v_{18}\}$ are non-adjacent with equal neighbourhoods, so transposing the members of any one of them is an automorphism of $J$.
By Observation~\ref{obs:representation}\ref{it:wa-nonedge}, the centres of each of these pairs, and of the non-adjacent pair $v_0, v_9$, are distinct, so by negating $c$ and then transposing pairs as needed, we may assume
\begin{equation}
\label{eq:J-pins}
  c_0 < c_9, \quad c_1 < c_2, \quad c_7 < c_8, \quad c_{14} < c_{15}, \quad c_{17} < c_{18} .
\end{equation}

We apply Corollary~\ref{cor:propagate} repeatedly.
Each application takes a vertex $v_u$, a connected set $S$ of non-neighbours of $v_u$, and one vertex of $S$ whose centre is already known to lie on a given side of $c_u$, and places the centres of all of $S$ on that side.

First, $v_0$ is adjacent to exactly $v_1, \dots, v_8$, and $v_9, \dots, v_{19}$ induce a connected subgraph, so $c_0 < c_9$ gives
\begin{equation}
\label{eq:J-fan}
  c_0 < c_j \quad \text{ for each }\quad 9 \leq j \leq 19 .
\end{equation}
Next, for each $u \in \{10, 11, 12, 13, 14, 16, 17, 19\}$ let $S_u$ be the component of $J - N[v_u]$ containing $v_0$, namely
\[
\begin{array}{ll}
  S_{10} = \{v_0, \dots, v_3\}, &
  S_{11} = \{v_0, \dots, v_3, v_6\},\\
  S_{12} = \{v_0, \dots, v_3, v_6, v_7, v_8, v_{11}\}, &
  S_{13} = \{v_0, \dots, v_4\},\\
  S_{14} = \{v_0, \dots, v_4, v_6, \dots, v_9, v_{11}, v_{12}, v_{13}\}, &
  S_{16} = \{v_0, \dots, v_5\},\\
  S_{17} = \{v_0, \dots, v_9, v_{11}, \dots, v_{16}\}, &
  S_{19} = \{v_0, \dots, v_{10}\} .
\end{array}
\]
Since $c_0 < c_u$ by~\eqref{eq:J-fan}, we obtain $c_j < c_u$ for every $v_j \in S_u$.
In particular, $S_{10}$ and $S_{11}$ give $c_2 < c_{10}$ and $c_6 < c_{11}$.
Applying the corollary to $v_2$ with the component $\{v_3, v_4, v_6, v_7, v_8, v_{10}, \dots, v_{19}\}$ of $J - N[v_2]$ gives $c_2 < c_3$.
Applying it to $v_6$ with the component $\{v_7, v_8, v_{11}, v_{12}, v_{14}, v_{15}, v_{17}, v_{18}, v_{19}\}$ of $J - N[v_6]$ gives $c_6 < c_7$.

Together with~\eqref{eq:J-pins} this determines every centre order needed below.
Applying Definition~\ref{def:sandwich} to suitable pairs now gives ten chains of inequalities, each relation holding between consecutive terms.
Every $\leq$ comes from an edge, and every $<$ from a non-edge whose centre order is one of those just established.
\[
\begin{array}{rll}
  (1) & c_3 < c_{10} - r_{10} & (\times 2)\\
  (2) & c_9 - r_9 \leq c_1 + r_1 < c_2 < c_3 - r_3 & (\times 1)\\
  (3) & c_9 + r_9 < c_{14} < c_{15} - r_{15} \leq c_5 + r_5 < c_{16} & (\times 1)\\
  (4) & c_{10} < c_{19} - r_{19} \leq c_{11} + r_{11} < c_{12} & (\times 4)\\
  (5) & c_{12} - r_{12} \leq c_4 + r_4 < c_{13} & (\times 2)\\
  (6) & c_{12} + r_{12} < c_{14} < c_{15} - r_{15} \leq c_5 + r_5 < c_{16} & (\times 2)\\
  (7) & c_{13} + r_{13} < c_{14} < c_{15} - r_{15} \leq c_5 + r_5 < c_{16} & (\times 1)\\
  (8) & c_{16} + r_{16} < c_{17} < c_{18} - r_{18} \leq c_{10} + r_{10} & (\times 2)\\
  (9) & c_{13} - r_{13} \leq c_6 + r_6 < c_7 < c_8 - r_8 \leq c_3 + r_3 & (\times 1)\\
  (10) & c_{16} - r_{16} \leq c_6 + r_6 < c_7 < c_8 - r_8 \leq c_0 + r_0 < c_9 & (\times 2)
\end{array}
\]
Each of the ten lines chains at least one strict inequality, so its left end is strictly less than its right end.
With the multiplicities shown, these are eighteen strict inequalities in all.
Summing the left ends and likewise the right ends, every radius cancels.
On the left, $\mp r_9$ cancels between $(2)$ and $(3)$, $\mp 2r_{12}$ between $(5)$ and $(6)$, $\pm r_{13}$ between $(7)$ and $(9)$, and $\pm 2r_{16}$ between $(8)$ and $(10)$.
On the right, $\mp 2r_{10}$ cancels between $(1)$ and $(8)$, and $\mp r_3$ between $(2)$ and $(9)$.

Both sides therefore reduce to the same quantity
\[  2c_3 + 2c_9 + 4c_{10} + 4c_{12} + 2c_{13} + 4c_{16} , \]
and the eighteen strict inequalities assert that it is strictly less than itself, so $J \notin \sandwich$.
This completes the proof of Theorem~\ref{thm:proper-sandwich}.

\section{\texorpdfstring{Proofs for the family $Q^m$}{Proofs for the family Qm}}
\label{ap:Qm}

This appendix proves Propositions~\ref{prop:Qm-proper} (Appendix~\ref{ap:Qm-proper}), \ref{prop:Qm-not-bicoloured} (Appendix~\ref{ap:Qm-not-bicoloured}), and~\ref{prop:Qm-minimal} (Appendix~\ref{ap:Qm-minimal}).
We continue with the notation of \S\ref{sec:proper}. 

Throughout, we write $u \sim v$ when $u$ and $v$ are adjacent and $u \not\sim v$ otherwise.

\subsection{\texorpdfstring{Proof of Proposition~\ref{prop:Qm-proper}: $Q^m \in \proper$}{Proof of Proposition \ref{prop:Qm-proper}: Qm is proper-tolerance}}
\label{ap:Qm-proper}

The proof is by explicit construction.
Definition~\ref{def:baserep} and Lemma~\ref{lem:baserep} give a proper tolerance representation of $B^m$, and Definition~\ref{def:qmrep} and Lemma~\ref{lem:Qrep} extend it to one of $Q^m$, which proves Proposition~\ref{prop:Qm-proper}.

We denote the interval and tolerance of $v$ in a tolerance representation (Definition~\ref{def:tolerance}) by $J_v = [L_v,R_v]$ and $t_v > 0$, respectively, reserving $I_v$ for the interval of $v$ in a bicoloured representation.
Thus $u \sim v$ if and only if $|J_u \cap J_v| \geq \min\{t_u,t_v\}$.
If all left endpoints are distinct and all right endpoints are distinct, the representation is proper (Definition~\ref{def:tolerance-variants}) precisely when the two endpoint orderings coincide.
If this is the case, we say the representation is {\em ordered} by that common ordering, and write $u < v$ when $u$ precedes $v$ in it.
For $u < v$, we have $|J_u \cap J_v| = \max\{0,\ R_u - L_v\}$, which gives $u \sim v$ if and only if $R_u - L_v \geq \min\{t_u,t_v\}$ as all tolerances are positive. 

\begin{definition}[Base representation]
\label{def:baserep}
Fix $\delta,\gamma,\epsilon > 0$ with
\begin{equation}
\label{eq:params2}
  (m+1)(m+2)\delta < 1, \quad (m+1)(m+2)\gamma < \delta, \quad\text{ and }\quad
  8m\epsilon < \gamma.
\end{equation}
For $i \geq 1$ let
\[ f_i = \bigl(\mbinom{i+1}{2}-1\bigr)\delta, \quad h_i = \epsilon+\bigl(\mbinom{i+2}{2}-2\bigr)\delta, \]
\[ \tilde f_i = \bigl(\mbinom{i+1}{2}-1\bigr)\gamma, \quad \tilde h_i = \epsilon+\bigl(\mbinom{i+2}{2}-2\bigr)\gamma, \]
and set $K = \bigl(\binom{m+2}{2}-1\bigr)\delta$. Define
\[
\begin{array}{lll}
  J_{2i-1} = [ f_i,\ f_i+2 ],
    & t_{2i-1} = 2-i\delta & \text{where } 1 \leq i \leq m,\\
  J_{2i} = [ h_i,\ h_i+2 ],
    & t_{2i} = 2 & \text{where } 1 \leq i \leq m,\\
  J_{2m+1} = [ 1+f_m,\ 2+K ],
    & t_{2m+1} = 1-\gamma, & \\
  J_x = [ 1+f_m-\epsilon,\ 2+K-\epsilon ],
    & t_x = 1+\delta+2\epsilon-f_m, & \\
  J_{2m+2i-1} = [ 1+K+\tilde f_i,\ 2+K+\tilde f_i ],
    & t_{2m+2i-1} = 1-i\gamma & \text{where } 2 \leq i \leq m,\\
  J_{2m+2i} = [ 1+K+\tilde h_i,\ 2+K+\tilde h_i ],
    & t_{2m+2i} = 1 & \text{where } 1 \leq i \leq m,\\
  J_z = [ 2+K-t_x+\delta/4,\ 3+K ],
    & t_z = t_x-\delta/4. &
\end{array}
\]
\end{definition}

Note that the left endpoint of $J_z$ is $2+K-t_z$.
From Definition~\ref{def:baserep}, we have
\begin{equation}
\label{eq:basefacts}
\begin{gathered}
  K-f_m = (m+1)\delta, \quad R_u = 2+L_u \ \text{ for } u \in \{1, \dots, 2m\},\\
  \text{and}\quad f_i < h_i < f_{i+1}, \quad \tilde f_i < \tilde h_i < \tilde f_{i+1} \ \text{ for every } i .
\end{gathered}
\end{equation}

We remark that the base representation restricted to vertices $\{1,\dots,4m\} \cup \{x\}$ is precisely the representation of~\cite{bogart-fishburn-isaak-langley}*{Lemma~9}, including their choice $t_x = |J_x \cap J_2|$.
Their lemma requires $0 < \epsilon \ll \gamma \ll \delta < 1$, and a routine check, which we omit, shows that the choices in~\eqref{eq:params2} suffice.
The restriction is therefore a proper tolerance representation of $B^m - z$, ordered by $\ord(m)$ with $z$ removed.

\begin{lemma}
\label{lem:baserep}
The intervals and tolerances of Definition~\ref{def:baserep} form a proper tolerance representation of $B^m$, ordered by $\ord(m)$.
\end{lemma}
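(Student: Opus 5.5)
Since the restriction to $V(B^m)\setminus\{z\}$ is already known to be a proper tolerance representation of $B^m - z$ ordered by $\ord(m)$ minus $z$ (the remark after~\eqref{eq:basefacts}, via~\cite{bogart-fishburn-isaak-langley}*{Lemma~9}), it remains to handle $z$ alone. I will check two things: that adding $J_z$ keeps the representation ordered by $\ord(m)$ with $z$ last, and that $z$ is adjacent to exactly $2m+1,\dots,4m$.

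\emph{Ordering.} $R_z = 3+K$ exceeds every other right endpoint, the largest of which is $R_{4m} = 2+K+\tilde h_m < 3+K$ by~\eqref{eq:params2}. For left endpoints I need $L_z = 2+K-t_z$ to exceed $L_{4m} = 1+K+\tilde h_m$, i.e.\ $t_z < 1-\tilde h_m$. Since $t_z = t_x - \delta/4 = 1+\frac{3}{4}\delta+2\epsilon-f_m$ and $f_m = (\binom{m+1}{2}-1)\delta \geq 2\delta$ for $m\ge2$, I get $t_z \le 1 - \frac54\delta + 2\epsilon$. This is below $1-\tilde h_m$ because $\tilde h_m \le \epsilon + (m+1)(m+2)\gamma/2 < \delta$ by~\eqref{eq:params2}. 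So the common ordering is $\ord(m)$.

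\emph{Adjacencies of $z$.} As $z$ is last, for each $u$ we have $u\sim z$ iff $R_u - L_z \geq \min\{t_u,t_z\}$, with $R_u - L_z = R_u - 2 - K + t_z$.
\begin{itemize}
\item For $u=2m+2i-1$ or $u = 2m+2i$, we have $R_u - 2 - K = \tilde f_i$ or $\tilde h_i \ge 0$, so the overlap is at least $t_z \ge \min$. Hence all of $2m+1,\dots,4m$ are neighbours; for $u=2m+1$, $R_u=2+K$ and the overlap equals exactly $t_z$.
\item For $x$, $R_x - L_z = t_z - \epsilon$. This is less than $t_z$, and also less than $t_x = t_z + \delta/4$, so $x\not\sim z$.
\item For chain vertices $u\le 2m$, $R_u \le h_m + 2$, and the overlap is at most $h_m - K + t_z$. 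Here $h_m - K = \epsilon + (\binom{m+2}{2}-2)\delta - (\binom{m+2}{2}-1)\delta = \epsilon - \delta < 0$, so the overlap is below $t_z$. I also need it below $t_u$; since $t_u \ge 2 - m\delta > t_z$, this holds.
\end{itemize}
This gives exactly the adjacencies of Definition~\ref{def:GmBm}.

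The main obstacle is bookkeeping rather than an idea: one must confirm that the inequalities in~\eqref{eq:params2} (notably $\epsilon<\delta$, $\tilde h_m<\delta$, and $f_m \ge 2\delta$) suffice in each comparison. The most delicate point is $x\not\sim z$, where the margin is only $\epsilon$; this is exactly why $t_z$ was set to $t_x-\delta/4$ and $L_x$ was offset by $\epsilon$.
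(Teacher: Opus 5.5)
Your proof is correct, and it matches the paper on the reduction to $z$ and on the ordering check. Both compare $R_z=3+K$ with $R_{4m}$ and show $1-t_z=f_m-\tfrac34\delta-2\epsilon>\tilde h_m$. Strictly, that last step needs $\tilde h_m+2\epsilon<\tfrac54\delta$ rather than just $\tilde h_m<\delta$, but \eqref{eq:params2} gives far more than enough.

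The adjacency check is where you differ. The paper first shows that $t_z$ is the smallest tolerance among all base vertices. This needs the extra comparison $t_x<1-m\gamma$ against the tolerances $1-i\gamma$ of the upper joints. The paper then reduces every adjacency to the single criterion $R_u\ge 2+K=R_{2m+1}$, together with the monotonicity of right endpoints along $\ord(m)$. You instead verify the three groups directly:
\begin{itemize}
\item for $2m+1,\dots,4m$ the overlap is at least $t_z$, so you never need to identify $\min\{t_u,t_z\}$;
\item for $x$ you only need $t_z<t_x$;
\item for $1,\dots,2m$ you only need $t_z<2-m\delta$.
\end{itemize}
Your route uses fewer tolerance comparisons. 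The paper's route also yields $t_z\le t_u$ for every base vertex $u$ (its \eqref{eq:tz}), which it reuses in the proof of Lemma~\ref{lem:Qrep}.

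One check is missing: you never verify $t_z>0$. Definition~\ref{def:tolerance} requires it. Your non-adjacency conclusions also depend on it, because deducing $\max\{0,R_u-L_z\}<t_z$ from $R_u-L_z<t_z$ needs $t_z>0$. The fix is one line: $f_m<K<\tfrac12$ by \eqref{eq:params2}, so $t_z=1+\tfrac34\delta+2\epsilon-f_m>\tfrac12$.
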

\begin{proof}
It remains to check $z$.
We use~\eqref{eq:basefacts} throughout, and~\eqref{eq:params2} gives $K < 1/2$ and $\tilde h_m < \delta/2$.
Since $m \geq 2$ we have $f_m \geq 2\delta$, so
\begin{equation}
\label{eq:tx}
  \mfrac{1}{2}+\delta < t_x \leq 1-\delta+2\epsilon ,
\end{equation}
where the lower bound holds because $f_m < K < 1/2$.
Now~\eqref{eq:params2} implies that $m\gamma + 2\epsilon < (m+1)\gamma < \delta$, which, combined with~\eqref{eq:tx}, gives $t_x < 1 - m\gamma$.
Every tolerance other than $t_x$ and $t_z$ is at least $1-m\gamma$, so $t_x$ is the smallest tolerance on $V(B^m) \setminus \{z\}$. Combining this with~\eqref{eq:tx}, we have
\begin{equation}
\label{eq:tz}
  t_z = t_x - \mfrac{1}{4}\delta > 0, \quad\text{ and }\quad t_z < t_x \leq t_u \ \text{ for every } u \in V(B^m) \setminus \{z\}.
\end{equation}

The right endpoint of $J_z$ is $3+K$, which is larger than every other right endpoint, the largest of which is $2+K+\tilde h_m < 3 + K$, since $\tilde h_m < \delta/2 < 1$.
The left endpoint of $J_z$ is $2+K-t_z$, which is larger than every other left endpoint, the largest of which is $1+K+\tilde h_m$, since
\[ 1-t_z = 1 - t_x + \mfrac{1}{4}\delta = f_m-\mfrac{3}{4} \delta - 2\epsilon \geq \mfrac{5}{4}\delta-2\epsilon > \tilde h_m . \]
So the representation is ordered by $\ord(m)$.

By~\eqref{eq:tz}, $\min\{t_u,t_z\} = t_z$, so $z \sim u$ if and only if $R_u - L_z \geq t_z$, i.e., $R_u \geq 2+K = R_{2m+1}$.
Since the right endpoints increase along $\ord(m)$, this holds exactly when $u$ is $2m+1$ or a later vertex, so $N(z) = \{2m+1,\dots,4m\}$, as required.
\end{proof}

We now extend the base representation to $Q^m$, referring to the vertices of $B^m$ as {\em base} vertices.
From the ordering $\ord(m)$ and Definition~\ref{def:baserep}, we get $L_{\omega_1} = 0$, $R_{\omega_1} = 2$, $L_{\omega_2} = \epsilon+\delta$, $L_{\omega_3} = 2\delta$, $L_{\omega_4} = \epsilon+4\delta$, and $L_{\omega_{4m-1}} = 1+K+\tilde h_{m-1}$.
\begin{definition}[Extending the base representation]
\label{def:qmrep}
Let
\[ L_{q_1} = \mfrac12(L_{\omega_3}+L_{\omega_4}), \quad R_{q_1} = 2+L_{q_1}, \quad\text{ and }\quad T = R_{q_1}-L_{\omega_{s_1}}, \]
and extend Definition~\ref{def:baserep} by
\[
  J_{q_1} = [ L_{q_1},\ R_{q_1} ], \quad t_{q_1} = T,
\]
\[
  J_{q_i} = \bigl[ -i,\ L_{\omega_{s_i}}+T_i \bigr], \quad
  t_{q_i} = T_i = \min\bigl\{\mfrac12(2-L_{\omega_{s_i}}),\ T\bigr\}
  \quad \text{where } 2 \leq i \leq 2m-1.
\]
\end{definition}

The interval corresponding to an outer vertex precedes the whole base, with its right endpoint placed so that it is adjacent to exactly the first $s_i$ base vertices.
The interval corresponding to the inner vertex begins between $L_{\omega_3}$ and $L_{\omega_4}$, which is what makes $q_1q_{2m-1}$ a non-edge.

\begin{lemma}
\label{lem:Qrep}
The intervals and tolerances of Definitions~\ref{def:baserep} and~\ref{def:qmrep} form a proper tolerance representation of $Q^m$.
\end{lemma}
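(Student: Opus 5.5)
The plan is to check Lemma~\ref{lem:baserep} plus three new items: that the extended family is still \emph{ordered}, the adjacencies between each $q_i$ and the base, and the adjacencies among the $q_i$. Throughout I use the criterion stated before Definition~\ref{def:baserep}: for $u<v$ in an ordered representation, $u\sim v$ if and only if $R_u-L_v\geq\min\{t_u,t_v\}$. The base pairs are already handled by Lemma~\ref{lem:baserep}.

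\emph{Ordering.} I claim the common order is $q_{2m-1},\dots,q_2,\omega_1,\omega_2,\omega_3,q_1,\omega_4,\dots$.
\begin{itemize}
\item The left endpoints of the outer vertices are $-i$. They are decreasing in $i$ and negative, so they all precede $L_{\omega_1}=0$.
\item The right endpoint of an outer vertex is $g(L_{\omega_{s_i}})$, where $g(L)=L+\min\{(2-L)/2,T\}$. The function $g$ is nondecreasing, and strictly increasing on the relevant range.
\item The cuts $s_i$ decrease with $i$, so $R_{q_i}$ decreases with $i$, as required.
\item Since $g(L)\leq (2+L)/2<2=R_{\omega_1}$ whenever $L<2$, all outer right endpoints lie below $R_{\omega_1}$.
\item For $q_1$: since $m\geq 2$, the vertices $\omega_3=3$ and $\omega_4=4$ lie in $\{1,\dots,2m\}$. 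There $R=2+L$ by~\eqref{eq:basefacts}, and $R_{q_1}=2+L_{q_1}$ as well. Choosing $L_{q_1}$ strictly between $L_{\omega_3}$ and $L_{\omega_4}$ therefore places $R_{q_1}$ strictly between $R_{\omega_3}$ and $R_{\omega_4}$.
\end{itemize}
I would also confirm that no new endpoint coincides with an old one; this is a small calculation from~\eqref{eq:params2}.

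\emph{Edges among the $q_i$.}
\begin{itemize}
\item Two outer intervals both contain $[-2,0]$, so they overlap in length at least $2$. Every $T_i\leq 1$, so all outer pairs are edges.
\item For an outer $q_i$ preceding $q_1$, note $T_i\leq T$. Then $q_i\sim q_1$ if and only if $L_{\omega_{s_i}}+T_i-L_{q_1}\geq T_i$, that is, $L_{\omega_{s_i}}\geq L_{q_1}$, that is, $s_i\geq 4$.
\item Hence the only outer vertex not adjacent to $q_1$ is the one with cut $2$, namely $q_{2m-1}$. This gives $K_{2m-1}$ minus $q_1q_{2m-1}$, as required.
\end{itemize}

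\emph{Edges between $q_i$ and the base.}
\begin{itemize}
\item For an outer $q_i$ and a base vertex $u$, the test is $L_{\omega_{s_i}}+T_i-L_u\geq\min\{T_i,t_u\}$. If $T_i\leq t_u$, this holds if and only if $L_u\leq L_{\omega_{s_i}}$, i.e.\ $u\in\{\omega_1,\dots,\omega_{s_i}\}$, which is exactly cut $s_i$.
\item For $q_1$ and a base $u$ after it, the test is $R_{q_1}-L_u\geq\min\{T,t_u\}$. Since $T=R_{q_1}-L_{\omega_{s_1}}$ by definition, equality holds at $\omega_{s_1}$, and monotonicity of $L$ gives the cut as long as $T\leq t_u$.
\item For the three base vertices $\omega_1,\omega_2,\omega_3$ preceding $q_1$, the overlap is $2-(L_{q_1}-L_u)$, which is close to $2$ and exceeds every tolerance. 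So these are edges, as the cut $s_1=4m-1\geq 3$ requires.
\item The vertex $z$ needs separate treatment, because $t_z$ may be smaller than $T$ or $T_i$. I would verify directly that $R_{q}<L_z+t_z=2+K$ for every $q_i$. For outer $q_i$ this follows from $R_{q_i}<2$. For $q_1$ it follows from $R_{q_1}=2+L_{q_1}$ together with $L_{q_1}<K$. Hence no $q_i$ is adjacent to $z$, consistent with Definition~\ref{def:cuts}.
\end{itemize}

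\emph{Main obstacle.} The hard step is the tolerance comparison: I must show $T_i\leq T\leq t_u$ for every base $u\neq z$ that lies at or before $\omega_{s_1}$, and treat any remaining vertex, chiefly $x$ and $\omega_{4m}$, by a direct endpoint comparison. Here $T\approx 1+L_{q_1}-K-\tilde h_{m-1}$, which is roughly $1-m\delta$. The base tolerances are at least $1-m\gamma$, except $t_x$ and the odd vertices with tolerance $2-i\delta$. Now $t_x\leq 1-\delta+2\epsilon$ by~\eqref{eq:tx}, so $T\leq t_x$ needs a careful estimate using $K-f_m=(m+1)\delta$. If $T>t_x$, I would instead check $x$ directly: since $x$ is $\omega_{2m+1}$ and $L_x<L_{\omega_{s_1}}$, the inequality $R_{q_1}-L_x\geq t_x$ holds. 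The same direct check handles $\omega_{4m}$: its left endpoint exceeds $L_{\omega_{s_1}}$, so $R_{q_1}-L_{\omega_{4m}}<T$, and since $t_{\omega_{4m}}=1$, I need $R_{q_1}-L_{\omega_{4m}}<\min\{T,1\}$. I expect this bookkeeping with~\eqref{eq:params2} to be the bulk of the work.
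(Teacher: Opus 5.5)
Your route is the paper's. You use the same common order $q_{2m-1},\dots,q_2,\omega_1,\omega_2,\omega_3,q_1,\omega_4,\dots$, the same identity $R_{q_i}=L_{\omega_{s_i}}+t_{q_i}$ that reduces each adjacency to a comparison of left endpoints, and the same computations for the edges among the $q_i$ and for $q_1$ against $\omega_1,\omega_2,\omega_3$.

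The one thing you leave open, the tolerance comparison, is the only real computation in the paper's proof. It is a single line, not a case analysis. Using $K-f_m=(m+1)\delta$ from \eqref{eq:basefacts} and $m\ge 2$,
\[ T = 1+3\delta+\epsilon/2-K-\tilde h_{m-1} < 1+(2-m)\delta+\epsilon/2-f_m \le 1+\epsilon/2-f_m < 1+3\delta/4+2\epsilon-f_m = t_z , \]
and $t_z$ is the smallest tolerance in $B^m$ by \eqref{eq:tz}. Hence $T_i\le T<t_u$ for every base vertex $u$, including $x$, $z$, $4m-1$ and $4m$. None of them needs separate treatment, so your direct check for $z$ is correct but unnecessary. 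Together with $K<1/2$ and $\tilde h_{m-1}<1/2$, the same formula for $T$ gives $0<L_{q_1}<T<1$. You should record this, since $T>0$ and $L_{\omega_{s_i}}<2$ are what make the new tolerances positive.

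Your contingency for $T>t_x$ would not have worked as stated, because it checks the wrong pair. Suppose a new vertex $q$ with cut $s$ precedes $\omega_p$ with $p\le s$. Then adjacency is automatic, since $R_q-L_{\omega_p}\ge R_q-L_{\omega_s}=t_q\ge\min\{t_q,t_{\omega_p}\}$. In particular $q_1\sim x$ holds for free. Tolerances matter only at the non-edges. There $R_q-L_u<t_q$ is automatic, and what must be shown is $R_q-L_u<t_u$. So a large $T$ would threaten not $q_1x$ but the non-adjacency of $x=\omega_{2m+1}$ with the outer vertices of cut at most $2m$, and your direct check does not touch those pairs.

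Two small slips. First, $\omega_{4m}$ is the joint $4m-1$, with tolerance $1-m\gamma$; the vertex $4m$, with tolerance $1$, is $\omega_{4m+1}$. Second, $T$ is about $1-K$ with $K=\bigl(\binom{m+2}{2}-1\bigr)\delta$ quadratic in $m$, not about $1-m\delta$.
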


\begin{proof}
By Lemma~\ref{lem:baserep}, the base representation is ordered by $\ord(m)$, so in particular the sequences $(L_{\omega_p})_p$ and $(R_{\omega_p})_p$ both strictly increase, and $L_{\omega_1}$ and $R_{\omega_1}$ are the smallest left and right endpoints in the base representation.
We use these facts and~\eqref{eq:basefacts} throughout.

From Definitions~\ref{def:baserep} and~\ref{def:qmrep}, together with $0 < \epsilon < \gamma < \delta$ from~\eqref{eq:params2}, we have
$0 < L_{q_1} = (L_{\omega_3} + L_{\omega_4})/2 = 3 \delta + \epsilon/2 < 5\delta \leq K$,
where the last inequality holds since $\binom{m+2}{2}-1 \geq 5$. Now
$T = R_{q_1} - L_{\omega_{4m-1}} = 1 + L_{q_1} - K - \tilde h_{m-1}$,
so $1 > T > L_{q_1} > 0$, where the first inequality holds since $L_{q_1} < K$ and the second holds since $K < 1/2$ and $\tilde h_{m-1} \leq \tilde h_m < \delta/2 < 1/2$ by~\eqref{eq:params2}.

{\em Properness.}
We show that both endpoint sequences strictly increase along the order
\[ q_{2m-1} < \cdots < q_2 < \omega_1 < \omega_2 < \omega_3 < q_1 < \omega_4 < \cdots < \omega_{4m+2} , \]
so that the representation is proper and ordered by this order.
For the left endpoints, this is immediate since $L_{q_i} = -i$ for the outer vertices, $L_{\omega_1} = 0$, $(L_{\omega_p})_p$ increases, and $L_{q_1}$ is the midpoint of $L_{\omega_3}$ and~$L_{\omega_4}$.

For the right endpoints, let $q_i$ be an outer vertex.
Now $R_{q_i} = L_{\omega_{s_i}}+T_i = \min\{1+L_{\omega_{s_i}}/2,\ L_{\omega_{s_i}}+T\}$, which is a minimum of two strictly increasing functions of $L_{\omega_{s_i}}$.
Moreover, $L_{\omega_{s_2}} > \cdots > L_{\omega_{s_{2m-1}}}$ since $s_2 > \cdots > s_{2m-1}$ and, hence,  $R_{q_{2m-1}} < \cdots < R_{q_2}$.

Since every cut satisfies $s_i \leq s_1 = 4m-1$, we have $L_{\omega_{s_i}} \leq L_{\omega_{4m-1}} = 1 + K + \tilde h_{m-1} < 2$, where the last inequality holds since $K < 1/2$ and $\tilde h_{m-1} < 1/2$ as noted above.
Hence $R_{q_i} \leq 1 + L_{\omega_{s_i}}/2 < 2 = R_{\omega_1}$.

Finally, $(R_{\omega_p})_p$ increases, and $R_{\omega_3} < R_{q_1} < R_{\omega_4}$ because $R_{q_1} = 2+L_{q_1}$ while $R_u = 2+L_u$ for~$u \in \{1, \dots, 2m\}$.

{\em Tolerances.}
We have
\begin{equation}
\label{eq:tchain}
  T_i \leq T \leq t_z \leq t_{\omega_p}
  \quad \text{ where } 2 \leq i \leq 2m-1 \text{ and } \omega_p \in V(B^m) ,
\end{equation}
where the first inequality is by definition, the last by~\eqref{eq:tz}, and the second because, using $K-f_m = (m+1)\delta$ from~\eqref{eq:basefacts},
\[ T = 1+3\delta+\mfrac12\epsilon-K-\tilde h_{m-1} < 1+(2-m)\delta+\mfrac12\epsilon-f_m \leq 1+\mfrac12\epsilon-f_m < 1+\mfrac34\delta+2\epsilon-f_m = t_z . \]

Finally, the new tolerances are positive, as $t_{q_1} = T > 0$ while $t_{q_i} = T_i = \min\{(2-L_{\omega_{s_i}})/2,\ T\}$ is positive because $T > 0$ and $L_{\omega_{s_i}} < 2$.

{\em Cuts.}
For every $1 \leq i \leq 2m-1$ we have $R_{q_i} = L_{\omega_{s_i}}+t_{q_i}$, by the definition of $T$ when $i = 1$ and of $J_{q_i}$ otherwise. 
Moreover, $\min\{t_{q_i},t_{\omega_p}\} = t_{q_i}$ for every base vertex $\omega_p$ by~\eqref{eq:tchain}.
So whenever $q_i$ precedes $\omega_p$ in the order above
\[
  q_i \sim \omega_p \iff R_{q_i}-L_{\omega_p} \geq t_{q_i}
             \iff L_{\omega_p} \leq L_{\omega_{s_i}} \iff p \leq s_i ,
\]
where the last equivalence holds by the monotonicity of $(L_{\omega_p})_p$.

An outer vertex precedes every base vertex, so $q_i$ has cut $s_i$ for $2 \leq i \leq 2m-1$.
The inner vertex $q_1$ precedes $\omega_p$ exactly when $p \geq 4$.
For $p \leq 3$ we have $R_{\omega_p}-L_{q_1} \geq 2-L_{q_1} > 1 > T$, since $R_{\omega_p} = 2+L_{\omega_p} \geq 2$ and $L_{q_1} < K < 1$, so $q_1 \sim \omega_p$.
Hence, $q_1$ is adjacent to $\omega_p$ exactly when $p \leq s_1$, i.e., $q_1$ has cut $s_1$.

{\em Edges between the new vertices.}
Let $2 \leq i < j \leq 2m-1$. Then $q_j$ precedes $q_i$ in the order above, and
$R_{q_j}-L_{q_i} = L_{\omega_{s_j}}+T_j+i > T_j \geq \min\{t_{q_i},t_{q_j}\}$, since $L_{\omega_{s_j}} \geq L_{\omega_1} = 0$, so $q_i \sim q_j$.
Thus, the outer vertices form a clique.

Now let $q_i$ be an outer vertex. Then $q_i$ precedes $q_1$ in the order above, while $\min\{t_{q_1},t_{q_i}\} = \min\{T,T_i\} = T_i$, so
\[
  q_1 \sim q_i \iff R_{q_i}-L_{q_1} \geq T_i
             \iff L_{\omega_{s_i}}+T_i-L_{q_1} \geq T_i
             \iff L_{\omega_{s_i}} \geq L_{q_1} \iff s_i \geq 4 ,
\]
where the last equivalence holds because $L_{\omega_3} < L_{q_1} < L_{\omega_4}$ and $(L_{\omega_p})_p$ is increasing.
Since $s_{2m-1} = 2$ is the only outer cut below $4$, $q_1$ is adjacent to every outer vertex except $q_{2m-1}$.

Finally, the base intervals and tolerances are unchanged, so the edges within $B^m$ remain those of Lemma~\ref{lem:baserep}.
The vertices $q_1,\dots,q_{2m-1}$ therefore induce $K_{2m-1}$ minus the edge $q_1q_{2m-1}$, each $q_i$ has cut $s_i$, and the representation is proper, so it represents $Q^m$.
\end{proof}

\subsection{\texorpdfstring{Proof of Proposition~\ref{prop:Qm-not-bicoloured}: $Q^m \notin \bicoloured$}{Proof of Proposition \ref{prop:Qm-not-bicoloured}: Qm is not bicoloured-interval}}
\label{ap:Qm-not-bicoloured}

Proposition~\ref{prop:Qm-not-bicoloured} follows from two results.
Proposition~\ref{prop:joints} says that in every bicoloured representation of $Q^m$ the vertices $q_1$, $x$, $z$, and all joints share a colour class, while Proposition~\ref{prop:impossible} says that the induced subgraph $B^m + q_1$ admits no bicoloured representation in which they do.
Since any bicoloured representation of $Q^m$ restricts to one of $B^m + q_1$, Proposition~\ref{prop:Qm-not-bicoloured} is immediate.

We first collect some general results that will be used throughout this subsection. Our first result concerns an induced path on four vertices and constrains the order of the centres.
The conclusion mirrors that of~\cite{bogart-fishburn-isaak-langley}*{Lemma~4} for proper tolerance representations (see Lemma~\ref{lem:proper-P4}).

\begin{lemma}
\label{lem:P4gen}
Let $(\sigma, c, r)$ be a bicoloured representation of a graph, and let
$v_0, v_1, v_2, v_3$ induce a path in this order, with $\sigma(v_1) = \sigma(v_2)$.
Then $\max\{c_0,c_1\} < \min\{c_2,c_3\}$ or $\max\{c_2,c_3\} < \min\{c_0,c_1\}$.
\end{lemma}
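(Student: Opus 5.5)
The plan is to first use Corollary~\ref{cor:propagate} to fix most of the centre order, and then rule out the one remaining bad configuration by comparing thresholds. The hypothesis $\sigma(v_1) = \sigma(v_2)$ is used only in the final step.

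\emph{Step 1: fixing most of the order.} In the induced path, the non-edges are $v_0v_2$, $v_0v_3$ and $v_1v_3$. The set $\{v_2, v_3\}$ is connected and avoids $N[v_0]$, so by Corollary~\ref{cor:propagate} the centres $c_2$ and $c_3$ lie strictly on one side of $c_0$. Negating every centre if necessary, we may assume $c_0 < c_2$ and $c_0 < c_3$. Likewise $\{v_0, v_1\}$ is connected and avoids $N[v_3]$, so $c_0$ and $c_1$ lie strictly on the same side of $c_3$. Since $c_0 < c_3$, this gives $c_1 < c_3$. At this point $\max\{c_0, c_1\} < c_3$ and $c_0 < c_2$. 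It remains to show $c_1 < c_2$, and we suppose for contradiction that $c_2 \leq c_1$. We then have $c_0 < c_2 \leq c_1 < c_3$.

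\emph{Step 2: two threshold comparisons.} The edge $v_0v_1$ and the non-edge $v_0v_2$ give
\[ \tau_{02} < c_2 - c_0 \leq c_1 - c_0 \leq \tau_{01}. \]
Symmetrically, the edge $v_2v_3$ and the non-edge $v_1v_3$ give
\[ \tau_{13} < c_3 - c_1 \leq c_3 - c_2 \leq \tau_{23}. \]
So $\tau_{02} < \tau_{01}$ and $\tau_{13} < \tau_{23}$.

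\emph{Step 3: using the colours.} Because $\sigma(v_1) = \sigma(v_2)$, the pairs $v_0v_1$ and $v_0v_2$ are of the same type: both are monochromatic or both are bichromatic. If both are bichromatic, $\tau_{02} < \tau_{01}$ reads $r_0 + r_2 < r_0 + r_1$, so $r_2 < r_1$. If both are monochromatic, it reads $\max\{r_0, r_2\} < \max\{r_0, r_1\}$, which forces $r_1 > \max\{r_0, r_2\}$, and in particular $r_2 < r_1$. In either case $r_2 < r_1$.

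The same reasoning applies to the pairs $v_1v_3$ and $v_2v_3$, which are again of the same type. Here $\tau_{13} < \tau_{23}$ gives $r_1 < r_2$. This contradicts $r_2 < r_1$, so $c_1 < c_2$.

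\emph{Conclusion.} Undoing the possible negation of centres yields exactly one of the two alternatives in the statement.

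The main obstacle I expect is Step~2. A naive approach would sum the four inequalities over all four thresholds, but that fails in the monochromatic case, because $\max$-thresholds do not add. The key is to isolate one endpoint at a time, pairing each edge with the non-edge that shares that endpoint, and only then use the colour hypothesis to see that the paired thresholds have the same form.
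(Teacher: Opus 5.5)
Your proposal is correct and follows the paper's proof essentially step for step. You use Corollary~\ref{cor:propagate} on $\{v_2,v_3\}$ and $\{v_0,v_1\}$ to reduce to the configuration $c_0 < c_2 \leq c_1 < c_3$. You then pair each edge with the non-edge sharing the endpoint $v_0$, respectively $v_3$, and use $\sigma(v_1)=\sigma(v_2)$ to get the contradictory inequalities $r_2 < r_1$ and $r_1 < r_2$, exactly as the paper does.
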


\begin{proof}
Since $\{v_2,v_3\} \cap N[v_0] = \emptyset$ and $v_2v_3$ is an edge, 
by Corollary~\ref{cor:propagate}, $c_2, c_3$ lie strictly to the same side of $c_0$.
Similarly, $c_0,c_1$ lie strictly on one side of $c_3$.
Negating every centre if necessary, we may assume $c_0 < c_2$. Then $c_0<c_3$ and, hence, $c_1 < c_3$.
It remains to show $c_1<c_2$.

Suppose otherwise, i.e., $c_2\leq c_1$, so that $c_0<c_2\leq c_1<c_3$.
The edge $v_0v_1$ and the non-edge $v_0v_2$ give $ \tau_{0,1}\ \geq\ c_1-c_0\ \geq\ c_2-c_0\ >\ \tau_{0,2}$.
Since $\sigma(v_1) = \sigma(v_2)$, the thresholds $\tau_{0,1}$ and $\tau_{0,2}$ both come from the same case of Definition~\ref{def:bicoloured}. 
Specifically, for $j \in \{1, 2\}$, $\tau_{0, j} = \max\{r_0, r_j\}$ if $\sigma(v_0) = \sigma(v_j)$ and $\tau_{0, j} = r_0 + r_j$ otherwise.
Both $\max\{r_0, r_j\}$ and $r_0 + r_j$ are non-decreasing in $r_j$.
Hence, $\tau_{0,1}>\tau_{0,2}$ implies that $r_1 > r_2$.

The edge $v_2v_3$ and the non-edge $v_1v_3$ give $\tau_{2,3}\ \geq\ c_3-c_2\ \geq\ c_3-c_1\ >\ \tau_{1,3}$.
The argument above, applied to $\tau_{2,3}$ and $\tau_{1,3}$ with $r_3$ taking the place of $r_0$, implies that $r_2 > r_1$, a contradiction.
\end{proof}

Recall, from Proposition~\ref{prop:K2t}, that $K_{2,t} \in \bicoloured$. Lemmas~\ref{lem:fan} and~\ref{lem:twins} show that a copy of $K_{2,3}$ nevertheless constrains the colouring.

\begin{lemma}
\label{lem:fan}
Let $(\sigma, c, r)$ be a bicoloured representation of a graph.
Suppose $\{u_0,u_1\}$ and $\{v_0,v_1,v_2\}$ induce either $K_{2,3}$ or $K_{2,3} - u_0v_0$.
If $\sigma(u_0) \neq \sigma(u_1)$ then $\sigma(v_1) = \sigma(v_2) = \sigma(u_0)$.
\end{lemma}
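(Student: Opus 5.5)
The first step is to dispose of the $K_{2,3}$ case. There $u_0,u_1$ are non-adjacent with three pairwise non-adjacent common neighbours, so Corollary~\ref{cor:forcing}\ref{it:forcing-col} gives $\sigma(u_0)=\sigma(u_1)$, and the hypothesis is vacuous. So assume the graph is $K_{2,3}-u_0v_0$ and $\sigma(u_0)\neq\sigma(u_1)$. The pair $u_0u_1$ is then a bichromatic non-edge, so $I_{u_0}\cap I_{u_1}=\emptyset$. Negating all centres if necessary, I will assume $c_{u_0}+r_{u_0}<c_{u_1}-r_{u_1}$.

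Next I would pin down the centre order, using Corollary~\ref{cor:propagate} twice.
\begin{itemize}
\item The set $\{v_0,u_1\}$ is connected and avoids $N[u_0]$, so $c_{v_0}$ lies on the same side of $c_{u_0}$ as $c_{u_1}$, namely $c_{v_0}>c_{u_0}$.
\item The path $v_1u_0v_2$ is connected and avoids $N[v_0]$, so $c_{v_1},c_{v_2},c_{u_0}$ all lie on one side of $c_{v_0}$. By the first point this is the left side, giving $c_{v_1},c_{v_2}<c_{v_0}$.
\end{itemize}
By symmetry between $v_1$ and $v_2$, assume $c_{v_1}<c_{v_2}<c_{v_0}$. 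Lemma~\ref{lem:additivity}\ref{it:add-far} then gives $c_{v_0}-c_{v_1}>r_{v_0}+r_{v_1}$. Lemma~\ref{lem:additivity}\ref{it:add-cover}, applied with $u_1$ adjacent to both $v_1$ and $v_0$, gives $c_{v_2}\in I_{u_1}$. I would also record where $c_{v_1}$ and $c_{v_2}$ sit relative to $c_{u_0}$ and $c_{u_1}$, using Observation~\ref{obs:representation} and the disjointness of $I_{u_0}$ and $I_{u_1}$.

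The main step is the colour argument. Suppose, for contradiction, that some $v_j$ with $j\in\{1,2\}$ has $\sigma(v_j)=\sigma(u_1)$. The edge $u_0v_j$ is then bichromatic, giving $|c_{v_j}-c_{u_0}|\le r_{u_0}+r_{v_j}$. The edge $u_1v_j$ is monochromatic, giving $|c_{u_1}-c_{v_j}|\le\max\{r_{u_1},r_{v_j}\}$. I will combine these with three more constraints:
\begin{itemize}
\item the bichromatic non-edge $u_0u_1$, namely $c_{u_1}-c_{u_0}>r_{u_0}+r_{u_1}$;
\item the edge $u_1v_0$;
\item the non-edges $v_jv_0$ and $u_0v_0$.
\end{itemize}
As in Case~2 of Lemma~\ref{lem:C4-colouring}, the aim is a telescoping sum of these inequalities in which the $\max$ term is bounded by the corresponding radius and everything cancels to a strict inequality $X<X$.

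I would split according to whether $r_{v_j}\le r_{u_1}$ or $r_{v_j}>r_{u_1}$.
\begin{itemize}
\item If $r_{v_j}\le r_{u_1}$, then $c_{v_j}\in I_{u_1}$. Adding this to the edge $u_0v_j$ gives $c_{u_1}-c_{u_0}\le r_{u_0}+r_{v_j}+r_{u_1}$. This is not yet contradictory, so the non-edge $v_jv_0$ together with the edge $u_1v_0$ must supply the missing $r_{v_j}$. In the subcase $j=2$ the known fact $c_{v_2}\in I_{u_1}$ should help.
\item If $r_{v_j}>r_{u_1}$, then $c_{u_1}\in I_{v_j}$. Since $v_j$ also meets $I_{u_0}$, its interval spans from $I_{u_0}$ to $c_{u_1}$. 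I would then play this against the non-edge to $v_0$, whose threshold is at least $\max\{r_{v_j},r_{v_0}\}$, and against the placement of $c_{v_0}$ to the right.
\end{itemize}
I expect this case analysis to be the main obstacle. In particular, the threshold $\tau_{v_jv_0}$ depends on the unknown colour of $v_0$, so I would only use its lower bound $\max\{r_{v_j},r_{v_0}\}$; it may turn out that the case $j=1$ needs the other vertex $v_2$ as an intermediary.
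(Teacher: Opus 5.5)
Your preliminary steps are correct: the $K_{2,3}$ case via Corollary~\ref{cor:forcing}\ref{it:forcing-col}, the order $c_{v_1}<c_{v_2}<c_{v_0}$ via Corollary~\ref{cor:propagate}, and $c_{v_2}\in I_{u_1}$ via Lemma~\ref{lem:additivity}. The gap is the main step. You leave it as a plan, and the plan cannot succeed as framed. You assume only that a single $v_j$ has colour $\sigma(u_1)$, and you intend to use the remaining vertices only through threshold bounds. That information is consistent.

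Take $(c,r)$ equal to $(-1.55,0.5)$, $(0,1)$, $(-1.2,1.25)$, $(0.1,1.2)$, $(1.4,1)$ for $u_0,u_1,v_1,v_2,v_0$ respectively, with $\sigma(u_0)=\sigma(v_0)=+$ and $\sigma(u_1)=\sigma(v_1)=-$. Every pair avoiding $v_2$ obeys Definition~\ref{def:bicoloured} exactly, and every pair containing $v_2$ obeys both implications of Definition~\ref{def:sandwich}. All the facts you derived hold here, with $r_{v_1}>r_{u_1}$, which is your second radius case. Yet $c_{v_2}>c_{u_1}$. Only the colour of $v_2$ excludes this configuration. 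Colour $+$ would make $u_0v_2$ a monochromatic edge at distance $1.65>1.2$, and colour $-$ would make $v_2v_0$ a bichromatic non-edge at distance $1.3\le 2.2$.

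Likewise, for $j=2$, take $(c,r)$ equal to $(-1.55,0.5)$, $(0,1)$, $(-2,1.2)$, $(-0.5,0.6)$, $(0.5,0.8)$ for $u_0,u_1,v_1,v_2,v_0$, with $\sigma(u_0)=+$, $\sigma(u_1)=\sigma(v_2)=\sigma(v_0)=-$, and $v_1$ constrained only by the sandwich conditions. This satisfies everything, including $c_{v_2}\in I_{u_1}$, and it lies in your first radius case. So neither radius case closes, and bringing in the other $v$ only geometrically, as an ``intermediary'', does not help.

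The missing idea is a case split on the colour of the other vertex $v_{3-j}$.
\begin{itemize}
\item If $\sigma(v_{3-j})=\sigma(u_0)$, then $u_0,v_1,u_1,v_2$ induce a $4$-cycle whose opposite edges $v_ju_1$ and $v_{3-j}u_0$ are both monochromatic. Lemma~\ref{lem:C4-colouring} then forces all four colours to be equal, contradicting $\sigma(u_0)\neq\sigma(u_1)$.
\item If $\sigma(v_1)=\sigma(v_2)=\sigma(u_1)$, then for each $i$ the path $v_0,u_1,v_i,u_0$ has a monochromatic middle edge. Lemma~\ref{lem:P4gen} together with $c_{u_0}<c_{u_1}$ gives $c_{v_1},c_{v_2}<c_{u_1}$, which is exactly what your first configuration violates. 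Now the non-edge $v_1v_2$, the edge $u_0v_2$, and the bichromatic non-edge $u_0u_1$ give $c_{v_1}<c_{v_2}-r_{v_2}\le c_{u_0}+r_{u_0}<c_{u_1}-r_{u_1}$. So the monochromatic edge $u_1v_1$ forces $c_{u_1}\le c_{v_1}+r_{v_1}<c_{v_2}$, a contradiction.
\end{itemize}
This two-case argument is the paper's proof.
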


\begin{proof}
Suppose that $\sigma(u_0) \neq \sigma(u_1)$ and, for contradiction, that one of $v_1, v_2$ has colour $\sigma(u_1)$.
By symmetry, we may assume $\sigma(v_1) = \sigma(u_1)$.

If $\sigma(v_2) \neq \sigma(u_1)$ then $\sigma(v_2) = \sigma(u_0)$.
Now, the vertices $u_0, v_1, u_1, v_2$ induce a $4$-cycle in this cyclic order, whose opposite edges $v_1u_1$ and $v_2u_0$ are both monochromatic.
Hence, by Lemma~\ref{lem:C4-colouring}, all four vertices have the same colour, contradicting $\sigma(u_0) \neq \sigma(u_1)$. From here on, we assume $\sigma(v_2) = \sigma(v_1) = \sigma(u_1)$.

If $u_0 \sim v_0$ then $v_0,v_1,v_2$ are pairwise non-adjacent common neighbours of the non-adjacent vertices $u_0$ and $u_1$, so Corollary~\ref{cor:forcing}\ref{it:forcing-col} gives $\sigma(u_0) = \sigma(u_1)$, a contradiction.

So $u_0 \not\sim v_0$. For $i \in \{1,2\}$ the vertices $v_0, u_1, v_i, u_0$ induce a path in this order, whose middle edge $u_1v_i$ is monochromatic, so Lemma~\ref{lem:P4gen} gives $\max\{c_{v_0},c_{u_1}\} < \min\{c_{v_i},c_{u_0}\}$ or $\max\{c_{v_i},c_{u_0}\} < \min\{c_{v_0},c_{u_1}\}$.
The first alternative gives $c_{u_1} < c_{u_0}$ and the second $c_{u_0} < c_{u_1}$, so the same alternative holds for both $i = 1$ and $i = 2$.
Negating every centre and exchanging $v_1$ with $v_2$ if necessary, we may assume $c_{u_0} < c_{u_1}$ and $c_{v_1} < c_{v_2}$.
Then, the second alternative holds for both $i$, giving 
\begin{equation}
\label{eq:fan-a}
c_{v_1}, c_{v_2} < c_{u_1}.
\end{equation}

The non-edge $u_0u_1$ is bichromatic by assumption, so $c_{u_1} - c_{u_0} > r_{u_0} + r_{u_1}$, i.e., $c_{u_0}+r_{u_0} < c_{u_1}-r_{u_1}$.
The bichromatic edge $u_0v_2$ gives $c_{u_0}+r_{u_0} \geq c_{v_2}-r_{v_2}$.
The monochromatic non-edge $v_1v_2$ gives $c_{v_2}-c_{v_1} > \max\{r_{v_1},r_{v_2}\} \geq r_{v_2}$, so $c_{v_1} < c_{v_2}-r_{v_2}$.
Combining these, we have
\begin{equation}
\label{eq:fan-b}
  c_{v_1} < c_{v_2}-r_{v_2} \leq c_{u_0}+r_{u_0} < c_{u_1}-r_{u_1} .
\end{equation}

Finally, the monochromatic edge $u_1v_1$ gives $|c_{u_1}-c_{v_1}| \leq \max\{r_{u_1},r_{v_1}\}$.
By~\eqref{eq:fan-b}, $c_{u_1}-c_{v_1} > r_{u_1}$, so $\max\{r_{u_1},r_{v_1}\} = r_{v_1}$ and we obtain $c_{u_1} \leq c_{v_1}+r_{v_1}$.
But $c_{u_1} > c_{v_2}$ by~\eqref{eq:fan-a}, and the non-edge $v_1v_2$ gives $c_{v_2} > c_{v_1}+r_{v_1}$, a contradiction.
\end{proof}

The proof of Lemma~\ref{lem:twins} also needs the following constraint on the centres of a monochromatic $C_4$.
It mirrors~\cite{bogart-fishburn-isaak-langley}*{Lemma~5}, which gives the analogous conclusion for bounded proper tolerance representations.

\begin{lemma}
\label{lem:monoC4}
Let $(\sigma, c, r)$ be a bicoloured representation of a graph, and let
$v_0, v_1, v_2, v_3$ induce a $4$-cycle in this cyclic order, all four of one colour.
Then the centres $c_0, c_1, c_2, c_3$ are distinct, and the minimum and maximum are attained by a non-adjacent pair.
\end{lemma}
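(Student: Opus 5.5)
We work directly with the monochromatic rule: every pair of the four vertices has threshold $\max\{r_i,r_j\}$. The proof has three steps.

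\emph{Distinctness.} The non-adjacent pairs are $v_0v_2$ and $v_1v_3$. By Observation~\ref{obs:representation}\ref{it:wa-nonedge} we have $c_0 \neq c_2$ and $c_1 \neq c_3$. So the only possible coincidences are between adjacent vertices, say $c_0 = c_1$ (the other cases follow by rotating the labels). Suppose, without loss of generality, that $c_2 > c_0$, negating every centre if necessary.
\begin{itemize}
\item The non-edge $v_0v_2$ gives $c_2 - c_0 > \max\{r_0,r_2\}$.
\item The edge $v_1v_2$ gives $c_2 - c_1 \le \max\{r_1,r_2\}$. Since $c_0 = c_1$, this forces $r_1 > \max\{r_0,r_2\}$, and in particular $r_1 > r_0$.
\item Now consider $v_3$. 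The non-edge $v_1v_3$ gives $|c_3 - c_1| > \max\{r_1,r_3\} \ge r_1$.
\item The edge $v_0v_3$ gives $|c_3 - c_0| \le \max\{r_0,r_3\}$. Since $c_0 = c_1$, this is $|c_3 - c_1| \le \max\{r_0,r_3\}$.
\end{itemize}
Combining the last two, $\max\{r_0,r_3\} > \max\{r_1,r_3\}$, so $r_0 > r_1$. This contradicts $r_1 > r_0$.

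\emph{Extremes.} Since the centres are distinct, it remains to show that the minimum and maximum are not attained by an adjacent pair. Suppose, after relabelling by a symmetry of the $4$-cycle and negating centres, that $c_0$ is the minimum and $c_1$ is the maximum. Then $c_2$ and $c_3$ lie strictly between them.

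The first idea is to use that $v_1v_3$ and $v_0v_2$ are non-edges while the boundary edges are present. One could look for a path of the form $v_2 v_1 v_0 v_3$, but that is not an induced path: $v_2v_3$ is an edge. So instead I would work directly with the inequalities.
\begin{itemize}
\item The edge $v_0v_1$ gives $c_1 - c_0 \le \max\{r_0,r_1\}$.
\item The non-edge $v_0v_2$ gives $c_2 - c_0 > \max\{r_0,r_2\}$.
\item The non-edge $v_1v_3$ gives $c_1 - c_3 > \max\{r_1,r_3\}$.
\end{itemize}
Suppose $r_0 \ge r_1$. Then $c_1 - c_0 \le r_0 < c_2 - c_0$, so $c_1 < c_2$. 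This contradicts $c_1$ being the maximum. Symmetrically, $r_1 \ge r_0$ gives $c_1 - c_0 \le r_1 < c_1 - c_3$, so $c_3 < c_0$, contradicting $c_0$ being the minimum. Either way we reach a contradiction, so the extremes are attained by a non-adjacent pair.

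This is short, so I expect no real obstacle. The only care needed is in the symmetry reductions: checking that any adjacent pair can be moved to the positions $\{v_0,v_1\}$ by rotation or reflection of the cycle, and that negating centres preserves the hypotheses.
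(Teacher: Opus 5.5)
Your proof is correct, but it is organised differently from the paper's. You prove the two conclusions separately, each by contradiction, after using the symmetries of the $4$-cycle to move the offending adjacent pair to $\{v_0,v_1\}$. For distinctness you set the edge $v_1v_2$ against the non-edge $v_0v_2$ to get $r_1>r_0$, and the edge $v_0v_3$ against the non-edge $v_1v_3$ to get $r_0>r_1$. For the extremes you split on whether $r_0\ge r_1$ or $r_1\ge r_0$.

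The paper instead makes a single normalisation, taking $r_0=\max_i r_i$ and $c_2>c_0$. The non-edge $v_0v_2$ then gives $c_2-c_0>r_0$, and the edge $v_0v_1$ gives $c_1\le c_0+r_0<c_2$. The edge $v_1v_2$ gives $c_2-c_1\le\max\{r_1,r_2\}\le r_0<c_2-c_0$, so $c_0<c_1<c_2$. Likewise $c_0<c_3<c_2$, and $c_1\ne c_3$ follows from the non-edge $v_1v_3$. This proves both claims at once, directly rather than by contradiction. It also yields slightly more: the extreme non-adjacent pair consists of a vertex of largest radius and its opposite vertex.

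Your version avoids the global choice of a largest radius and uses only comparisons at a single edge. The cost is two separate arguments plus the symmetry bookkeeping you flagged. That bookkeeping is sound, since relabelling along an automorphism of $C_4$ and negating all centres both preserve the hypotheses. Your aside about the path $v_2v_1v_0v_3$ plays no role and can be dropped.
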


\begin{proof}
Suppose, without loss of generality, that $r_0 = \max_i \{r_i\}$. 
Now, the non-edge $v_0v_2$ gives $|c_2 - c_0| > \max\{ r_2, r_0 \} = r_0$. Negating every centre if necessary, we may assume $c_2 > c_0$, so $c_2 - c_0 > r_0$.

The edge $v_0v_1$ gives $|c_1 - c_0| \leq \max\{r_0, r_1\} = r_0$, so $c_1 \leq c_0 + r_0 < c_2$. Now, the edge $v_1v_2$ gives $c_2 - c_1 \leq \max\{r_1, r_2\} \leq r_0 < c_2 - c_0$, so $c_1 > c_0$. Hence, we have $c_0 < c_1 < c_2$.
Applying the same argument to $v_3$, we obtain $c_0 < c_1, c_3 < c_2$. Finally, the non-edge $v_1v_3$ implies that $c_1 \neq c_3$, completing the proof. 
\end{proof}

\begin{lemma}
\label{lem:twins}
Let $G$ be a graph in which $\{u_0,u_1\}$ and $\{v_0,v_1,v_2\}$ induce either $K_{2,3}$ or $K_{2,3} + v_1v_2$.
Let $w \notin \{u_0,u_1,v_0,v_1,v_2\}$ be a vertex adjacent to $v_1$ and $v_2$ but to neither $u_0$ nor $u_1$.
Then in every bicoloured representation $(\sigma, c, r)$ of $G$ we have $\sigma(v_1) = \sigma(v_2)$.
\end{lemma}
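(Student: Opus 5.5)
We argue by contradiction: assume $\sigma(v_1) \neq \sigma(v_2)$. The key observation is that $u_0$, $u_1$ and $w$ are pairwise non-adjacent, since $u_0u_1 \notin E(G)$ and $w$ is adjacent to neither $u_0$ nor $u_1$. They are also common neighbours of both $v_1$ and $v_2$. We split according to the two allowed induced graphs on $\{u_0,u_1,v_0,v_1,v_2\}$.

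\emph{Case $v_1v_2 \notin E(G)$.} This case is immediate. The non-adjacent vertices $v_1$ and $v_2$ have three pairwise non-adjacent common neighbours $u_0, u_1, w$. So Corollary~\ref{cor:forcing}\ref{it:forcing-col}, with $v_1, v_2$ in the roles of $u_0, u_1$ there, gives $\sigma(v_1) = \sigma(v_2)$. The vertex $v_0$ is not used here.

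\emph{Case $v_1v_2 \in E(G)$.} Then $v_1v_2$ is a bichromatic edge, so $|c_{v_1}-c_{v_2}| \leq r_{v_1}+r_{v_2}$. We aim to contradict this using the three pairwise non-adjacent vertices $u_0,u_1,w$, whose centres are distinct by Observation~\ref{obs:representation}\ref{it:wa-nonedge}. Order them as $a,b,c$ with $c_a<c_b<c_c$. By Lemma~\ref{lem:additivity}, $c_c-c_a>r_a+r_c$, and both $I_{v_1}$ and $I_{v_2}$ contain the segment $[c_a+r_a,\,c_c-r_c]$, which contains $c_b$. Since $v_1,v_2$ have different colours, each of $a,b,c$ is monochromatic with exactly one of $v_1,v_2$. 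By pigeonhole, some $v_j$ has the same colour as at least two of $a,b,c$, and its edges to those two are governed by the threshold $\max$. We will write out the edge inequalities for the monochromatic edges and the non-edge inequalities $c_b-c_a>\max\{r_a,r_b\}$ and $c_c-c_b>\max\{r_b,r_c\}$. Then we sum suitable pairs so that the centres cancel, in the style of Lemma~\ref{lem:C4-colouring} and Lemma~\ref{lem:antihole-swap}. For example, if $v_1$ is monochromatic with both $a$ and $c$, then $c_c-c_a \leq 2\max\{r_{v_1},r_a,r_c\}$ must be reconciled with $c_c-c_a > \max\{r_a,r_b\}+\max\{r_b,r_c\}$. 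We will split into subcases according to which of $r_{v_1}, r_a, r_c$ is largest.

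If this direct computation does not close, we will bring in $v_0$, which is adjacent to $u_0,u_1$ and not to $v_1,v_2$. We will use Corollary~\ref{cor:propagate} and Lemma~\ref{lem:P4gen} on the induced paths $v_0,u_i,v_j,w$ to pin down where $w$ lies relative to $u_0,u_1$. These paths are induced only when $w \not\sim v_0$; if instead $w \sim v_0$, we use Lemma~\ref{lem:additivity}\ref{it:add-cover} directly. A further option is to split on whether $\sigma(u_0)=\sigma(u_1)$. If the colours differ, then $I_{u_0}\cap I_{u_1}=\emptyset$, and Corollary~\ref{cor:forcing}\ref{it:forcing-geo} together with Lemma~\ref{lem:fan}, applied to $\{u_0,u_1\}$ and a triple containing $v_0$, restricts the colours of $v_1,v_2$ directly.

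The main obstacle is the second case. Finding the right pair of inequalities to sum, and handling the subcases on which radius is largest, is where the work lies. The first case is a one-line application of Corollary~\ref{cor:forcing}.
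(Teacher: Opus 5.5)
Your first case is correct. If $v_1v_2\notin E(G)$, then $u_0,u_1,w$ are three pairwise non-adjacent common neighbours of the non-adjacent pair $v_1,v_2$, and Corollary~\ref{cor:forcing}\ref{it:forcing-col} gives $\sigma(v_1)=\sigma(v_2)$ at once. This is quicker than the paper, which treats both cases uniformly.

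\textbf{The gap is the case $v_1v_2\in E(G)$.} You give no argument there, and your main plan cannot succeed. The five vertices $v_1,v_2,u_0,u_1,w$ alone admit a representation with $\sigma(v_1)\neq\sigma(v_2)$:
\begin{itemize}
\item colour $u_0,u_1,w,v_1$ with $+$ and $v_2$ with $-$;
\item take $c_{u_0}=-2$, $c_{u_1}=0$, $c_w=2$ and $c_{v_1}=c_{v_2}=0$;
\item take $r_{v_1}=3$ and all other radii equal to $1$.
\end{itemize}
Then $u_0,u_1,w$ are pairwise at distance at least $2>1$, so they are non-adjacent. Each of them is at distance at most $2$ from $v_1$ (threshold $\max\{3,1\}=3$) and from $v_2$ (threshold $1+1=2$), and $v_1v_2$ is an edge. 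So no sum of inequalities among these five vertices, whatever subcases you take on the largest radius, can give a contradiction. The vertex $v_0$ is indispensable.

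Your fallback list does not close the case either. Lemma~\ref{lem:fan} cannot be applied to $\{u_0,u_1\}$: their only available common neighbours are $v_0,v_1,v_2$, and $v_1v_2$ is an edge. Two subcases are in fact easy:
\begin{itemize}
\item $\sigma(u_0)\neq\sigma(u_1)$ follows from Lemma~\ref{lem:C4-colouring} applied to the induced $4$-cycles $u_0v_0u_1v_j$;
\item $w\sim v_0$ follows from Corollary~\ref{cor:forcing}\ref{it:forcing-col} applied to the pairs $v_0v_1$ and $v_0v_2$.
\end{itemize}
But the core subcase, $\sigma(u_0)=\sigma(u_1)$ with $w\not\sim v_0$, is left untouched.

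\textbf{The missing idea} is to let $v_0$ force the colours, and then use the rigidity of a monochromatic $4$-cycle.
\begin{enumerate}
\item Since $\sigma(v_1)\neq\sigma(v_2)$, $v_0$ agrees with exactly one of them; say $\sigma(v_0)=\sigma(v_1)\neq\sigma(v_2)$.
\item Apply Lemma~\ref{lem:fan} to the non-adjacent bichromatic pair $\{v_0,v_2\}$ and the triple $\{w,u_0,u_1\}$, with $v_0w$ as the possibly missing edge. This gives $\sigma(u_0)=\sigma(u_1)=\sigma(v_0)$.
\item Now $u_0v_0u_1v_1$ is a monochromatic induced $4$-cycle. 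By Lemma~\ref{lem:monoC4}, either $c_{u_0},c_{u_1}$ lie between $c_{v_0}$ and $c_{v_1}$, or vice versa.
\end{enumerate}

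In the first configuration, the monochromatic edge from $v_0$ to the farther of $u_0,u_1$ forces the whole segment between $c_{u_0}$ and $c_{u_1}$ into $I_{v_0}$. Then $I_{v_2}$ is disjoint from $I_{v_0}$ but meets both $I_{u_0}$ and $I_{u_1}$. This yields $|c_{u_0}-c_{u_1}|<\max\{r_{u_0},r_{u_1}\}$, contradicting $u_0\not\sim u_1$.

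In the second configuration, argue as follows.
\begin{itemize}
\item Corollary~\ref{cor:forcing}\ref{it:forcing-geo}, using $I_{v_0}\cap I_{v_2}=\emptyset$, puts $c_w$ outside the segment between $c_{u_0}$ and $c_{u_1}$; say beyond $c_{u_1}$.
\item Corollary~\ref{cor:forcing}\ref{it:forcing-col} rules out $w\sim v_0$.
\item Lemma~\ref{lem:P4gen}, applied to the induced path $w,v_1,u_1,v_0$ with its monochromatic middle edge, then places $c_{v_1}$ beyond $c_{u_1}$. This is a contradiction.
\end{itemize}
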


\begin{proof}
Suppose, for contradiction, that $\sigma(v_1) \neq \sigma(v_2)$, so that $\sigma(v_0)$ agrees with exactly one of $\sigma(v_1)$ and $\sigma(v_2)$.
Exchanging $v_1$ with $v_2$ preserves every hypothesis and the conclusion, so we may assume $\sigma(v_0) = \sigma(v_1) \neq \sigma(v_2)$.

Since $\sigma(v_0) \neq \sigma(v_2)$, Lemma~\ref{lem:fan}, applied to the non-adjacent pair $v_0, v_2$, their common neighbours $u_0, u_1$, and the further neighbour $w$ of $v_2$, gives $\sigma(u_0) = \sigma(u_1) = \sigma(v_0)$.
Swapping the two colours if necessary, suppose $u_0,u_1,v_0,v_1$ are all coloured $+$ while $\sigma(v_2) = -$.
Now, $v_0v_2$ is a bichromatic non-edge, so $I_{v_0} \cap I_{v_2} = \emptyset$, while the non-edge $u_0u_1$ is monochromatic, so after relabelling $u_0,u_1$ if necessary
\begin{equation}
\label{eq:book-ab}
  c_{u_1}-c_{u_0} > \max\{r_{u_0},r_{u_1}\}.
\end{equation}

The vertices $u_0, v_0, u_1, v_1$ induce a monochromatic $4$-cycle in this cyclic order, whose non-adjacent pairs are $u_0u_1$ and $v_0v_1$, so by Lemma~\ref{lem:monoC4} the least and greatest centres are attained either by $v_0,v_1$ or by $u_0,u_1$, giving two cases.
In each case we may reflect all centres, provided that we also exchange $u_0$ with $u_1$, since the reflection reverses~\eqref{eq:book-ab}, which is restored by the exchange.

{\em Case 1: $c_{u_0},c_{u_1}$ strictly between $c_{v_0}$ and $c_{v_1}$.}
Reflecting if necessary, assume $c_{v_0} < c_{u_0} < c_{u_1} < c_{v_1}$.
The monochromatic edge $v_0u_1$ gives $c_{u_1}-c_{v_0} \leq \max\{r_{v_0},r_{u_1}\}$, while $c_{u_1}-c_{v_0} > c_{u_1}-c_{u_0} > \max\{r_{u_0},r_{u_1}\} \geq r_{u_1}$ by~\eqref{eq:book-ab}. 
So $ \max\{r_{v_0},r_{u_1}\} = r_{v_0}$ and $c_{u_1} \in I_{v_0}$. As $c_{v_0} < c_{u_0} < c_{u_1}$, we also have $c_{u_0} \in I_{v_0}$, and, in particular, $[ c_{u_0},c_{u_1} ] \subseteq I_{v_0}$.

Now, since $I_{v_2} \cap I_{v_0} = \emptyset$, $I_{v_2}$ lies entirely to one side of $[ c_{u_0},c_{u_1} ]$.
But $u_0v_2$ and $u_1v_2$ are edges, so $I_{v_2}$ meets both $I_{u_0}$ and $I_{u_1}$.
Suppose first that $I_{v_2}$ lies to the left, so that $c_{v_2}+r_{v_2} < c_{u_0}$.
Then $I_{v_2} \cap I_{u_1} \neq \emptyset$ gives $c_{v_2}+r_{v_2} \geq c_{u_1}-r_{u_1}$, and hence $c_{u_1}-c_{u_0} < r_{u_1}$.
Suppose instead that $I_{v_2}$ lies to the right, so that $c_{v_2}-r_{v_2} > c_{u_1}$.
Then $I_{v_2} \cap I_{u_0} \neq \emptyset$ gives $c_{v_2}-r_{v_2} \leq c_{u_0}+r_{u_0}$, and hence $c_{u_1}-c_{u_0} < r_{u_0}$.
In either case, we obtain a contradiction to~\eqref{eq:book-ab}.

{\em Case 2: $c_{v_0},c_{v_1}$ strictly between $c_{u_0}$ and $c_{u_1}$.}
The vertices $u_0,u_1,w$ are pairwise non-adjacent.
So, by Corollary~\ref{cor:forcing}\ref{it:forcing-geo} applied to $v_0,v_2$, whose intervals are disjoint, their common neighbours $u_0,u_1$ and the vertex $w$, $c_w$ lies strictly to one side of both $c_{u_0}$ and $c_{u_1}$.
Reflecting if necessary, assume $c_{u_0} < c_{u_1} < c_w$.

Moreover, $v_0 \not\sim w$, since otherwise $u_0, u_1, w$ would be three pairwise non-adjacent common neighbours of the non-adjacent pair $v_0, v_2$, and Corollary~\ref{cor:forcing}\ref{it:forcing-col} would give $\sigma(v_0) = \sigma(v_2)$, contrary to our assumption.

Now $w, v_1, u_1, v_0$ induce a path in this order, whose middle edge $v_1u_1$ is monochromatic. 
By Lemma~\ref{lem:P4gen}, we have $\max\{c_w,c_{v_1}\} < \min\{c_{u_1},c_{v_0}\}$ or $\max\{c_{u_1},c_{v_0}\} < \min\{c_w,c_{v_1}\}$.
The former gives $c_w < c_{u_1}$, contradicting $c_{u_1} < c_w$. 
So the latter holds and $c_{u_1} < c_{v_1}$, contradicting the case hypothesis.
\end{proof}

\subsubsection{Colouring the joints}

The colouring is pinned down in two steps.
Proposition~\ref{prop:Wm-mono} shows that $q_1$, $x$, and $z$ already share a colour class in the subgraph $B^m + q_1$.
Proposition~\ref{prop:joints} extends that class to every joint of $Q^m$.

\begin{proposition}
\label{prop:Wm-mono}
In every bicoloured representation of $B^m + q_1$, $\sigma(q_1) = \sigma(x) = \sigma(z)$.
\end{proposition}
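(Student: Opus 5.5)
The plan is to establish $\sigma(x)=\sigma(z)$ first, and then $\sigma(q_1)=\sigma(z)$ by contradiction, applying Lemma~\ref{lem:fan} twice with the roles of $q_1$ and $z$ exchanged.

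\emph{Step 1: $\sigma(x)=\sigma(z)$.} The vertices $x$ and $z$ are non-adjacent, and both are adjacent to every chain vertex $2m+1,\dots,4m$. From Definition~\ref{def:GmBm}, the edges at $2m+1$ inside the chain are $\{2m-1,2m+1\}$, $\{2m,2m+1\}$ and $\{2m+1,2m+3\}$. The vertex $2m+2$ is adjacent in the chain only to $2m+3$, and $4m$ is isolated in the chain. Hence $2m+1$, $2m+2$, $4m$ are three pairwise non-adjacent common neighbours of $x$ and $z$; they are distinct since $m\ge 2$. Corollary~\ref{cor:forcing}\ref{it:forcing-col} then gives $\sigma(x)=\sigma(z)$.

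\emph{Step 2: $\sigma(q_1)=\sigma(z)$.} Recall that $q_1$ has cut $4m-1$, so $N(q_1)\cap V(B^m)=\{1,\dots,4m-2\}\cup\{x\}$. In particular $q_1\not\sim z$, and $q_1\not\sim 4m$. Suppose, for contradiction, that $\sigma(q_1)\neq\sigma(z)$.

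For the first application of Lemma~\ref{lem:fan}, take $u_0=q_1$, $u_1=z$, $v_0=4m$, $v_1=2m+1$, $v_2=2m+2$. Since $4m-2\ge 2m+2$, both $v_1$ and $v_2$ are adjacent to $q_1$ and to $z$. The vertex $v_0$ is adjacent to $z$ but not to $q_1$, and $v_0,v_1,v_2$ are pairwise non-adjacent by Step~1. So these five vertices induce $K_{2,3}-u_0v_0$, and the lemma yields $\sigma(2m+1)=\sigma(2m+2)=\sigma(q_1)$.

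For the second application, take $u_0=z$, $u_1=q_1$, $v_0=2m-2$, and the same $v_1,v_2$. Since $m\ge2$, we have $2m-2\ge 2$, so $2m-2$ is a chain vertex adjacent to $q_1$. It is not adjacent to $z$, whose chain neighbours are only $2m+1,\dots,4m$. Its only chain neighbour is $2m-1$, so it is non-adjacent to $2m+1$ and $2m+2$. This again gives an induced $K_{2,3}-u_0v_0$, and Lemma~\ref{lem:fan} yields $\sigma(2m+1)=\sigma(2m+2)=\sigma(z)$.

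The two applications give $\sigma(q_1)=\sigma(z)$, contradicting the assumption. Combining with Step~1, we obtain $\sigma(q_1)=\sigma(x)=\sigma(z)$.

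The main obstacle is the case analysis rather than any estimate. We must choose the witnesses so that each quintuple induces exactly $K_{2,3}$ or $K_{2,3}$ minus the prescribed edge $u_0v_0$, which requires checking every chain adjacency against Definition~\ref{def:GmBm}. The boundary case $m=2$ is the one to watch: there $4m-2=2m+2$ and $2m-2=2$, and I would recheck that the vertices chosen above remain distinct and keep the required adjacencies in that case.
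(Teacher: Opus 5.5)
Your proof is correct and follows the paper's argument essentially verbatim. Step~1 is Corollary~\ref{cor:forcing}\ref{it:forcing-col} with the witnesses $2m+1$, $2m+2$, $4m$, and Step~2 applies Lemma~\ref{lem:fan} twice with the roles of $q_1$ and $z$ exchanged. The only difference is that in the second application the paper takes $v_0 = 1$ where you take $v_0 = 2m-2$; both choices satisfy the hypotheses, including in the boundary case $m=2$.
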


\begin{proof}
Let $\alpha = 2m+1$ and $\beta = 2m+2$.
The vertices $\alpha, \beta, 4m$ are distinct and pairwise non-adjacent, and all three lie in $N(x) \cap N(z)$, while $x \not\sim z$, so Corollary~\ref{cor:forcing}\ref{it:forcing-col} gives $\sigma(x) = \sigma(z)$.
It remains to show that $\sigma(q_1) = \sigma(z)$.

Suppose, for contradiction, that $\sigma(q_1) \neq \sigma(z)$.
Now applying Lemma~\ref{lem:fan} with $(u_0,u_1,v_0,v_1,v_2) = (q_1, z, 4m, \alpha, \beta)$ gives $\sigma(\alpha) = \sigma(q_1)$, and with $(u_0,u_1,v_0,v_1,v_2) = (z, q_1, 1, \alpha, \beta)$ it gives $\sigma(\alpha) = \sigma(z)$, so $\sigma(q_1) = \sigma(z)$, the desired contradiction.

The hypotheses of Lemma~\ref{lem:fan} are straightforward to verify in both applications using Definitions~\ref{def:GmBm} and~\ref{def:Qm}.
Here $m \geq 2$ ensures that $\beta \leq 4m-2$ and $\alpha \geq 5$, so that $q_1 \sim \beta$ and $1 \not\sim \alpha, \beta$.
\end{proof}

The outer vertices $q_2, \dots, q_{2m-1}$ force the colour of every joint but the top one, and $q_1$ forces the joint $4m-1$.

\begin{proposition}
\label{prop:joints}
In every bicoloured representation of $Q^m$, the vertices $q_1$, $x$, $z$, and all joints belong
to the same colour class.
\end{proposition}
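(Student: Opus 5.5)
The plan is to first fix the common colour of $q_1$, $x$ and $z$, and then attach each joint to that colour class one at a time. By Proposition~\ref{prop:Wm-mono}, applied to the induced subgraph $B^m+q_1$, we already have $\sigma(q_1)=\sigma(x)=\sigma(z)$. It therefore remains to show that every joint $2j+1$, for $1\le j\le 2m-1$, has the colour of $x$, or equivalently of $z$. The main tool is Lemma~\ref{lem:twins} in its $K_{2,3}+v_1v_2$ form. In each application, $v_1\in\{x,z\}$ and $v_2=2j+1$ is the joint, so $v_1v_2$ is the edge. The non-adjacent pair $u_0,u_1$ is always $2j-1,2j$; these are indeed non-adjacent, and both are adjacent to $2j+1$. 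The vertex $v_0$ is an outer vertex $q_i$ whose cut ends at $2j$, which is where the outer vertices are used.

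\emph{Lower joints $2j+1$ with $2j\le 2m$, i.e.\ $3,5,\dots,2m+1$.} Take the outer $q_i$ with the even cut $s_i=2j$. Then $N(q_i)\cap V(B^m)=\{1,\dots,2j\}$, so $q_i\sim 2j-1,2j$ while $q_i\not\sim x$ and $q_i\not\sim 2j+1$. Set $v_1=x$; note that $x\sim 2j-1$ since $2j-1\ge 2$, and $x\sim 2j$, $x\sim 2j+1$. For the further vertex take $w=2j+3$, which is adjacent to $x$ and to $2j+1$ but to neither $2j-1$ nor $2j$. Then $\{2j-1,2j\}$ and $\{q_i,x,2j+1\}$ induce $K_{2,3}+x(2j+1)$, and Lemma~\ref{lem:twins} gives $\sigma(2j+1)=\sigma(x)$.

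\emph{Upper joints $2j+1\in\{2m+3,\dots,4m-3\}$.} Take the outer $q_i$ with the odd cut $s_i=2j+1$. Its neighbours in the base are then $1,\dots,2m$, $x$, and $2m+1,\dots,2j$. Since $q_i\sim x$, the vertex $x$ cannot play $v_1$ here, so we use $v_1=z$ instead. This works because $2j-1\ge 2m+1$, so $z\sim 2j-1,2j,2j+1$, while $z\not\sim q_i$ and $q_i\not\sim 2j+1$. Again take $w=2j+3\le 4m-1$, which is adjacent to $z$ and to $2j+1$ but to neither $2j-1$ nor $2j$. Lemma~\ref{lem:twins} then gives $\sigma(2j+1)=\sigma(z)$.

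\emph{Top joint $4m-1$.} No outer vertex has a cut ending at $4m-2$; only $q_1$ does. Here I would use Lemma~\ref{lem:fan}, arguing by contradiction: suppose $\sigma(4m-1)\ne\sigma(x)$. I would look for a non-adjacent pair $u_0,u_1$ with $\sigma(u_0)\ne\sigma(u_1)$ and three common neighbours, one possibly missing $u_0$, such that the lemma's conclusion contradicts a colour already fixed. The first choice to try is $u_0=4m-1$, $u_1=q_1$ (they are non-adjacent since $q_1$'s cut stops at $4m-2$). Their common neighbours include $4m-3$, $4m-2$ and $x$; I would check adjacencies to find three that fit the pattern $K_{2,3}$ or $K_{2,3}-u_0v_0$, possibly using $1$ as the $v_0$ non-adjacent to $4m-1$. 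The lemma would then force $v_1,v_2$ to have the colour of $4m-1$, contradicting the already established $\sigma(4m-3)=\sigma(x)=\sigma(q_1)$. If this triple fails, I would instead try $z$ as $u_1$, with $q_1$ and $4m-2$ among the common neighbours.

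I expect the top joint to be the main obstacle. The earlier cases only require routine verification of the hypotheses of Lemma~\ref{lem:twins} from Definitions~\ref{def:GmBm} and~\ref{def:Qm}, but here the correct five-vertex configuration has to be found, and its non-adjacencies must hold exactly as required.
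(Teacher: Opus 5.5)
Your lower-joint case breaks down at $j=1$, i.e.\ for the joint $3$. The outer vertex with cut $2$ is $q_{2m-1}$, and your justification ``$x\sim 2j-1$ since $2j-1\ge 2$'' fails there: $2j-1=1$, and $x$ is adjacent to every chain vertex except $1$. So $\{1,2\}$ and $\{q_{2m-1},x,3\}$ induce neither $K_{2,3}$ nor $K_{2,3}+x3$, and Lemma~\ref{lem:twins} does not apply.

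A different choice of pair does not help either. The common neighbours of $x$ and $3$ are $2$, $5$ and $q_1,\dots,q_{m-1}$, and their only non-adjacent pair is $\{2,5\}$. Every common neighbour of $2$ and $5$ other than $3$ and $x$ is a vertex $q_j$ adjacent to $3$. So Lemma~\ref{lem:twins} can never be applied with $\{v_1,v_2\}=\{x,3\}$. As written, your argument therefore does not colour the joint $3$, which the proposition needs, and Proposition~\ref{prop:impossible} uses afterwards.

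The missing idea is to pair the joint $3$ with $q_1$ instead of $x$, which is what the paper does. Apply Lemma~\ref{lem:twins} with $v_0=q_{2m-1}$ and $(u_0,u_1,v_1,v_2,w)=(1,2,3,q_1,5)$. Since $q_1$ has cut $4m-1$, it is adjacent to $1$, $2$, $3$ and $5$. Moreover, $q_1\not\sim q_{2m-1}$ is exactly the edge removed in Definition~\ref{def:Qm}. So the configuration is $K_{2,3}+3q_1$, and the lemma gives $\sigma(3)=\sigma(q_1)$, the common colour from Proposition~\ref{prop:Wm-mono}. This is precisely where the non-edge $q_1q_{2m-1}$ enters the argument.

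Your other cases coincide with the paper's: the same $v_0=q_i$, pair $(a,a+1)$, partner $x$ or $z$, and $w=a+4$. For the top joint, the configuration you are looking for is $(u_0,u_1,v_0,v_1,v_2)=(4m-1,q_1,1,4m-3,4m-2)$ in Lemma~\ref{lem:fan}. Note that $x$ cannot be one of the three $v$'s, since it is adjacent to $4m-3$ and $4m-2$. The lemma then forces $\sigma(4m-3)=\sigma(4m-1)\ne\sigma(q_1)$. This contradicts the colour of $4m-3$ obtained earlier, as a lower joint when $m=2$ and as an upper one when $m\ge3$.
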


\begin{proof}
Restricting the representation to $V(B^m) \cup \{q_1\}$ and applying Proposition~\ref{prop:Wm-mono} gives $\sigma(q_1) = \sigma(x) = \sigma(z)$.
Swapping the two colours if necessary, we may assume that this common colour is $+$, so it remains to show that every joint is coloured $+$.
Recall from Definition~\ref{def:Qm} that $q_1$ has cut $s_1 = 4m-1$, and that the cuts of $q_2,\dots,q_{2m-1}$ are the even values $2 \leq s \leq 2m$ and the odd values $2m+3 \leq s \leq 4m-3$.
The vertex $q_{2m-1}$ has cut $2$, and it is the only one of $q_2,\dots,q_{2m-1}$ not adjacent to $q_1$.

All the joints but $4m-1$ are handled by Lemma~\ref{lem:twins}, applied once for each outer vertex $q_i$, while $4m-1$ is treated separately at the end.
Let $q_i$ be one of $q_2,\dots,q_{2m-1}$, of cut $s = s_i$, and set
\[
  a = \begin{cases}
        s-1 & \text{if } s \text{ is even},\\
        s-2 & \text{if } s \text{ is odd},
      \end{cases}
  \quad\text{and}\quad
  b = \begin{cases}
        q_1 & \text{if } s = 2,\\
        x   & \text{if } s \text{ is even and } s \geq 4,\\
        z   & \text{if } s \text{ is odd}.
      \end{cases}
\]
Note that $a + 4 \leq 4m - 1$, so $a$, $a+1$, $a+2$, and $a+4$ are chain vertices. Moreover, $a$ is odd.
We apply the lemma with $v_0 = q_i$ and $(u_0,\ u_1,\ v_1,\ v_2,\ w)  =  (a,\ a+1,\ a+2,\ b,\ a+4)$.
That the hypotheses hold is straightforward to verify using Definitions~\ref{def:GmBm}, \ref{def:cuts}, and~\ref{def:Qm}.
For $s = 2$, the non-adjacency $v_0 \not\sim v_2$ is precisely the missing edge $q_1q_{2m-1}$.
The conclusion $\sigma(v_1) = \sigma(v_2)$ gives $\sigma(a+2) = \sigma(b)$, and each of $x$, $q_1$, $z$ is coloured $+$, so $\sigma(a+2) = +$.

As $a+2$ runs over $3, 5, \dots, 4m-3$ when $s$ runs over the cuts of the outer vertices, every joint other than $4m-1$ is coloured $+$.
Only the joint $4m-1$ remains.
Suppose, for contradiction, that $\sigma(4m-1) = -$.
We apply Lemma~\ref{lem:fan} with $(u_0,u_1,v_0,v_1,v_2) = (4m-1, q_1, 1, 4m-3, 4m-2)$, whose hypotheses are again straightforward to verify from Definitions~\ref{def:GmBm} and~\ref{def:Qm}, using $m \geq 2$.
Since $\sigma(4m-1) \neq \sigma(q_1)$, the lemma gives $\sigma(4m-3) = \sigma(4m-1) = -$, contradicting $\sigma(4m-3) = +$.
\end{proof}

\subsubsection{The contradiction}

The contradiction comes from Proposition~\ref{prop:cl8} below, which is the bicoloured counterpart of~\cite{bogart-fishburn-isaak-langley}*{Lemma~8}.
Its hypothesis asks for each even vertex $2i$ to lie to the right of the joint $2i-1$ with $2 \leq i \leq 2m-1$, together with two inequalities among the centres and radii of $B^m - z$.
Proposition~\ref{prop:impossible} then derives that order from the colouring and verifies the two inequalities, which are forced by $q_1$ and $z$.

\begin{proposition}
\label{prop:cl8}
Let $m \geq 2$ and let $(\sigma, c, r)$ be a bicoloured representation of a graph containing $B^m - z$ as an induced subgraph, with $c_{2i-1} < c_{2i}$ for each $2 \leq i \leq 2m-1$ and
\begin{equation}
\label{eq:cl8-ord}
  c_{4m-1}-c_{4m-3}  >  r_{4m-3}
  \quad\text{and}\quad
  c_x < c_{2m+1} .
\end{equation}
Then $x$ and the joints $3, 5, \dots, 4m-1$ do not all have the same colour.
\end{proposition}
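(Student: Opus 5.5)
We argue by contradiction: suppose $x$ and all joints $3,5,\dots,4m-1$ share a colour, say $+$. Every threshold among these vertices is then a maximum of radii. The colours of $1$ and of the even vertices are unconstrained, so inequalities involving them must come from Definition~\ref{def:sandwich} alone, or from comparisons where the colour of the non-joint cancels. The plan mirrors~\cite{bogart-fishburn-isaak-langley}*{Lemma~8}. We propagate the separation $c_{4m-1}-c_{4m-3}>r_{4m-3}$ in~\eqref{eq:cl8-ord} down the chain, so that the consecutive joint gaps $g_i=c_{2i+1}-c_{2i-1}$ increase with $i$. Then we use $x$, which is adjacent to every chain vertex except $1$, to reach a contradiction.

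\emph{Step 1 (ordering).} The path $1,3,5,\dots,4m-1$ has a monochromatic interior. Apply Lemma~\ref{lem:P4gen} to consecutive induced $P_4$'s $2i-3,2i-1,2i+1,2i+3$, together with the hypothesis $c_{2i-1}<c_{2i}$ and the separation at the top. This should show that the joint centres are strictly increasing, $c_3<c_5<\cdots<c_{4m-1}$, and that $c_1<c_3$.

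\emph{Step 2 (propagation).} We prove by downward induction on $i$ that $g_i>r_{2i-1}$ and $g_{i-1}<g_i$; the base case $i=2m-1$ is the first inequality of~\eqref{eq:cl8-ord}. The monochromatic edge $(2i-1)(2i+1)$ gives $g_i\le\max\{r_{2i-1},r_{2i+1}\}$, and combined with $g_i>r_{2i-1}$ this forces $r_{2i+1}\ge g_i>r_{2i-1}$. The vertex $2i$ satisfies $2i\sim 2i+1$, $2i\not\sim 2i-1$, and $2i\not\sim 2i-3$, and it lies to the right of $c_{2i-1}$. Since its colour is unknown, only the weaker sandwich forms apply: $|c_{2i}-c_{2i+1}|\le r_{2i}+r_{2i+1}$ for the edge, and separations exceeding $\max$ of the radii for the non-edges. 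In addition, Lemma~\ref{lem:additivity} applies to the non-edges $2i,2i-1$ and $2i-1,2i-3$. I expect these constraints to yield $g_{i-1}<g_i$ and $g_{i-1}>r_{2i-3}$. I expect this step to be the main obstacle, because the colour of $2i$ is unknown. I would first try a case split on $\sigma(2i)$. If $2i$ is coloured $+$, all thresholds at $2i$ are maxima and the argument of~\cite{bogart-fishburn-isaak-langley}*{Lemma~8} should transfer directly. If $2i$ is coloured $-$, the edge $2i\sim 2i+1$ is bichromatic while the non-edge $2i\not\sim 2i-1$ forces $I_{2i}\cap I_{2i-1}=\emptyset$. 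This disjointness is a stronger separation than in the monochromatic case, which should compensate for the weaker edge condition.

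\emph{Step 3 (contradiction).} Because $x\sim 4m-1$ is a monochromatic edge, $c_{4m-1}-c_x\le\max\{r_x,r_{4m-1}\}$. Combining this with $x\sim 3$, the increasing gaps from Step 2, and $c_x<c_{2m+1}$ should show that the relevant maximum is $r_x$, so $c_{4m-1}-c_x\le r_x$. Since $c_x<c_{2m+1}$ lies in the lower half of the chain and the gaps increase upward, we get $c_x-c_1<c_{4m-1}-c_x\le r_x$. Hence $c_1\in I_x$, and by Observation~\ref{obs:representation}\ref{it:wa-nonedge} this contradicts $x\not\sim 1$.
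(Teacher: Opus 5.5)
Your overall plan matches the paper's: propagate the top separation downward so the joint gaps grow, then play $x$ against the non-edge $\{1,x\}$. However, both load-bearing steps have genuine gaps.

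\emph{Step 2.} The inductive step is the heart of the proof, and you leave it as an expectation. Worse, the ingredients you name cannot deliver it. Note first that $\{2i-3,2i-1\}$ is an edge (consecutive joints), so Lemma~\ref{lem:additivity} does not apply to it. More importantly, vertex $2i$ hangs off the upper joint $2i+1$ and carries no information about the lower gap $g_{i-1}$. Here is a concrete check. Give the joints $2i-3,2i-1,2i+1,2i+3$ centres $0,4,6,9$ and radii $5,1,2,3$, and give $2i$ centre $11/2$ and radius $1/2$, all coloured $+$. Every adjacency among these five vertices is then correctly represented, and $c_{2i-1}<c_{2i}$. Moreover $g_i=2>r_{2i-1}$ and even $g_{i+1}=3>r_{2i+1}$. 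Yet $g_{i-1}=4\le r_{2i-3}$ and $g_{i-1}>g_i$.

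What is needed is the pendant vertex $2i-2$ of the \emph{lower} joint. Suppose $g_{i-1}\le r_{2i-3}$. The non-edge $\{2i-3,2i-2\}$ with $c_{2i-3}<c_{2i-2}$ forces $c_{2i-2}>c_{2i-1}$. The non-edge $\{2i-2,2i+1\}$, the edge $\{2i-2,2i-1\}$ and $g_i>r_{2i-1}$ together rule out $c_{2i-2}>c_{2i+1}$. The same non-edge then gives $c_{2i+1}-c_{2i-2}>r_{2i+1}\ge g_i$, i.e.\ $c_{2i-2}<c_{2i-1}$, a contradiction. The even vertex enters only through the sandwich bounds, so no case split on its colour is needed.

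\emph{Step 3.} The inequality $c_x-c_1<c_{4m-1}-c_x$ does not follow from $c_x<c_{2m+1}$ and increasing gaps. From $c_1$ to $c_{2m+1}$ there are $m$ gaps ($c_3-c_1,g_2,\dots,g_m$), but from $c_{2m+1}$ to $c_{4m-1}$ only $m-1$. Monotonicity therefore yields only $2c_{2m+1}<c_3+c_{4m-1}$. For instance, with $m=2$ the centres $c_1,c_3,c_5,c_7=0,1,2,7/2$ have increasing gaps, yet $c_5$ lies above the midpoint of $c_1$ and $c_7$. The reach $c_x+r_x\ge c_{4m-1}$ that you get from the edge $\{x,4m-1\}$ is too weak to pay for the extra gap $c_3-c_1$, and your induction, which starts at $i=2$, does not control that gap.

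The paper closes this with three further ingredients:
\begin{itemize}
\item the vertex $4m$ (non-edge with $4m-1$, edge with $x$, split on $\sigma(4m)$) gives $c_x+r_x>c_{4m-1}+r_{4m-1}$;
\item the non-edge $\{1,x\}$ gives $c_x-r_x>c_1+d$, where $d\in\{0,r_1\}$ according to $\sigma(1)$;
\item $c_3-c_1-d\le r_3<g_2\le g_{2m-1}\le r_{4m-1}$, where for $\sigma(1)=+$ one first needs $c_1<c_2<c_3$, obtained via Corollary~\ref{cor:propagate} and the non-edge $\{2,5\}$.
\end{itemize}
Adding these gives $2c_x>c_3+c_{4m-1}>2c_{2m+1}$, contradicting $c_x<c_{2m+1}$.

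A smaller point: Lemma~\ref{lem:P4gen} applied to $1,3,5,7$ gives only $\max\{c_1,c_3\}<\min\{c_5,c_7\}$, not $c_1<c_3$ as Step 1 claims.
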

\begin{proof}
Suppose, for contradiction, that $x$ and all the joints carry one colour.
Swapping the two colours if necessary, we may assume that this colour is $+$.
For $2 \leq i \leq 2m-1$, we define the {\em gap} $g_i$ to equal $c_{2i+1}-c_{2i-1}$.
Set $d = 0$ if $\sigma(1) = +$ and $d = r_1$ if $\sigma(1) = -$.
For clarity, we present the proof in steps.

{\em The gaps $g_i$ increase for $i \geq 2$.} We apply reverse induction to show 
\begin{equation}
\label{eq:cl8-1}
  g_i > r_{2i-1} \quad \text{ for each } 2 \leq i \leq 2m-1.
\end{equation}
The case $i = 2m-1$ is the hypothesis $g_{2m-1} = c_{4m-1} - c_{4m-3} > r_{4m-3}$. Now fix $2 \leq i \leq 2m-2$. Suppose $g_{i+1} > r_{2i+1}$ and, for contradiction, that $g_i \leq r_{2i-1}$.

The monochromatic edge $\{2i+1, 2i+3\}$ gives $g_{i+1} \leq \max\{r_{2i+1}, r_{2i+3}\}$, so, as $g_{i+1} > r_{2i+1}$, we have $g_{i+1} \leq r_{2i+3}$.
By hypothesis $c_{2i-1} < c_{2i}$, so the non-edge $\{2i-1, 2i\}$ gives $c_{2i} - c_{2i-1} > r_{2i-1} \geq g_i = c_{2i+1} - c_{2i-1}$. Hence, $c_{2i} > c_{2i+1}$.

If $c_{2i} > c_{2i+3}$, the non-edge $\{2i, 2i+3\}$ gives $c_{2i} - c_{2i+3} > r_{2i}$.
Combined with the assumption $g_{i+1} > r_{2i+1}$, this gives $c_{2i} - c_{2i+1} > r_{2i} + r_{2i+1}$, contradicting the edge $\{2i, 2i+1\}$.
Hence, as the non-edge $\{2i, 2i+3\}$ has distinct centres, $c_{2i+1} < c_{2i} < c_{2i+3}$.

Now, the non-edge $\{2i, 2i+3\}$ gives $c_{2i+3} - c_{2i} > r_{2i+3} \geq g_{i+1} = c_{2i+3} - c_{2i+1}$, i.e., $c_{2i} < c_{2i+1}$, a contradiction.
Hence, $g_i > r_{2i-1}$, which completes the induction and proves~\eqref{eq:cl8-1}.

Now let $2 \leq i \leq 2m-1$.
By~\eqref{eq:cl8-1}, $g_i > 0$, so $c_3 < c_5 < \cdots < c_{4m-1}$, and the monochromatic edge $\{2i-1,2i+1\}$ gives $g_i \leq \max\{r_{2i-1},r_{2i+1}\}$, so~\eqref{eq:cl8-1} gives
\begin{equation}
\label{eq:cl8-2}
  r_{2i+1} \geq g_i \quad \text{ for each } 2 \leq i \leq 2m-1.
\end{equation}
Thus, $g_{i+1} > r_{2i+1} \geq g_i$ for $2 \leq i \leq 2m-2$, so $g_2 < g_3 < \cdots < g_{2m-1}$.

Pair $g_{m+j}$ with $g_{j+1}$ for $1 \leq j \leq m-1$, so that the two indices lie between $2$ and $2m-1$. Summing over $j$ gives
\begin{equation}
\label{eq:cl8-3}
  c_{4m-1}-c_{2m+1} = \sum_{i=m+1}^{2m-1} g_i
  > \sum_{i=2}^{m} g_i = c_{2m+1}-c_3 .
\end{equation}

{\em The vertex $1$ lies to the left of every vertex of $B^m - z$ other than $3$.}
Let $S = V(B^m - z) \setminus \{1,3\}$.
Then $S$ induces a connected subgraph, since $x \in S$ is adjacent to every other vertex of $S$.
Moreover, $S \cap N[1] = \emptyset$, as $N_{B^m-z}(1) = \{3\}$ and $B^m - z$ is an induced subgraph.
Hence, by Corollary~\ref{cor:propagate}, the centres of $S$ all lie on one side of $c_1$.

If they lie to the left, then $c_5 < c_1$, so the non-edge $\{1,5\}$ gives $c_1 - c_5 > r_1$, while~\eqref{eq:cl8-1} gives $c_5 - c_3 = g_2 > r_3$.
Adding the two inequalities gives $c_1 - c_3 > r_1 + r_3$, contradicting the edge $\{1,3\}$.
Hence $c_1 < c_v$ for every $v \in S$.
In particular, $c_1 < c_2$, $c_1 < c_x$, and $c_1 < c_{4m}$.

If $\sigma(1) = +$, the monochromatic non-edge $\{1,x\}$ gives $c_x - c_1 > \max\{r_1, r_x\} \geq r_x$, and otherwise the bichromatic non-edge $\{1,x\}$ gives $c_x - c_1 > r_1 + r_x$.
In either case, by the definition of $d$, we have
\begin{equation}
\label{eq:cl8-4}
  c_x - r_x > c_1 + d .
\end{equation}
Moreover, the vertices $1, 3, \dots, 4m-1$ induce a path, and none of them is adjacent to $4m$.
So, by Corollary~\ref{cor:propagate}, their centres all lie on one side of $c_{4m}$. As $c_1 < c_{4m}$, they lie to the left, so $c_{4m} > c_{4m-1}$, and the non-edge $\{4m-1,4m\}$ gives
\begin{equation}
\label{eq:cl8-5}
  c_{4m}-c_{4m-1} > \max\{r_{4m-1},r_{4m}\} .
\end{equation}

{\em The vertex $3$ lies at most $d+r_3$ to the right of vertex $1$.}
If $\sigma(1) = -$, the assertion is implied by the edge $\{1,3\}$, which gives $c_3-c_1 \leq r_1+r_3 = d+r_3$.
Hence, we may assume that $\sigma(1) = +$, so $d = 0$. We show $c_3 - c_1 \leq r_3$.

We first show $c_2 < c_3$.
The non-edge $\{2,5\}$ gives $|c_2-c_5| > \max\{r_2,r_5\}$.
Here $c_2 > c_5$ is impossible, since $c_2-c_5 > r_2$ with $c_5-c_3 = g_2 > r_3$ gives $c_2-c_3 > r_2+r_3$, contradicting the edge $\{2,3\}$.
So $c_2 < c_5$, and $c_5-c_2 > r_5 \geq g_2 = c_5-c_3$ by~\eqref{eq:cl8-2}, so $c_2 < c_3$.

Since $c_1 < c_2$, the non-edge $\{1,2\}$ gives $c_2 - c_1 > r_1$, so $c_3-c_1 > c_2-c_1 > r_1$, and the monochromatic edge $\{1,3\}$ gives $c_3-c_1 \leq \max\{r_1,r_3\} = r_3$, as desired.

In each case, we have $c_3 - c_1 \leq d + r_3$.
Along with~\eqref{eq:cl8-1} and~\eqref{eq:cl8-2}, this gives
\begin{equation}
\label{eq:cl8-6}
  c_3-c_1-d  \leq  r_3  <  g_2  \leq  g_{2m-1} .
\end{equation}

{\em The interval $I_x$ reaches further right than $I_{4m-1}$.}
The hypothesis~\eqref{eq:cl8-ord} gives $c_x < c_{2m+1}$. Now $2m+1 < 4m-1$ since $m \geq 2$, so $c_x < c_{2m+1} < c_{4m-1}$.
Combined with~\eqref{eq:cl8-5}, this gives $c_{4m}-c_x > c_{4m}-c_{4m-1} > r_{4m}$.

If $\sigma(4m) = +$, the monochromatic edge $\{x,4m\}$ gives $c_{4m}-c_x \leq \max\{r_x,r_{4m}\}$.
Since $c_{4m}-c_x > r_{4m}$, that maximum is $r_x$, so $c_x+r_x \geq c_{4m} > c_{4m-1}+r_{4m-1}$, the second inequality by~\eqref{eq:cl8-5}.

If $\sigma(4m) = -$, the bichromatic non-edge $\{4m-1,4m\}$ gives $c_{4m}-c_{4m-1} > r_{4m}+r_{4m-1}$ and the bichromatic edge $\{x,4m\}$ gives $c_{4m}-c_x \leq r_x+r_{4m}$.
Together these give $c_x+r_x \geq c_{4m}-r_{4m} > c_{4m-1}+r_{4m-1}$.

Hence, for either colouring of the vertex $4m$, we obtain
\begin{equation}
\label{eq:cl8-7}
  c_x+r_x  >  c_{4m-1}+r_{4m-1} .
\end{equation}
Finally, putting the bounds above together,
\begin{align*}
  2c_x &  >  (c_1+d)+r_x+c_x  =  (c_1+d)+(c_x+r_x) && \text{by~\eqref{eq:cl8-4}}\\
       &  >  (c_1+d)+c_{4m-1}+r_{4m-1} && \text{by~\eqref{eq:cl8-7}}\\
       &  >  (c_1+d)+c_{4m-1}+(c_3-c_1-d)  =  c_3+c_{4m-1} && \text{by~\eqref{eq:cl8-2} and~\eqref{eq:cl8-6}}\\
       &  >  c_3+(2c_{2m+1}-c_3)  =  2c_{2m+1} && \text{by~\eqref{eq:cl8-3},}
\end{align*}
contradicting the hypothesis $c_x < c_{2m+1}$.
\end{proof}

The proof of Proposition~\ref{prop:impossible} also uses the following consequence of Lemma~\ref{lem:P4gen}: if the interior vertices of a longer path share a colour then their centres are monotone.

\begin{corollary}
\label{cor:noextremum}
Let $(\sigma, c, r)$ be a bicoloured representation of a graph, let $k \geq 3$, and let $v_0,\dots,v_k$ induce a path in this order.
If $\sigma(v_1) = \cdots = \sigma(v_{k-1})$ then $c_1 < c_2 < \cdots < c_{k-1}$ or $c_{k-1} < \cdots < c_2 < c_1$.
\end{corollary}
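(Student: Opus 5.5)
The plan is to apply Lemma~\ref{lem:P4gen} to each window of four consecutive vertices of the path, and then glue the local conclusions together by comparing overlapping windows. For each $1 \leq j \leq k-2$, the vertices $v_{j-1}, v_j, v_{j+1}, v_{j+2}$ induce a path in this order, since an induced subgraph of an induced path is again induced. Its middle edge $v_jv_{j+1}$ is monochromatic, because $1 \leq j$ and $j+1 \leq k-1$ and all interior vertices share a colour. So Lemma~\ref{lem:P4gen} gives one of two alternatives:
\[ \max\{c_{j-1},c_j\} < \min\{c_{j+1},c_{j+2}\} \quad\text{or}\quad \max\{c_{j+1},c_{j+2}\} < \min\{c_{j-1},c_j\}. \]
In particular, the first alternative gives $c_j < c_{j+1}$ and the second gives $c_j > c_{j+1}$. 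Since $k \geq 3$, there is at least one window, namely $j = 1$.

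Next, I will show that all windows choose the same alternative. Consider consecutive windows $j$ and $j+1$, where $1 \leq j \leq k-3$.

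Suppose window $j$ takes the first alternative. Then $c_j < c_{j+2}$, because $c_j \le \max\{c_{j-1},c_j\} < \min\{c_{j+1},c_{j+2}\} \le c_{j+2}$.

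Now suppose, for contradiction, that window $j+1$ takes the second alternative. Then $\max\{c_{j+2},c_{j+3}\} < \min\{c_j,c_{j+1}\}$, which gives $c_{j+2} < c_j$. This contradicts $c_j < c_{j+2}$.

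The case where window $j$ takes the second alternative is symmetric, obtained by negating every centre. So by induction every window takes the same alternative.

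If all windows take the first alternative, then $c_j < c_{j+1}$ for each $1 \le j \le k-2$, so $c_1 < c_2 < \cdots < c_{k-1}$. If all take the second, the chain is reversed.

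The only point needing care is the comparison of adjacent windows. The key observation is that each alternative forces a definite order between $c_j$ and $c_{j+2}$, and the two neighbouring windows must agree on it. Beyond that step the argument is routine.
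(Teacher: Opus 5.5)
Your proposal is correct and follows essentially the same route as the paper's proof. Both apply Lemma~\ref{lem:P4gen} to each window $v_{j-1},v_j,v_{j+1},v_{j+2}$, rule out switching alternatives between consecutive windows through the comparison of $c_j$ and $c_{j+2}$, and read off the monotone chain from the common alternative.
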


\begin{proof}
For $1 \leq j \leq k-2$ the vertices $v_{j-1}, v_j, v_{j+1}, v_{j+2}$ induce a path in this order, and its middle edge $v_jv_{j+1}$ is monochromatic, so Lemma~\ref{lem:P4gen} gives
$ \max\{c_{j-1},c_j\} < \min\{c_{j+1},c_{j+2}\} $ or $\max\{c_{j+1},c_{j+2}\} < \min\{c_{j-1},c_j\}$.
The first alternative at $j$ gives $c_j < c_{j+1}$ and $c_j < c_{j+2}$, and the second gives the reverse inequalities.
So the first at $j$ and the second at $j+1$ cannot both hold, since one gives $c_j < c_{j+2}$ and the other $c_{j+2} < c_j$, and likewise with the roles exchanged.
Hence, the same alternative holds at every $j$, and it gives $c_1 < c_2 < \cdots < c_{k-1}$ or $c_{k-1} < \cdots < c_2 < c_1$.
\end{proof}

In $B^m + q_1$ the vertex $q_1$ forces the first inequality of~\eqref{eq:cl8-ord} and $z$ forces the second.

\begin{proposition}
\label{prop:impossible}
No bicoloured representation of $B^m + q_1$ has $q_1$, $x$, $z$, and every joint in a single colour class.
\end{proposition}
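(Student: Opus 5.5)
The plan is to assume a bicoloured representation of $B^m + q_1$ in which $q_1$, $x$, $z$ and all joints are coloured $+$, and to verify every hypothesis of Proposition~\ref{prop:cl8}. Proposition~\ref{prop:cl8} then says that $x$ and the joints cannot share a colour, which is the contradiction. So the work is to extract, from this colouring alone, three things: the ordering $c_{2i-1} < c_{2i}$ for $2 \leq i \leq 2m-1$, and the two inequalities of~\eqref{eq:cl8-ord}.

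\emph{Step 1: the centre order.} The vertices $1, 3, 5, \dots, 4m-1, 4m-2$ induce a path whose interior consists of joints, all coloured $+$. Corollary~\ref{cor:noextremum} therefore makes the joint centres strictly monotone, and after negating every centre we may assume $c_3 < c_5 < \cdots < c_{4m-1}$.

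For $2 \leq i \leq 2m-1$, consider the induced path $2i, 2i+1, 2i-1, 2i-3$ (with $2i-3 = 1$ when $i = 2$). Its middle edge joins two joints and is monochromatic, so Lemma~\ref{lem:P4gen} applies. It puts $c_{2i}, c_{2i+1}$ on one side of $c_{2i-1}, c_{2i-3}$. For $i \geq 3$, monotonicity forces this to be the right side. For $i = 2$ I would first place $c_1$ to the left of $c_3$, using Corollary~\ref{cor:propagate} as in the proof of Proposition~\ref{prop:cl8}. In all cases this yields $c_{2i-1} < c_{2i}$.

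Also by Corollary~\ref{cor:propagate}, since $q_1 \sim 1$ and neither $q_1$ nor $1$ is adjacent to $4m-1$, the centre $c_{q_1}$ lies on the same side of $c_{4m-1}$ as $c_1$, namely the left.

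\emph{Step 2: the inequality $c_{4m-1} - c_{4m-3} > r_{4m-3}$, forced by $q_1$.} Suppose instead that $c_{4m-1} - c_{4m-3} \leq r_{4m-3}$. The non-edge $\{4m-3, 4m-2\}$, together with $c_{4m-3} < c_{4m-2}$ from Step~1, gives $c_{4m-2} - c_{4m-3} > r_{4m-3}$, and hence $c_{4m-1} < c_{4m-2}$. So $c_{q_1} < c_{4m-1} < c_{4m-2}$.

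Now compare the edge $q_1(4m-2)$ with the non-edge $q_1(4m-1)$. The non-edge is monochromatic, so its threshold is $\max\{r_{q_1}, r_{4m-1}\}$. The edge uses either $\max$ or a sum, depending on $\sigma(4m-2)$; the same dichotomy arises for the edge $(4m-1)(4m-2)$. I also have the monochromatic edge $q_1(4m-3)$ and $c_{4m-1} \in I_{4m-3}$ available.

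I expect this to be the main obstacle. I would split on $\sigma(4m-2)$ and, in each case, chain the inequalities along $c_{q_1} < c_{4m-3} \leq c_{4m-1} < c_{4m-2}$ in the style of Lemma~\ref{lem:fan}. If the bare inequalities do not close, I would bring in the extra vertex $x$, which is adjacent to all of $4m-3$, $4m-2$, $4m-1$ and $q_1$.

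\emph{Step 3: the inequality $c_x < c_{2m+1}$, forced by $z$.} The set $\{1, \dots, 2m\} \cup \{x\}$ is connected and avoids $N[z]$, so Corollary~\ref{cor:propagate} puts all of it on one side of $c_z$. Since $z$ is adjacent to the high joints, monotonicity of the joints should force that side to be the left. Now suppose $c_x \geq c_{2m+1}$. Then $c_{2m-1} < c_{2m+1} \leq c_x < c_z$, and $x \not\sim z$ is a monochromatic non-edge. I would combine the monochromatic edge $\{2m-1, 2m+1\}$, the edge $z(2m+1)$ and the non-edge $z(2m-1)$, much as in Lemma~\ref{lem:additivity}, to contradict the non-edge $xz$.

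With Steps~1--3 in hand, every hypothesis of Proposition~\ref{prop:cl8} holds for the restriction to $B^m - z$. Since $x$ and all the joints are coloured $+$, this contradicts Proposition~\ref{prop:cl8}.
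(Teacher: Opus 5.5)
Your overall strategy, reducing to Proposition~\ref{prop:cl8} by verifying its hypotheses, is the paper's, and Step~1 is sound. The case $i=2$ needs nothing extra. On $4,5,3,1$ the first alternative of Lemma~\ref{lem:P4gen} would give $c_5<c_3$, and the second gives $c_3<c_4$ and $c_1<c_5<c_{4m-1}$, which is all you need to put $c_{q_1}$ left of $c_{4m-1}$. Importing the placement of $1$ from the proof of Proposition~\ref{prop:cl8} would in fact be circular, since it uses \eqref{eq:cl8-1}.

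\textbf{Step~2.} The genuine gaps are in Steps~2 and~3: in both, the inequalities you plan to combine cannot close. Your chain $c_{q_1}<c_{4m-3}\le c_{4m-1}<c_{4m-2}$ uses $c_{q_1}<c_{4m-3}$, which you never prove. It is exactly what the case $\sigma(4m-2)=+$ needs. There the edge $q_1(4m-2)$ and the non-edge $q_1(4m-1)$ give $c_{4m-2}-r_{4m-2}\le c_{q_1}$, and the non-edge $(4m-3)(4m-2)$ gives $c_{4m-3}<c_{4m-2}-r_{4m-2}$. Only $c_{q_1}<c_{4m-3}$ makes this contradictory.

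It does not follow from $q_1,4m-3,4m-1,4m-2$, which induce a monochromatic $4$-cycle. Colour all four $+$ and take centres $0,1,2,3$ and radii $2.5,\,0.5,\,0.5,\,2.5$ for $4m-3,\,q_1,\,4m-1,\,4m-2$. This satisfies every constraint you list, including $c_{4m-1}-c_{4m-3}\le r_{4m-3}$; indeed, under your order Lemma~\ref{lem:monoC4} forces $c_{4m-3}<c_{q_1}$. Your fallback $x$ does not help: $x$ is adjacent to all four, and $c_x=1.5$, $r_x=2$ extends the example.

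The missing ingredient is the vertex $1$. The path $1,q_1,4m-3,4m-1$ is induced with monochromatic middle edge, so Lemma~\ref{lem:P4gen} together with $c_{q_1}<c_{4m-1}$ gives $c_{q_1}<c_{4m-3}$. After that your split on $\sigma(4m-2)$ goes through, and the case $\sigma(4m-2)=-$ never needed this inequality.

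\textbf{Step~3.} This fails for the same reason. Colour $2m-1,\,2m+1,\,x,\,z$ all $+$, with centres $0,\,1,\,1.1,\,2$ and radii $1.1,\,1,\,0.5,\,0.5$. This realises every adjacency among these four vertices together with $c_{2m-1}<c_{2m+1}\le c_x<c_z$. So the edges $\{2m-1,2m+1\}$ and $z(2m+1)$ and the non-edge $z(2m-1)$ cannot contradict $x\not\sim z$.

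You need $2m-2$, which is adjacent to $2m-1$ and $x$ but to neither $2m+1$ nor $z$. Lemma~\ref{lem:P4gen} on the induced path $z,2m+1,2m-1,2m-2$ gives $c_{2m-2}<c_{2m+1}$, since the other alternative contradicts $c_{2m-1}<c_{2m+1}$. Then on the path $z,2m+1,x,2m-2$, whose middle edge $\{2m+1,x\}$ is monochromatic, it gives $c_x<c_{2m+1}$ directly. No preliminary placement of $z$ is needed.
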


\begin{proof}
Suppose, for contradiction, that $(\sigma, c, r)$ is such a representation.
Swapping the two colours if necessary, we may assume that $q_1$, $x$, $z$, and every joint are coloured $+$.
The vertices $1, 3, \dots, 4m-1, 4m-2$ induce a path in this order, and its interior vertices are exactly the joints, all coloured $+$, so Corollary~\ref{cor:noextremum} implies that the sequence $c_3, c_5, \dots, c_{4m-1}$ is strictly monotone. 
Negating the centres if necessary, we may assume $c_3 < c_5 < \cdots < c_{4m-1}$.

Now let $2 \leq i \leq 2m-1$.
The vertices $2i-2$, $2i-1$, $2i+1$, $2i$ induce a path in this order, whose middle edge joins the joints $2i-1$ and $2i+1$, so Lemma~\ref{lem:P4gen} gives $\max\{c_{2i-2},c_{2i-1}\} < \min\{c_{2i+1},c_{2i}\}$ or $\max\{c_{2i+1},c_{2i}\} < \min\{c_{2i-2},c_{2i-1}\}$.
The second implies $c_{2i+1} < c_{2i-1}$, contradicting the order just fixed, so the first holds and $c_{2i-1} < c_{2i}$.
Thus
\begin{equation}
\label{eq:imp-order}
  c_3 < c_5 < \cdots < c_{4m-1} \quad\text{and}\quad
  c_{2i-1} < c_{2i} \quad \text{ for } 2 \leq i \leq 2m-1 .
\end{equation}
Since~\eqref{eq:imp-order} holds, it suffices to show that both inequalities of~\eqref{eq:cl8-ord} hold.
Proposition~\ref{prop:cl8} then says that $x$ and the joints do not all have the same colour, a contradiction.

{\em The hypothesis $c_{4m-1} - c_{4m-3} > r_{4m-3}$.}
Suppose first that $c_{4m-2} < c_{4m-1}$.
Since $c_{4m-3} < c_{4m-2}$ by~\eqref{eq:imp-order}, the non-edge ${\{4m-3,4m-2\}}$ gives $c_{4m-2}-c_{4m-3} > r_{4m-3}$.
Along with $c_{4m-2} < c_{4m-1}$, this implies $c_{4m-1}-c_{4m-3} > r_{4m-3}$ as required. From here on, we may assume $c_{4m-1} \leq c_{4m-2}$.

Let $S = \{q_1, 1, 2, \dots, 4m-4\}$.
Then $S$ induces a connected subgraph, as $q_1$ is adjacent to every other vertex of $S$.
Moreover, $S \cap N[4m-1] = \emptyset$, as $N(4m-1) = \{4m-3, 4m-2, x, z\}$.
Hence, by Corollary~\ref{cor:propagate}, the centres of $S$ all lie on one side of $c_{4m-1}$, and since $3 \in S$ with $c_3 < c_{4m-1}$ by~\eqref{eq:imp-order}, they lie to the left.
In particular, $c_{q_1} < c_{4m-1}$.

Note that the vertices $1, q_1, 4m-3, 4m-1$ induce a path in this order, whose middle edge $\{q_1, 4m-3\}$ is monochromatic.
So Lemma~\ref{lem:P4gen} gives $\max\{c_1, c_{q_1}\} < \min\{c_{4m-3}, c_{4m-1}\}$ or $\max\{c_{4m-3}, c_{4m-1}\} < \min\{c_1, c_{q_1}\}$.
The latter would give $c_{4m-1} < c_{q_1}$, contrary to $c_{q_1} < c_{4m-1}$, so the former holds and, hence, $c_{q_1} < c_{4m-3}$.
Hence, we have $c_{q_1} < c_{4m-3} < c_{4m-1} \leq c_{4m-2}$ and the monochromatic non-edge $\{q_1, 4m-1\}$ gives $c_{4m-1} - c_{q_1} > \max\{r_{q_1}, r_{4m-1}\} \geq r_{q_1}$.

If we also have $\sigma(4m-2) = +$, the monochromatic edge $\{q_1, 4m-2\}$ gives $c_{4m-2} - c_{q_1} \leq \max\{r_{q_1}, r_{4m-2}\}$. 
Now $c_{4m-2} - c_{q_1} \geq c_{4m-1} - c_{q_1} > r_{q_1}$, so $\max\{r_{q_1}, r_{4m-2}\} = r_{4m-2}$.
Hence, $c_{q_1} \geq c_{4m-2} - r_{4m-2}$.
Along with the monochromatic non-edge  $\{4m-3, 4m-2\}$, this implies $c_{4m-3} < c_{4m-2} - r_{4m-2} \leq c_{q_1}$, contradicting $c_{q_1} < c_{4m-3}$.

We may now assume $\sigma(4m-2) = -$.
Suppose also, for contradiction, that $c_{4m-1} - c_{4m-3} \leq r_{4m-3}$.
The bichromatic non-edge $\{4m-3, 4m-2\}$ gives $c_{4m-2} - c_{4m-3} > r_{4m-3} + r_{4m-2} \geq (c_{4m-1} - c_{4m-3}) + r_{4m-2}$. 
Subtracting $c_{4m-1} - c_{4m-3}$ from both sides gives $c_{4m-2} - c_{4m-1} > r_{4m-2}$.

Finally, the bichromatic edge $\{q_1, 4m-2\}$ gives $c_{4m-2} - c_{q_1} \leq r_{q_1} + r_{4m-2}$. Along with $c_{4m-1} - c_{q_1} > r_{q_1}$ this yields $c_{4m-2} - c_{4m-1} < r_{4m-2}$, a contradiction.
So $c_{4m-1} - c_{4m-3} > r_{4m-3}$.

{\em The hypothesis $c_x < c_{2m+1}$.}
The vertices $z, 2m+1, 2m-1, 2m-2$ induce a path in this order, whose middle edge is monochromatic.
By Lemma~\ref{lem:P4gen},  $\max\{c_z,c_{2m+1}\} < \min\{c_{2m-1},c_{2m-2}\}$ or $\max\{c_{2m-1},c_{2m-2}\} < \min\{c_z,c_{2m+1}\}$.
The first implies $c_{2m+1} < c_{2m-1}$, contradicting~\eqref{eq:imp-order}, so the second holds and, hence, $c_{2m-2} < c_{2m+1}$.

Similarly, the vertices $z, 2m+1, x, 2m-2$ induce a path in this order, whose middle edge $\{2m+1, x\}$ is monochromatic. 
Now Lemma~\ref{lem:P4gen} gives $\max\{c_z,c_{2m+1}\} < \min\{c_x,c_{2m-2}\}$ or $\max\{c_x,c_{2m-2}\} < \min\{c_z,c_{2m+1}\}$.
The first implies $c_{2m+1} < c_{2m-2}$, contrary to $c_{2m-2} < c_{2m+1}$, so the second holds and $c_x < c_{2m+1}$, as desired.
\end{proof}

\subsection{\texorpdfstring{Proof of Proposition~\ref{prop:Qm-minimal}: $Q^m - u \in \bicoloured$}{Proof of Proposition \ref{prop:Qm-minimal}: Qm minus u is bicoloured-interval}}
\label{ap:Qm-minimal}

We use $50\%$-tolerance representations in centre and radius form. Each vertex $v$ is assigned a centre $c_v$ and a radius $r_v > 0$, with $I_v = [c_v-r_v,\ c_v+r_v]$ so that $uv \in E(G)$ if and only if $|c_u - c_v| \leq \max\{r_u, r_v\}$, or equivalently either $c_u \in I_v$ or $c_v \in I_u$.
Since this is the monochromatic case of Definition~\ref{def:bicoloured}, such a representation shows that $G \in \bicoloured$ (Proposition~\ref{prop:unit-in-B}).
We write $S_v = \{w : c_w \in I_v\}$ for the {\em star} at $v$, whose vertices are consecutive in the centre order, so $E(G)$ consists of the pairs $vw$ with $v \neq w \in S_v$ for each $v \in V(G)$.

The proof is a case analysis on the deleted vertex $u \in V(Q^m)$, with the following cases.
\[
\begin{array}{llrl}
\hline
\text{case} & \text{deleted vertex} & \text{number of vertices} & \text{representation}\\ \hline
\text{(C1)} & u = x \text{ or } u = z           & 2      & \text{$50\%$}\\
\text{(C2)} & u \in \{1,\dots,4m\}              & 4m     & \text{$50\%$}\\
\text{(C3)} & u = q_i,\ 1 \leq i \leq 2m-1      & 2m-1   & \text{bicoloured}\\
\hline
\end{array}
\]
We first set up three tools that all cases share.
Lemma~\ref{lem:omitxz} then builds the bases for Case~(C1) and Lemma~\ref{lem:baseC3} those for Case~(C2), Corollary~\ref{cor:C1C2} settles Cases~(C1) and~(C2), and Corollary~\ref{cor:C3} settles Case~(C3).

The first tool builds a representation of the chain from a sequence $g_1,\dots,g_{2m}$ of {\em gaps}.
Here $g_i$ is the distance between the odd centres $c_{2i-1}$ and $c_{2i+1}$ for $i \leq 2m-1$, while $g_{2m}$ serves only to place $c_{4m}$.
The parameter $\rho$ is the common radius of the even vertices, chosen small enough that their intervals contain no other centre.

\begin{definition}[The chain representation]
\label{def:chain}
Let $g_1,\dots,g_{2m}$ and $\rho$ be positive reals.
The {\em chain representation} on these gaps assigns centres and radii as
\[
\begin{array}{llll}
\hline
\text{vertex } v & \text{centre } c_v & \text{radius } r_v & \text{range}\\ \hline
1    & 0                              & g_1/4 & \\
2i-1 & c_{2i-3}+g_{i-1}               & g_{i-1}      & 2 \leq i \leq 2m\\
2    & g_1/2                   & \rho         & \\
2i   & c_{2i-1}+ (g_{i-1}+g_i)/2 & \rho         & 2 \leq i \leq 2m\\
\hline
\end{array}
\]
\end{definition}

Centres with consecutive labels in Definition~\ref{def:chain} differ by
\begin{equation}
\label{eq:diffs}
\begin{gathered}
  c_2-c_1 = c_3-c_2 = \mfrac12 g_1,\\
  c_{2i}-c_{2i-1} = \mfrac12(g_{i-1}+g_i) \quad \text{where } 2 \leq i \leq 2m,\\
  c_{2i+1}-c_{2i} = \mfrac12(g_i-g_{i-1}) \quad \text{where } 2 \leq i \leq 2m-1 .
\end{gathered}
\end{equation}

The next lemma describes the star at an odd vertex from local information alone, and assumes nothing about the gap sequence.

\begin{lemma}
\label{lem:oddstar}
Let $2 \leq i \leq 2m$, and let $(c, r)$ assign centres and radii to a set of vertices containing $2i-3$, $2i-2$, and $2i-1$, with the centres increasing in the labels.
If $r_{2i-1} = g$ and $c_{2i-1} = c_{2i-3}+g$ for some $g > 0$, then $I_{2i-1} = [\,c_{2i-3},\ c_{2i-1}+g\,]$. In particular, $S_{2i-1}$ consists of $2i-3$, $2i-2$, and $2i-1$, together with any vertex whose centre lies in $(\,c_{2i-1},\ c_{2i-1}+g\,]$.
\end{lemma}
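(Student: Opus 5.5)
This is a direct computation from the definitions; no earlier result is needed. By the centre–radius convention, $I_{2i-1} = [c_{2i-1} - r_{2i-1},\ c_{2i-1} + r_{2i-1}]$. Substituting $r_{2i-1} = g$ and $c_{2i-1} - g = c_{2i-3}$ gives $I_{2i-1} = [c_{2i-3},\ c_{2i-1}+g]$ immediately. This proves the first assertion.

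For the star, recall that $S_{2i-1}$ consists of the vertices $w$ with $c_w \in I_{2i-1}$, and split the real line at the points $c_{2i-3}$ and $c_{2i-1}$.

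\emph{Centres at or below $c_{2i-1}$.} By the hypothesis that centres increase with labels, the vertices with centre in $[c_{2i-3}, c_{2i-1}]$ are exactly $2i-3$, $2i-2$ and $2i-1$. Indeed, a vertex with centre in this range must have a label between $2i-3$ and $2i-1$, and these are the only such labels in the vertex set. Every other vertex with a smaller label has centre strictly less than $c_{2i-3}$, so it lies outside $I_{2i-1}$.

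\emph{Centres above $c_{2i-1}$.} A vertex with centre greater than $c_{2i-1}$ lies in $S_{2i-1}$ exactly when its centre is at most $c_{2i-1}+g$, that is, when its centre lies in $(c_{2i-1},\ c_{2i-1}+g]$. This is precisely the second part of the claimed description.

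The only point needing care is the phrase ``centres increasing in the labels''. I read it as a strict order on the labelled chain vertices present, so that $2i-2$ is the unique label strictly between $2i-3$ and $2i-1$. Unlabelled vertices such as $x$, $z$ or the $q_j$ may also be present. The lemma's phrase ``any vertex whose centre lies in $(c_{2i-1}, c_{2i-1}+g]$'' is meant to cover these, which requires that none of them has its centre in $[c_{2i-3}, c_{2i-1})$. I would state this as part of the ordering hypothesis: all vertices of the set are totally ordered by centre, and $2i-3$, $2i-2$, $2i-1$ are consecutive in that order. With that reading, the lemma is the two-line verification above and there is no real obstacle.
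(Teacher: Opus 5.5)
Your proof is correct and follows the paper's argument: the left endpoint of $I_{2i-1}$ is $c_{2i-1}-g=c_{2i-3}$, the centres in $[c_{2i-3},c_{2i-1}]$ are exactly those of $2i-3$, $2i-2$, $2i-1$ by the ordering, and the rest of $I_{2i-1}$ is $(c_{2i-1},c_{2i-1}+g]$. Your caveat about unlabelled vertices is sound, and it agrees with how the paper uses the lemma, since it is only applied to chain representations (Lemmas~\ref{lem:chain} and~\ref{lem:chain-u}), where every vertex carries a numeric label.
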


\begin{proof}
The interval $I_{2i-1} = [\,c_{2i-1}-g,\ c_{2i-1}+g\,]$ has left endpoint $c_{2i-3}$ by hypothesis.
Since the centres increase with the labels, the centres in $[\,c_{2i-3},\ c_{2i-1}\,]$ are exactly those of $2i-3$, $2i-2$, and $2i-1$, while $I_{2i-1} \setminus [\,c_{2i-3},\ c_{2i-1}\,] = (\,c_{2i-1},\ c_{2i-1}+g\,]$, so every other vertex of $S_{2i-1}$ has its centre in that interval.
\end{proof}

\begin{lemma}
\label{lem:chain}
Suppose that $2\rho < g_1$ and $2\rho < g_i-g_{i-1}$ for $2 \leq i \leq 2m$, so the gaps increase.
Then the chain representation of Definition~\ref{def:chain} is a $50\%$-tolerance representation of the chain $B^m - \{x, z\}$ with $c_1 < c_2 < \cdots < c_{4m}$ and stars
\[
  S_1 = \{1\}, \quad
  S_{2i-1} = \{2i-3, 2i-2, 2i-1\} \quad \text{where } 2 \leq i \leq 2m, \quad
  S_{2i} = \{2i\} \quad \text{where } 1 \leq i \leq 2m .
\]
\end{lemma}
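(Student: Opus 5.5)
The plan is a direct verification: establish the centre order, compute each star $S_v$ exactly, and then read off the edge set. In a $50\%$-tolerance representation, $vw$ is an edge if and only if $w \in S_v$ or $v \in S_w$. So once the stars are known, the edge set is the union of the pairs inside each star containing its owner. We then compare this with the chain $B^m - \{x,z\}$, whose edges are $\{2i-1,2i+1\}$ and $\{2i,2i+1\}$ for $1 \le i \le 2m-1$.

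\emph{Step 1 (monotone centres).} By \eqref{eq:diffs}, every consecutive difference is positive. The differences $c_2-c_1$, $c_3-c_2$ and $c_{2i}-c_{2i-1}$ are positive trivially, and $c_{2i+1}-c_{2i} = (g_i-g_{i-1})/2 > \rho > 0$ by hypothesis. Hence $c_1 < c_2 < \cdots < c_{4m}$.

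\emph{Step 2 (even stars).} Each even vertex has radius $\rho$, so $S_{2i} = \{2i\}$ holds provided both neighbours in the centre order are more than $\rho$ away.
\begin{itemize}
\item On the left, $c_{2i}-c_{2i-1}$ is $g_1/2 > \rho$ for $i=1$, and $(g_{i-1}+g_i)/2 > g_i/2 > \rho$ for $i \ge 2$, using $g_i > 2\rho + g_{i-1} > 2\rho$.
\item On the right, $c_{2i+1}-c_{2i}$ is $g_1/2 > \rho$ or $(g_i-g_{i-1})/2 > \rho$. For $i = 2m$ there is no right neighbour.
\end{itemize}

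\emph{Step 3 (odd stars).}
\begin{itemize}
\item For $S_1$: $r_1 = g_1/4 < g_1/2 = c_2-c_1$, so $S_1=\{1\}$.
\item For $2 \le i \le 2m$: Lemma~\ref{lem:oddstar} applies with $g = g_{i-1}$, since $c_{2i-1} = c_{2i-3}+g_{i-1}$ and $r_{2i-1}=g_{i-1}$. So $S_{2i-1}$ is $\{2i-3,2i-2,2i-1\}$ plus any vertex with centre in $(c_{2i-1}, c_{2i-1}+g_{i-1}]$.
\item The next centre after $c_{2i-1}$ is $c_{2i}$ (for $i \le 2m$), at distance $(g_{i-1}+g_i)/2 > g_{i-1}$ because $g_i > g_{i-1}$. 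By monotonicity, no centre lies in that half-open interval, which gives the claimed stars.
\end{itemize}

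\emph{Step 4 (edges).} The pairs within stars are $\{2i-3,2i-1\}$, $\{2i-3,2i-2\}$ and $\{2i-2,2i-1\}$ for $2\le i\le 2m$. Only the pairs containing the owner $2i-1$ count, namely $\{2i-3,2i-1\}$ and $\{2i-2,2i-1\}$. Reindexing $i \mapsto i+1$, these are exactly the chain edges $\{2i-1,2i+1\}$ and $\{2i,2i+1\}$ for $1 \le i \le 2m-1$. Vertex $4m$ gets no edge, and $4m-1$ is adjacent only to $4m-3$ and $4m-2$, as required.

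The only delicate point is Step 3: checking that no centre beyond $c_{2i-1}$ falls inside $I_{2i-1}$, and getting the indexing of the top end ($i=2m$, vertex $4m-1$ versus $4m$) right. Both reduce to the strict increase of the gaps.
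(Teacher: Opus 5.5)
Your proposal is correct and follows essentially the same route as the paper. Both arguments read the centre order and the even stars off the differences in \eqref{eq:diffs}, get $S_1$ from $c_2 = 2r_1$, and obtain the odd stars from Lemma~\ref{lem:oddstar} by noting that $c_{2i}-c_{2i-1} = (g_{i-1}+g_i)/2 > g_{i-1}$. The edges of the chain are then read off from the stars exactly as in the paper; the only deviation is your slightly different (equally valid) lower bound $(g_{i-1}+g_i)/2 > g_i/2 > \rho$.
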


\begin{proof}
All the differences in~\eqref{eq:diffs} are positive, so $c_1 < c_2 < \cdots < c_{4m}$.
All of them are also larger than $\rho$, the first because $2\rho < g_1$, the second because $(g_{i-1}+g_i)/2 \geq g_{i-1} \geq g_1 > 2\rho$, and the third because $2\rho < g_i-g_{i-1}$.
Each even interval $I_{2i}$ therefore contains no centre other than $c_{2i}$, so $S_{2i} = \{2i\}$.

We have $I_1 = [\,-r_1,\ r_1\,]$ while $c_2 = 2r_1 > r_1$, so $S_1 = \{1\}$.
For $2 \leq i \leq 2m$ Definition~\ref{def:chain} gives $r_{2i-1} = g_{i-1}$ and $c_{2i-1} = c_{2i-3}+g_{i-1}$, while $c_{2i}-c_{2i-1} = (g_{i-1}+g_i)/2 > g_{i-1}$ by~\eqref{eq:diffs}.
So no centre lies in $(\,c_{2i-1},\ c_{2i-1}+g_{i-1}\,]$, and Lemma~\ref{lem:oddstar} gives $S_{2i-1} = \{2i-3,\, 2i-2,\, 2i-1\}$.

The stars therefore give the edges $\{2i-3,2i-1\}$ and $\{2i-2,2i-1\}$ for $2 \leq i \leq 2m$, which are exactly the edges of the chain. The vertex $4m$ lies in no star but its own.
\end{proof}

The second tool adds new vertices to the left of every centre of a given representation, each with an interval reaching exactly to the last centre of a prescribed prefix.
Each new vertex is then adjacent to that prefix, and the new vertices form a clique.
This is how the outer vertices $q_2,\dots,q_{2m-1}$ are attached in every case below.
The inner vertex $q_1$ needs to be handled separately, since $q_1 \not\sim q_{2m-1}$.

\begin{lemma}
\label{lem:prefixclique}
Suppose $G$ has a $50\%$-tolerance representation $(c,r)$ with pairwise distinct centres, and set $\ell = \min_v \{ c_v-r_v\}$.
Let $p_1,\dots,p_t$ be new vertices, and let $P_1,\dots,P_t$ be non-empty prefixes of the centre order of $G$.
Let $\pi_j = \max_{w \in P_j} \{ c_w \}$ be the largest centre in $P_j$.
Set
\[  c_{p_j} = \ell-1-\mfrac{j}{2t}, \quad r_{p_j} = \pi_j - c_{p_j}. \]
Then $(c,r)$ extended in this way is a $50\%$-tolerance representation of $G$ together with $p_1,\dots,p_t$, and its centres are pairwise distinct.
Each interval $I_{p_j}$ has left endpoint smaller than every centre and right endpoint $\pi_j$, and $N[p_j] = S_{p_j} = P_j \cup \{p_1,\dots,p_t\}$.
\end{lemma}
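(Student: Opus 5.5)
The plan is to verify Lemma~\ref{lem:prefixclique} by checking, pair by pair, the condition that $uv$ is an edge if and only if $c_u \in I_v$ or $c_v \in I_u$. Throughout, we use the star description $N[v]$ as the union of $S_v$ with $\{w : v \in S_w\}$.

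First we record the geometry of the new intervals. The centres $c_{p_j} = \ell - 1 - j/(2t)$ are pairwise distinct and lie in $[\ell-3/2,\ \ell-1)$, so they are strictly less than $\ell$, and hence less than every old left endpoint and every old centre. The radius is $r_{p_j} = \pi_j - c_{p_j} \geq \pi_j - \ell + 1 > 0$, since $\pi_j \geq \ell$. The right endpoint of $I_{p_j}$ is $c_{p_j} + r_{p_j} = \pi_j$. Its left endpoint is $2c_{p_j} - \pi_j$, which is less than $c_{p_j}$ and therefore less than every centre, new or old. So $I_{p_j} = [\,2c_{p_j}-\pi_j,\ \pi_j\,]$ contains exactly the centres that are at most $\pi_j$. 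Since $P_j$ is a prefix of the old centre order and the old centres are distinct, these are all the new centres together with the old centres of $P_j$. Hence $S_{p_j} = P_j \cup \{p_1,\dots,p_t\}$, and all centres are pairwise distinct.

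Next we check that no extra edges arise. For two old vertices nothing has changed, so their adjacency is that of $G$. For an old vertex $w$ and a new vertex $p_j$, we need $c_{p_j} \notin I_w$. This holds because $c_{p_j} < \ell \leq c_w - r_w$. Consequently $p_j \sim w$ if and only if $c_w \in I_{p_j}$, that is, if and only if $w \in P_j$. Two new vertices are adjacent because each of their centres lies in the other's interval. Together these give $N[p_j] = S_{p_j} = P_j \cup \{p_1,\dots,p_t\}$, and the extended representation is a $50\%$-tolerance representation of $G$ together with $p_1,\dots,p_t$, with the claimed neighbourhoods.

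There is no real obstacle here. The only point needing care is that the left endpoint of $I_{p_j}$ is far left, so no spurious condition arises. In particular, one must confirm that $c_{p_j}$ itself lies outside every old interval, which is exactly why the new centres are placed below $\ell$ rather than merely below the smallest old centre.
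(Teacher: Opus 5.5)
Your proposal follows the paper's proof step for step: new centres placed below $\ell$, right endpoint $\pi_j$, the prefix property for the old centres, and $c_{p_j} < \ell \le c_w - r_w$ to rule out $p_j \in S_w$. One inference, however, is invalid as written.

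You say the left endpoint $2c_{p_j}-\pi_j$ is less than $c_{p_j}$ ``and therefore less than every centre, new or old''. That is fine for the old centres. It fails for the new centres $c_{p_{j'}}$ with $j' > j$, which lie to the \emph{left} of $c_{p_j}$, so being below $c_{p_j}$ says nothing about them. Yet this is exactly what is needed for $p_{j'} \in S_{p_j}$. It is therefore needed for the asserted left endpoint, for $S_{p_j} = P_j \cup \{p_1,\dots,p_t\}$, and for your claim that ``each'' of two new centres lies in the other's interval. You identify the far-left endpoint as the one point needing care, but then justify it incorrectly.

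The repair uses facts you already recorded, and it is how the paper argues. First, $r_{p_j} = \pi_j - \ell + 1 + j/(2t) > 1$. Second, all new centres lie in $[\ell-3/2,\ \ell-1)$, so $|c_{p_j}-c_{p_{j'}}| = |j-j'|/(2t) < 1/2 < r_{p_j}$. Equivalently, $c_{p_j} - r_{p_j} < \ell - 2$, which lies below every new centre. This is precisely why the construction uses the offset $1$ and the spacing $j/(2t)$.

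Incidentally, the clique itself does not need this bound. For $j < j'$, the larger centre satisfies $c_{p_j} < \ell \le \pi_{j'}$, so it already lies in $I_{p_{j'}}$. Only the star and left-endpoint assertions of the lemma depend on it.
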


\begin{proof}
Since centres and radii of vertices in $G$ are preserved, it suffices to consider the vertices $p_j$.
The centres $c_{p_j}$ are distinct and lie to the left of $\ell$, hence to the left of every centre of $G$.
Since $P_j$ is non-empty, $\pi_j \geq \min_v \{c_v\} \geq \ell$, so $r_{p_j} \geq \ell - c_{p_j} = 1 + j/(2t) > 1$.
The right endpoint of $I_{p_j}$ is $c_{p_j}+r_{p_j} = \pi_j$, and its left endpoint lies below every centre, since $c_{p_j} < \ell$ and $r_{p_j} > 1 > |c_{p_j} - c_{p_{j'}}| = |j-j'|/(2t)$.
So $I_{p_j}$ contains every $c_{p_{j'}}$ and, as $P_j$ is a prefix of the centre order, exactly the centres of $P_j$ among those of $G$, giving $S_{p_j} = P_j \cup \{p_1,\dots,p_t\}$.

Finally, $c_w-r_w \geq \ell > c_{p_j}$ for every $w \in V(G)$, so $p_j$ does not lie in the star of any vertex of $G$.
Hence, $S_{p_j} = N[p_j]$.
\end{proof}

The third tool adjoins $x$ and $z$.
Definition~\ref{def:xz} places $c_x$ in a gap of the centre order with $I_x$ reaching every centre but $c_1$.
It starts $I_z$ in that same gap, to the right of $c_x$, so that $I_z$ contains exactly the surviving centres of $2m+1,\dots,4m$, and places $c_z$ to the right of every other interval.
In each of Cases~(C1)--(C3), whichever of $x$ and $z$ survives is placed this way.

\begin{definition}[Adjoining $x$ and $z$]
\label{def:xz}
Let $\Gamma = B^m - \{x, z\}$ or $\Gamma = B^m - \{x, z, u\}$ for a chain vertex $u$, and let $\nu \in \{4,\dots,4m\}$.
Let $(c,r)$ be a $50\%$-tolerance representation of $\Gamma$ with pairwise distinct centres, in which the vertices with label at most $\nu$ form a non-empty proper prefix of the centre order, the centres are non-negative, and $c_1 = 0$ when $1$ is present.
Set
\[  L = \max\{c_v : v \leq \nu\}, \quad H = \min\{c_v : v > \nu\}, \quad\text{ and }\quad \pi = \max\{c_v : v \in V(\Gamma)\}. \]
Let $L^+ = \max\{L,\ \mfrac12(c_1+\pi)\}$ if $1$ is present, and $L^+ = L$ if $1$ is deleted.
Adjoin $x$ by setting
\[
  c_x = \mfrac12(L^++H), \quad
  r_x = \max\{|c_x-c_v| : v \in V(\Gamma),\, v \neq 1\}.
\]
Let $H^- = \mfrac12(c_x+H)$ if $x$ is adjoined, and $H^- = H$ if $x$ is omitted.
Adjoin $z$ by setting
\[
  c_z = 1+\max\{c_v+r_v\}, \quad
  r_z = c_z-H^-,
\]
where the maximum is over every vertex present, including $x$ when it is adjoined.
\end{definition}

To place $x$ we need $L^+ < H$, so $c_x$ is strictly between $L$ and $H$.
When $1$ is deleted this is $L < H$, which holds since the vertices with label at most $\nu$ come first in the centre order.
When $1$ is present it reduces to $(c_1+\pi)/2 < H$, that is, to
\begin{equation}
\label{eq:budget}
  2H-c_1-\pi  >  0 ,
\end{equation}
whose left-hand side we call the {\em budget}.
The condition is natural, since we take $I_x$ to reach the largest centre $\pi$ while missing $c_1$, so $c_x$ is above the midpoint $(c_1+\pi)/2$, and the centre order forces $c_x$ below $H$.
The adjacencies of $x$ and $z$ are then fixed, as verified in the next lemma.

\begin{lemma}
\label{lem:xz}
Let $\Gamma$, $\nu$, and $(c,r)$ be as in Definition~\ref{def:xz}, and adjoin $x$, or $z$, or both.
\begin{enumerate}[label=\normalfont(\roman*)]
  \item If $x$ is adjoined and $L^+ < H$, then $L < c_x < H$, the centres of $\Gamma$ lying in $I_x$ are exactly those other than $c_1$, and $x$ is adjacent to exactly the surviving $v \neq 1$.
  \item If $z$ is adjoined, and $L^+ < H$ when $x$ is adjoined, then $c_z$ is larger than every other centre, and $z$ is adjacent to exactly the surviving $v$ with $c_v \geq H$.
  In particular, $x \not\sim z$.
\end{enumerate}
\end{lemma}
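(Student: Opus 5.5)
The proof is a direct verification from Definition~\ref{def:xz}, using the $50\%$-tolerance rule in star form: $v\sim w$ if and only if $c_v\in I_w$ or $c_w\in I_v$.

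\emph{Part (i).} First I locate $c_x$. If $L^+<H$, then $c_x=\frac12(L^++H)$ lies strictly between $L^+$ and $H$. Since $L\le L^+$, this gives $L<c_x<H$, so $c_x$ differs from every centre of $\Gamma$.

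Next I identify the centres inside $I_x$. By the choice of $r_x$, the interval $I_x$ contains every $c_v$ with $v\ne1$. When $1$ is present I must also show $c_1\notin I_x$. Every centre lies in $[0,\pi]$ and $c_1=0$, so the farthest centre other than $c_1$ from $c_x$ is either $\pi$ or the smallest one. Since $c_x>L^+\ge\frac12(c_1+\pi)$, the distance $c_x-c_1$ exceeds $\pi-c_x$. It also exceeds $c_x-c_v$ for every other centre $c_v$, because $c_v>c_1$. Hence $r_x<c_x-c_1$ and $c_1\notin I_x$.

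This shows that $S_x$ consists of $x$ together with every surviving $v\neq1$, so $x$ is adjacent to each of them. It remains to check that $x\not\sim 1$, i.e.\ that $c_x\notin I_1$. In the chain-type bases, $I_1$ should not reach beyond $L$, while $c_x>L$. If instead $c_x\in I_1$ held, I would need $r_1\ge c_x$. I would try to rule this out by noting that a star of $\Gamma$ is a set of consecutive centres, and $I_1$ containing $c_x$ would force it to contain $H$ as well (when $H$ is close), giving the non-edge $\{1,v\}$ for some $v>\nu$ as an edge.

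\emph{The main obstacle.} This last point is where I expect difficulty: it is not automatic from the hypotheses as stated. The clean fix is to observe that $\nu\ge4$, so some vertex $v\le\nu$ with $c_v\le L$ is non-adjacent to $1$, and then bound $r_1$ via the star structure. The exact argument depends on which base is used, so I would handle it by requiring $c_1+r_1<$ the next non-neighbour centre, which lies at most $L$.

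\emph{Part (ii).} The point $c_z$ exceeds every right endpoint, so $z$ lies in no other star, and $c_z$ is the largest centre. The left endpoint of $I_z$ is $H^-$, so $I_z$ contains exactly the centres $\ge H^-$.

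If $x$ is omitted, then $H^-=H$, and these centres are exactly those with $c_v\ge H$. If $x$ is adjoined, then $c_x<H^-<H$, and no centre of $\Gamma$ lies in $(L,H)$. So again the centres in $I_z$ are exactly those $\ge H$, and $c_x\notin I_z$.

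For $x\not\sim z$, we already have $c_x\notin I_z$, and $c_z\notin I_x$ because $c_z$ exceeds every right endpoint, including $c_x+r_x$.
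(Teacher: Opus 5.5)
\textbf{Gap in part~(i).} Part~(ii), and the first half of part~(i) (locating $c_x$, and showing $c_1\notin I_x$ via $2c_x>c_1+\pi$), are essentially the paper's argument. The gap is the final step of~(i), namely $c_x\notin I_1$. You say this is ``not automatic from the hypotheses as stated'' and propose to add $c_1+r_1<{}$(next non-neighbour centre) as an extra, base-dependent requirement. That would change the lemma, and it is unnecessary. The condition is already forced by the fact that $(c,r)$ is a $50\%$-tolerance representation of $\Gamma$. Your ``clean fix'' is the right idea, and it is exactly the paper's argument. What you miss is that it needs no extra input.

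\textbf{The missing argument.} $\Gamma$ omits at most one chain vertex and $\nu\ge 4$, so one of $2$ and $4$ survives with label at most $\nu$; call it $v$. By the definition of $L$ we have $c_v\le L$. Since $v\not\sim 1$ in $\Gamma$ (the only neighbour of $1$ there is $3$), we get $c_v\notin I_1=[-r_1,r_1]$. The centres are distinct and non-negative with $c_1=0$, so $c_v>0$. Hence $r_1<c_v\le L<c_x$, and $c_x\notin I_1$. Nothing here depends on which base is used.

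\textbf{Your first attempt.} Pushing $I_1$ rightwards until it contains $H$ does not work in general, since nothing makes $H$ ``close''. The contradiction must come from a non-neighbour of $1$ whose centre lies to the left of $c_x$, not to the right.

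\textbf{Minor point in part~(ii).} Before reading off $I_z=[H^-,\,2c_z-H^-]$ and concluding that $I_z$ contains exactly the centres $\ge H^-$, check that $r_z>0$. This follows from $H^-\le H\le\pi<c_z$, as the paper notes.
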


\begin{proof}
(i) Since $L \leq L^+ < H$, the centre $c_x = (L^++H)/2$ lies strictly between $L$ and $H$, and $H \leq \pi$.
By construction $I_x$ contains every centre other than $c_1$, so $x$ is adjacent to every surviving $v \neq 1$.
If $1$ is deleted there is nothing more to check, so suppose that $1$ is present, with $c_1 = 0$.

Since the centres are distinct and non-negative, every other centre satisfies $0 < c_v \leq \pi$. If $c_v \leq c_x$ then $c_x-c_v < c_x-c_1$, and if $c_v > c_x$ then $c_v-c_x \leq \pi-c_x < c_x-c_1$, because $2c_x = L^++H > 2L^+ \geq c_1+\pi$.
Hence $r_x < c_x-c_1$, so $c_1 \notin I_x$.

Finally, $\nu \geq 4$ and at most one chain vertex is deleted, so one of $2$ and $4$ survives with label at most $\nu$.
Denote this surviving vertex by $v$. 
Now $v$ is not adjacent to $1$ in $\Gamma$, so $c_v \notin I_1$, giving $r_1 < c_v \leq L < c_x$.
Hence $c_x \notin I_1$, and with $c_1 \notin I_x$ this gives $x \not\sim 1$.

(ii) Since $c_x < H \leq \pi$ when $x$ is adjoined, we have $H^- \leq \pi < c_z$, so $r_z > 0$.
By the choice of $c_z$, every other interval $I_v$ lies strictly to the left of $c_z$.
So $c_z$ is larger than every other centre and $c_z \notin I_v$.
Hence $z \sim v$ if and only if $c_v \in I_z = [ H^-,\ 2c_z-H^- ]$. 
Since every centre is smaller than $c_z < 2c_z-H^-$, this reduces to $c_v \geq H^-$.

When $x$ is omitted, $H^- = H$.
When $x$ is adjoined, part~(i) gives $L < c_x < H$, so $c_x < H^- < H$, while no centre of $\Gamma$ lies in $(L,H)$.
Either way $z$ is adjacent to exactly the surviving $v$ with $c_v \geq H$, and in particular $x \not\sim z$.
\end{proof}

For the rest of this subsection, set
\[
  \eta = \mfrac{1}{16m^2}, \quad \Lambda(i) = 1+i\eta, \quad
  \Sigma(i) = \mfrac{\Lambda(i)}{2m}, \quad \rho = \mfrac{\eta}{16m} .
\]
Note that $\Lambda$ and $\Sigma$ increase in $i$, and that, since $m \geq 2$,
\begin{equation}
\label{eq:LamSig}
\begin{gathered}
  \Sigma(i) < \mfrac{1}{m} \leq \mfrac12 < 1 \leq \Lambda(i) < 2 \quad\text{for } 1 \leq i \leq 2m,\\
  m\Sigma(2m) = \mfrac12+\mfrac{1}{16m} \leq \mfrac{17}{32}, \quad\text{ and }\quad
  m^2\eta = \mfrac{1}{16} .
\end{gathered}
\end{equation}

Every gap below is a {\em long} gap $\Lambda(i)$ or a {\em short} gap $\Sigma(i)$, apart from the single value $g^*$ that Definition~\ref{def:chain-u} uses for odd $u$.
We make the budget~\eqref{eq:budget} positive in three ways: by dropping the gaps from $\Lambda$ to $\Sigma$, by adjusting $\nu$, or by lowering the top of the chain.
Moreover, all gaps are at least $\Sigma(1)$ and any increase from one gap to the next is at least $\eta/(2m)$, both of which are larger than $2\rho = \eta/(8m)$.
So, at the index~$1$ and at every index where the gaps increase, each difference in~\eqref{eq:diffs} is positive and larger than $\rho$.
Consequently, Lemma~\ref{lem:chain} applies whenever the gaps increase at every index.

\subsubsection{\texorpdfstring{Deleting $x$ or $z$: the bases for (C1)}{Deleting x or z: the bases for (C1)}}
\label{sss:C1}

Here the chain is untouched.
Applying Lemma~\ref{lem:chain} with $g_i = \Lambda(i)$ for $1 \leq i \leq 2m$ gives a $50\%$-tolerance representation $(c, r)$ of the chain with centre order $1 < \cdots < 4m$.
The odd and even centres are, respectively,
\[
\begin{gathered}
  c_{2i-1} = (i-1)+\eta i(i-1)/2 \quad\text{for } 1 \leq i \leq 2m
  \quad\text{and}\\
  c_{2i} = i+\eta\bigl(i(i+1)-1\bigr)/2 \quad\text{for } 2 \leq i \leq 2m ,
\end{gathered}
\]
while $c_2 = (1+\eta)/2$.

The next lemma uses Definition~\ref{def:xz} to adjoin $z$ or $x$ to $(c,r)$.
Note that with $\nu = 2m$ we have $L = c_{2m}$ and $H = c_{2m+1}$, so the budget is $2c_{2m+1}-c_1-\pi = \eta(1/2-m^2) < 0$ by the formulas above.
Thus $L^+ \geq H$ and Lemma~\ref{lem:xz}(i) does not apply.
If $x$ is omitted, however, there is no budget constraint and only $z$ has to be placed, for which $\nu = 2m$ ensures $N(z) = \{2m+1,\dots,4m\}$.
If instead $z$ is omitted, we are free to choose a different value of $\nu$, and $\nu = 2m+1$ makes the budget positive.

\begin{lemma}
\label{lem:omitxz}
Let $(c,r)$ be the representation of the chain above.
\begin{enumerate}[label=\normalfont(\roman*)]
  \item Omitting $x$ and adjoining $z$ by Definition~\ref{def:xz} with $\nu = 2m$ gives a $50\%$-tolerance representation of $B^m-x$, in the centre order $1 < \cdots < 4m < z$.
  \item Omitting $z$ and adjoining $x$ by Definition~\ref{def:xz} with $\nu = 2m+1$ gives a $50\%$-tolerance representation of $B^m-z$, in the centre order $1 < \cdots < 2m+1 < x < 2m+2 < \cdots < 4m$.
\end{enumerate}
\end{lemma}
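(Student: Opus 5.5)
Both parts come down to checking the hypotheses of Definition~\ref{def:xz} and Lemma~\ref{lem:xz} against the explicit chain representation with gaps $g_i = \Lambda(i)$. Lemma~\ref{lem:chain} then gives the edges inside the chain, and Lemma~\ref{lem:xz} gives the edges at the adjoined vertex. By Lemma~\ref{lem:chain}, the chain representation is a $50\%$-tolerance representation of $B^m - \{x,z\}$ with centres $c_1 < c_2 < \cdots < c_{4m}$. These centres are pairwise distinct, non-negative, and have $c_1 = 0$. So for any $\nu \in \{4,\dots,4m-1\}$ the vertices with label at most $\nu$ form a non-empty proper prefix of the centre order, with $L = c_\nu$ and $H = c_{\nu+1}$. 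This holds for $\nu = 2m$ and $\nu = 2m+1$ because $m \geq 2$.

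For part~(i) we omit $x$, so $H^- = H = c_{2m+1}$, and Lemma~\ref{lem:xz}(ii) applies with no budget condition. It shows that $c_z$ exceeds every other centre and that $z$ is adjacent to exactly the $v$ with $c_v \geq c_{2m+1}$, namely $2m+1,\dots,4m$. This is $N(z)$ in $B^m$, the chain edges are unchanged, and the centre order is $1 < \cdots < 4m < z$.

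For part~(ii) we omit $z$ and take $\nu = 2m+1$, so $L = c_{2m+1}$, $H = c_{2m+2}$ and $\pi = c_{4m}$. The only thing to verify is $L^+ < H$, that is, the budget inequality~\eqref{eq:budget}, $2c_{2m+2} - c_{4m} > 0$. Using the closed forms,
\[ c_{2m+2} = (m+1) + \tfrac{\eta}{2}\bigl((m+1)(m+2)-1\bigr), \qquad c_{4m} = 2m + \tfrac{\eta}{2}\bigl(2m(2m+1)-1\bigr), \]
the budget is $2 + \tfrac{\eta}{2}\bigl(2(m+1)(m+2) - 2 - 4m^2 - 2m + 1\bigr) = 2 + \tfrac{\eta}{2}\bigl(-2m^2 + 4m + 3\bigr)$. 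Since $m^2\eta = 1/16$ by~\eqref{eq:LamSig}, this is at least $2 - 1/16 > 0$. Lemma~\ref{lem:xz}(i) then says that $c_x$ lies strictly between $c_{2m+1}$ and $c_{2m+2}$, and that $x$ is adjacent to exactly the chain vertices other than $1$, which is its neighbourhood in $B^m - z$. This gives the stated centre order.

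The main obstacle is bookkeeping. Lemma~\ref{lem:xz} only controls adjacencies at the new vertex, so we must also confirm that adjoining $x$ creates no edge to $1$. That is covered inside Lemma~\ref{lem:xz}(i), where the argument uses that $2$ or $4$ survives with label at most $\nu$; here $2$ survives with $2 \leq 2m+1$. We must also confirm that the stars of the chain vertices are unchanged. They are, because the centres and radii of the chain are not modified, and adjacency in a $50\%$-tolerance representation is decided pairwise.
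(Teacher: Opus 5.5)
Your proof is correct and follows the paper's argument essentially step for step. Both parts reduce to Lemma~\ref{lem:xz}. In part~(i) there is no budget condition because $x$ is omitted. Part~(ii) is settled by the same closed-form computation $2c_{2m+2}-c_1-c_{4m} = 2-\tfrac{\eta}{2}(2m^2-4m-3) > 0$, using $m^2\eta = 1/16$, together with the trivial inequality $L = c_{2m+1} < c_{2m+2} = H$ that you establish earlier.
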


\begin{proof}
The chain representation satisfies the hypotheses of Definition~\ref{def:xz} for either value of $\nu$, since its centre order is $1 < \cdots < 4m$ and $c_1 = 0$.
Since $(c,r)$ represents the chain, only the adjacencies and the position of the adjoined vertex need checking.

(i) The hypothesis $L^+ < H$ of Lemma~\ref{lem:xz}(ii) is vacuous because $x$ is not adjoined, so $c_z$ is larger than every other centre and $z$ is adjacent to exactly the vertices with $c_v \geq H = c_{2m+1}$, namely $2m+1,\dots,4m$.

(ii) By Lemma~\ref{lem:xz}(i) it suffices to verify $L^+ < H$, which then gives that $x$ is adjacent to exactly $2,\dots,4m$ and that $c_{2m+1} = L < c_x < H = c_{2m+2}$.
Here $L = c_{2m+1} < c_{2m+2} = H$, and since $c_1 = 0$ and $\pi = c_{4m}$, we have
\[
\begin{aligned}
  2c_{2m+2}-c_1-\pi
  &= 2\bigl((m+1)-m\bigr) + \mfrac{\eta}{2}\Bigl( 2\bigl((m+1)(m+2)-1\bigr) - \bigl(2m(2m+1)-1\bigr) \Bigr)\\
  &= 2-\mfrac{\eta}{2}(2m^2-4m-3) > 0,
\end{aligned}
\]
where the last inequality holds since $m^2\eta = 1/16$.
\end{proof}

\subsubsection{Deleting a chain vertex: the bases for (C2)}
\label{sss:C2}

For each chain vertex $u$ we build the chain representation for $B^m - \{x, z, u\}$ that Case~(C2) rests on.
We then adjoin $x$ and $z$ by Definition~\ref{def:xz}, which requires the budget to be positive.
To achieve this, we drop the gaps from long to short at the deleted vertex.
The exceptions are $u = 1$, $u = 4m-1$, and $u = 4m$, where every gap stays long.
At $u = 1$ and $u = 4m$ the deletion itself suffices, while at $u = 4m-1$ the vertex $4m$ is placed just above $4m-2$, which lowers the top of the chain.

\begin{definition}[The chain representation for $B^m - \{x, z, u\}$]
\label{def:chain-u}
Let $u \in \{1,\dots,4m\}$.
We assign centres and radii to the vertices of $B^m - \{x, z, u\}$ as in Definition~\ref{def:chain}, with the gaps and the adjustments listed below.
Where the gaps drop from long to short, the sequence stops increasing, and further adjustments restore the centre order and the stars.
\begin{itemize}
\item If $u = 1$ or $u = 4m$, set $g_i = \Lambda(i)$ for $1 \leq i \leq 2m$.
\item If $u = 4m-1$, set $g_i = \Lambda(i)$ for $1 \leq i \leq 2m$, and set $c_{4m} = c_{4m-2}+\Sigma(2m)$.
\item If $u = 2$, set $g_1 = \Lambda(1)$ and $g_i = \Sigma(i)$ for $i \geq 2$, and set $r_1 = g_1$, $r_3 = g_2/4$, and $c_4 = c_3+g_2/2$.
\item If $u = 2k$ is even with $4 \leq u \leq 4m-2$, set
  \[
  \begin{gathered}
    g_i = \Lambda(i) \ \text{ for } i \leq k-1, \quad g_k = \Lambda(k-1), \quad g_i = \Sigma(i) \ \text{ for } i \geq k+1 ,\\
    r_{2k+1} = \mfrac14 g_{k+1}, \quad c_{2k+2} = c_{2k+1}+\mfrac12 g_{k+1} .
  \end{gathered}
  \]
\item If $u = 2k-1$ is odd with $3 \leq u \leq 4m-3$, set
  \[  g_i = \Lambda(i) \ \text{ for } 1 \leq i \leq k-2, \quad g_{k-1} = g^* = g_{k-2}+1, \quad g_i = \Sigma(i) \ \text{ for } i \geq k+1 , \]
  where $g_0 = 0$.
  Note that there is no gap $g_k$.
  The four formulas of Definition~\ref{def:chain} that call for $g_k$ therefore do not apply, and in their place we set
  \[
  \begin{gathered}
    c_{2k+1} = c_{2k-3}+g^*, \quad r_{2k+1} = \mfrac14 g_{k+1},\\
    c_{2k} = c_{2k+1}-\mfrac18 g_{k+1}, \quad c_{2k+2} = c_{2k+1}+\mfrac12 g_{k+1} .
  \end{gathered}
  \]
\end{itemize}
\end{definition}

The next lemma verifies that Definition~\ref{def:chain-u} gives a representation of $B^m - \{x, z, u\}$ and identifies the stars that differ from those of Lemma~\ref{lem:chain}.

\begin{lemma}
\label{lem:chain-u}
Definition~\ref{def:chain-u} gives a $50\%$-tolerance representation of $B^m - \{x, z, u\}$.
Its centres are pairwise distinct, in the order $1 < \cdots < 4m$ with $u$ removed.
If $u$ is odd or $u = 4m$, then the stars are exactly those displayed in Lemma~\ref{lem:chain} with $u$ removed.
For even $u$ with $u \neq 4m$, the same holds except that $S_{u+1} = \{u+1\}$, while $S_{u-1}$ is the star of Lemma~\ref{lem:chain} at $u-1$ together with $u+1$.
\end{lemma}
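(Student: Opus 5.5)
We verify Lemma~\ref{lem:chain-u} case by case, following the five bullets of Definition~\ref{def:chain-u}. In each case the argument has the same three parts. First, we list the consecutive centre differences, as in~\eqref{eq:diffs}, check that they are positive, and so establish the centre order. Second, we compute every star using Lemma~\ref{lem:oddstar} and the smallness of $\rho$. Third, we read off the edge set and compare it with the chain minus $u$, namely the edges $\{2i-3,2i-1\}$ and $\{2i-2,2i-1\}$ that avoid $u$.

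Throughout we use the same facts as before: every gap is at least $\Sigma(1)>2\rho$, and every increase between consecutive gaps is at least $\eta/(2m)>2\rho$. As a result, each even interval contains no other centre, so $S_{2i}=\{2i\}$, wherever the neighbouring differences are as in~\eqref{eq:diffs} with increasing gaps.

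The cases $u=1$ and $u=4m$ are immediate. The representation is the restriction of the one from Lemma~\ref{lem:chain}, and restriction preserves stars minus the deleted vertex.

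For $u=4m-1$, only $c_{4m}$ moves, to $c_{4m-2}+\Sigma(2m)$. This lies between $c_{4m-2}+\rho$ and the right end of $I_{4m-3}$. The right end of $I_{4m-3}$ exceeds $c_{4m-3}+\Lambda(2m-1)$, and $c_{4m-3}+\Lambda(2m-1)$ in turn exceeds $c_{4m-2}$ plus roughly $\Lambda/2$. So we must check that $c_{4m}\notin I_{4m-3}$ is not required. In fact $4m$ is isolated, while $4m-3$ has no odd successor and so has star $\{4m-5,4m-4,4m-3\}$ in Lemma~\ref{lem:chain}. We therefore need $c_{4m}>c_{4m-3}+\Lambda(2m-2)$. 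Since $c_{4m-2}-c_{4m-3}=(\Lambda(2m-2)+\Lambda(2m-1))/2$, this holds. Also $c_{4m}\notin I_{4m-2}$ because $\Sigma(2m)>\rho$.

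The cases in which the gaps drop from long to short are where the real work lies. Take $u=2k$ first. Below index $k$ everything is as in Lemma~\ref{lem:chain}. The joint $2k+1$ is pinned by $g_k=\Lambda(k-1)$, so $I_{2k-1}$ has radius $\Lambda(k-1)$ and reaches exactly $c_{2k+1}$. This is the claimed extra member $u+1\in S_{u-1}$, and it is harmless because $\{2k-1,2k+1\}$ is already an edge. The shrunken radius $r_{2k+1}=g_{k+1}/4$ and the choice $c_{2k+2}=c_{2k+1}+g_{k+1}/2$ make $S_{2k+1}=\{2k+1\}$. The edges of $2k+1$ to $2k-1$ and $2k+2$ are then recovered from $S_{2k-1}$ and from $S_{2k+3}\ni 2k+1$; for the latter, Lemma~\ref{lem:oddstar} applies because $c_{2k+3}=c_{2k+1}+g_{k+1}$. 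Above index $k+1$ the short gaps $\Sigma(i)$ increase, so Lemma~\ref{lem:chain}'s reasoning applies verbatim. The remaining point is that no centre above falls into a long interval. The largest surviving long interval, $I_{2k-1}$, ends at $c_{2k+1}$, and the next centre $c_{2k+2}$ lies beyond it. The case $u=2$ is the same argument with $k=1$, using $r_1=g_1$.

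For odd $u=2k-1$, the joint $2k+1$ is placed at distance $g^*=g_{k-2}+1$ from $c_{2k-3}$. This gap is larger than every other gap in play (at least $1$ more than the longest earlier gap), so $I_{2k-3}$ ends at $c_{2k-1}$'s former position, which is empty, and misses $c_{2k}$ and $c_{2k+1}$. Thus $2k-3$ loses exactly its edge to the deleted $u$. With $c_{2k}=c_{2k+1}-g_{k+1}/8$, the star $S_{2k+1}$ is $\{2k+1\}$ together with $c_{2k}$, since its radius is $g_{k+1}/4$, and nothing else; this gives the edge $\{2k,2k+1\}$. We also need $c_{2k}-c_{2k-2}>\rho$, which is clear. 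Then $S_{2k+3}$ is obtained from Lemma~\ref{lem:oddstar} as before.

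The main obstacle is bookkeeping at the transition index: checking that no short interval reaches back to a long centre, and that no long interval reaches forward past the transition. The inequalities in~\eqref{eq:LamSig}, namely $\Sigma<1/m\le 1/2\le\Lambda$ and $m\Sigma(2m)\le 17/32$, supply this. In particular, all intervals above the transition have total span below $1$, while every gap below it is at least $1$.
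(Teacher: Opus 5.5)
Your plan matches the paper's. You fix the centre order from the consecutive differences, get the even stars from the smallness of $\rho$, and get the odd stars from Lemma~\ref{lem:oddstar}, with special work only at the indices $k$ and $k+1$. The cases $u\in\{1,4m\}$ and even $u$ are essentially right. (One slip: the chain neighbour of $2k+1$ recovered from $S_{2k+3}$ is $2k+3$, not $2k+2$; note $2k+1\not\sim 2k+2$.)

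The genuine problem is in the odd case. It comes from misreading which way the odd intervals reach. By Lemma~\ref{lem:oddstar}, $I_{2i-1}=[c_{2i-3},\,c_{2i-1}+g_{i-1}]$ reaches \emph{back} to the previous joint and forward only by $g_{i-1}$. So $I_{2k-3}$ ends at $c_{2k-3}+g_{k-2}$, not at ``$c_{2k-1}$'s former position''. That position is $c_{2k-3}+g^*$, which is exactly where $c_{2k+1}$ now sits, so it is not empty. An interval ending there would contain $c_{2k-2}$, $c_{2k}$ and $c_{2k+1}$, giving three spurious edges at $2k-3$. Your stated justification therefore fails, and it never addresses $c_{2k-2}$ at all. (Also, the edge $\{2k-3,2k-1\}$ came from $S_{2k-1}$, not $S_{2k-3}$.)

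The same misreading produces the false statements in your $u=4m-1$ paragraph. The right end of $I_{4m-3}$ is $c_{4m-3}+\Lambda(2m-2)$, which lies $\eta/2$ below $c_{4m-2}$. So it does not exceed $c_{4m-3}+\Lambda(2m-1)$, and $c_{4m}$ does not lie below it. Your final inequality there uses the correct value, so that paragraph contradicts itself rather than reaching a wrong conclusion.

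The correct argument is the paper's uniform one. Since $g^*=g_{k-2}+1>g_{k-2}$, the gaps increase at index $k-1$. Hence $c_{2k-2}-c_{2k-3}=g_{k-2}+1/2>r_{2k-3}$, and Lemma~\ref{lem:oddstar} leaves $S_{2k-3}=\{2k-5,2k-4,2k-3\}$ unchanged. For $k=2$, simply $r_1=1/4<c_2=1/2$.

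Two checks at the odd transition are also missing.

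\begin{itemize}
\item We have $c_{2k+1}-c_{2k}=g_{k+1}/8$, and your remark that every gap exceeds $2\rho$ does not bound this difference below by $\rho$. You need $g_{k+1}/8=\Sigma(k+1)/8>1/(16m)>\rho$ to get $S_{2k}=\{2k\}$. The lemma asserts this, and Corollary~\ref{cor:C3} later relies on it. The paper covers it by listing every non-standard consecutive difference ($\Sigma(2m)$; $g_k$ and $g_{k+1}/2$; $g_{k+1}/8$ and $1/2-g_{k+1}/8$) and checking that each exceeds $1/(16m)$.
\item Excluding $2k-2$ from $S_{2k+1}$ needs $c_{2k+1}-c_{2k-2}=1/2>g_{k+1}/4$. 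Your closing heuristic, that every gap below the transition is at least $1$, is not the operative fact here, since this distance is only $1/2$.
\end{itemize}

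All of this is easy to repair, but as written the odd case is not proved.
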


\begin{proof}
Write $u = 2k$ or $u = 2k-1$ with $1 \leq k \leq 2m$.
Since the edges of a representation are the pairs $vw$ with $v \neq w \in S_v$, the stars determine the edges.
Removing $u$ from every star of Lemma~\ref{lem:chain} removes exactly the edges at $u$.
For even $u \neq 4m$, the two further changes move the edge $\{u-1,u+1\}$ from $S_{u+1}$ to $S_{u-1}$, and so leave the edges unchanged.
So it is enough to establish the centre order and the stars claimed.

We first establish the centre order, which the star computations use.
By the remark following~\eqref{eq:LamSig}, the differences in~\eqref{eq:diffs} at the index $1$, and at every index where the gaps increase, are positive and are larger than $\rho$.
This covers every difference between consecutive surviving centres except the following, which come from Definition~\ref{def:chain-u}.
\begin{itemize}
\item For $u = 4m-1$: $c_{4m}-c_{4m-2} = \Sigma(2m)$.
\item For even $u \neq 4m$: $c_{2k+1}-c_{2k-1} = g_k$, and $c_{2k+2}-c_{2k+1} = g_{k+1}/2$. 
For $k < 2m-1$, we have $c_{2k+3}-c_{2k+2} = c_{2k+1} + g_{k+1} - c_{2k+2} = g_{k+1}/2$.
\item For odd $u$ with $3 \leq u \leq 4m-3$: $c_{2k+1}-c_{2k} = g_{k+1}/8$, $c_{2k+2}-c_{2k+1} = g_{k+1}/2$, $c_{2k+3}-c_{2k+2} = g_{k+1}/2$ when $k < 2m-1$, and
\[
\begin{aligned}
  c_{2k}-c_{2k-2} &= (c_{2k}-c_{2k+1}) + (c_{2k+1}-c_{2k-3}) + (c_{2k-3}-c_{2k-2})\\
  &= -\mfrac18 g_{k+1} + g^* - \mfrac12(g_{k-2}+g^*) = \mfrac12-\mfrac18 g_{k+1}.
\end{aligned}
\]
\end{itemize}
All of these differences are larger than $1/(16m)$ and, hence, larger than $\rho$.
The surviving centres therefore increase with their labels, consecutive ones differing by more than $\rho$.
Since every even radius is $\rho$, no interval $I_{2i}$ contains a centre other than $c_{2i}$.
So $S_{2i} = \{2i\}$ at each surviving even vertex.

For the odd stars, it suffices to consider the indices $k$ and $k+1$.
At every other index $i \geq 2$, the vertices $2i-3$, $2i-2$, $2i-1$ survive and Definition~\ref{def:chain-u} keeps the formulas of Definition~\ref{def:chain} at $i$.
The gaps also increase at $i$, so Lemma~\ref{lem:oddstar} gives $S_{2i-1} = \{2i-3,2i-2,2i-1\}$, as in the proof of Lemma~\ref{lem:chain}.
For $u \notin \{1,2\}$ we have $r_1 = g_1/4$ and $c_2 = g_1/2 = 2r_1$, so $S_1 = \{1\}$ as in Lemma~\ref{lem:chain}.
We consider the cases of Definition~\ref{def:chain-u} in order, with the even deletions split by index.

{\em Case 1: $u \in \{1,\, 4m-1,\, 4m\}$.}
Every gap is $\Lambda(i)$, so the hypotheses of Lemma~\ref{lem:chain} hold.
For $u = 1$ and $u = 4m$ no value is changed, so every surviving star is the one that lemma gives.
For $u = 4m-1$ the odd vertex at the index $k = 2m$ is the deleted $4m-1$ itself, and there is no index $k+1$.
So there is nothing to check.

{\em Case 2: $u$ even with $u \neq 4m$, $i = k$.}
When $u \geq 4$ we have $g_k = \Lambda(k-1) = g_{k-1}$, so the gaps do not increase at $k$.
We apply Lemma~\ref{lem:oddstar} with $g = g_{k-1}$.
The right endpoint of $I_{2k-1}$ is $c_{2k-1}+g_{k-1} = c_{2k-1}+g_k = c_{2k+1}$.
Since $2k$ is deleted, the only centre in $(\,c_{2k-1},\ c_{2k+1}\,]$ is $c_{2k+1}$, so $S_{2k-1} = \{2k-3,2k-2,2k-1,2k+1\}$.

At $u = 2$ we have $r_1 = g_1$, so $I_1 = [\,-g_1,\ g_1\,]$.
Its right endpoint is $c_3 = g_1$, and $2$ is deleted, so $S_1 = \{1,3\}$.

{\em Case 3: $u$ even with $u \neq 4m$, $i = k+1$.}
Here $g_{k+1} = \Sigma(k+1) < 1 < g_k$.
We have $r_{2k+1} = g_{k+1}/4$ and $c_{2k+2} = c_{2k+1}+g_{k+1}/2$, while $c_{2k+1} = c_{2k-1}+g_k$.
As $2k$ is deleted, the centres nearest $c_{2k+1}$ are $c_{2k+2}$, at distance $g_{k+1}/2$, and $c_{2k-1}$, at distance $g_k > 1$.
Both are larger than $r_{2k+1} = g_{k+1}/4$, so $S_{2k+1} = \{2k+1\}$.

{\em Case 4: $u$ odd with $3 \leq u \leq 4m-3$.}
Here $g_{k-1} = g^* = g_{k-2}+1$ and $g_{k+1} = \Sigma(k+1) < 1$, and there is no gap $g_k$.
The deleted vertex $2k-1$ is the odd one at the index $k$, so only $S_{2k+1}$ needs to be considered.
Definition~\ref{def:chain-u} sets $r_{2k+1} = g_{k+1}/4$ and $c_{2k+1} = c_{2k-3}+g^*$, the position Definition~\ref{def:chain} would have given $2k-1$.

The centres nearest $c_{2k+1}$ on either side are $c_{2k}$, at distance $g_{k+1}/8$, and $c_{2k+2}$, at distance $g_{k+1}/2$. The next centre to the left of $c_{2k}$ is $c_{2k-2}$, at distance $1/2$ from $c_{2k+1}$.
Of these only $g_{k+1}/8$ is at most $r_{2k+1} = g_{k+1}/4$, and every other centre lies to the right of $c_{2k+2}$ or left of $c_{2k-2}$.
So $S_{2k+1} = \{2k,2k+1\}$, the star of Lemma~\ref{lem:chain} at $2k+1$ with the deleted $2k-1$ removed.
\end{proof}

With the representation of Definition~\ref{def:chain-u} in hand, we adjoin $x$ and $z$ as in Definition~\ref{def:xz}.
Lemma~\ref{lem:xz} requires the budget to be positive when $1$ is present.
The next lemma establishes this with a uniform bound, which is also utilised in \S\ref{sss:C3}.

\begin{lemma}
\label{lem:budget}
For every $u \in \{2,\dots,4m\}$ the representation of Definition~\ref{def:chain-u}, taken with $\nu = 2m$, has budget $2H-c_1-\pi \geq 11/32$.
\end{lemma}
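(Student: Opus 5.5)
With $\nu = 2m$ we have $H = \min\{c_v : v > 2m\}$, which is $c_{2m+1}$ unless $u = 2m+1$, in which case it is $c_{2m+2}$. We also have $c_1 = 0$ since $u \neq 1$, and $\pi = \max_v c_v$. The plan is to compute $H$ and $\pi$ explicitly from Definition~\ref{def:chain-u} in each case and bound $2H - \pi$ from below. The centre order of Lemma~\ref{lem:chain-u} tells us that $\pi$ is $c_{4m}$, or $c_{4m-1}$ when $u = 4m$. Since $c_{2i-1} = \sum_{j<i} g_j$ for surviving odd vertices (up to the modifications at the deleted index), we get $\pi \approx \sum_{j=1}^{2m-1} g_j + \tfrac12(g_{2m-1}+g_{2m})$ and $H \approx \sum_{j=1}^{m} g_j$. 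Thus the budget is essentially
\[ \sum_{j\le m} g_j - \sum_{j=m+1}^{2m-1} g_j - \tfrac12(g_{2m-1}+g_{2m}), \]
plus local corrections.

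Case $u$ in the upper half. Take $u = 2k$ or $u = 2k-1$ with $k \geq m+1$, so the drop to short gaps happens above $H$. The lower gaps are long, so $2H \geq 2m$ up to the $\eta$ terms. The gaps above index $k$ are short, with total at most $m\Sigma(2m) \leq 17/32$ by~\eqref{eq:LamSig}. The long gaps between $m+1$ and $k$ number at most $m-1$. This looks too crude, so I must count carefully. With $\ell$ long gaps in total ($\ell \approx k$), $\pi \approx \ell + (\text{short part})$ and $2H \approx 2m$, so the budget is roughly $2m - \ell - 17/32 \geq 2m - (2m-1) - 17/32$ when $k \leq 2m-1$. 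For odd $u$, $g^* = g_{k-2}+1$ merges two long gaps into one of length about $2$, so the count is unchanged. The $\eta$ contributions total at most a constant multiple of $m^2\eta = 1/16$, and that must be tracked to reach $11/32$. The boundary cases $u = 4m-1$ (where $c_{4m} = c_{4m-2} + \Sigma(2m)$ lowers $\pi$) and $u = 4m$ (where $\pi = c_{4m-1}$) need separate direct computation. For example, for $u = 4m$ we have $\pi = c_{4m-1} = (2m-1) + \eta m(2m-1)$ and $2H = 2c_{2m+1} = 2m + \eta m(m+1)$, giving $1 - \eta(m^2 - 2m) \geq 1 - 1/16$.

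Case $u$ in the lower half, $k \leq m$. Here the short gaps start at or below $H$. Then $H \geq$ the sum of the long gaps below $k$ (about $k-1$ of them), and $\pi - H$ is a sum of at most $m$ short gaps plus half a short gap, which is at most $17/32 + \Sigma(2m)/2$. So the budget is at least $H - (\pi - H) \geq (k-1) - 17/32 - 1/(2m)$. This needs $k \geq 2$. The cases $u = 2, 3$ (with $k$ small) must be checked by hand: there $H$ involves $g_1 = \Lambda(1)$ or $g^* = 1$ plus short gaps, so $H \geq 1 + \sum_{2 \le j \le m}\Sigma(j)$ while $\pi - H \leq \sum_{j>m}\Sigma(j) + \Sigma(2m)/2$. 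The difference is at least $1 - m(\Sigma(2m) - \Sigma(1)) - \Sigma(2m)/2 \geq 1 - m\eta\cdot 2m/(2m)\cdot\ldots$, which is close to $1 - 1/4$.

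The main obstacle is the bookkeeping: identifying $H$ exactly in each subcase (notably $u \in \{2m, 2m+1\}$, where the modified centres $c_{2k}$, $c_{2k+2}$ sit next to the cut) and keeping the $\eta$ and $\Sigma$ error terms below $21/32$. I would organise the proof by writing $2H - \pi = H - (\pi - H)$ and bounding $H$ below by its long-gap content and $\pi - H$ above, treating $u \in \{2m, 2m+1, 4m-1, 4m\}$ as explicit exceptional computations.
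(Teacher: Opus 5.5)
Your strategy is the paper's: telescope the odd centres from $c_1=0$, compare $H$ with $\pi-H$, and split according to whether the drop to short gaps happens at $k\le m$ or at $k\ge m+1$. As written, though, the lower-half step does not close.

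From $H\ge k-1$ and $\pi-H\le 17/32+\Sigma(2m)/2$ you get $(k-1)-17/32-1/(2m)$. For $k=2$ and $m\in\{2,3\}$ this is below $11/32$; at $m=2$ it is $7/32$. Your hand checks cover only $u=2,3$, so the case $u=4$ is left unproved, contrary to your remark that $k\ge 2$ suffices.

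The repair is to count exactly. For $k\le m$ the top of the chain is unmodified, so $\pi-H$ consists of the $m-1$ short gaps $g_{m+1},\dots,g_{2m-1}$ plus $c_{4m}-c_{4m-1}=\tfrac12(\Sigma(2m-1)+\Sigma(2m))\le\Sigma(2m)$. Hence $\pi-H\le m\Sigma(2m)\le 17/32$. Also $H\ge g_1\ge 1$ for every such $u$; at $u=3$ one has $g_1=g^*=1$. This gives $1-m\Sigma(2m)\ge 15/32$ uniformly, which is what the paper gets, and the separate computations at $u=2,3$ become unnecessary.

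In the upper half you only assert that the $\eta$-losses can be kept below $1/8$. The paper avoids this bookkeeping by being cruder rather than finer. It first proves once that $\pi\le\sum_{i\le 2m-1}g_i+\Sigma(2m)$, checking $k\le 2m-2$, $k=2m-1$, $u=4m-1$ and $u=4m$; this replaces your separate boundary computations. It then writes $H=H'-\kappa$, with $H'=\sum_{i\le m}g_i$, and $\kappa=\Sigma(m+2)/8$ at $u=2m+1$ and $\kappa=0$ otherwise.

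For $k\ge m+1$ one has $H'\ge m$. Meanwhile $\tau+2\kappa\le (m-1)\Lambda(2m)$, where $\tau=\sum_{i=m+1}^{2m-1}g_i$. This comes from bounding the contribution of each index $m+1,\dots,2m-1$ by $\Lambda(2m)$, regardless of which gaps are short. For odd $u\ge 2m+3$ the gap $g^*\le 2\Lambda(2m)$ fills two indices, and at $u=2m+1$ the term $2\kappa\le\Lambda(2m)$ fills the vacant one. Hence $H'-(\tau+2\kappa)\ge m-(m-1)(1+2m\eta)\ge 1-2m^2\eta=7/8$. The budget is therefore at least $7/8-\Sigma(2m)\ge 7/8-17/32=11/32$, with no need to track where the short gaps begin.
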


\begin{proof}
Since $u \geq 2$, the vertex $1$ always survives and $c_1 = 0$ by Definition~\ref{def:chain}.
By Lemma~\ref{lem:chain-u} the centre order is $1 < \cdots < 4m$ with $u$ removed.

Write $u = 2k$ or $u = 2k-1$ with $1 \leq k \leq 2m$.
For odd $u$ with $3 \leq u \leq 4m-3$, we take $g_k$ to be $0$ wherever a sum ranges over it, since it is otherwise undefined.
Then $c_{2i+1} = \sum_{j \leq i} g_j$ for every surviving odd vertex $2i+1$, by telescoping from $c_1 = 0$.
For odd $u$ the step across the deleted vertex is $c_{2k+1}-c_{2k-3} = g^* = g_{k-1}+g_k$.
We first show that
\begin{equation}
\label{eq:pi}
  \pi  \leq  \sum_{i \leq 2m-1} g_i+\Sigma(2m) .
\end{equation}
When $k \leq 2m-1$, neither $4m-1$ nor $4m$ is deleted, so $\pi = c_{4m}$.
Since $c_{4m-1} = \sum_{i \leq 2m-1} g_i$, \eqref{eq:pi} is equivalent to $c_{4m}-c_{4m-1} \leq \Sigma(2m)$.
\begin{itemize}
\item If $k \leq 2m-2$, the index $2m$ is unchanged and the gaps $g_{2m-1} = \Sigma(2m-1)$ and $g_{2m} = \Sigma(2m)$ are both short.
So Definition~\ref{def:chain} gives $c_{4m}-c_{4m-1} = (\Sigma(2m-1)+\Sigma(2m))/2 \leq \Sigma(2m)$, where the inequality holds since $\Sigma$ is increasing.
\item If $k = 2m-1$, then $c_{4m} = c_{4m-1}+g_{2m}/2$ by Definition~\ref{def:chain-u}, so $c_{4m}-c_{4m-1} \leq \Sigma(2m)/2$.
\end{itemize}
If $k = 2m$, then every gap is long.
For $u = 4m$ the vertex $4m$ is deleted, so $\pi = c_{4m-1} = \sum_{i \leq 2m-1} g_i$, and~\eqref{eq:pi} holds.
For $u = 4m-1$, Definition~\ref{def:chain-u} gives $\pi = c_{4m} = c_{4m-2}+\Sigma(2m)$.
Since $g_{2m-2} < g_{2m-1}$, we have $c_{4m-2} = c_{4m-3}+(g_{2m-2}+g_{2m-1})/2 < c_{4m-3}+g_{2m-1} = \sum_{i \leq 2m-1} g_i$, and~\eqref{eq:pi} follows.

With~\eqref{eq:pi} in hand, we turn to the budget itself.
Let $H' = \sum_{i \leq m} g_i$ and $\tau = \sum_{i=m+1}^{2m-1} g_i$, so that $\sum_{i \leq 2m-1} g_i = H'+\tau$.
When $u \neq 2m+1$ we have $H = c_{2m+1} = H'$, and we set $\kappa = 0$.
When $u = 2m+1$ we have $k = m+1$, so $H = c_{2m+2} = c_{2m+3}-g_{m+2}/8$ by Definition~\ref{def:chain-u}.
Also $c_{2m+3} = H'$, since $g_{m+1} = 0$.
Here we set $\kappa = g_{m+2}/8 = \Sigma(m+2)/8$, so that $H = H'-\kappa$ in both cases.
Combined with~\eqref{eq:pi}, this gives
\[
  2H-c_1-\pi  \geq  2(H'-\kappa)-(H'+\tau+\Sigma(2m))  =  H'-(\tau+2\kappa)-\Sigma(2m) .
\]

Suppose first that $k \leq m$.
Then $g_i = \Sigma(i)$ for every $m+1 \leq i \leq 2m-1$, so $\tau$ is a sum of $m-1$ terms, each at most $\Sigma(2m)$.
As $\kappa = 0$, this gives $\tau+2\kappa \leq (m-1)\Sigma(2m)$.
Also, $H' \geq g_1 \geq 1$, since $g_1 = \Lambda(1)$ unless $u = 3$, where $g_1 = g^* = 1$.
Hence, $H'-(\tau+2\kappa) \geq 1-(m-1)\Sigma(2m)$.

Suppose instead that $k \geq m+1$.
Then $g_i = \Lambda(i) \geq 1$ for every $i \leq m$, except that $g_m = g^* \geq 2$ when $u = 2m+1$.
So $H' \geq m$.
We claim that $\tau+2\kappa \leq (m-1)\Lambda(2m)$, where $\tau$ sums the $m-1$ indices $m+1,\dots,2m-1$.
\begin{itemize}
\item If $u$ is even or $u = 4m-1$, then $\kappa = 0$ and every gap summed in $\tau$ is $\Lambda(i)$, $\Lambda(k-1)$, or $\Sigma(i)$, hence at most $\Lambda(2m)$.
\item If $u = 2m+1$, then $k = m+1$ contributes nothing, each of the other $m-2$ indices contributes $\Sigma(i) \leq \Lambda(2m)$, and $2\kappa = \Sigma(m+2)/4 \leq \Lambda(2m)$.
\item If $u$ is odd with $2m+3 \leq u \leq 4m-3$, then $\kappa = 0$, the indices $k-1$ and $k$ together contribute $g^* = g_{k-2}+1 \leq 2\Lambda(2m)$, and each of the other $m-3$ indices contributes at most $\Lambda(2m)$.
\end{itemize}
The claim follows.
Since $\Lambda(2m) = 1+2m\eta$, we have $(m-1)\Lambda(2m) \leq (m-1)+2m^2\eta$, and hence $H'-(\tau+2\kappa) \geq 1-2m^2\eta$.

In both cases
\[
  2H-c_1-\pi  \geq  H'-(\tau+2\kappa)-\Sigma(2m)  \geq  1-2m^2\eta-m\Sigma(2m)
   \geq  1-\mfrac18-\mfrac{17}{32}  =  \mfrac{11}{32} ,
\]
the last inequality by~\eqref{eq:LamSig}.
\end{proof}

The following lemma is the main result of this subsubsection.
It combines Definitions~\ref{def:chain-u} and~\ref{def:xz} into a representation of $B^m-u$ for every chain vertex $u$. \S\ref{sss:C1C2} uses these representations to settle Case~(C2).

\begin{lemma}
\label{lem:baseC3}
Let $u \in \{1,\dots,4m\}$.
Definition~\ref{def:chain-u}, together with $x$ and $z$ adjoined by Definition~\ref{def:xz} with $\nu = 2m$, gives a $50\%$-tolerance representation of $B^m-u$, in the centre order $\ord(m)$ with $u$ deleted.
\end{lemma}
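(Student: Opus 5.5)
The plan is to assemble the statement from Lemma~\ref{lem:chain-u}, Lemma~\ref{lem:budget} and Lemma~\ref{lem:xz}; almost all the work has already been done there. First, Lemma~\ref{lem:chain-u} supplies a $50\%$-tolerance representation $(c,r)$ of $\Gamma = B^m - \{x,z,u\}$. Its centres are pairwise distinct and appear in the order $1 < \cdots < 4m$ with $u$ removed. The centres are non-negative, since they are built from $c_1 = 0$, or from $c_2 = g_1/2$ when $u = 1$, by adding positive differences. We also have $c_1 = 0$ whenever $1$ survives.

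Next I check the hypotheses of Definition~\ref{def:xz} with $\nu = 2m$. The surviving vertices with label at most $2m$ form a prefix of this order, because the order follows the labels. This prefix is non-empty: $m \geq 2$, so at least three of $1,\dots,2m$ survive. It is also proper, since at least one of $2m+1,\dots,4m$ survives. Consequently $L$ is the largest surviving centre with label at most $2m$, and $H$ is the smallest surviving centre with label above $2m$.

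The one substantive condition is $L^+ < H$, which Lemma~\ref{lem:xz} needs when $x$ is adjoined. If $u = 1$, then $L^+ = L < H$ immediately. If $u \geq 2$, then $1$ survives, and the condition is equivalent to a positive budget~\eqref{eq:budget}, which Lemma~\ref{lem:budget} gives as at least $11/32$.

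With $L^+ < H$ in hand, Lemma~\ref{lem:xz}(i) places $c_x$ strictly between $L$ and $H$, and shows that $x$ is adjacent to exactly the surviving chain vertices other than $1$, as in $B^m$. Lemma~\ref{lem:xz}(ii) places $c_z$ above every other centre. It shows that $z$ is adjacent to exactly the surviving $v$ with $c_v \geq H$, namely the surviving vertices among $2m+1,\dots,4m$, and that $x \not\sim z$. Adjoining $x$ and $z$ leaves every old centre and radius unchanged. The proof of Lemma~\ref{lem:xz} also shows that neither new vertex lies in the star of an old vertex in an unwanted way, since the adjacencies are fully determined there. So the edges among chain vertices remain those of $\Gamma$, and the result represents $B^m - u$.

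The centre order is then $1,\dots,2m$ (minus $u$), followed by $x$, then $2m+1,\dots,4m$ (minus $u$), then $z$, which is $\ord(m)$ with $u$ deleted. The main obstacle is the budget inequality, and it is already handled by Lemma~\ref{lem:budget}. What remains is bookkeeping: checking the edge case $u = 1$, where $c_1$ is absent and non-negativity must come from $c_2 > 0$ after translation if needed, and confirming that the prefix condition holds in every deletion case, including $u = 2m$ and $u = 2m+1$.
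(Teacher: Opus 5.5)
Your proposal is correct and follows essentially the same route as the paper: Lemma~\ref{lem:chain-u} supplies the base representation and the hypotheses of Definition~\ref{def:xz}. The condition $L^+ < H$ is then immediate at $u = 1$ and follows from Lemma~\ref{lem:budget} otherwise, and Lemma~\ref{lem:xz} fixes the neighbourhoods of $x$ and $z$ and the centre order $\ord(m)$ with $u$ deleted (no translation is needed at $u = 1$, since the surviving centres are already positive).
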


\begin{proof}
By Lemma~\ref{lem:chain-u}, Definition~\ref{def:chain-u} gives a $50\%$-tolerance representation of $B^m - \{x, z, u\}$ whose centre order is $1 < \cdots < 4m$ with $u$ removed.
In particular, the centres are distinct and non-negative, the vertices of label at most $2m$ form a non-empty proper prefix, and $c_1 = 0$ when $1$ survives, as Definition~\ref{def:xz} requires.
The hypothesis $L^+ < H$ of Lemma~\ref{lem:xz} holds at $u = 1$ because $L^+ = L < H$, and at every other $u$ by Lemma~\ref{lem:budget}.
So Lemma~\ref{lem:xz} applies: $x$ is adjacent to exactly the surviving chain vertices other than $1$, and $z$ to exactly the surviving $v \geq 2m+1$, with $x \not\sim z$.
These are exactly the edges of $B^m-u$ at $x$ and $z$.
Moreover, $L < c_x < H$ and $c_z$ is larger than every other centre, so the centre order is $\ord(m)$ with $u$ deleted.
\end{proof}

\subsubsection{\texorpdfstring{Adjoining $q_1$: (C1) and (C2)}{Adjoining q1: (C1) and (C2)}}
\label{sss:C1C2}

Case~(C1) deletes $x$ or $z$ and Case~(C2) deletes a chain vertex, so in each of them $q_1$ survives and has to be adjoined to a representation of $B^m-u$.
Lemma~\ref{lem:prefixclique} cannot attach it, since $q_1 \not\sim q_{2m-1}$, so $c_{q_1}$ has to lie among the centres of $B^m-u$ rather than to their left.
We first name the centres that decide where $q_1$ can go.

Fix $u \in V(B^m)$, and let $C(m) = \{2, 4, \dots, 2m\} \cup \{2m+3, 2m+5, \dots, 4m-1\}$ be the set of cuts of the vertices $q_1, \dots, q_{2m-1}$.
For a cut $s \in C(m)$ write $\pre(s) = \{\omega_1,\dots,\omega_s\} \setminus \{u\}$ for the surviving vertices among the first $s$ of $\ord(m)$.
Each representation of $B^m-u$ built above has centre order $\ord(m)$ with $u$ deleted, except the one with $u = z$.
For $u = z$, Lemma~\ref{lem:omitxz}(ii) exchanges $x$ and $2m+1$, so the first $s$ vertices of its centre order form $\pre(s)$ except when $s = 2m+1$.
As $2m+1 \notin C(m)$, in both orders $\pre(s)$ is a prefix of the centre order for every $s \in C(m)$.

For a representation $(c,r)$ of $B^m-u$ with one of the centre orders noted above, we write $\hi(s) = \max\{c_v : v \in \pre(s)\}$ for the largest centre of a vertex of $\pre(s)$, $\lo = \min\{c_v : v \in V(B^m-u)\}$ for the smallest centre, and  set  $M = \hi(4m-1)$.
Since $\pre(s) \subseteq \pre(s')$ for $s \leq s'$, the function $\hi$ is non-decreasing on $C(m)$.

The next lemma adjoins $q_1$. The hypothesis $2\hi(2)-\lo < M$ is the analogue of the budget for $x$, and is what allows us to adjoin $q_1$ in the required order and with the appropriate adjacencies.

\begin{lemma}
\label{lem:lift}
Let $u \in V(B^m)$, and let $(c,r)$ be a $50\%$-tolerance representation of $B^m-u$ with pairwise distinct centres. Suppose the centre order induced by $(c, r)$ is $\ord(m)$ with $u$ deleted, or, when $u = z$, that order with $x$ and $2m+1$ exchanged.
Let~$b$ be the vertex following $\pre(2)$ in the centre order, so that $b = 3$, or $b = 4$ if $u = 3$.
Suppose that $2 \hi(2)-\lo  <  M$ and set
\[
  c_{q_1} = \mfrac12 \bigl( \hi(2)+\min\{ c_b,\ \mfrac12(\lo+M) \} \bigr) ,
  \quad r_{q_1} = M-c_{q_1} .
\]
Then adjoining $q_1$ gives a $50\%$-tolerance representation of $B^m - u + q_1$ with pairwise distinct centres.
In this representation, $\hi(2) < c_{q_1} < c_b \leq \hi(4)$.
Moreover, $I_{q_1}$ has left endpoint smaller than every centre of $(c,r)$ and right endpoint $M$.
\end{lemma}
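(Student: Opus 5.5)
The plan is to verify the claims directly from the formula for $c_{q_1}$, using only the centre order and the $50\%$-tolerance rule. Recall that in such a representation $vw$ is an edge exactly when $c_w\in I_v$ or $c_v\in I_w$.

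\emph{Order and endpoints.} Write $\mu=\min\{c_b,\tfrac12(\lo+M)\}$, so that $c_{q_1}=\tfrac12(\hi(2)+\mu)$.
\begin{itemize}
\item Since $b$ follows $\pre(2)$ in the centre order, $\hi(2)<c_b$.
\item The hypothesis $2\hi(2)-\lo<M$ is exactly $\hi(2)<\tfrac12(\lo+M)$.
\end{itemize}
Hence $\hi(2)<\mu$, and so $\hi(2)<c_{q_1}<\mu\le c_b$. Because $\hi(2)$ and $c_b$ are consecutive centres, $c_{q_1}$ differs from every centre of $(c,r)$. Moreover $b\in\{3,4\}$ lies among the first four vertices of $\ord(m)$ (here $2m\ge4$), so $b\in\pre(4)$ and $c_b\le\hi(4)$. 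The right endpoint is $c_{q_1}+r_{q_1}=M$ by definition, and $r_{q_1}>0$ since $c_{q_1}<c_b\le M$. The left endpoint is $2c_{q_1}-M=\hi(2)+\mu-M$. Now $\hi(2)<\tfrac12(\lo+M)$ and $\mu\le\tfrac12(\lo+M)$, so $\hi(2)+\mu<\lo+M$, and the left endpoint is below $\lo$, hence below every centre.

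\emph{Adjacencies of $q_1$.} Because $I_{q_1}$ runs from below every centre up to $M=\hi(4m-1)$, and $\pre(4m-1)$ is a prefix of the centre order (noted before the lemma, including for $u=z$), $I_{q_1}$ contains exactly the centres of $\pre(4m-1)$. Thus $q_1$ is adjacent to all of $\pre(4m-1)$. It remains to show that $q_1$ lies in no star $S_v$ with $v\notin\pre(4m-1)$. The only such surviving vertices are $4m$ and $z$.

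For each such $v$, suppose $c_{q_1}\in I_v$. We have $c_{q_1}<c_b<c_v$, since $b$ precedes both $4m$ and $z$ in either centre order. As $I_v$ is an interval containing both $c_{q_1}$ and $c_v$, it would contain $c_b$, making $v\sim b$. But $4m$ is adjacent only to $x$ and $z$, and $z$ only to $2m+1,\dots,4m$, where $2m+1\ge5$. So neither is adjacent to $3$ or $4$, a contradiction. Hence $N(q_1)=\pre(4m-1)$, which is precisely $N(q_1)\cap V(B^m)$ for cut $s_1=4m-1$ with $u$ removed. The stars of the old vertices change only by possibly gaining $q_1$, so all old edges are preserved and the extension represents $B^m-u+q_1$.

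\emph{Main obstacle.} The only delicate point is the left-endpoint bound. The reason $c_{q_1}$ is capped at $\tfrac12(\lo+M)$ is exactly to force $I_{q_1}$ to reach below $\lo$. This is where the hypothesis $2\hi(2)-\lo<M$ is needed, to keep the cap above $\hi(2)$. The cases $u\in\{3,4,z\}$ need only that $b\in\pre(4)$ and that $\pre(s)$ is a prefix, both already established.
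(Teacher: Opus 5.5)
Your argument follows the paper's proof essentially step for step: the bounds $\hi(2)<c_{q_1}<\mu\le c_b\le\hi(4)$, the left endpoint $\hi(2)+\mu-M<\lo$, and non-adjacency via ``$c_{q_1}\in I_v$ would force $c_b\in I_v$''. However, it contains one concrete error. You assert that the only surviving vertices outside $\pre(4m-1)$ are $4m$ and $z$, and this is false. Because $x=\omega_{2m+1}$ sits inside $\ord(m)$, every later chain label is shifted by one position. So $\omega_{4m-1}=4m-2$, and $\pre(4m-1)=\{1,\dots,4m-2,x\}\setminus\{u\}$. The vertices of $B^m-u$ outside this prefix are therefore the surviving members of $\{4m-1,4m,z\}$, and you never check that $c_{q_1}\notin I_{4m-1}$. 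This is not a cosmetic omission. The non-edge $q_1\not\sim 4m-1$ is exactly the one that Proposition~\ref{prop:impossible} later exploits, so it is the non-adjacency the lemma most needs to deliver.

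The repair is your own argument applied once more. First, $c_b\le\hi(4)\le M<c_{4m-1}$. Second, $N(4m-1)=\{4m-3,4m-2,x,z\}$ contains neither $3$ nor $4$, since $4m-3\ge5$ for $m\ge2$. Hence $c_{q_1}\in I_{4m-1}$, together with $c_{q_1}<c_b<c_{4m-1}$, would give $c_b\in I_{4m-1}$, i.e.\ $b\sim 4m-1$, a contradiction. Once $4m-1$ is added to your list, the proof is complete and matches the paper's.
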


\begin{proof}
We start by proving the assertions about $I_{q_1}$.
Note that $\hi(2) < c_b$ by the choice of $b$, and $\hi(2) < (\lo+M)/2$ by hypothesis.
Since $c_{q_1}$ is the midpoint of $\hi(2)$ and $\min\{c_b,\ (\lo+M)/2\}$, we obtain $\hi(2) < c_{q_1} < \min\{c_b,\ (\lo+M)/2\}$.
In particular, $\hi(2) < c_{q_1} < c_b \leq \hi(4)$, where the last inequality holds since $b \in \pre(4)$.

Since $c_{q_1} < (\lo+M)/2$, we have $r_{q_1} = M-c_{q_1} > (M-\lo)/2$, which is positive since $\lo \leq \hi(2) < (\lo+M)/2$.
For the same reason the left endpoint $2c_{q_1}-M$ of $I_{q_1}$ is smaller than $\lo$.
Hence, the left endpoint is smaller than every centre of $(c,r)$, while the right endpoint is $M$ by definition.

We now consider adjacencies.
Since $q_1$ has cut $4m-1$, its neighbourhood in $B^m - u + q_1$ is $\pre(4m-1)$, and the vertices of $B^m-u$ outside $\pre(4m-1)$ are the surviving members of $\{4m-1,\, 4m,\, z\}$.
By the preceding paragraph, the centres of $(c,r)$ lying in $I_{q_1}$ are exactly those at most $M = \hi(4m-1)$, which are the centres of $\pre(4m-1)$.
So $S_{q_1} \cap V(B^m-u) = \pre(4m-1)$. 

Hence, to show that $q_1$ is adjacent to exactly $\pre(4m-1)$, it suffices to show that $c_{q_1} \notin I_w$ for every surviving $w \in \{4m-1,\, 4m,\, z\}$.
Such a $w$ follows $\pre(4m-1)$ in the centre order, so $c_w > M \geq c_b$, as $b \in \pre(4) \subseteq \pre(4m-1)$.
Since $b \in \{3,4\}$, it is not adjacent to $4m-1$, $4m$, or $z$.
As $c_b < c_w$, the centre $c_b$ lies to the left of $I_w$, that is, $c_b < c_w - r_w$.
Hence $c_{q_1} < c_b < c_w - r_w$, so $c_{q_1} \notin I_w$.

It follows that $q_1$ is adjacent to exactly the vertices of $\pre(4m-1)$.
The edges among the vertices of $B^m-u$ are unchanged, so adjoining $q_1$ gives a $50\%$-tolerance representation of $B^m - u + q_1$.
Its centres are pairwise distinct, since $c_{q_1}$ lies in the open interval $(\hi(2),\, c_b)$, which contains no centre of $(c,r)$ by the choice of $b$.
\end{proof}

We now adjoin $q_1$ and the outer vertices to the three representations of $B^m-u$ built above, which settles Cases~(C1) and~(C2) at once.

\begin{corollary}
\label{cor:C1C2}
For every $u \in \{1,\dots,4m\} \cup \{x,z\}$ the graph $Q^m-u$ lies in $\halftolerance$.
\end{corollary}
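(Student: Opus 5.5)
The plan is to start, for each $u \in \{1,\dots,4m\} \cup \{x,z\}$, from the $50\%$-tolerance representation of $B^m - u$ already built. For $u \in \{x,z\}$ we use Lemma~\ref{lem:omitxz}, and for a chain vertex $u$ we use Lemma~\ref{lem:baseC3}. To this we adjoin first $q_1$ by Lemma~\ref{lem:lift}, and then the outer vertices $q_2,\dots,q_{2m-1}$ by Lemma~\ref{lem:prefixclique}. In each case the centre order is $\ord(m)$ with $u$ deleted, or, for $u = z$, that order with $x$ and $2m+1$ exchanged. These are exactly the orders allowed by Lemma~\ref{lem:lift}, and for every $s \in C(m)$ the set $\pre(s)$ is a prefix of the centre order.

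The main work is to verify the hypothesis $2\hi(2) - \lo < M$ of Lemma~\ref{lem:lift} in each case. This is the step I expect to be the real obstacle, since it requires a small calculation for each family of deletions.

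For $u \neq 1,2$ we have $\lo = c_1 = 0$ and $\hi(2) = c_2 = g_1/2$, so the condition reads $g_1 < M$. Here $M = \hi(4m-1)$ is the largest centre of $\pre(4m-1)$, and this set contains $4m-2$ or $4m-3$ together with $x$. The claim $g_1 < M$ holds because the chain representation places $c_{4m-3}$ at distance at least $g_1 + g_2 > g_1$ from $c_1$, and $m \geq 2$ guarantees that $4m-3 \geq 5$. If one of these chain vertices is deleted, its neighbour still survives at a larger distance.

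For $u = 1$, $\pre(2) = \{2, x\}$ in the order of Lemma~\ref{lem:baseC3}, so the condition becomes $2c_x - c_2 < M$. I would check this using $c_x < H = c_{2m+1}$ together with the fact that $M \geq c_{4m-2}$ is far to the right: the long gaps exceed $1$ each, whereas $c_x - c_2$ is roughly half the span.

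For $u = 2$, $\pre(2) = \{1,3\}$, so $\hi(2) = c_3 = g_1$, and since $M \geq c_{4m-3} \geq g_1 + g_2 + \cdots$ the inequality again holds.

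For $u = x$ the formulas of \S\ref{sss:C1} give $\hi(2) = c_2$ explicitly, and for $u = z$ they give $M \geq c_{4m-2}$. Both inequalities reduce to a direct numerical check of the form $1+\eta < c_{4m-2}$.

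Once Lemma~\ref{lem:lift} applies, $I_{q_1}$ has left endpoint below every centre and right endpoint $M$. To attach each outer vertex $q_i$ with $2 \leq i \leq 2m-1$, I apply Lemma~\ref{lem:prefixclique} to the representation of $B^m - u + q_1$, taking $P_i$ to be the prefix $\pre(s_i)$ of the base centre order. There is a subtlety here: $q_1$'s centre lies between $\hi(2)$ and $c_b \leq \hi(4)$, so in the extended centre order the prefix with $q_1$ added is $\pre(s) \cup \{q_1\}$ for every $s \geq 4$, while for $s = 2$ it is $\pre(2)$ alone. Taking $P_i = \pre(s_i) \cup \{q_1\}$ for $s_i \geq 4$ and $P_{2m-1} = \pre(2)$, which are genuine prefixes, Lemma~\ref{lem:prefixclique} gives exactly $N(q_i) = \pre(s_i)$ together with $q_1$ precisely when $s_i \geq 4$, and it makes the outer vertices a clique. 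This matches Definition~\ref{def:Qm}, including the non-edge $q_1 q_{2m-1}$.

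Finally, two things remain. First, Lemma~\ref{lem:prefixclique} leaves existing stars untouched, so $q_1$'s adjacencies are unaffected, although I must confirm that $c_{q_j} \notin I_{q_1}$ for $j = 2m-1$. Second, if $\pre(s)$ is empty, as when $u$ removes the only vertices of $\pre(2)$, this is impossible, since $\pre(2)$ always retains a vertex. The resulting representation is a monochromatic, hence $50\%$-tolerance, representation of $Q^m - u$.
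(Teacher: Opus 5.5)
Your plan matches the paper's proof: take the representation of $B^m-u$ from Lemma~\ref{lem:omitxz} or Lemma~\ref{lem:baseC3}, adjoin $q_1$ by Lemma~\ref{lem:lift}, and attach the outer vertices by Lemma~\ref{lem:prefixclique} with $P_i=\pre(s_i)\cup\{q_1\}$ for $s_i\ge 4$ and $P_{2m-1}=\pre(2)$. The cases $u\notin\{1,2\}$ are essentially right. The cases $u=1$ and $u=2$, however, rest on a misreading of $\pre(2)$, and as written they fail.

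By definition $\pre(s)=\{\omega_1,\dots,\omega_s\}\setminus\{u\}$. So $\pre(2)=\{2\}$ for $u=1$ and $\pre(2)=\{1\}$ for $u=2$, and neither $x$ nor $3$ ever lies in $\pre(2)$. With your sets, the inequalities you assert are false.
\begin{itemize}
\item For $u=2$ you need $2g_1<M$. But $g_1=\Lambda(1)>1$, every later gap is short, and $\sum_{i=2}^{2m-1}\Sigma(i)<1$. Hence $M=c_{4m-2}<c_{4m-1}<g_1+1<2g_1$, and your bound $M\ge g_1+g_2+\cdots$ cannot give more than this.
\item For $u=1$, the explicit long-gap centres give $2c_x-c_2-M=\tfrac12-\tfrac{\eta}{2}(2m^2-4m+1)>0$.
\end{itemize}
Moreover, $\{2,x\}$ is not a prefix of the centre order. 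Taking $P_{2m-1}=\pre(2)$ with your sets would also make $q_{2m-1}$ adjacent to $3$ or to $x$, although its cut is $2$.

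The repair is immediate, and it is what the paper does. Since $\pre(2)\subseteq\{1,2\}$ is a prefix, $\lo$ and $\hi(2)$ are the centres of its first and last vertices. So $2\hi(2)-\lo$ equals $g_1$, $g_1/2$, or $0$ according as both of $1,2$ survive, $u=1$, or $u=2$, and every case reduces to $g_1<M$.

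Your justification of $g_1<M$ via $c_{4m-3}\ge g_1+g_2$ also breaks for $u=3$. There is no gap $g_2$, and $c_5=g^*=g_1$, so for $m=2$ you get $c_{4m-3}=g_1$ exactly. You need $M\ge c_{4m-2}>c_5$ there, as in the paper.

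Your remaining worry, that $c_{q_{2m-1}}\notin I_{q_1}$, is already handled by Lemma~\ref{lem:prefixclique}. There $\ell$ is taken over the representation that includes $q_1$.
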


\begin{proof}
Let $(c,r)$ be the representation of $B^m-u$ given by Lemma~\ref{lem:baseC3} when $u$ is a chain vertex, by Lemma~\ref{lem:omitxz}(i) when $u = x$, and by Lemma~\ref{lem:omitxz}(ii) when $u = z$.
We apply Lemma~\ref{lem:lift} to adjoin $q_1$, giving a $50\%$-tolerance representation of $B^m - u + q_1$ with $\hi(2) < c_{q_1} < \hi(4)$.
To see that the lemma may be applied, note that the centre order of $(c,r)$ is $\ord(m)$ with $u$ deleted, or, when $u = z$, that order with $x$ and $2m+1$ exchanged, so it suffices to verify $2 \hi(2)-\lo < M$.

Recall, by Definitions~\ref{def:chain} and~\ref{def:chain-u}, that $c_1 = 0$ when $1$ survives and $c_2 = g_1/2$ when $2$ survives.
Since $\pre(2) \subseteq \{1, 2\}$ is a prefix of the centre order, $\lo$ is the centre of its first vertex and $\hi(2)$ the centre of its last.
It follows that $2 \hi(2)-\lo$ equals $g_1$, $g_1/2$, or $0$, if both $1$ and $2$ survive, $u = 1$, or $u = 2$, respectively.
In each case $2 \hi(2)-\lo \leq g_1$, so it suffices to show $g_1 < \hi(4m-1) = M$.

By Definitions~\ref{def:chain} and~\ref{def:chain-u}, $g_1 = c_3$ when $u \neq 3$, and $g_1 = c_1 + g^* = c_5$ otherwise.
Since $\{4m-3,4m-2\} \subseteq \{\omega_1,\dots,\omega_{4m-1}\}$ and only $u$ is deleted, at least one of them lies in $\pre(4m-1)$. Hence, $M \geq c_w$, where $w = 4m-2$ if it survives and $w = 4m-3$ otherwise.
If $u \neq 3$, then $w \geq 4m-3 \geq 5 > 3$, and since the chain centres increase with the label, $g_1 = c_3 < c_w \leq M$.
Similarly, if $u = 3$, then $w = 4m-2 \geq 6 > 5$, so $g_1 = c_5 < c_w \leq M$.

Finally, we attach the outer vertices by applying Lemma~\ref{lem:prefixclique} with prefixes as follows.
For $2 \leq i \leq 2m-1$, let $P_i$ be the set of vertices of $B^m - u + q_1$ with centre at most $\hi(s_i)$.
If $s_i = 2$, then $P_i = \pre(2)$, since $\hi(2) < c_{q_1}$.
If $s_i \geq 4$, then $P_i = \pre(s_i) \cup \{q_1\}$, since $c_{q_1} < \hi(4) \leq \hi(s_i)$.
Either way $P_i$ is a non-empty prefix of the centre order with largest centre $\hi(s_i)$.
So $q_2, \dots, q_{2m-1}$ form a clique, and among the vertices of $B^m - u + q_1$ each $q_i$ is adjacent to exactly $P_i$.
Since $s_i \geq 4$ exactly when $i \leq 2m-2$, these are the edges of $Q^m-u$, and we have a $50\%$-tolerance representation of $Q^m-u$.
\end{proof}

\subsubsection{\texorpdfstring{Deleting $q_i$: (C3)}{Deleting qi: (C3)}}
\label{sss:C3}

For $i \geq 2$ the graph $Q^m-q_i$ contains $B^m + q_1$, so by Proposition~\ref{prop:impossible} it has no $50\%$-tolerance representation, and none of the constructions above applies.
Instead, we use a bicoloured representation in which a single vertex $y$ is coloured differently.
Such a representation of a graph $G$ amounts to a $50\%$-tolerance representation $(c,r)$ of $G-y$ together with an interval $I_y$ of positive length which meets $I_v$ precisely when $yv \in E(G)$.
For $i = 1$ a second colour is not necessary, and a $50\%$-tolerance representation of $Q^m - q_1$ can be given.
However, for brevity, we treat the $q_i$ uniformly, which suffices for our purposes.

\begin{corollary}
\label{cor:C3}
For every $i$ with $1 \leq i \leq 2m-1$, the graph $Q^m-q_i$ lies in $\bicoloured$.
\end{corollary}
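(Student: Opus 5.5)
Following the sketch in \S\ref{sec:proper}, I would take $y$ to be the joint just above the cut $s_i$, namely $y = s_i+1$ if $s_i$ is even and $s_i \leq 2m$ (so $y$ is an odd chain vertex $\geq 3$), and $y = s_i$'s successor joint $s_i+2$ in the odd range (for $s_1 = 4m-1$ this is the top joint $4m-1$ itself, to be adjusted). The plan has three stages.

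First, build a $50\%$-tolerance representation of $Q^m - \{q_i, y\}$. Deleting the chain vertex $y$ puts us in the setting of Lemma~\ref{lem:baseC3}, which gives a $50\%$-tolerance representation of $B^m - y$ in centre order $\ord(m)$ with $y$ deleted. By Lemma~\ref{lem:budget} the budget is at least $11/32$. I would then run the proof of Corollary~\ref{cor:C1C2} verbatim with $u = y$: adjoin $q_1$ by Lemma~\ref{lem:lift}, whose hypothesis $2\hi(2)-\lo<M$ was checked there for every chain $u$, and attach the outer vertices other than $q_i$ by Lemma~\ref{lem:prefixclique}. When $i = 1$ one simply skips Lemma~\ref{lem:lift}.

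Second, colour everything $+$ and give $y$ colour $-$, with an interval $I_y$ meeting exactly the intervals of $N(y) \setminus \{q_i\}$. In $Q^m$ the joint $y=2k-1$ is adjacent to $2k-3,2k-2,2k+1$, to $x$, to $z$ if $y\geq 2m+1$, and to exactly those $q_j$ whose cut is at least $y$, i.e.\ $q_1,\dots,q_{i-1}$ (the cuts decrease and $s_i < y$ by the choice of $y$). Since $y$ is bichromatic to everyone, I only need $I_y$ to intersect precisely these intervals. The natural choice is a short interval placed in the gap left in the chain near where $c_y$ ``should'' be, stretched just far enough left to touch $I_{2k-3}$ and $I_{2k-2}$ and right to touch $I_{2k+1}$.

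Third, verify the non-adjacencies. The outer intervals $I_{q_j}$ of Lemma~\ref{lem:prefixclique} end exactly at $\hi(s_j)$, and $I_{q_1}$ ends at $M$. So placing the left end of $I_y$ strictly above $\hi(s_i)$ excludes $q_i,\dots,q_{2m-1}$, while reaching $\hi(s_{i-1})$ or $M$ includes $q_1,\dots,q_{i-1}$. This works since $\hi(s_{i-1})\geq \hi(y)$. Likewise, $I_x$ contains every chain centre and so meets $I_y$, and $I_z$ begins at $H^-$, which meets $I_y$ exactly when $y > 2m$.

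The main obstacle is the third stage, namely keeping $I_y$ disjoint from intervals of non-neighbours whose radii are large. The odd chain vertices have radii equal to the long gaps $\Lambda$, and $I_{q_1}$ and $I_x$ are very wide. I expect the fix is to exploit the freedom in Definition~\ref{def:chain-u}, where short gaps $\Sigma$ follow the deleted vertex so that the nearby odd intervals are small. If a direct placement fails for some boundary $i$ (notably $i=1$ with $y=4m-1$, or $i=2m-1$ with $y=3$), I would instead reuse the $u=4m-1$ or $u=3$ variants of Definition~\ref{def:chain-u} and check those cases by hand.
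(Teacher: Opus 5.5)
Your overall route is the paper's: take the $50\%$-tolerance representation of $Q^m-y$ built in the proof of Corollary~\ref{cor:C1C2}, delete $q_i$, and reinsert the joint $y$ in the second colour, so that $y$ is adjacent exactly to the vertices whose intervals meet $I_y$. However, your choice of $y$ is wrong for the outer vertices with odd cuts. Because $\omega_{2m+1}=x$, a cut $s\geq 2m+1$ reaches the chain only up to $s-1$. So the joint just above the cut is $y=\omega_{s_i}+1$, which is $s_i+1$ for even $s_i$ but $s_i$ itself for odd $s_i$. This holds uniformly, including $s_1=4m-1$, so no adjustment is needed there. For the same reason the adjacency rule is $q_j\sim y$ if and only if $s_j>s_i$, not ``cut at least $y$''.

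With your $y=s_i+2=s_{i-1}$ for $2\leq i\leq m-1$ (a range that is non-empty once $m\geq 3$), the surviving vertex $q_{i-1}$ has cut exactly $y$ and is \emph{not} adjacent to $y$. In effect you have paired $q_i$ with the joint that belongs to $q_{i-1}$. For instance, with $m=3$ and $i=2$ you get $y=11$, while $q_1\not\sim 11$. In the representation you use, $I_{q_{i-1}}$ runs from below every centre up to $\hi(s_{i-1})=c_{y-1}$. Meanwhile $I_y$ must meet both $I_{y-2}$ and $I_{y-1}$. Since $\{y-2,y-1\}$ is a non-edge, $I_{y-2}$ ends strictly left of $c_{y-1}$ and $I_{y-1}$ starts strictly right of $c_{y-2}$. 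Hence $I_y$ contains a point of $[c_{y-2},c_{y-1}]\subseteq I_{q_{i-1}}$, and you would be representing $Q^m-q_i$ with the extra edge $yq_{i-1}$.

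The same computation shows that your placement rule in the third stage fails even where your $y$ is correct. There $\hi(s_i)=c_{y-1}$, so a left endpoint strictly above $\hi(s_i)$ cannot meet $I_{y-2}$. Indeed, if it could, you would not need to delete $q_i$ at all, and the construction would give a bicoloured representation of $Q^m$, contradicting Proposition~\ref{prop:Qm-not-bicoloured}. The point of deleting $q_i$ is exactly that $I_y$ is forced to meet $I_{q_i}$.

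The paper takes $I_y=[A,B]$ with the following endpoints.
\begin{itemize}
\item $A=c_{y-2}+r_{y-2}$, which lies strictly between $c_{y-2}$ and $c_{y-1}$. Then $I_{q_j}$ meets $I_y$ if and only if $y-1\in\pre(s_j)$, i.e.\ $s_j\geq s_i$, and equality occurs only for the deleted $q_i$.
\item $B=c_{y+2}-r_{y+2}$, or $B=c_{4m-2}$ when $y=4m-1$. This misses $I_{y+1}$ because Definition~\ref{def:chain-u} sets $r_{y+2}=g_{k+1}/4$ and $c_{y+1}=c_{y+2}-g_{k+1}/8$.
\end{itemize}
The wide intervals you worry about, $I_x$ and $I_{q_1}$, belong to neighbours of $y$ (or to the deleted vertex), so they cause no trouble.

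What does need care, and what your third stage asserts without proof, is the adjacency with $z$ when $y=2m+1$. There $B=H-\Sigma(m+2)/8<H$, so $I_y$ reaches $I_z=[H^-,2c_z-H^-]$ only because $c_x$ lies far enough below $H$. This is where the quantitative budget bound $2H-c_1-\pi\geq 11/32$ of Lemma~\ref{lem:budget} is needed.
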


\begin{proof}
Let $y = \omega_{s_i} + 1$ be the joint just above the cut of $q_i$, so that $q_j \sim y$ if and only if $s_j > s_i$.
The proof of Corollary~\ref{cor:C1C2} gives a $50\%$-tolerance representation $(c, r)$ of $Q^m-y$, built from Definition~\ref{def:chain-u} with $u = y$ by adjoining $x$ and $z$ via Lemma~\ref{lem:baseC3}, $q_1$ via Lemma~\ref{lem:lift}, and the outer vertices via Lemma~\ref{lem:prefixclique}.
Deleting $q_i$ from $(c, r)$ leaves a $50\%$-tolerance representation of $Q^m-q_i-y$, which we still denote by $(c, r)$.
To prove the assertion, it suffices to exhibit an interval $I_y$ of positive length which meets $I_w$ precisely when $w \in N_{Q^m-q_i}(y)$.

Suppose $y = 2k-1$ with $2 \leq k \leq 2m$, and set $I_y = [A,B]$, where
\[
  A = c_{y-2}+r_{y-2}
  \quad\text{and}\quad
  B = \begin{cases}
        c_{y+2}-r_{y+2} & \text{if } y \leq 4m-3,\\
        c_{y-1}              & \text{if } y = 4m-1 .
      \end{cases}
\]
It remains to verify the required properties of $I_y$. We refer to Definition~\ref{def:chain-u} throughout. 

{\em Positive length.}
Suppose first that $y \leq 4m-3$. 
By the choice of $A$, we have $A = c_{2k-3} + r_{2k-3}$.
Here, if $k = 2$, then $r_{2k-3} = r_1 = g_1/4 = g^* - 3/4$, as $g_1 = g^* = 1$, and, otherwise, $r_{2k-3} = g_{k-2} = g^* - 1$.
In either case, $A \leq c_{2k-3} + g^* - 3/4 = c_{2k+1} - 3/4$.
On the other hand, $B = c_{2k+1} - r_{2k+1} = c_{2k+1} - g_{k+1}/4 > c_{2k+1} - 1/8$, where the last inequality holds since $g_{k+1} = \Sigma(k+1) < 1/2$. 
As $c_{y-1} = c_{2k-3}+(g_{k-2}+g^*)/2 = c_{2k+1}-1/2$, we obtain $A < c_{y-1} < B$.

Suppose instead that $y = 4m-1$, so that $i = 1$. Here, we have $A = c_{4m-3} + r_{4m-3} = c_{4m-3} + g_{2m-2}$ and $B = c_{4m-2} = c_{4m-3} + (g_{2m-2} + g_{2m-1})/2$.
Hence, $B - A = (g_{2m-1} - g_{2m-2})/2 = \eta/2 > 0$, where the second equality holds since every gap is long.

In both cases, we have $c_{y-2} < A < c_{y-1} \leq B$, so $I_y$ has positive length and contains $c_{y-1}$.

{\em Adjacencies with the chain and $x$.}
Recall that the chain neighbours of $y$ are $y-2$, $y-1$ and additionally $y+2$ if $y \leq 4m-3$.
Moreover, $y$ is adjacent to $x$.

By the choice of $A$ and $B$, the interval $I_y$ meets $I_{y-2}$, which has right endpoint $A$ by construction, and, for $y \leq 4m-3$, also $I_{y+2}$, which has left endpoint $B$.
$I_y$ also meets $I_{y-1}$ since $c_{y-1} \in I_y$.
Moreover, since $y-1 \neq 1$ and $I_x$ contains every centre other than $c_1$ by Lemma~\ref{lem:xz}(i), $I_y$ also meets $I_x$.
It remains to show that $I_y$ misses $I_w$ for every other chain vertex $w$.

Suppose first that $w$ is even, so that $S_w = \{w\}$ by Lemma~\ref{lem:chain-u}.
If $w \leq y-3$, then $c_w+\rho < c_{w+1} \leq c_{y-2} < A$, so $I_w$ lies to the left of $I_y$.
Similarly, if $w \geq y+3$ (so $y \leq 4m-3$), then $c_w-\rho > c_{w-1} \geq c_{2k+1} > B$, so $I_w$ lies to the right of $I_y$.
If $w = y+1$ and $y \leq 4m-3$, then $c_w = c_{2k+1}-g_{k+1}/8 = B+g_{k+1}/8$.
Since $g_{k+1} = \Sigma(k+1) > 1/(2m) > \eta/(2m) = 8\rho$, we obtain $c_w-\rho > B$, so $I_w$ lies to the right of $I_y$.
If $w = y+1$ and $y = 4m-1$, then $I_w$ begins at $c_{4m}-\rho = B+\Sigma(2m)-\rho > B$, as $B = c_{4m-2}$ and $c_{4m} = c_{4m-2}+\Sigma(2m)$, so again $I_w$ lies to the right of $I_y$.

Suppose next that $w$ is odd, so that $w+1 \notin S_w$ by Lemma~\ref{lem:chain-u}.
If $w \leq y-4$ (so $y \geq 5$), then $I_w$ ends below $c_{w+1} \leq c_{y-3} < A$, so $I_w$ lies to the left of $I_y$.
If $w \geq y+4$ (so $y \leq 4m-3$), then $I_w$ begins at $c_{w-2} \geq c_{2k+1} > B$ by Definition~\ref{def:chain}, so $I_w$ lies to the right of $I_y$.

{\em Adjacency with $z$.}
Recall that $y \sim z$ if and only if $y \geq 2m+1$.
By Definition~\ref{def:xz} and Lemma~\ref{lem:xz}, we have that $I_z = [\,H^-,\ 2c_z-H^-\,]$, that $L < c_x < H^- < H$, and that $c_z$ is larger than every other centre.

Since $B < c_{4m} < c_z$, the interval $I_y$ meets $I_z$ if and only if $B \geq H^-$.
If $y \geq 2m+3$, then $B > c_{y-2} \geq c_{2m+1} = H > H^-$, so $I_y$ meets $I_z$.
If $y \leq 2m-1$, then $B = c_{2k+1} - g_{k+1}/4 < c_{2k+1} - g_{k+1}/8 = c_{2k} \leq c_{2m} = L < H^-$, so $I_y$ misses $I_z$. 

Suppose now that $y = 2m+1$, so that $k = m+1$.
Here, $L = c_{2m} = c_{2k-3}+(g_{k-2}+g^*)/2$ and $H = c_{2k} = c_{2k+1}-g_{k+1}/8 = c_{2k-3}+g^* - g_{k+1}/8$, so
\begin{equation}
\label{eq:corC3-1}
H-L  =  \mfrac12(g^*-g_{k-2})-\mfrac18 g_{k+1}  =  \mfrac12-\mfrac18\Sigma(m+2)
  \quad\text{and}\quad
  B  =  c_{2k+1}-\mfrac14 g_{k+1}  =  H-\mfrac18\Sigma(m+2).
\end{equation}
Moreover, $H^- = (L^+ + 3H)/4$, since $c_x = (L^+ + H)/2$ and $H^- = (c_x+H)/2$.
Hence, $H^- \leq B$ if and only if $L^+ \leq H - \Sigma(m+2)/2$. 
Since $L^+ = \max\{L,\ (c_1+\pi)/2\}$, it suffices to bound both terms in the maximum.

By~\eqref{eq:corC3-1}, $L \leq H - \Sigma(m+2)/2$ is equivalent to $\Sigma(m+2) \leq 4/5$. 
By Lemma~\ref{lem:budget}, we have $2H - c_1 - \pi \geq 11/32$, so $c_1 + \pi \leq 2H - \Sigma(m+2)$ is implied by $ \Sigma(m+2) \leq 11/32$. 
Both hold since $\Sigma(m+2) \leq \Sigma(2m) \leq 17/(32m) \leq 17/64$ by~\eqref{eq:LamSig}.
Hence, $I_y$ meets $I_z$, as required.

{\em Adjacencies with the vertices $q_j$.}
Recall that $y \sim q_j$ if and only if $s_j > s_i$.
By Lemmas~\ref{lem:prefixclique} and~\ref{lem:lift}, every surviving $I_{q_j}$ has left endpoint smaller than every centre of $B^m-y$ and right endpoint $\hi(s_j)$, where $\hi(s_1) = M$.
Since the left endpoint of $I_{q_j}$ lies below $c_{y-2} < A$, the interval $I_{q_j}$ meets $I_y$ if and only if $\hi(s_j) \geq A$.

By Lemma~\ref{lem:baseC3}, the centre order of $B^m-y$ is $\ord(m)$ with $y$ deleted.
The only vertex of $\ord(m)$ separating two consecutive chain vertices is $x$, which lies between $2m$ and $2m+1$.
But $y-1$ is even, so the vertex preceding $y-1$ is $y-2$.
Since $c_{y-2} < A < c_{y-1}$ and $\pre(s_j)$ is a prefix of this order, $\hi(s_j) \geq A$ if and only if $y-1 \in \pre(s_j)$.
As $y-1 = \omega_{s_i}$ and $s_i$ is the cut of the deleted $q_i$, this holds if and only if $s_j > s_i$, as required.
\end{proof}

\end{document}